\newtheorem{theorem}{Theorem}[section]
\newtheorem{proposition}[theorem]{Proposition}
\newtheorem{lemma}[theorem]{Lemma}
\newtheorem{corollary}[theorem]{Corollary}
\newtheorem{conjecture}[theorem]{Conjecture}
\theoremstyle{definition}
\newtheorem{definition}[theorem]{Definition}
\newtheorem{question}[theorem]{Question}
\theoremstyle{definition}
\newtheorem{example}[theorem]{Example}
\newtheorem{remark}[theorem]{Remark}
\newcommand{\R}{\mathbb{R}}
\newcommand{\reverse}[1]{\overleftarrow{#1}} 
\DeclareMathOperator{\conv}{conv} 
\DeclareMathOperator{\asc}{asc} 
\DeclareMathOperator{\tasc}{tasc} 
\DeclareMathOperator{\inv}{inv} 
\DeclareMathOperator{\card}{\#} 
\DeclareMathOperator{\Var}{Var} 
\DeclareMathOperator{\EVar}{EVar} 
\DeclareMathOperator{\sdim}{dim} 
\newcommand{\wole}{\preccurlyeq} 
\newcommand{\tc}[1]{#1^{\mathsf{tc}}} 
\newcommand{\meet}{\wedge} 
\newcommand{\join}{\vee} 
\newcommand{\maxs}{\Sigma} 
\newcommand{\Perm}[1]{\operatorname{Perm}(#1)} 
\newcommand{\Asso}[1]{\operatorname{Asso}(#1)} 
\newcommand{\vT}{{\mathcal T}} 
\newcommand{\vF}{{\mathcal F}} 
\newcommand{\vA}{{\mathcal A}} 
\newcommand{\streestovtrees}{{\varphi}} 
\definecolor{darkblue}{rgb}{0,0,0.7} 
\newcommand{\darkblue}{\color{darkblue}} 
\newcommand{\defn}[1]{\emph{\darkblue #1}} 
\def\part{\@startsection{part}{1}%
\z@{.7\linespacing\@plus\linespacing}{.8\linespacing}%
{\LARGE\sffamily\centering}}
\def\l@section{\@tocline{1}{5pt}{0pc}{}{}}
\let\oldtocpart=\tocpart
\renewcommand{\tocpart}[2]{\bf\large\oldtocpart{#1}{#2}}
\let\oldtocsection=\tocsection
\renewcommand{\tocsection}[2]{\bf\oldtocsection{#1}{#2}}
\definecolor{blue}{rgb}{0, 0.445, 0.695}
\definecolor{bluishgreen}{rgb}{0, 0.626, 0.456}
\definecolor{purple}{rgb}{0.783, 0.464, 0.640}
\definecolor{skyblue}{rgb}{0.359, 0.752, 0.973}
\definecolor{orange}{rgb}{0.999, 0.706, 0.0}
\definecolor{yellow}{rgb}{0.937, 0.890, 0.258}
\definecolor{olive}{RGB}{116,141,19}
\definecolor{green}{RGB}{108,208,48}
\definecolor{teal}{RGB}{47,77,62}
\definecolor{turquoise}{RGB}{86,235,211}
\definecolor{lightblue}{RGB}{150,178,153}
\definecolor{blue2}{RGB}{25,50,191}
\definecolor{indigo}{RGB}{142,128,251}
\definecolor{indigo2}{RGB}{114,32,246}
\definecolor{lightpurple}{RGB}{243,197,250}
\definecolor{purple2}{RGB}{105,66,131}
\definecolor{magenta}{RGB}{206,43,188}
\definecolor{brown}{RGB}{110,57,13} 
\definecolor{darkblue}{rgb}{0.0, 0.0, 0.7}
\title[The $s$-week order and $s$-permutahedra II]{The $s$-weak order and $s$-permutahedra II: \\
The combinatorial complex of pure intervals}
\date{\today}
\author[C.~Ceballos]{Cesar Ceballos$^{\diamond}$}
\address[C.~Ceballos]{Institute of Geometry, TU Graz, Graz, Austria}
\email{\href{mailto:cesar.ceballos@tugraz.at}{\texttt{cesar.ceballos@tugraz.at}}}
\urladdr{http://www.geometrie.tugraz.at/ceballos/}
\author[V.~Pons]{Viviane Pons$^{\star}$}
\address[V.~Pons]{CNRS, IRL CRM Montr\'eal, Canada -- Universit\'e Paris-Saclay, CNRS,
Laboratoire Interdisciplinaire des Sciences du Num\'erique, Orsay, France.}
\email{\href{mailto:viviane.pons@lisn.upsaclay.fr}{\texttt{viviane.pons@lisn.upsaclay.fr}}}
\urladdr{\url{https://www.lri.fr/~pons/}}
\thanks{This project was supported by the 
ANR-FWF International Cooperation Project PAGCAP, funded by
the ANR Project ANR-21-CE48-0020 and
the FWF Project I 5788. 
Cesar Ceballos was also supported by the Austrian Science Fund FWF, Project P 33278.
}
\keywords{Weak order, Permutahedron, Tamari Lattice, Associahedron.}
\subjclass[2020]{Primary 20F55 \and 52B05 ; Secondary 06B05 \and 06B10}
\begin{document}

\maketitle

\begin{abstract}
This paper introduces the geometric foundations for the study of the $s$-permutahedron and the $s$-associahedron, two objects that encode the underlying geometric structure of the $s$-weak order and the $s$-Tamari lattice. We introduce the $s$-permutahedron as the complex of pure intervals of the $s$-weak order, present enumerative results about its number of faces, and prove that it is a combinatorial complex. This leads, in particular, to an explicit combinatorial description of the intersection of two faces. 
We also introduce the $s$-associahedron as the complex of pure $s$-Tamari intervals of the $s$-Tamari lattice, show some enumerative results, and prove that it is isomorphic to a well chosen $\nu$-associahedron. Finally, we present three polytopality conjectures, evidence supporting them, and some hints about potential generalizations to other finite Coxeter groups.   
\end{abstract}

\tableofcontents

\section*{Introduction}

This paper is the second contribution (after~\cite{CP22}) to a series of articles related to the $s$-weak order and the $s$-permutahedron. We originally introduced these objects in an extended abstract~\cite{FPSAC2019}, which is now developed into two long versions: the prequel~\cite{CP22} and this present paper. 

\begin{figure}[h!]
\includegraphics[width=0.6\textwidth]{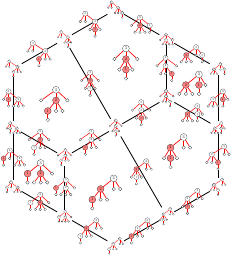}
\caption{The $s$-weak order as the edge graph of the $s$-permutahedron for $s=(0,2,2)$. }
\label{fig:s022-complex}
\end{figure}

\begin{figure}[h!]
\includegraphics[width=0.4\textwidth]{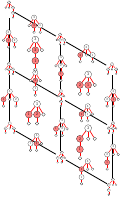} 

\caption{The $s$-weak order as the edge graph of the $s$-permutahedron for $s=(0,0,2)$. }
\label{fig:s002-complex}
\end{figure}

The purpose of the first paper~\cite{CP22} was to develop the combinatorial foundations for the study of the $s$-weak order and the $s$-Tamari lattice, two partially ordered structures defined on certain families of decreasing trees, see examples in Figures~\ref{fig:s022-complex}-~\ref{fig:s002-complex} and~\ref{fig_associahedron_s022}. 
Our motivation for introducing those objects was to fill the gap between recent developments on generalizations of the classical Tamari lattice on one side, and its connections to the classical weak order of permutations on the other.  

\begin{figure}[htb]
\includegraphics[width=0.8\textwidth]{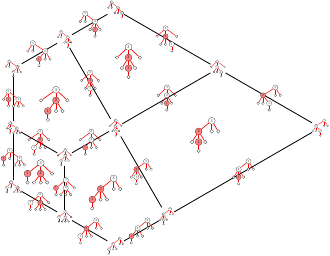}
\caption{The $s$-Tamari lattice as the edge graph of the  $s$-associahedron for $s=(0,2,2)$}
\label{fig_associahedron_s022}
\end{figure}

On one side, we have the classical Tamari lattice (see~\cite{Tam51, hoissen12associahedra}), a partial order on Catalan objects (such as binary trees) whose Hasse diagram is the edge graph of a polytope called the associahedron. It is a central object in algebraic combinatorics, related to many interesting structures such as AVL trees in efficient sorting algorithms~\cite{AVL62}, Hopf algebras~\cite{LodayRonco02_permutations,LodayRonco98_binarytrees,AguiarSottile_StructureLodayRonco,HNT05}, Cambrian lattices~\cite{Reading-cambrianLattices}, Permutrees~\cite{PP18}, cluster algebras~\cite{FominZelevinsky-ClusterAlgebrasI, FominZelevinsky-ClusterAlgebrasII}, representation theory~\cite{haiman_conjectures_1994,BPR12}, and more.
Two important generalizations of the Tamari lattice that are relevant for our work are the $m$-Tamari lattice of Bergeron and Pr\'eville-Ratelle~\cite{BPR12} and the more general $\nu$-Tamari lattice of Pr\'eville-Ratelle and Viennot~\cite{PrevilleRatelleViennot}, whose study is motivated by conjectural connections with the theory of (multivariate higher) diagonal harmonics in representation theory, see e.g.~\cite{BPR12,bousquet_representation_2013,bousquet_number_2011,HopfDreams_2022}.

On the other side, we have the classical weak order on permutations, whose Hasse diagram is the edge graph of a polytope called the permutahedron. It is known to be a lattice~\cite{GR70} obtained from an orientation of the Cayley graph of the symmetric group. Both the lattice construction and the associated polytope also exist for other finite Coxeter groups~\cite{Bjo84,Hoh12}.
The weak order has well known connections with the classical Tamari lattice, which can be obtained from it both as a sublatttice~\cite{bjorner_wachs_shellableII_1997} and as a lattice quotient~\cite{Reading-cambrianLattices}. These properties have interesting geometric and algebraic counterparts. Geometrically, the associahedron can be obtained by removing certain facets of the permutahedron~\cite{HL07,HLT11}, and this translates into algebraic properties between certain related Hopf algebras~\cite{LodayRonco02_permutations,LodayRonco98_binarytrees}.

The recent extensions of the Tamari lattice and the lack of (type $A$)\footnote{There are further generalizations in the context of Coxeter groups. In this paper, we mainly focus on generalizations which are related to Coxeter groups of type $A$. Some discussions about other types are mentioned in Section~\ref{sec_polytopal_conjectures}.}  generalizations of the weak order lead to the question: 
what is the equivalent of the weak order for the $\nu$-Tamari lattice?
The $s$-weak order introduced in~\cite{CP22} gives an answer to this question.
Given a sequence of non-negative integers $s$, the \defn{$s$-weak order} is a lattice structure on a family of combinatorial objects called \emph{$s$-decreasing trees}. We also introduced the \defn{$s$-Tamari lattice} as a sublattice of the $s$-weak order and showed that it is isomorphic to a well chosen $\nu$-Tamari lattice. 

\begin{center}
\begin{tikzpicture}
\node (Tam) at (0,0) {Tamari lattice};
\node (Weak) at (0,1.5) {Weak order};
\draw[->] (Tam) -- (Weak);
\node[draw] (sTam) at (4,0) {
\begin{tabular}{c}
 $s$-Tamari lattice \\  
 $\nu$-Tamari lattice     
\end{tabular}
};
\node[draw] (sWeak) at (4,1.5) {$s$-Weak order};
\draw[->] (sTam) -- (sWeak);
\draw[->] (Tam) -- (sTam);
\draw[->] (Weak) -- (sWeak);
\node at (0,1.9) {}; 
\end{tikzpicture}
\end{center}

The purpose of this second paper is to {investigate the underlying geometric structure of the $s$-weak order and the $s$-Tamari lattice}. As we can see from Figures~\ref{fig:s022-complex} and~\ref{fig_associahedron_s022}, the $s$-weak order and the $s$-Tamari lattice can be realized as the edge graph of a much richer geometric structure, whose faces can be explicitly described in terms of a special family of intervals which we call pure intervals and pure $s$-Tamari intervals. 
This leads to two main definitions: (1) the \defn{$s$-permutahedron} is the complex of pure intervals of the $s$-weak order and (2) the \defn{$s$-associahedron} is the complex of pure $s$-Tamari intervals of the $s$-Tamari lattice.  
In this second paper, we introduce these concepts and develop the geometric foundations for the study of~$s$-permutahedra and $s$-associahedra.

\begin{center}
\begin{tikzpicture}
\node (Asso) at (0,0) {Associahedron};
\node (Perm) at (0,1.5) {Permutahedron};
\draw[->] (Asso) -- (Perm);
\node[draw] (sAsso) at (4,0) {
\begin{tabular}{c}
 $s$-Associahedron\\  
 $\nu$-Associahedron     
\end{tabular}
};
\node[draw] (sPerm) at (4,1.5) {$s$-Permutahedron};
\draw[->] (sAsso) -- (sPerm);
\draw[->] (Asso) -- (sAsso);
\draw[->] (Perm) -- (sPerm);
\node at (0,1.9) {}; 
\end{tikzpicture}
\end{center}

The paper is subdivided into three parts. Part 1 is dedicated to the introduction and study of the $s$-permutahedron (Definition~\ref{def:s-perm}), 
whose faces are given by pure intervals of the $s$-weak order (Definition~\ref{def:pure-intervals}). 
We present enumerative results about the number of faces by giving an explicit formula for the $f$-polynomial (Proposition~\ref{prop:f-poly}), and describe a more efficient recursive formula to compute it (Proposition~\ref{prop:f-poly-recur}).
The main result of this part is to prove that the $s$-permutahedron is a \defn{combinatorial complex} (Theorem~\ref{sperm_combinatorial_complex}) which, in particular, requires that the intersection of two faces is also a face of the complex (Theorem~\ref{thm:intersection} and Corollary~\ref{cor:intersection}). Our proof is purely combinatorial and is based on an explicit characterization of pure intervals (Theorem~\ref{thm:pure-interval-char}) using the notions of \defn{variations} and \defn{essential variations}.

In Part 2, we introduce and study the $s$-associahedron (Definition~\ref{def_sAssociahedron}), 
whose faces are given by pure $s$-Tamari intervals of the $s$-Tamari lattice (Definition~\ref{def_pure_sTamari_intervals}). 
We present an explicit formula for its $f$-polynomial (Proposition~\ref{prop:f-poly_stamari}), which provides natural definitions of the \defn{$s$-Catalan} and the \defn{$s$-Narayana} numbers. 
The main result of this part is to show that the $s$-associahedron 
coincides with the $\nu$-associahedron introduced by Ceballos, Padrol, and Sarmiento in~\cite{CeballosPadrolSarmiento-geometryNuTamari} (Theorem~\ref{thm:nu-ass-s-ass}). 
The $\nu$-associahedron is a polytopal complex of bounded faces induced by certain arrangement of tropical hyperplanes, and its edge graph gives a geometric realization of the $\nu$-Tamari lattice~\cite{CeballosPadrolSarmiento-geometryNuTamari}.  
This proves, in particular, that the $s$-associahedron is also a polytopal complex. 

In Part 3, we present three conjectures about geometric realizations of the $s$-permutahedron and the $s$-associahedron (Section~\ref{sec_polytopal_conjectures}), as well as some hints about potential generalizations of our work in the context of finite Coxeter groups (Section~\ref{sec_Coxeter}).  
The first conjecture (Conjecture~\ref{conj:pure-polytopal}) states that pure intervals of the $s$-weak order can be realized as convex polytopes. 
The second conjecture (Conjecture~\ref{coj:spermutahedra}) asserts that the $s$-permutahedron can be geometrically realized as a polytopal subdivision of a zonotope, and the third (Conjecture~\ref{con:sassociahedra}) that there is such a realization such that the $s$-associahedron can be obtained from it by removing some facet defining inequalities.    
All these conjectures are strongly supported by computer evidence. 
Several examples are illustrated in Figure~\ref{fig_geometricrealizations_intro}.
In particular, we show that they hold in full generality in dimensions 2 and 3 (Section~\ref{sec_geometric_realizations_dim_2_3}).

\begin{figure}[htbp]
\begin{center}
\begin{tabular}{ccc}
\includegraphics[height=4.3cm]{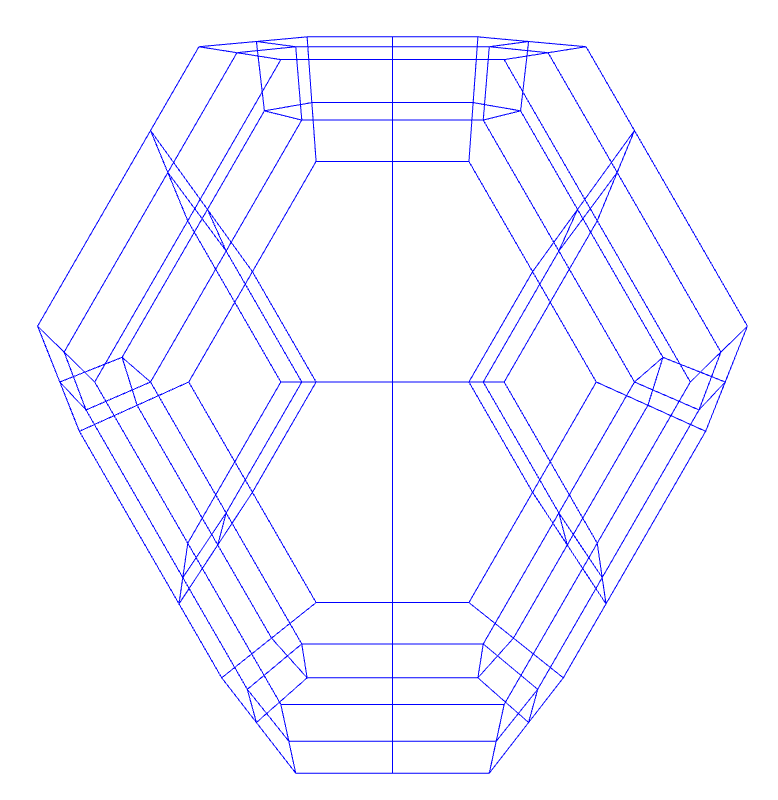}&
\includegraphics[height=4.3cm]{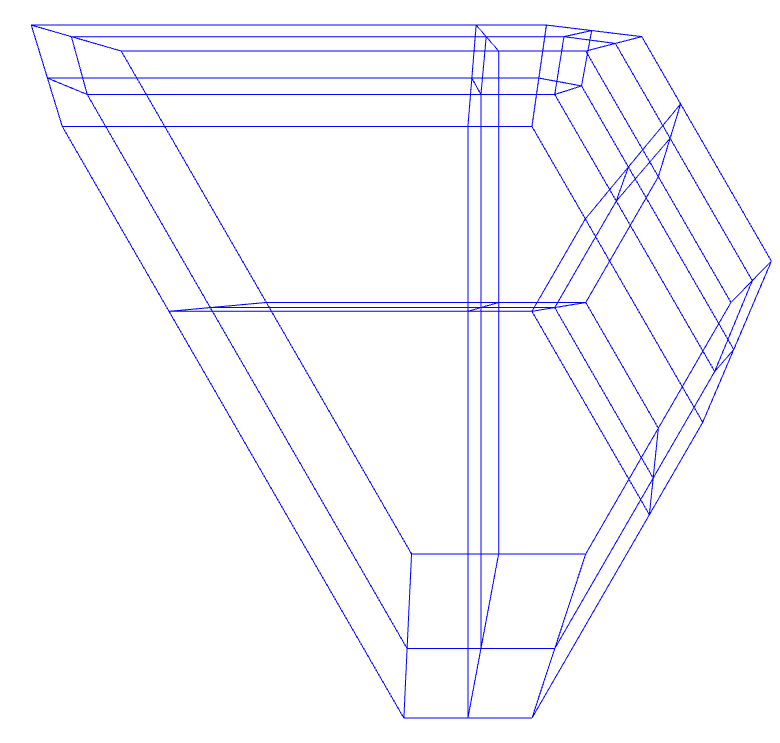}&
\includegraphics[height=4.3cm]{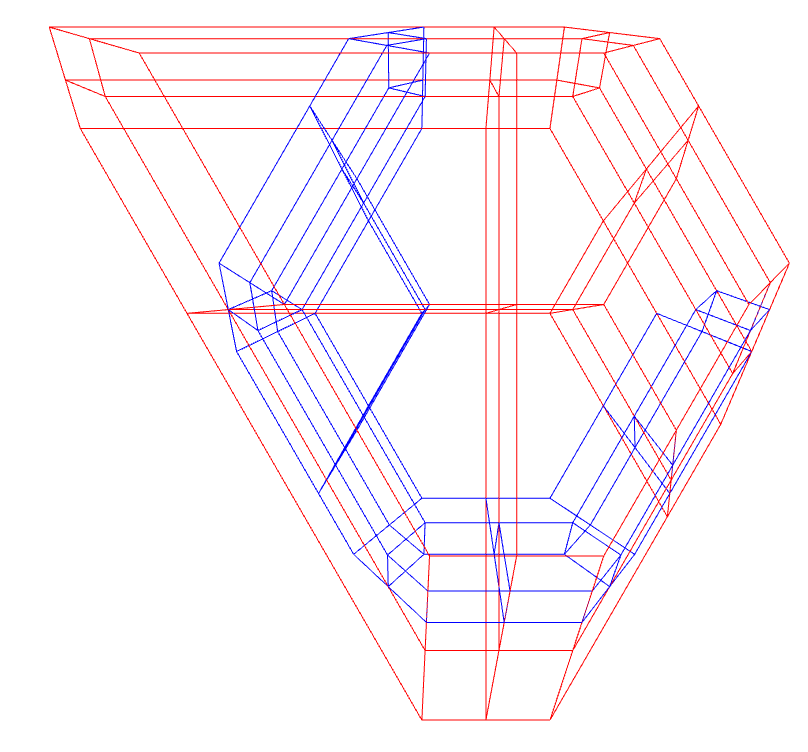} \\
\includegraphics[height=4.3cm]{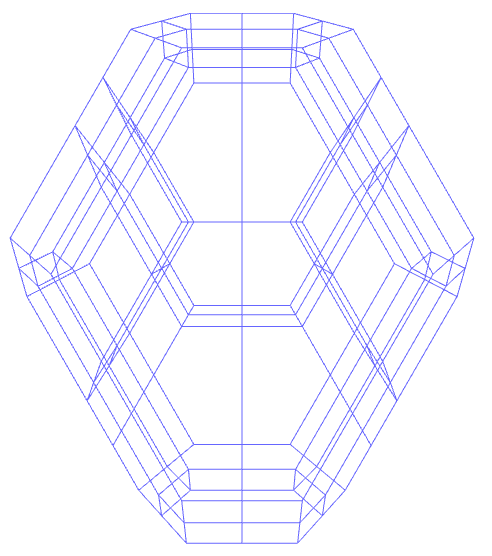}&
\includegraphics[height=4.3cm]{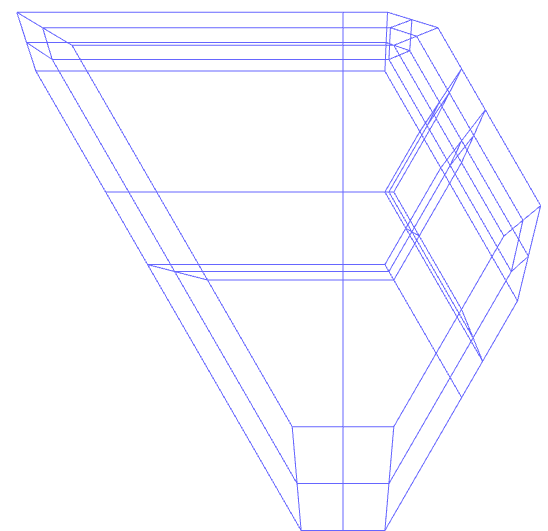}&
\includegraphics[height=4.3cm]{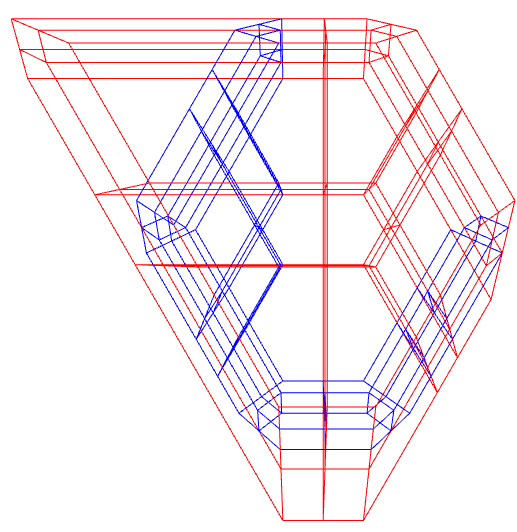} \\
\includegraphics[height=4.3cm]{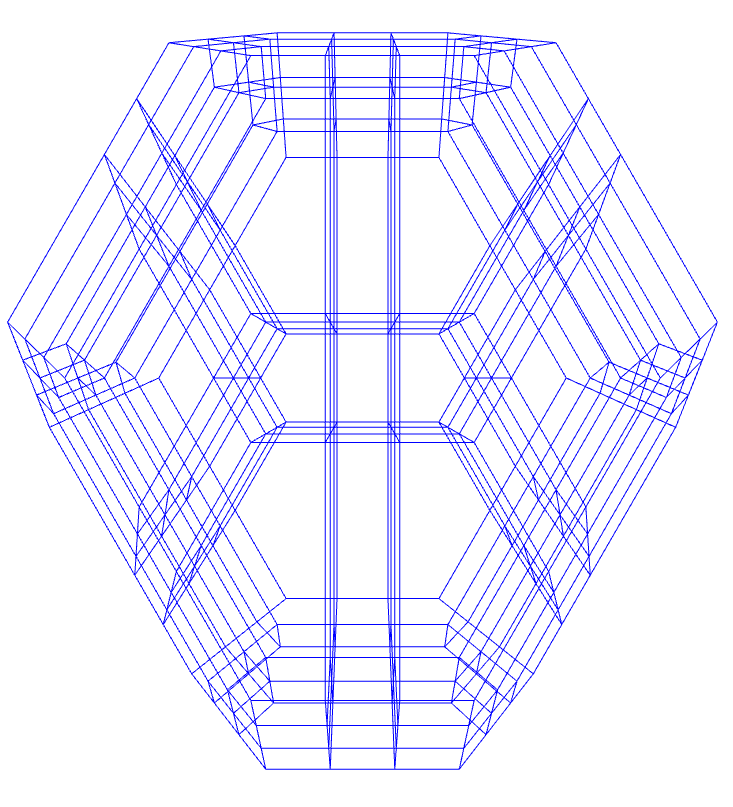}&
\includegraphics[height=4.3cm]{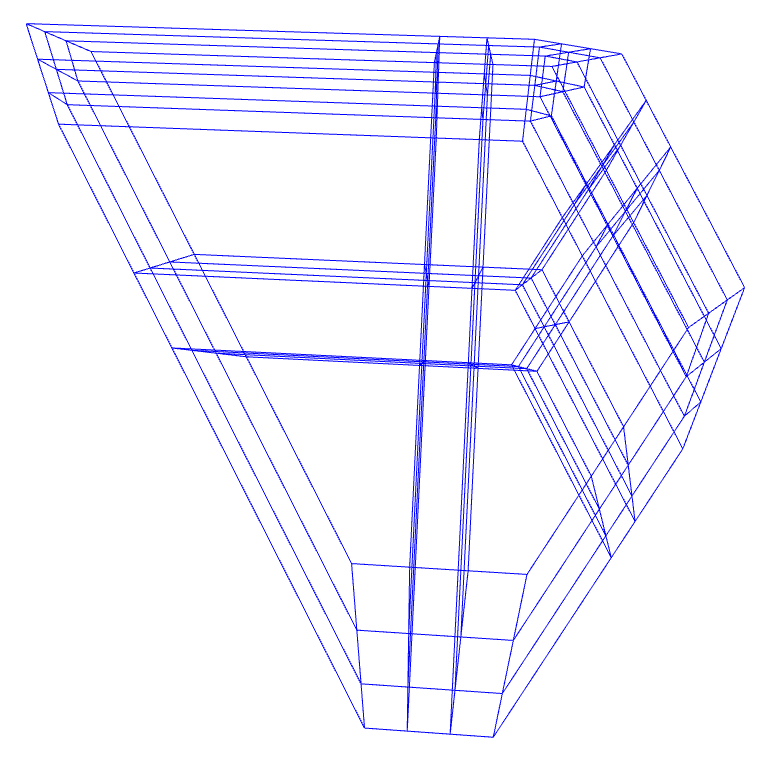}&
\includegraphics[height=4.3cm]{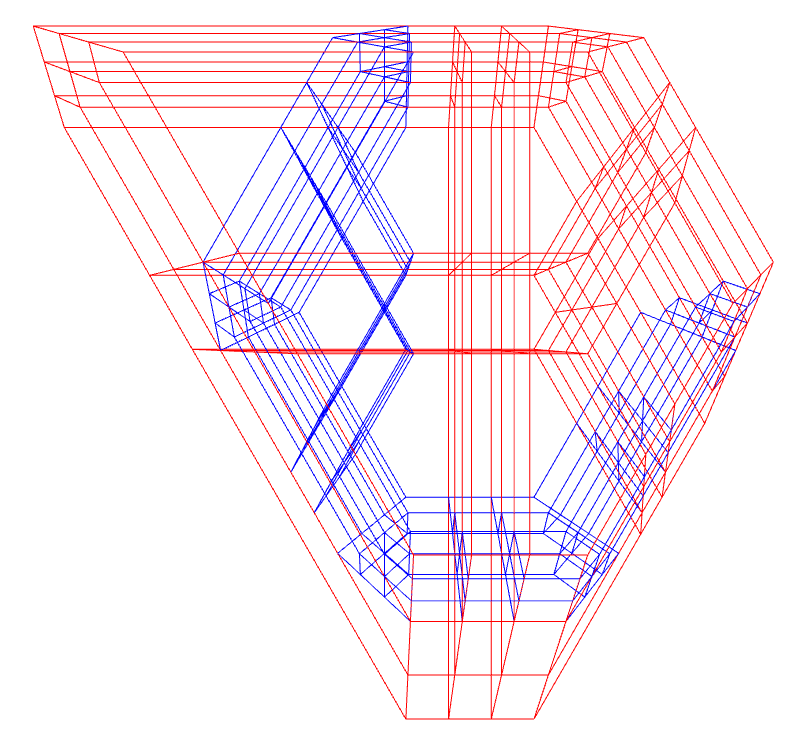} \\
\end{tabular}
\caption{The $s$-permutahedron and the $s$-associahedron obtained from it by removing certain facets,  for $s=(0,2,2,2)$, $s =(0,2,3,2)$, and $s=(0,3,3,3)$.}
\label{fig_geometricrealizations_intro}
\end{center}
\end{figure}

We remark that these conjectures were already present in our initial extended abstract~\cite{FPSAC2019}, and have been partially solved since. Indeed, in~\cite{GMPT23}, for a composition $s$ without any $0$, the authors provide a realization of the $s$-permutahedron as the dual of the complex of interior faces of a triangulation of a flow polytope. 
From this, using the Cayley trick and techniques from tropical geometry, they are able to construct the desired geometric realization of the $s$-permutahedron. The results in~\cite{GMPT23} solve Conjectures~\ref{conj:pure-polytopal} and~\ref{coj:spermutahedra} for the special case where $s$ contains no zeros. 
The case where $s$ contains zeros is still open, and Conjecture~\ref{con:sassociahedra} remains open in general. We present a further discussion on these connections in Section~\ref{sec:flows}.
We remark that pure intervals also appear in the work of Lacina~\cite{SWeakSB}, where he shows that they are the only intervals of the $s$-weak order which are homotopy equivalent to a sphere. 

These recent advances motive the further study of the $s$-weak order and the $s$-permutahedron, as well as a deeper understanding on the nice properties of the family of pure intervals. 
In comparison to~\cite{GMPT23}, this paper develops the foundations for the study of $s$-permutahedra and $s$-associahedra and explores the combinatorial/geometric properties of the $s$-permutahedra for all~$s$ (with or without zeros). 

Beside the paper, we provide the SageMath~\cite{SageMath2022} code used to reach the results as well as a demo SageMath worksheet with computations of all examples in this paper~\cite{SageDemoII}. 

\vfill

\part{The $s$-permutahedron}\label{part_one}
\section{The $s$-weak order (background)}

\subsection{Lattice definition through $s$-decreasing trees and tree-inversions}

In this Section, we recall the main definitions and results of~\cite{CP22} needed for Part~\ref{part_one} of this paper. We call a sequence $s = (s(1), s(2), \dots, s(n))$ of non-negative integers a \defn{weak composition} of length $\ell(s) = n$. If for all $i$, $s(i) > 0$, this is simply a composition. An \defn{$s$-decreasing tree} is a \emph{rooted planar} tree with $n$ internal nodes labeled bijectively by $1,\dots,n$, such that node $i$ has $s(i) + 1$ children and such that labels are decreasing from root to leaves. An example is given on Figure~\ref{fig:ex-dtree}.

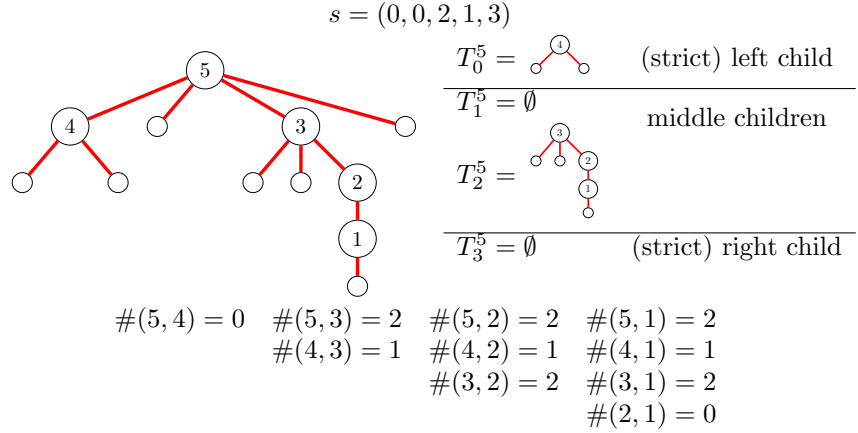
\begin{figure}[ht]
$s = (0,0,2,1,3)$

\begin{tabular}{clc}
\multirow{4}{*}{\scalebox{.8}{{ \newcommand{\nodea}{\node[draw,circle] (a) {$5$}
;}\newcommand{\nodeb}{\node[draw,circle] (b) {$4$}
;}\newcommand{\nodec}{\node[draw,circle] (c) {$ $}
;}\newcommand{\noded}{\node[draw,circle] (d) {$ $}
;}\newcommand{\nodee}{\node[draw,circle] (e) {$ $}
;}\newcommand{\nodef}{\node[draw,circle] (f) {$3$}
;}\newcommand{\nodeg}{\node[draw,circle] (g) {$ $}
;}\newcommand{\nodeh}{\node[draw,circle] (h) {$ $}
;}\newcommand{\nodei}{\node[draw,circle] (i) {$2$}
;}\newcommand{\nodej}{\node[draw,circle] (j) {$1$}
;}\newcommand{\nodeba}{\node[draw,circle] (ba) {$ $}
;}\newcommand{\nodebb}{\node[draw,circle] (bb) {$ $}
;}\begin{tikzpicture}[auto]
\matrix[column sep=.3cm, row sep=.3cm,ampersand replacement=\&]{
         \&         \&         \&         \& \nodea  \&         \&         \&         \&         \\ 
         \& \nodeb  \&         \& \nodee  \&         \&         \& \nodef  \&         \& \nodebb \\ 
 \nodec  \&         \& \noded  \&         \&         \& \nodeg  \& \nodeh  \& \nodei  \&         \\ 
         \&         \&         \&         \&         \&         \&         \& \nodej  \&         \\ 
         \&         \&         \&         \&         \&         \&         \& \nodeba \&         \\
};

\path[ultra thick, red] (b) edge (c) edge (d)
	(j) edge (ba)
	(i) edge (j)
	(f) edge (g) edge (h) edge (i)
	(a) edge (b) edge (e) edge (f) edge (bb);
\end{tikzpicture}}
}}&
$T_0^5 = \begin{aligned}\scalebox{.4}{{ \newcommand{\nodea}{\node[draw,circle] (a) {$4$}
;}\newcommand{\nodeb}{\node[draw,circle] (b) {$ $}
;}\newcommand{\nodec}{\node[draw,circle] (c) {$ $}
;}\begin{tikzpicture}[auto]
\matrix[column sep=.3cm, row sep=.3cm,ampersand replacement=\&]{
         \& \nodea  \&         \\ 
 \nodeb  \&         \& \nodec  \\
};

\path[ultra thick, red] (a) edge (b) edge (c);
\end{tikzpicture}}}\end{aligned} $ & (strict) left child \\ \cline{2-3}
 & $T_1^5 = \emptyset $ & \multirow{2}{*}{middle children} \\
 & $T_2^5 = \begin{aligned}\scalebox{.4}{{ \newcommand{\nodea}{\node[draw,circle] (a) {$3$}
;}\newcommand{\nodeb}{\node[draw,circle] (b) {$ $}
;}\newcommand{\nodec}{\node[draw,circle] (c) {$ $}
;}\newcommand{\noded}{\node[draw,circle] (d) {$2$}
;}\newcommand{\nodee}{\node[draw,circle] (e) {$1$}
;}\newcommand{\nodef}{\node[draw,circle] (f) {$ $}
;}\begin{tikzpicture}[auto]
\matrix[column sep=.3cm, row sep=.3cm,ampersand replacement=\&]{
         \& \nodea  \&         \\ 
 \nodeb  \& \nodec  \& \noded  \\ 
         \&         \& \nodee  \\ 
         \&         \& \nodef  \\
};

\path[ultra thick, red] (e) edge (f)
	(d) edge (e)
	(a) edge (b) edge (c) edge (d);
\end{tikzpicture}}
}\end{aligned}$ & \\ \cline{2-3}
 & $T_3^5 = \emptyset $ & (strict) right child \\
\end{tabular}

\vspace{.5cm}
\begin{tabular}{llll}
$\card(5,4) = 0$ & $\card(5,3) = 2$ & $\card(5,2) = 2$ & $\card(5,1) = 2$ \\
                 & $\card(4,3) = 1$ & $\card(4,2) = 1$ & $\card(4,1) = 1$ \\
                 &                  & $\card(3,2) = 2$ & $\card(3,1) = 2$ \\
                 &                  &                  & $\card(2,1) = 0$
\end{tabular}
\caption{an $s$-decreasing tree and its tree-inversions (Figure 4 of~\cite{CP22})}
\label{fig:ex-dtree}
\end{figure}

As each node is given a unique label, we usually write \emph{the node $y$} instead of \emph{the node labeled by $y$} for convenience. For an $s$-decreasing tree $T$ and a node $y$ such that $s(y) = m$, we write $T_0^y, \dots, T_{m}^y$ the $m+1$ children of $y$. Then $T_0^y$ is the \defn{left} child of $y$, while $T_1^y, \dots, T_{m-1}^y$ are the \defn{middle children} of $y$ and $T_m^y$ is the \defn{right} child of $y$. Besides, if $m > 0$, we say that $T_0^y$ (resp. $T_{m}^y$) is a \defn{strict left} (resp. strict right) child of $y$. Note that the word \emph{child} refers here to the whole subtree not just the child node. 

An $s$-decreasing tree $T$ is characterized by its multi-set of \defn{tree-inversions} $\inv(T)$. Let $c < a$ be two nodes a of an $s$-decreasing tree $T$, the \defn{cardinality} of the tree-inversion $(c,a)$, written $\card_T(c,a)$, is given by the relative position of $a$ towards $c$. If $a$ is left of $c$ or belongs to $T_0^c$, then $\card_T(c,a) = 0$. If $a$ belongs to a middle child $T_i^c$ (with $0 < i < m$), then $\card_T(c,a) = i$. Finally, if $a$ belongs to $T_m^c$ or is right of $c$, then $\card_T(c,a) = m$. We give many examples on Figure~\ref{fig:ex-dtree}.

As we explain in~\cite{CP22}, $s$-decreasing trees are a generalization of permutations. More precisely, if~$s$ does not contain any zeros, $s$-decreasing trees correspond to certain multi-permutations where the letter~$i$ is repeated $s(i)$ times: they are in bijection with \defn{stirling permutations}, \emph{i.e.},  $121$-avoiding multi-permutations. In particular, if $s = (1,1,\dots)$, the $s$-decreasing trees are the decreasing binary trees, known to be in bijection with classical permutations. Thus, the tree-inversions are a generalization of the classical inversion sets of permutations. It can be understood as a \emph{multi-set} where each inversion $(c,a)$ appears $\card(c,a)$ times. Many classical notions and operations on sets can be generalized to multi-sets.

\begin{definition}[Definitions~1.4 and~1.5 of~\cite{CP22}]
\label{def:multi-sets}

Let $I$ be a multi-set of inversions $(c,a)$ with $a < c$. We write $\card_I(c,a)$ the number of occurrences of $(c,a)$ in $I$. If there is no occurrence of $(c,a)$ in $I$ we write $(c,a) \notin I$ or equivalently $\card_I(c,a) = 0$.

Given two multi-sets of inversions $I$ and $J$, we say that $I$ is \defn{included} in $J$ (resp. strictly included) and write $I \subseteq J$ (resp. $I \subset J$) if $\card_I(c,a) \leq \card_J(c,a)$ (resp. $\card_I(c,a) < \card_J(c,a)$) for all $(c,a) \in I$.

Given a weak composition $s$ with $\ell(s) = n$, we write $\maxs_s$ the \defn{maximal $s$-inversion set} defined by  $\card_{\maxs_s}(c,a) = s(c)$ for all $1 \leq a < c \leq n$.

We say that a multi-set of inversions $I$ is \defn{transitive} if for all triplets $a < b < c$, either $\card_I(b,a) = 0$ or $\card_I(c,a) \geq \card_I(c,b)$.

Given a weak composition $s$ with $\ell(s) = n$ and $I \subseteq \maxs_s$, then $I$ is \defn{planar} if for all triplets $1 \leq a < b < c \leq n$, either $\card_I(b,a) = s(b)$ or $\card_I(c,b) \geq \card_I(b,a)$.
\end{definition}

We prove the following.

\begin{proposition}[Proposition 1.6 of~\cite{CP22}]
\label{prop:tree-inversion-set}
For $s$ a given weak composition, a multi-set of inversions is the multi-set of tree-inversions of an $s$-decreasing tree $T$ if and only if it is planar, transitive, and included in $\maxs_s$. We then call it an \defn{$s$-tree-inversion set}.
\end{proposition}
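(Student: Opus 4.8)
The plan is to prove both implications by tracking, for each triple $a<b<c$, the three numbers $\card_I(c,a)$, $\card_I(c,b)$ and $\card_I(b,a)$ and relating them to the planar geometry of the tree. For the ``only if'' direction, let $T$ be an $s$-decreasing tree and $I=\inv(T)$. The inclusion $I\subseteq\maxs_s$ is immediate, since by definition $\card_T(c,a)\leq s(c)$ for every pair. For the other two conditions, fix $a<b<c$ and note that, read from left to right, the regions determined by $c$ (its children together with the areas lying to the left and to the right of its subtree) are linearly ordered, and that $\card_T(c,a)$ and $\card_T(c,b)$ record in which of these regions $a$ and $b$ sit. First I would establish the basic trichotomy: if $a$ lies strictly inside the span of $b$, i.e. $0<\card_T(b,a)<s(b)$, then $a$ is a descendant of $b$ and hence lies in the same region of $c$, so $\card_T(c,a)=\card_T(c,b)$; if $\card_T(b,a)=0$ (so $a$ is left of $b$ or in its leftmost child) then $a$ lies in a weakly earlier region, $\card_T(c,a)\leq\card_T(c,b)$; and symmetrically $\card_T(b,a)=s(b)$ forces $\card_T(c,a)\geq\card_T(c,b)$. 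The transitivity condition is then exactly the assertion of $\card_T(c,a)\geq\card_T(c,b)$ in the cases $\card_T(b,a)\neq0$ (inside or right), and the planarity condition the assertion of $\card_T(c,a)\leq\card_T(c,b)$ in the cases $\card_T(b,a)\neq s(b)$ (inside or left); both follow at once.

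For the ``if'' direction I would argue by induction on $n=\ell(s)$, reconstructing the unique tree from $I$. Since labels decrease towards the leaves, the root must be $n$, and the values $\card_I(n,a)$ for $a<n$ partition $\{1,\dots,n-1\}$ into blocks $G_0,\dots,G_{s(n)}$ which will become the children $T_0^n,\dots,T_{s(n)}^n$. Each restriction of $I$ to a block $G_k$ is again planar, transitive and included in the maximal set of the restricted composition --- the three conditions only constrain triples of nodes lying inside the block --- so the induction hypothesis provides, for every $k$, a unique $s$-decreasing tree $T_k$ with tree-inversion set $I|_{G_k}$. Grafting the $T_k$ as the children of a root labelled $n$ produces a candidate $s$-decreasing tree $T$, and since every choice made along the way was forced, this will also yield uniqueness.

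It then remains to verify $\inv(T)=I$. Inversions with both endpoints in a single block agree with $I$ by induction, and $\card_T(n,a)=\card_I(n,a)$ holds by construction. The decisive case is a pair $(c,a)$ whose endpoints lie in different blocks, say $c\in G_j$ and $a\in G_k$: in $T$ these two nodes sit in distinct children of the root, so $a$ is entirely left or right of $c$ and therefore $\card_T(c,a)\in\{0,s(c)\}$. Applying transitivity to the triple $a<c<n$ shows $\card_I(c,a)=0$ whenever $k<j$, and applying planarity to the same triple shows $\card_I(c,a)=s(c)$ whenever $k>j$; in both cases this matches $\card_T(c,a)$, completing the identification. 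I expect the main obstacle to be precisely this interplay: pinning down the trichotomy in the forward direction with its boundary bookkeeping (where ``left of $c$'' is conflated with the leftmost child, and ``right of $c$'' with the rightmost), and, in the backward direction, recognising that transitivity and planarity are exactly the two conditions needed to make the cross-block inversions consistent --- a multiset that is transitive but not planar is precisely one for which the reconstructed tree fails to reproduce the right-hand inversions.
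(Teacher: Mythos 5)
Your argument is correct. Note first that the paper does not reprove this statement: it is quoted from~\cite{CP22}, with the necessity declared ``clear by picture'' (Figure~\ref{fig:transitivity_planarity}) and the sufficiency delegated to the explicit insertion algorithm \cite[Algorithm~1.8]{CP22}. Your necessity argument is a careful writeup of exactly that picture: the trichotomy on $\card_T(b,a)$ (left of $b$ or in its leftmost child / in a middle child / right of $b$ or in its rightmost child), combined with the left-to-right ordering of the regions determined by $c$, identifies transitivity with the ``$\card_T(c,a)\geq\card_T(c,b)$'' half and planarity with the ``$\card_T(c,a)\leq\card_T(c,b)$'' half. (Your reading of planarity is the intended one; the displayed condition in Definition~\ref{def:multi-sets} appears to carry a typo in its last term, which should be $\card_I(c,a)$ rather than $\card_I(b,a)$, as every subsequent invocation of planarity in the paper confirms.) For sufficiency your route differs genuinely from the cited one: instead of inserting the nodes $n, n-1,\dots,1$ one at a time by navigating the partially built tree according to the cardinalities, you decompose once at the root into the blocks $G_k=\{a<n:\card_I(n,a)=k\}$, recurse on each block (whose restricted multiset is trivially still planar, transitive and bounded), and then check the cross-block pairs. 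The decisive step is exactly where both hypotheses get used: transitivity applied to the triple $a<c<n$ forces $\card_I(c,a)=0$ when $a$ lies in an earlier block than $c$, and planarity applied to the same triple forces $\card_I(c,a)=s(c)$ when it lies in a later one, matching the constructed tree in both cases. The recursive decomposition yields a shorter correctness proof, since induction absorbs all intra-block verification, whereas the insertion algorithm of~\cite{CP22} is better suited as an actual construction procedure; beyond that the two approaches are equivalent, and your version also delivers the (implicit) uniqueness of the tree since every choice is forced.
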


Even though they might seem technical, the transitivity and planarity conditions can be naturally explained by picture, as in Figure~\ref{fig:transitivity_planarity}. These conditions regard the possible positions for the triplet $c > b > a$ inside the tree. By picture, it is then clear that the transitivity and planarity conditions are satisfied by any multi-set of inversions corresponding to an $s$-decreasing tree. The proof that they are sufficient conditions to obtain an $s$-decreasing tree is done using an explicit construction algorithm~\cite[Algorithm 1.8]{CP22}.

\begin{figure}[h]
\begin{center}
\includegraphics[width = 0.5\textwidth]{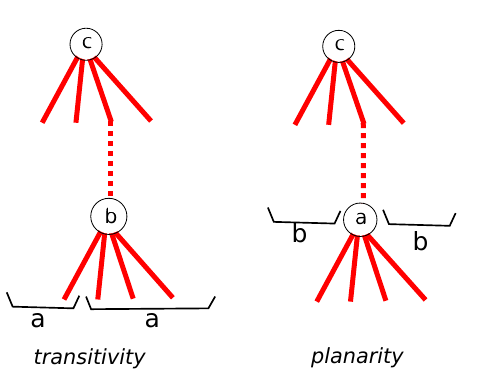}
\caption{Illustration of the transitivity and planarity conditions on $s$-tree inversion sets. (Figure 5 of~\cite{CP22})}
\label{fig:transitivity_planarity}
\end{center}
\end{figure}

Tree-inversions and tree-inversion sets are fundamental notions to work on $s$-decreasing trees, and are specially useful in proofs. The methodology we often use is to first \emph{understand} looking at the trees, and then \emph{prove} using tree-inversions. In particular, we use tree-inversions to define a partial order on $s$-decreasing trees by inclusion of their tree-inversion sets. This is the \defn{$s$-weak order}, generalizing the classical weak order which is also defined by inclusion of inversions. Furthermore, we proved in~\cite{CP22} that it is a lattice using the following two operations on multi-sets.

\begin{definition}[Definition 1.12 of~\cite{CP22}]
\label{def:union-tree-inversions}
The \defn{union} of two multi-sets of inversions $I$ and $J$ is given by 
\begin{equation}
\card_{I \cup J} (c,a) = \max \left( \card_{I}(c,a), \card_{J}(c,a) \right)
\end{equation}
for all $a < c$. 
\end{definition} 

\begin{definition}[Definition 1.4 of~\cite{CP22}]
\label{def:tc-tree-inversions}
 Let $I$ be a multi-set  of inversions. A \defn{transitivity path} between two values $c > a$ is a list of $k \geq 2$ values $c = b_1 > b_2 > \dots > b_k = a$ such that $\card_I(b_i, b_{i+1}) > 0$ for all $1 \leq i < k$. Note that if $\card_I(c,a) > 0$, then $(c,a)$ itself is a transitivity path. We write $\tc{I}$ for the \defn{transitive closure} of $I$, which is defined as follows. For all $a < c$, $\card_{\tc{I}}(c,a) = v$ with $v$ the maximal value of $\card_I(b_1,b_2)$ for $c = b_1 > \dots > b_k = a$ a transitivity path between $c$ and $a$.
\end{definition}

\begin{definition}[{\cite[Defintion~1.9]{CP22}}]
Let $R$ and $T$ be two $s$-decreasing trees of a same weak composition $s$ with $\ell(s) = n$. We say that $R \wole T$ if 
$\inv(R) \subseteq \inv(T)$ using the inclusion of multi-sets. We call the relation $\wole$ the \defn{$s$-weak order}.
\end{definition}

\begin{theorem}[Theorem~1.21 of~\cite{CP22}]
 The $s$-weak order is a lattice for every weak composition $s$. The join of two $s$-decreasing trees $T$ and $R$ is given by
\begin{equation}
\inv(T \join R) = \tc{ \left( \inv(T) \cup \inv(R) \right) }.
\end{equation}
\end{theorem}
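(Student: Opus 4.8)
The plan is to prove the theorem by exhibiting the join explicitly and invoking general lattice theory for the rest. Since the set of $s$-decreasing trees is finite and the order has a minimum (the empty inversion set) and a maximum ($\maxs_s$), it suffices to show that every pair $T,R$ has a least upper bound: binary joins together with the bottom element give all finite joins, and then $T \meet R := \bigvee\{L : L \wole T,\ L \wole R\}$ is forced to be the meet. So, fixing $T$ and $R$, I set $K := \tc{(\inv(T)\cup\inv(R))}$ and aim to prove two things: that $K$ is an $s$-tree-inversion set, so that $K=\inv(W)$ for some tree $W$ by Proposition~\ref{prop:tree-inversion-set}, and that $W$ is the least upper bound of $T$ and $R$.

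First I would check the two easy conditions of Proposition~\ref{prop:tree-inversion-set}. Inclusion in $\maxs_s$ is immediate from the definition of the transitive closure: $\card_K(c,a)$ equals $\card_I(c,b_2)$ for the first edge of a realizing transitivity path, where $I:=\inv(T)\cup\inv(R)$, and $\card_I(c,b_2)\le s(c)$ because $I\subseteq\maxs_s$. Transitivity of $K$ follows by concatenating paths: if $\card_K(b,a)>0$, a transitivity path realizing $\card_K(c,b)$ prolonged by any positive path from $b$ to $a$ is a path from $c$ to $a$ with the same first edge, so $\card_K(c,a)\ge\card_K(c,b)$.

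The crux is planarity, handled in two steps, of which the second is where I expect the genuine difficulty to lie. The union step is routine: if $I_1,I_2$ are planar and $\card_{I_1\cup I_2}(b,a)<s(b)$ for a triple $a<b<c$, then $\card_{I_j}(b,a)\le\card_{I_1\cup I_2}(b,a)<s(b)$ for each $j$, so planarity of each $I_j$ yields $\card_{I_j}(c,b)\ge\card_{I_j}(b,a)$, and taking maxima gives $\card_{I_1\cup I_2}(c,b)\ge\card_{I_1\cup I_2}(b,a)$; hence $I$ is planar. The delicate point is that the transitive closure preserves planarity. Assume $p:=\card_K(b,a)<s(b)$ for a triple $a<b<c$, the case $p=0$ being trivial. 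Choose a transitivity path $b=d_0>d_1>\dots>d_\ell=a$ realizing $p$, so that $\card_I(b,d_1)=p$ and $d_1<b<c$. Applying the planarity of $I$ to the triple $(d_1,b,c)$, and using that $\card_I(b,d_1)=p<s(b)$ rules out the first alternative, I obtain $\card_I(c,b)\ge p$, whence $\card_K(c,b)\ge\card_I(c,b)\ge p=\card_K(b,a)$. The key idea is to feed the \emph{first edge} of a realizing path into the planarity inequality; this is the only place where the interaction between the transitive closure and planarity is used in an essential way, and it is the step I would expect to be the main obstacle.

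Finally, writing $W$ for the tree with $\inv(W)=K$, it is an upper bound since $\inv(T),\inv(R)\subseteq I\subseteq K$. To see it is the least one, let $V$ be any $s$-decreasing tree with $T\wole V$ and $R\wole V$; then $I\subseteq\inv(V)$, and since $\inv(V)$ is transitive, a telescoping application of transitivity along a realizing path for $\card_K(c,a)$ shows $\card_{\inv(V)}(c,a)\ge\card_I(c,b_2)=\card_K(c,a)$, so $K\subseteq\inv(V)$ and $W\wole V$. Therefore $W=T\join R$, giving the formula $\inv(T\join R)=\tc{(\inv(T)\cup\inv(R))}$, and the reduction in the first paragraph upgrades the existence of joins to the conclusion that the $s$-weak order is a lattice.
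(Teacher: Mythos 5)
Your proof is correct: the reduction of the lattice property to the existence of binary joins (finiteness plus the bottom element given by the empty inversion set), the verification that $\tc{(\inv(T)\cup\inv(R))}$ satisfies the three conditions of Proposition~\ref{prop:tree-inversion-set} --- with the preservation of planarity under transitive closure, via feeding the first edge of an optimal transitivity path into the planarity inequality, correctly identified and handled as the crux --- and the minimality argument by telescoping transitivity are all sound. Note that the present paper does not reprove this statement (it is quoted from~\cite{CP22}), but your argument follows essentially the same route as the original proof there.
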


We show an example of the computation of the join on Figure~\ref{fig:join}, please see~\cite{CP22} for more detailed examples and computations.

\begin{figure}[h]
\begin{tabular}{ccc}
$T$ & $\begin{aligned}\scalebox{.8}{{ \newcommand{\nodea}{\node[draw,circle] (a) {$3$}
;}\newcommand{\nodeb}{\node[draw,circle] (b) {$1$}
;}\newcommand{\nodec}{\node[draw,circle] (c) {$ $}
;}\newcommand{\noded}{\node[draw,circle] (d) {$ $}
;}\newcommand{\nodee}{\node[draw,circle] (e) {$2$}
;}\newcommand{\nodef}{\node[draw,circle] (f) {$ $}
;}\newcommand{\nodeg}{\node[draw,circle] (g) {$ $}
;}\newcommand{\nodeh}{\node[draw,circle] (h) {$ $}
;}\begin{tikzpicture}[auto]
\matrix[column sep=.3cm, row sep=.3cm,ampersand replacement=\&]{
         \& \nodea  \&         \&         \&         \\ 
 \nodeb  \& \noded  \&         \& \nodee  \&         \\ 
 \nodec  \&         \& \nodef  \& \nodeg  \& \nodeh  \\
};

\path[ultra thick, red] (b) edge (c)
	(e) edge (f) edge (g) edge (h)
	(a) edge (b) edge (d) edge (e);
\end{tikzpicture}}}\end{aligned}$ &
\begin{tabular}{ll}
$\card_T(3,2) = 2$ & $\card_T(3,1) = 0$ \\
                   & $\card_T(2,1) = 0$ 
\end{tabular} \\ \hline
$R$ & $\begin{aligned}\scalebox{.8}{{ \newcommand{\nodea}{\node[draw,circle] (a) {$3$}
;}\newcommand{\nodeb}{\node[draw,circle] (b) {$2$}
;}\newcommand{\nodec}{\node[draw,circle] (c) {$ $}
;}\newcommand{\noded}{\node[draw,circle] (d) {$1$}
;}\newcommand{\nodee}{\node[draw,circle] (e) {$ $}
;}\newcommand{\nodef}{\node[draw,circle] (f) {$ $}
;}\newcommand{\nodeg}{\node[draw,circle] (g) {$ $}
;}\newcommand{\nodeh}{\node[draw,circle] (h) {$ $}
;}\begin{tikzpicture}[auto]
\matrix[column sep=.3cm, row sep=.3cm,ampersand replacement=\&]{
         \&         \&         \& \nodea  \&         \\ 
         \& \nodeb  \&         \& \nodeg  \& \nodeh  \\ 
 \nodec  \& \noded  \& \nodef  \&         \&         \\ 
         \& \nodee  \&         \&         \&         \\
};

\path[ultra thick, red] (d) edge (e)
	(b) edge (c) edge (d) edge (f)
	(a) edge (b) edge (g) edge (h);
\end{tikzpicture}}}\end{aligned}$ &
\begin{tabular}{ll}
$\card_R(3,2) = 0$ & $\card_R(3,1) = 0$ \\
                   & $\card_R(2,1) = 1$ 
\end{tabular} \\ \hline
$T \join R$ & $\begin{aligned}\scalebox{.8}{{ \newcommand{\nodea}{\node[draw,circle] (a) {$3$}
;}\newcommand{\nodeb}{\node[draw,circle] (b) {$ $}
;}\newcommand{\nodec}{\node[draw,circle] (c) {$ $}
;}\newcommand{\noded}{\node[draw,circle] (d) {$2$}
;}\newcommand{\nodee}{\node[draw,circle] (e) {$ $}
;}\newcommand{\nodef}{\node[draw,circle] (f) {$1$}
;}\newcommand{\nodeg}{\node[draw,circle] (g) {$ $}
;}\newcommand{\nodeh}{\node[draw,circle] (h) {$ $}
;}\begin{tikzpicture}[auto]
\matrix[column sep=.3cm, row sep=.3cm,ampersand replacement=\&]{
         \& \nodea  \&         \&         \&         \\ 
 \nodeb  \& \nodec  \&         \& \noded  \&         \\ 
         \&         \& \nodee  \& \nodef  \& \nodeh  \\ 
         \&         \&         \& \nodeg  \&         \\
};

\path[ultra thick, red] (f) edge (g)
	(d) edge (e) edge (f) edge (h)
	(a) edge (b) edge (c) edge (d);
\end{tikzpicture}}}\end{aligned}$ &
\begin{tabular}{ll}
$\card_{T \join R}(3,2) = 2$ & $\card_{T \join R}(3,1) = 2$ \\
                   & $\card_{T \join R}(2,1) = 1$ 
\end{tabular} \\ 
\end{tabular}
\caption{The computation of $T \join R$ in the $s$-weak lattice (Figure 7 of~\cite{CP22})}
\label{fig:join}
\end{figure}
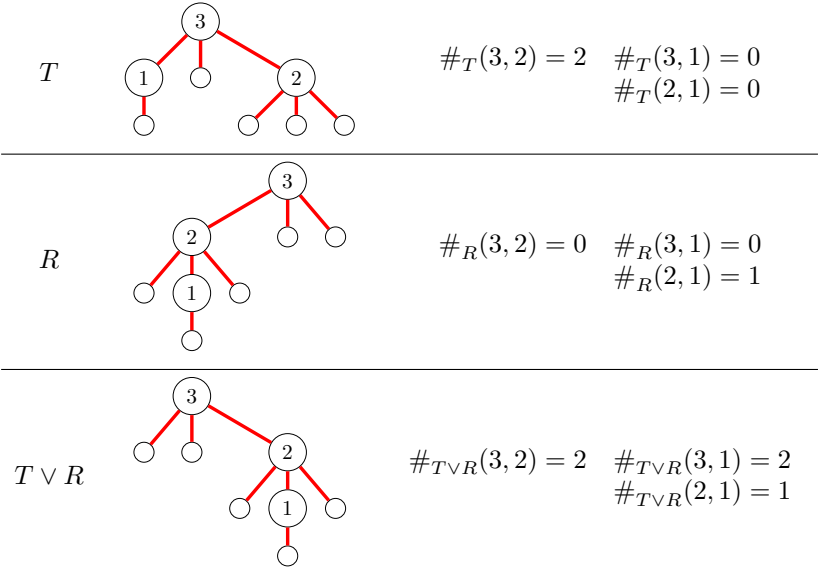

\subsection{Tree-ascents and cover relations}

The {cover relations} of the $s$-weak order can be combinatorially described in terms of a notion of {tree ascents} defined in~\cite{CP22}.

\begin{definition}[Definition 1.24 from~\cite{CP22}]
\label{def:tree-ascent}
Let $T$ be an $s$-decreasing tree of some weak composition~$s$ of length $n$. We say that~$(a,c)$ with $a < c$ is a \defn{tree-ascent} of $T$ if
\begin{enumerate}[(i)]
\item $a$ is a descendant of $c$;
\label{cond:tree-ascent-desc}
\item $a$ does not belong to the right child of $c$;
\label{cond:tree-ascent-non-final}
\item if $a$ is a descendant of $b$ with $c > b > a$, then $a$ belongs to the right descendant of $b$;
\label{cond:tree-ascent-middle}
\item if it exists, the strict right child of $a$ is empty. 
\label{cond:tree-ascent-smaller}
\end{enumerate}
\end{definition}

The above definition can be visually interpreted from the tree: 
basically, $a$ is located at the \emph{right end} of a non right subtree of $c$, with the exception $s(a)=0$, when $a$ is allowed to have more children on its unique leg. 
Figure~\ref{fig:s_tree_ascents} shows a schematic illustration of these two possible occurrences of an $(a,c)$ ascent. 

\begin{figure}[h]
\begin{center}
\includegraphics[width=0.5\textwidth]{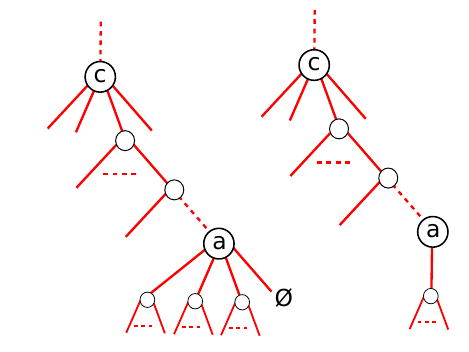}
\caption{Schematic illustration of two $s$-trees containing an ascent $(a,c)$. (Figure~8 from~\cite{CP22})}
\label{fig:s_tree_ascents}
\end{center}
\end{figure}

We prove in~\cite[Theorem 1.32]{CP22} that tree-ascents correspond to cover relations of the $s$-weak order. More precisely, to each tree ascent $(a,c)$ of a tree $T$, one can associate an \defn{$s$-tree rotation} by increasing the cardinality of $\card_T(c,a)$ by one and performing the transitive closure of the resulting multi-set of inversions. In~\cite[Lemma 1.29]{CP22}, we show that it increases all cardinalities $\card(c,a')$ by one where $a'$ is~$a$ or a non-left descendant of $a$. The obtained tree covers $T$ in the $s$-weak order.  

It is sometimes useful to characterize tree-ascents using solely properties on tree-inversions. In~\cite{CP22}, we prove the following result, which basically translates all the conditions of Definition~\ref{def:tree-ascent} in terms of tree-inversions.

\begin{proposition}[Proposition 1.27 of~\cite{CP22}]
\label{prop:tree-ascent-inversions}
A couple $(a,c)$ is a tree-ascent if and only if it satisfies the following statements
\begin{enumerate}[(i)]
\item for all $d$ such that $c < d \leq n$, we have $\card(d,c) = \card(d,a)$;
\label{cond:inv-tree-ascent-desc}
\item $\card(c,a) < s(c)$;
\label{cond:inv-tree-ascent-non-final}
\item for all $b$ such that  $a < b < c$, then $\card(c,b) = \card(c,a)$ implies that $\card(b,a) = s(b)$;
\label{cond:inv-tree-ascent-middle}
\item if $s(a) > 0$, for all $a' < a$, then $\card(a,a') = s(a)$ implies that $\card(c,a') > \card(c,a)$.
\label{cond:inv-tree-ascent-smaller}
\end{enumerate}
\end{proposition}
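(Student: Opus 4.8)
The plan is to translate each of the four geometric conditions of Definition~\ref{def:tree-ascent} into the arithmetic of cardinalities, exploiting that an $s$-decreasing tree and its inversion multiset determine each other (Proposition~\ref{prop:tree-inversion-set}), so that positions in the tree $T$ may be read off from the numbers $\card(\cdot,\cdot)$ and vice versa. The central bridge is a cardinality description of the ancestor--descendant relation: for $a<x$, the node $a$ is a descendant of $x$ if and only if $\card(d,x)=\card(d,a)$ for every $d$ with $x<d\le n$. The forward implication is immediate, since a descendant of $x$ lies in the same region (left of, right of, or $i$-th child of) each higher node $d$ as $x$ itself. For the converse I would argue by contraposition: if $a$ is not a descendant of $x$, then the least common ancestor $d$ of $a$ and $x$ is a proper ancestor of $x$, hence $d>x$, and at $d$ the two nodes $x$ and $a$ branch into distinct children, so $\card(d,x)\neq\card(d,a)$. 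Applied with $x=c$, this is precisely the equivalence of condition (i) of Definition~\ref{def:tree-ascent} with the first inversion condition.

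Once condition (i) holds, $a$ sits in the child $T^c_p$ of $c$ with $p=\card(c,a)$, and it lies in the right child $T^c_{s(c)}$ exactly when $p=s(c)$; thus ``$a$ not in the right child of $c$'' is literally $\card(c,a)<s(c)$, giving the second inversion condition. For the third condition I would first record the positional fact that, when $a$ is a descendant of $c$, a node $b$ with $a<b<c$ can satisfy ``$a$ is a descendant of $b$'' only if $b$ is itself a descendant of $c$ lying on the $c$-to-$a$ path, whence $\card(c,b)=\card(c,a)$; this already yields the implication from the inversion version to the geometric version (iii). The reverse is the delicate point: I must show that whenever $\card(c,b)=\card(c,a)$ but $a$ is \emph{not} a descendant of $b$, the value $\card(b,a)$ is forced to equal $s(b)$ anyway. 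Here I would pass to the least common ancestor $e$ of $a$ and $b$; the equal-cardinality hypothesis places $e$ strictly between $a$ and $c$ (the remaining possibility being that $b$ lies entirely to the left of $c$, which puts $a$ to the right of $b$ directly), and applying condition (iii) of Definition~\ref{def:tree-ascent} to $e$ forces $a$ into the right child $T^e_{s(e)}$, hence to the right of $b$, which is exactly $\card(b,a)=s(b)$.

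Finally, for the fourth condition I would use that every descendant $a'$ of $a$ lies in the same child of $c$ as $a$, so that $\card(c,a')=\card(c,a)$ for such $a'$. Consequently a node in the strict right child $T^a_{s(a)}$ of $a$ (that is, a descendant $a'$ of $a$ with $\card(a,a')=s(a)$) would violate $\card(c,a')>\card(c,a)$; this proves that the fourth inversion condition implies emptiness of the strict right child, condition (iv). Conversely, assuming that strict right child is empty, any $a'<a$ with $\card(a,a')=s(a)$ must lie to the right of $a$ rather than below it; the least-common-ancestor argument of the previous paragraph, again invoking condition (iii), rules out the only bad configuration in which such an $a'$ could remain in the same child $T^c_p$ as $a$, so $a'$ falls into a later child of $c$ or to the right of $c$, giving $\card(c,a')>\card(c,a)$ and hence the fourth inversion condition.

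The main obstacle is the reverse direction of the third condition (and its reuse in the fourth): the equality $\card(c,b)=\card(c,a)$ does \emph{not} by itself force $a$ to be a descendant of $b$, so the translation is not a literal one. The resolution throughout is the same least-common-ancestor device, which pins down the relative left/right placement of $a$ and $b$ and shows that the only surviving configuration is the one in which $a$ lies to the right of $b$; Proposition~\ref{prop:tree-inversion-set} is what lets me pass freely between this tree picture and the cardinality statements. I would therefore organize the write-up so that the ancestor--descendant cardinality lemma and the least-common-ancestor placement argument are established once, and then invoked in the treatment of conditions (iii) and (iv), establishing the equivalence condition by condition in the order (i), (ii), (iii), (iv).
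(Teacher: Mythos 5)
This statement is imported verbatim from the prequel (Proposition~1.27 of~\cite{CP22}); the present paper contains no proof of it, so there is nothing in-text to compare your argument against. Judged on its own, your plan is correct and is the natural one: the cardinality criterion for the ancestor--descendant relation (equality of $\card(d,\cdot)$ at every $d>x$, with the least-common-ancestor argument for the converse), the identification of the child index of $a$ below $c$ with $\card(c,a)$, and the LCA device for handling a $b$ (or $a'$) of which $a$ is \emph{not} a descendant all check out against the conventions of Definition~\ref{def:multi-sets} and the planar reading of cardinalities. The one place your case analysis is incomplete as written is in the direction ``geometric (iii) implies inversion (iii)'': for $b$ with $\card(c,b)=\card(c,a)$ you only consider ``$b$ in the same child of $c$ as $a$'' and ``$b$ entirely to the left of $c$'', but a priori $b$ could also lie to the right of $c$, which happens exactly when $\card(c,a)=s(c)$ and would make $a$ lie to the \emph{left} of $b$, breaking the conclusion. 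That configuration is excluded by condition~\eqref{cond:inv-tree-ascent-non-final}, which you have standing but do not invoke; the same remark applies to the placement of $a'$ in your treatment of condition~\eqref{cond:inv-tree-ascent-smaller} (the subcase ``$a'$ to the right of $c$'' yields $\card(c,a')=s(c)>\card(c,a)$ only because $\card(c,a)<s(c)$). With those two explicit appeals to condition~\eqref{cond:inv-tree-ascent-non-final} added, the argument is complete.
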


\section{The $s$-permutahedron}

The key idea of this paper is that even if the $s$-week order is just a poset, it actually posses a much richer underlying geometric structure, encoded in a combinatorial object which we will call the $s$-permutahedron. The ``faces" of the $s$-permutahedron will be given by pure intervals of the $s$-weak order, which we now introduce.

\subsection{Pure intervals of the $s$-weak order}


Let $T$ be an $s$-decreasing tree and $A$ be a (possibly empty) subset of tree-ascents of $T$. We denote by $T+A$ the $s$-decreasing tree whose inversion set is obtained by increasing the cardinality $\card_T(c,a)$ for $(a,c) \in A$ by one  and then taking the transitive closure.\footnote{We use this notation for simplicity but it is not entirely consistent with~\cite{CP22}. Indeed, we used the notation $+(c,a)$ in~\cite{CP22} on multi-sets to increase the cardinality $\card(c,a)$ \emph{without} taking the transitive closure. Besides, $A$ is formally a set of tree-ascents $(a,c)$ and not a set of tree-inversions $(c,a)$.}

\begin{definition}
\label{def:pure-intervals}
Let $T_1 \wole T_2$ be two $s$-decreasing trees for a given weak composition $s$. We say that the interval $[T_1, T_2]$ is a \defn{pure interval} if $T_2 = T_1 + A$ with $A$ a subset of tree-ascents of $T_1$. 
\end{definition}

We present examples on Figures~\ref{fig:pure-interval},~\ref{fig:pure-interval10}, and~\ref{fig:pure-interval-ascentope}. 

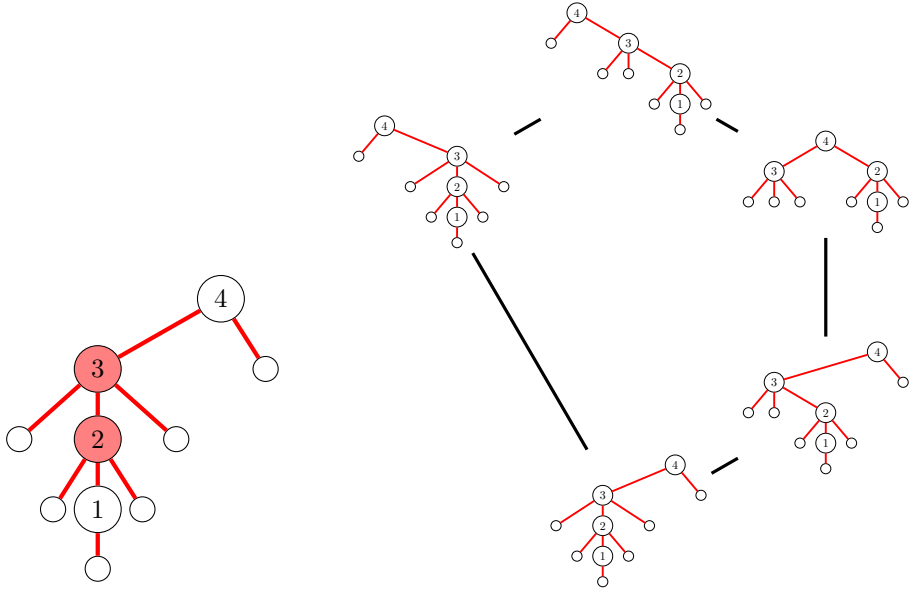
\begin{figure}[ht]
\begin{tabular}{cc}
{ \newcommand{\nodea}{\node[draw,circle,fill=white] (a) {$4$}
;}\newcommand{\nodeb}{\node[draw,circle,fill=red!50] (b) {$3$}
;}\newcommand{\nodec}{\node[draw,circle,fill=white] (c) {$$}
;}\newcommand{\noded}{\node[draw,circle,fill=red!50] (d) {$2$}
;}\newcommand{\nodee}{\node[draw,circle,fill=white] (e) {$$}
;}\newcommand{\nodef}{\node[draw,circle,fill=white] (f) {$1$}
;}\newcommand{\nodeg}{\node[draw,circle,fill=white] (g) {$$}
;}\newcommand{\nodeh}{\node[draw,circle,fill=white] (h) {$$}
;}\newcommand{\nodei}{\node[draw,circle,fill=white] (i) {$$}
;}\newcommand{\nodej}{\node[draw,circle,fill=white] (j) {$$}
;}\begin{tikzpicture}[auto]
\matrix[column sep=0.1cm, row sep=.3cm,ampersand replacement=\&]{
         \&         \&         \&         \&         \& \nodea  \&         \\ 
         \&         \& \nodeb  \&         \&         \&         \& \nodej  \\ 
 \nodec  \&         \& \noded  \&         \& \nodei  \&         \&         \\ 
         \& \nodee  \& \nodef  \& \nodeh  \&         \&         \&         \\ 
         \&         \& \nodeg  \&         \&         \&         \&         \\
};

\path[ultra thick, red] (f) edge (g)
	(d) edge (e) edge (f) edge (h)
	(b) edge (c) edge (d) edge (i)
	(a) edge (b) edge (j);
\end{tikzpicture}}
&
\scalebox{3}{
\begin{tikzpicture}[every node/.style={inner sep = 0pt}]
\node(tree0) at (0.866025403784439,0.500000000000000) {\scalebox{0.15}{$
{ \newcommand{\nodea}{\node[draw,circle] (a) {$4$}
;}\newcommand{\nodeb}{\node[draw,circle] (b) {$3$}
;}\newcommand{\nodec}{\node[draw,circle] (c) {$$}
;}\newcommand{\noded}{\node[draw,circle] (d) {$2$}
;}\newcommand{\nodee}{\node[draw,circle] (e) {$$}
;}\newcommand{\nodef}{\node[draw,circle] (f) {$1$}
;}\newcommand{\nodeg}{\node[draw,circle] (g) {$$}
;}\newcommand{\nodeh}{\node[draw,circle] (h) {$$}
;}\newcommand{\nodei}{\node[draw,circle] (i) {$$}
;}\newcommand{\nodej}{\node[draw,circle] (j) {$$}
;}\begin{tikzpicture}[every node/.style={inner sep = 3pt}]
\matrix[column sep=.3cm, row sep=.3cm,ampersand replacement=\&]{
         \&         \&         \&         \&         \& \nodea  \&         \\ 
         \&         \& \nodeb  \&         \&         \&         \& \nodej  \\ 
 \nodec  \&         \& \noded  \&         \& \nodei  \&         \&         \\ 
         \& \nodee  \& \nodef  \& \nodeh  \&         \&         \&         \\ 
         \&         \& \nodeg  \&         \&         \&         \&         \\
};

\path[ultra thick, red] (f) edge (g)
	(d) edge (e) edge (f) edge (h)
	(b) edge (c) edge (d) edge (i)
	(a) edge (b) edge (j);
\end{tikzpicture}}
$}};
\node(tree1) at (0.000000000000000,2.00000000000000) {\scalebox{0.15}{$
{ \newcommand{\nodea}{\node[draw,circle] (a) {$4$}
;}\newcommand{\nodeb}{\node[draw,circle] (b) {$$}
;}\newcommand{\nodec}{\node[draw,circle] (c) {$3$}
;}\newcommand{\noded}{\node[draw,circle] (d) {$$}
;}\newcommand{\nodee}{\node[draw,circle] (e) {$2$}
;}\newcommand{\nodef}{\node[draw,circle] (f) {$$}
;}\newcommand{\nodeg}{\node[draw,circle] (g) {$1$}
;}\newcommand{\nodeh}{\node[draw,circle] (h) {$$}
;}\newcommand{\nodei}{\node[draw,circle] (i) {$$}
;}\newcommand{\nodej}{\node[draw,circle] (j) {$$}
;}\begin{tikzpicture}[every node/.style={inner sep = 3pt}]
\matrix[column sep=.3cm, row sep=.3cm,ampersand replacement=\&]{
         \& \nodea  \&         \&         \&         \&         \&         \\ 
 \nodeb  \&         \&         \&         \& \nodec  \&         \&         \\ 
         \&         \& \noded  \&         \& \nodee  \&         \& \nodej  \\ 
         \&         \&         \& \nodef  \& \nodeg  \& \nodei  \&         \\ 
         \&         \&         \&         \& \nodeh  \&         \&         \\
};

\path[ultra thick, red] (g) edge (h)
	(e) edge (f) edge (g) edge (i)
	(c) edge (d) edge (e) edge (j)
	(a) edge (b) edge (c);
\end{tikzpicture}}
$}};
\node(tree2) at (1.73205080756888,1.00000000000000) {\scalebox{0.15}{$
{ \newcommand{\nodea}{\node[draw,circle] (a) {$4$}
;}\newcommand{\nodeb}{\node[draw,circle] (b) {$3$}
;}\newcommand{\nodec}{\node[draw,circle] (c) {$$}
;}\newcommand{\noded}{\node[draw,circle] (d) {$$}
;}\newcommand{\nodee}{\node[draw,circle] (e) {$2$}
;}\newcommand{\nodef}{\node[draw,circle] (f) {$$}
;}\newcommand{\nodeg}{\node[draw,circle] (g) {$1$}
;}\newcommand{\nodeh}{\node[draw,circle] (h) {$$}
;}\newcommand{\nodei}{\node[draw,circle] (i) {$$}
;}\newcommand{\nodej}{\node[draw,circle] (j) {$$}
;}\begin{tikzpicture}[every node/.style={inner sep = 3pt}]
\matrix[column sep=.3cm, row sep=.3cm,ampersand replacement=\&]{
         \&         \&         \&         \&         \& \nodea  \&         \\ 
         \& \nodeb  \&         \&         \&         \&         \& \nodej  \\ 
 \nodec  \& \noded  \&         \& \nodee  \&         \&         \&         \\ 
         \&         \& \nodef  \& \nodeg  \& \nodei  \&         \&         \\ 
         \&         \&         \& \nodeh  \&         \&         \&         \\
};

\path[ultra thick, red] (g) edge (h)
	(e) edge (f) edge (g) edge (i)
	(b) edge (c) edge (d) edge (e)
	(a) edge (b) edge (j);
\end{tikzpicture}}
$}};
\node(tree3) at (1.73205080756888,2.00000000000000) {\scalebox{0.15}{$
{ \newcommand{\nodea}{\node[draw,circle] (a) {$4$}
;}\newcommand{\nodeb}{\node[draw,circle] (b) {$3$}
;}\newcommand{\nodec}{\node[draw,circle] (c) {$$}
;}\newcommand{\noded}{\node[draw,circle] (d) {$$}
;}\newcommand{\nodee}{\node[draw,circle] (e) {$$}
;}\newcommand{\nodef}{\node[draw,circle] (f) {$2$}
;}\newcommand{\nodeg}{\node[draw,circle] (g) {$$}
;}\newcommand{\nodeh}{\node[draw,circle] (h) {$1$}
;}\newcommand{\nodei}{\node[draw,circle] (i) {$$}
;}\newcommand{\nodej}{\node[draw,circle] (j) {$$}
;}\begin{tikzpicture}[every node/.style={inner sep = 3pt}]
\matrix[column sep=.3cm, row sep=.3cm,ampersand replacement=\&]{
         \&         \&         \& \nodea  \&         \&         \&         \\ 
         \& \nodeb  \&         \&         \&         \& \nodef  \&         \\ 
 \nodec  \& \noded  \& \nodee  \&         \& \nodeg  \& \nodeh  \& \nodej  \\ 
         \&         \&         \&         \&         \& \nodei  \&         \\
};

\path[ultra thick, red] (b) edge (c) edge (d) edge (e)
	(h) edge (i)
	(f) edge (g) edge (h) edge (j)
	(a) edge (b) edge (f);
\end{tikzpicture}}
$}};
\node(tree4) at (0.866025403784439,2.50000000000000) {\scalebox{0.15}{$
{ \newcommand{\nodea}{\node[draw,circle] (a) {$4$}
;}\newcommand{\nodeb}{\node[draw,circle] (b) {$$}
;}\newcommand{\nodec}{\node[draw,circle] (c) {$3$}
;}\newcommand{\noded}{\node[draw,circle] (d) {$$}
;}\newcommand{\nodee}{\node[draw,circle] (e) {$$}
;}\newcommand{\nodef}{\node[draw,circle] (f) {$2$}
;}\newcommand{\nodeg}{\node[draw,circle] (g) {$$}
;}\newcommand{\nodeh}{\node[draw,circle] (h) {$1$}
;}\newcommand{\nodei}{\node[draw,circle] (i) {$$}
;}\newcommand{\nodej}{\node[draw,circle] (j) {$$}
;}\begin{tikzpicture}[every node/.style={inner sep = 3pt}]
\matrix[column sep=.3cm, row sep=.3cm,ampersand replacement=\&]{
         \& \nodea  \&         \&         \&         \&         \&         \\ 
 \nodeb  \&         \&         \& \nodec  \&         \&         \&         \\ 
         \&         \& \noded  \& \nodee  \&         \& \nodef  \&         \\ 
         \&         \&         \&         \& \nodeg  \& \nodeh  \& \nodej  \\ 
         \&         \&         \&         \&         \& \nodei  \&         \\
};

\path[ultra thick, red] (h) edge (i)
	(f) edge (g) edge (h) edge (j)
	(c) edge (d) edge (e) edge (f)
	(a) edge (b) edge (c);
\end{tikzpicture}}
$}};

\draw (tree0) -- (tree1);
\draw (tree0) -- (tree2);
\draw (tree1) -- (tree4);
\draw (tree2) -- (tree3);
\draw (tree3) -- (tree4);
\end{tikzpicture}
}
\end{tabular}
\caption{Example of a pure interval. On the left, the minimal tree with colored tree-ascents and on the right the corresponding interval.}
\label{fig:pure-interval}
\end{figure}

\begin{figure}[ht]
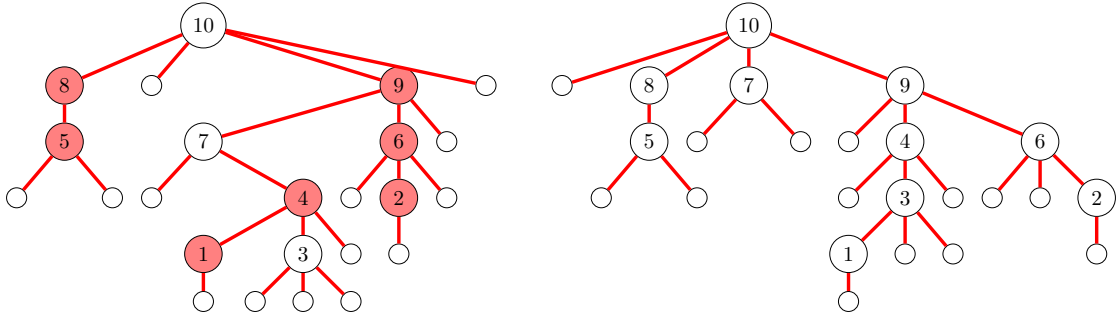

\begin{tabular}{cc}
\scalebox{.8}{\input{figures/dtrees/F10}} &
\scalebox{.8}{\input{figures/dtrees/T10_example_pure_max}}
\end{tabular}
\caption{Example of a pure interval. On the left, the minimal tree of the interval with selected tress-ascents and on the right, the maximal tree of the corresponding interval.}
\label{fig:pure-interval10}
\end{figure}

\begin{figure}[ht]
\begin{tabular}{cc}
\scalebox{.8}{\input{figures/dtrees/Fascentope}} &
\includegraphics[width=.4\textwidth]{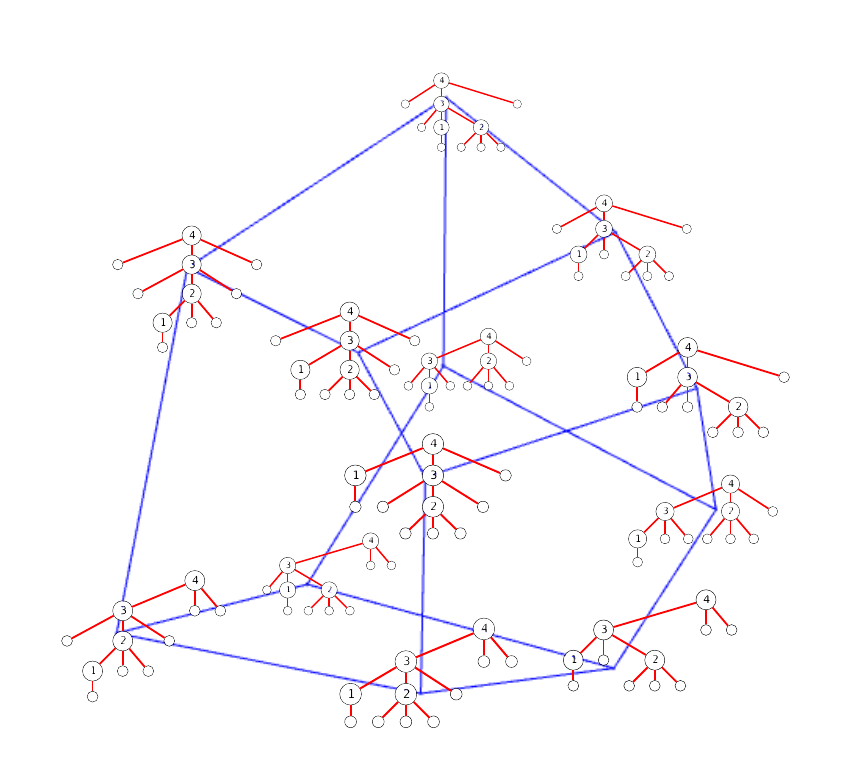}
\end{tabular}
\caption{Example of a pure interval. the minimal tree of the interval with selected tress-ascents and on the right, the interval drawn as the skeleton of a $3$d polytope.}
\label{fig:pure-interval-ascentope}
\end{figure}

A pure interval $[T_1, T_1 + A]$ is represented by the tree $T_1$ where we color in red all nodes $a$ such that there is a tree-ascent $(a,c)$ in $A$. By Remark~1.25 of~\cite{CP22}, the smaller value $a$ is enough to identify the tree-ascent $(a,c)$: indeed $c$ is the ascendant of $a$ with minimal value such that $\card_{T_1}(c,a) < s(c)$. It is the only possibility for a tree-ascent $(a,*)$ to exist (whereas there could be other tree-ascents $(*,c)$). 

On Figure~\ref{fig:pure-interval}, the nodes $2$ and $3$ are colored, representing respectively the tree-ascents $(2,3)$ and $(3,4)$. On the right, the corresponding interval is shown. The maximum is obtained by increasing the cardinalities of the tree-inversions $(3,2)$ and $(4,3)$ and taking the transitive closure. For example, we see that $(4,1)$ has been increased by transitivity because $(4,3)$ was increased and $\card(3,1) > 0$. This is also equivalent to taking the lattice join of the two trees obtained by applying the two $s$-tree-rotations of the two selected tree-ascents (see \Cref{lem_maximal_equal_join}). We present a second example on Figure~\ref{fig:pure-interval10}, showing the minimal element of the interval with selected tree-ascents and the maximal element of the interval. You can check that all tree-inversions corresponding to tree-ascents have been increased.

\begin{lemma}\label{lem_maximal_equal_join}
Let $T_1$ and $T_2$ be two $s$-decreasing trees such that 
$T_2 = T_1+A$ with $A$ a subset of tree-ascents of $T_1$. Then $T_2$ can be obtained as the join
$
T_2 = \bigvee_{a\in A} (T_1+a)
$ 
over all trees $T_1+a$ obtained from~$T_1$ by rotating a tree-ascent~$a\in A$.  
\end{lemma}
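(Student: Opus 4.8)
The plan is to prove two inequalities in the $s$-weak order and conclude by antisymmetry, working throughout with tree-inversion multisets. Write $I=\inv(T_1)$. For a tree-ascent $a\in A$ (identified, as recalled after Definition~\ref{def:pure-intervals}, by its smaller node, so that it increments a well-defined pair) let $I_a$ be the multiset obtained from $I$ by raising that single cardinality by one, and let $I_A$ be the multiset obtained by raising all these cardinalities simultaneously. By the definition of $T_1+A$ we have $\inv(T_2)=\tc{I_A}$ and $\inv(T_1+a)=\tc{I_a}$. Distinct ascents in $A$ have distinct smaller nodes, hence increment distinct pairs, so $I_a\subseteq I_A$ for every $a\in A$.

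First I would establish $\bigvee_{a\in A}(T_1+a)\wole T_2$. From $I_a\subseteq I_A$ and monotonicity of the transitive closure (immediate from Definition~\ref{def:tc-tree-inversions}) we get $\inv(T_1+a)=\tc{I_a}\subseteq\tc{I_A}=\inv(T_2)$, that is $T_1+a\wole T_2$ for every $a\in A$. Thus $T_2$ is an upper bound of the family $\{T_1+a : a\in A\}$, and since the $s$-weak order is a lattice, its join (the least upper bound) satisfies $\bigvee_{a\in A}(T_1+a)\wole T_2$.

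For the reverse inequality $T_2\wole\bigvee_{a\in A}(T_1+a)$ I would use the explicit join formula of~\cite{CP22}, which gives $\inv\big(\bigvee_{a\in A}(T_1+a)\big)=\tc{J}$ where $J=\bigcup_{a\in A}\tc{I_a}$. Since $\inv(T_2)=\tc{I_A}$, by monotonicity of $\tc{}$ it suffices to prove the \emph{pre-closure} inclusion $I_A\subseteq J$, which I would check cardinality by cardinality. If the pair $(c,a')$ corresponds to the ascent with smaller node $a'\in A$, then
\[
\card_{I_A}(c,a')=\card_I(c,a')+1=\card_{I_{a'}}(c,a')\le\card_{\tc{I_{a'}}}(c,a')\le\card_J(c,a'),
\]
using that $\tc{}$ is extensive and that $J$ is the pointwise maximum over $a\in A$. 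For every remaining pair $(c,a')$ one has $\card_{I_A}(c,a')=\card_I(c,a')\le\card_{\tc{I_b}}(c,a')\le\card_J(c,a')$ for any fixed $b\in A$ (here using $A\neq\emptyset$). Hence $I_A\subseteq J$, so $\tc{I_A}\subseteq\tc{J}$, i.e. $T_2\wole\bigvee_{a\in A}(T_1+a)$.

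The two inequalities together give $T_2=\bigvee_{a\in A}(T_1+a)$ by antisymmetry, the case $A=\emptyset$ being trivial. I do not anticipate a real obstacle: the argument is a short sequence of monotonicity and closure manipulations, and the only point requiring care is the reverse inequality, where the right move is to compare the \emph{pre-closure} multiset $I_A$ against the \emph{already-closed} $J$ (rather than attempting to match the two iterated closures $\tc{I_A}$ and $\tc{J}$ directly), which renders every cardinality comparison immediate.
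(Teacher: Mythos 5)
Your proposal is correct and follows essentially the same route as the paper's proof: both directions are established exactly as in the paper, with your $I_A$ and $J$ playing the roles of the paper's $M_A$ and $M_2$, and the key step in both is the observation that the pre-closure multiset $I_A$ is already contained in the closed union $J=\bigcup_{a\in A}\tc{I_a}$, after which one concludes by monotonicity of the transitive closure. The only (harmless) additions on your side are the explicit treatment of $A=\emptyset$ and the remark that distinct ascents increment distinct pairs.
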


\begin{proof}
We denote by $T_2$ the join of the rotated trees by the tree-ascents of $A$ and prove that \mbox{$T_2 = T_1 +A$}. For a tree-ascent $(a,c)$, let $M_{(a,c)}$ be the multi-set of inversions such that $\card_{M_{(a,c)}}(c,a) = \card_{T_1}(c,a) + 1$ while all other values are unchanged. Similarly, we write $M_A := \cup_{(a,c) \in A} M_{(a,c)}$, \emph{i.e.}, all cardinalities $(c,a)$ with $(a,c) \in A$ have been increased by one. In particular, the tree-inversions of the tree $T_1 + (a,c)$ resulting from a single rotation $(a,c)$ are given by $\tc{M_{(a,c)}}$, \emph{i.e.}, the transitive closure of $M_{(a,c)}$. Similarly, the tree-inversions of $T+A$ are given by $\tc{M_A}$.  

By definition, we have $M_{(a,c)} \subseteq M_A$ for all $(a,c) \in A$ where the inclusion is the multi-set inclusion defined in Definition~\ref{def:multi-sets}. This is compatible with the multi-set transitive closure of Definition~\ref{def:tc-tree-inversions}, and so we have $\tc{M_{(a,c)}} \subseteq \tc{M_A}$. In other words, $T_1 + (a,c) \wole T_1 + A$. By taking the lattice join, we obtain $T_2 \wole T_1 +A$. 

Now by definition of the join, the inversions of $T_2$ are given by $\inv(T_2) = \tc{M_2}$ where $M_2 = \cup_{(a,c) \in A} \tc{M_{(a,c)}}$. In particular, $M_2 \neq M_A$ as it is the union of the transitive closures of the $M_{(a,c)}$ multi-sets. But, as the transitive closure can only increase the cardinality of a tree-inversion, for each $(a,c) \in A$ we know that $\card_{M_2}(c,a) \geq \card_{M_{(a,c)}}(c,a) = \card_{T_1}(c,a) + 1 = \card_{M_A}(c,a)$. Besides, for each couple $a' < c'$ such that $(a',c') \not\in A$, we have $\card_{M_2}(c',a') \geq \card_{T_1}(c',a') = \card_{M_A}(c',a')$. We obtain $M_A \subseteq M_2$. By taking the transitive closure, we have $\inv(T_1 + A) \subseteq \inv(T_2)$ and so $T_1 + A \wole T_2$. 
\end{proof}

\subsection{The combinatorial complex of pure intervals}

We are now ready to define one of the main objects of this paper, the $s$-permutahedron.

 \begin{definition}[The $s$-permutahedron]
 \label{def:s-perm}
 The \defn{$s$-permutahedron} $\Perm{s}$ is the collection of pure intervals $[T,T+A]$ of the $s$-weak order. Here, $T$ denotes an $s$-decreasing tree and $A$ a subset of tree-ascents of $T$. The \defn{dimension} of $[T,T+A]$ is said to be equal to $|A|$. In particular, 
 \begin{enumerate}
\item the vertices of $\Perm{s}$ are $s$-decreasing trees $T$, and
\item two $s$-decreasing trees are connected by an edge if and only if they are related by an $s$-tree rotation. 
\end{enumerate} 
We often refer to pure intervals $[T,T+A]$ as \defn{faces} of $\Perm{s}$, and say that one face is contained in another if the containment holds as intervals in the $s$-weak order. 
 \end{definition}

Figure~\ref{fig:s022-complex} illustrates an example of the $s$-permutahedron $\Perm{0,2,2}$. As we can see, it is a polytopal complex whose faces are labeled by pure intervals, and whose edge graph is the Hasse diagram of the $s$-weak order. 
 
As we have a (combinatorial) notion of face and dimension, we can also define the \defn{$f$-polynomial} associated to a weak composition $s$ by

\begin{equation}
f_s(t) := \sum_{F \in \Perm{s}} t^{\sdim(F)}
\end{equation}

The $f$-polynomial for our example in Figure~\ref{fig:s022-complex} is

\begin{equation}
f_{(0,2,2)}(t) = 15 + 20 t + 6 t^2.
\end{equation}

Indeed, there are $15$ $s$-decreasing trees (faces of dimension $0$), $20$ edges (faces of dimension $1$) and $6$ pure intervals of dimension $2$, which correspond to the $6$ polygons that appear naturally on the lattice. The following proposition is straightforward from the definition.

\begin{proposition}
\label{prop:f-poly}
The $f$-polynomial of the $s$-permutahedron $\Perm{s}$ is given by
\[
\sum_{T} (1+t)^{\asc(T)},
\]
where the sum runs over all $s$-decreasing trees $T$ and $\asc(T)$ denotes the number of ascents of $T$.
\end{proposition}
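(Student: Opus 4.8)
The plan is to unfold the definition of the $f$-polynomial and reorganize the resulting sum by grouping the faces according to their minimal tree. By Definition~\ref{def:s-perm}, every face of $\Perm{s}$ is a pure interval $[T,T+A]$ determined by an $s$-decreasing tree $T$ together with a subset $A$ of tree-ascents of $T$, and its dimension is $|A|$. So the first step is to argue that the assignment $(T,A)\mapsto[T,T+A]$ is a bijection between such pairs and the faces of $\Perm{s}$; granting this, one immediately obtains
\[
f_s(t) = \sum_{F \in \Perm{s}} t^{\sdim(F)} = \sum_{T}\ \sum_{A} t^{|A|},
\]
where the outer sum runs over all $s$-decreasing trees $T$ and the inner sum over all subsets $A$ of the set of tree-ascents of $T$.

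The only point that needs care — and the step I expect to be the main obstacle — is that this assignment is well defined and injective, \emph{i.e.}\ that a pure interval determines the pair $(T,A)$ uniquely (in particular that $\sdim([T,T+A]) = |A|$ is unambiguous). Surjectivity is immediate from the definition of a pure interval, and the minimal element of the interval recovers $T$, so what remains is to recover $A$ from $[T,T+A]$. Here I would identify $A$ with the atoms of the interval: the trees covering $T$ inside $[T,T+A]$ are exactly the single rotations $T+a$ for $a\in A$. Indeed, each such $T+a$ lies in $[T,T+A]$ by Lemma~\ref{lem_maximal_equal_join} and covers $T$ since tree-ascents index the cover relations of $T$; conversely any atom of the interval is a single tree rotation, hence of the form $T+a$ for some tree-ascent $a$. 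The crux is then to rule out a \emph{spurious} tree-ascent $a=(a_0,c_0)\notin A$ with $T+a\wole T+A$: rotating $a$ strictly raises $\card(c_0,a_0)$ while $\card_T(c_0,a_0)<s(c_0)$ by condition (ii) of Proposition~\ref{prop:tree-ascent-inversions}, so one must check that the transitive closure producing $\inv(T+A)=\tc{M_A}$ does \emph{not} also raise $\card(c_0,a_0)$. I would control this using the increment description of a single rotation (only cardinalities $\card(c,a')$ with $a'$ a non-left descendant of $a$ change) together with the remaining defining inequalities of a tree-ascent, analyzing the possible transitivity paths $c_0=b_1>\dots>b_k=a_0$. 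This establishes that $A$ is precisely the set of tree-ascents $a$ of $T$ with $T+a\wole T+A$, so $(T,A)$ is recovered and the dimension is well defined.

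With the bijection in hand, the computation is routine. Fixing $T$ and letting $A$ range over all subsets of its $\asc(T)$ tree-ascents, the binomial theorem gives
\[
\sum_{A} t^{|A|} = \sum_{k=0}^{\asc(T)} \binom{\asc(T)}{k} t^{k} = (1+t)^{\asc(T)},
\]
and summing over all $s$-decreasing trees $T$ yields $f_s(t) = \sum_{T} (1+t)^{\asc(T)}$, as claimed.
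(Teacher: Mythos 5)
Your proof follows the same route as the paper's: group the faces by their minimal tree and apply the binomial theorem --- the paper's own proof is exactly the observation that each $k$-subset of tree-ascents of $T$ contributes a $t^k$ to $(1+t)^{\asc(T)}$. The one place you go beyond the paper is in insisting that the pair $(T,A)$ be recoverable from the interval $[T,T+A]$, so that the count of faces equals the count of pairs and the dimension $|A|$ is unambiguous. That concern is legitimate (the paper silently assumes it at this point), and the fact you need is true: the atoms of $[T,T+A]$ above $T$ are exactly the rotations $T+a$ for $a\in A$; equivalently, no tree-ascent $a\notin A$ satisfies $T+a\wole T+A$. However, your treatment of this crux is only a sketch (``analyzing the possible transitivity paths''), not an argument. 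In the paper this uniqueness is a consequence of Theorem~\ref{thm_min_essential_variations_ascents}, proved much later via the variation/ascent-path machinery: if $(a,c)$ is a tree-ascent of $T$ and $\card(c,a)$ increases in $[T,T+A]$, then conditions~(iii) and~(iv) of Definition~\ref{def:tree-ascent} force $(c,a)$ to be a minimal essential variation (it cannot be screened by a middle descendant, and no $(b,a)$ with $a<b<c$ can vary since $\card_T(b,a)=s(b)$), hence $(a,c)\in A$. So if you want Proposition~\ref{prop:f-poly} to stand on its own with the injectivity included, you must either import that later theorem or actually carry out the transitivity-path analysis you only gesture at; as written, the step is a gap, though one the paper itself does not fill at this stage either.
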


\begin{proof}
Let 
\[
f_s(t) = \sum_{T} (1+t)^{\asc(T)}
= c_0+c_1t+c_2t^2+\dots.
\]
We need to show that $c_k$ counts the number of $k$-dimensional faces of $\Perm{s}$. 
This follows from the fact that every subset of ascents $A$ of $T$, of size $k$, contributes a $t^k$ to the term $(1+t)^{\asc(T)}$. 
\end{proof}

The polynomial $\sum_{T} t^{\asc(T)}$ is a natural generalization of the Eulerian polynomial which we call the \defn{$s$-Eulerian polynomial}. Its coefficients are called the \defn{$s$-Eulerian numbers}.

There is actually a recursive way to compute $f_s$ which is much better in practice than enumerating all the $s$-decreasing trees. This is described in the following proposition.

\begin{proposition}
\label{prop:f-poly-recur}
If $s$ is a weak composition of length $1$, then $f_s(t) = 1$. 
If $s$ be a weak composition of length $n \geq 2$, then we write $s = (\tilde{s}, u, v)$ where $\tilde{s}$ is the sequence obtained by removing the last two values in $s$, $u = s(n-1)$ and $v = s(n)$. Then

\begin{enumerate}
\item if $n > 2$ and $u = 0$, then $f_s = f_{(\tilde{s}, v)} f_{(0, v)}$;
\label{prop-case:f-poly-recur-0}
\item if $n = 2$ or $u > 0$, then $f_s(t) = (v + 1)f_{(\tilde{s}, u + v)}(t) + vt f_{(\tilde{s}, u +  v - 1)}(t).$
\label{prop-case:f-poly-recur-1}
\end{enumerate}
\end{proposition}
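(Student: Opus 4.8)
For $\ell(s)=1$ there is a unique $s$-decreasing tree and it has no tree-ascent (a tree-ascent $(a,c)$ requires $a<c$), so $f_s(t)=(1+t)^0=1$. For the inductive step I will work with the $f$-polynomial in the shape given by Proposition~\ref{prop:f-poly}, $f_s(t)=\sum_T(1+t)^{\asc(T)}$, and reduce $s=(\tilde s,u,v)$ to the length-$(n-1)$ compositions $(\tilde s,u+v)$ and $(\tilde s,u+v-1)$ by contracting the two largest nodes.

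\textbf{The main bijection.} In any $(\tilde s,u,v)$-tree $T$ the root is the node $n$, with $v+1$ children, and node $n-1$ is forced to be the top of one of those children, sitting in some slot $k\in\{0,\dots,v\}$; node $n-1$ has its own $u+1$ children. Contracting the edge from $n$ to $n-1$ (replacing $n-1$ by its $u+1$ children) turns the root into a node with $u+v+1$ children, i.e.\ produces an $(\tilde s,u+v)$-tree $\bar T$. This gives a bijection $T\leftrightarrow(\bar T,k)$: the children of $n-1$ are exactly the contiguous block of slots $k,k+1,\dots,k+u$ of $\bar T$, and the rightmost slot $u+v$ of $\bar T$ stays the right child of $n$. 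In particular the fibre over each $\bar T$ consists of the $v+1$ values of $k$.

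\textbf{The key lemma (the crux).} I will prove $\asc(T)=\asc(\bar T)+\epsilon_k$, where $\epsilon_k=1$ precisely when $(n-1,n)$ is a tree-ascent of $T$, and $\epsilon_k=0$ otherwise; by Proposition~\ref{prop:tree-ascent-inversions} this happens iff $k<v$ and the strict right child of $n-1$ is empty (which is automatic when $u=0$, since then $s(n-1)=0$ and condition~(iv) of Definition~\ref{def:tree-ascent} is vacuous). The idea is that any tree-ascent whose top lies strictly inside a subtree is untouched by the contraction, so only the tree-ascents with top the root can change. Condition~(iv) of Proposition~\ref{prop:tree-ascent-inversions} is intrinsic to $a$ (it amounts to the strict right child of $a$ being empty), and conditions~(i),(iii) only compare cardinalities $\card(\cdot,\cdot)$ through equalities and strict inequalities that survive splitting the root of $\bar T$ into $n$ and $n-1$; hence every root-topped tree-ascent of $\bar T$ persists in $T$ with its top reassigned to $n$ or to $n-1$ (for instance, a bottom lying on the right branch of slot $k+u$ gets re-topped from the root to $n$). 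The only node that can become a \emph{new} ascent-bottom is $n-1$ itself, and a short case split on whether slot $k+u$ of $\bar T$ is empty shows the net change is exactly $\epsilon_k$. I expect this local bookkeeping --- checking that nothing is silently created or destroyed and that conditions~(iii),(iv) transfer --- to be the main obstacle.

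\textbf{Assembling the recursion.} Summing the key lemma over the fibre gives $f_s(t)=\sum_{\bar T}(1+t)^{\asc(\bar T)}\sum_{k=0}^{v}(1+t)^{\epsilon_k}$, the outer sum over $(\tilde s,u+v)$-trees. If $u=0$ then $\epsilon_k=1$ for $k<v$ and $\epsilon_v=0$, so the inner sum is $(v+1)+vt$ independently of $\bar T$; recognising this value as $f_{(0,v)}(t)$ and using $u+v=v$ yields $f_s=f_{(\tilde s,v)}\,f_{(0,v)}$, which is case~\ref{prop-case:f-poly-recur-0}. If $u>0$ then for $k<v$ we have $(1+t)^{\epsilon_k}=1+t\,[\text{slot }k+u\text{ of }\bar T\text{ is empty}]$, so the inner sum equals $(v+1)+t\,N(\bar T)$ with $N(\bar T)$ the number of empty slots among positions $u,\dots,u+v-1$ of the root of $\bar T$. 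To finish I will use a second, easy bijection: deleting an empty non-right root-slot of an $(\tilde s,u+v)$-tree only reindexes the root cardinalities monotonically, hence preserves $\asc$, and produces an $(\tilde s,u+v-1)$-tree; conversely each $(\tilde s,u+v-1)$-tree is obtained, together with a marked empty slot in the window $\{u,\dots,u+v-1\}$, in exactly $v$ ways. Therefore $\sum_{\bar T}(1+t)^{\asc(\bar T)}N(\bar T)=v\,f_{(\tilde s,u+v-1)}(t)$, giving $f_s=(v+1)f_{(\tilde s,u+v)}+vt\,f_{(\tilde s,u+v-1)}$, i.e.\ case~\ref{prop-case:f-poly-recur-1}. (The hypothesis $n\ge 3$ in case~\ref{prop-case:f-poly-recur-0}, with $n=2$ routed to case~\ref{prop-case:f-poly-recur-1}, is exactly what prevents the degenerate identity $f_{(0,v)}=f_{(v)}f_{(0,v)}$; the base evaluation $f_{(0,v)}=(v+1)+vt$ comes directly from the inner sum.)
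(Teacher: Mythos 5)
Your proof is correct and follows essentially the same route as the paper's: merge the nodes $n$ and $n-1$, record the position of $n-1$ as one of $v+1$ marks, observe that the only tree-ascent affected is $(n-1,n)$ (whose presence is governed by $k<v$ and the emptiness of the strict right child of $n-1$), and count the trees with that ascent via deletion/insertion of the empty slot, which gives the $vt\,f_{(\tilde s,u+v-1)}$ term exactly as in the paper. The only cosmetic difference is that you obtain the $u=0$ case from the same contraction by factoring out the inner sum $(v+1)+vt=f_{(0,v)}(t)$, whereas the paper states a separate product-of-pure-intervals bijection for that case; the underlying correspondence is identical.
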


For example if $s = (u,v)$ is a weak composition of length $2$. The $s$-decreasing trees have two nodes labeled $1$ and $2$. The node $2$ is the root and has $v+1$ children. The $s$-weak order is the chain between the $s$-decreasing tree where $1$ is at the extreme left and the $s$-decreasing tree where $1$ is at the extreme right. There are $v+1$ $s$-decreasing trees (corresponding to the possible positions of $1$) and $v$ edges. We get $f_{(u,v)}(t) = v + 1 + vt$. By Proposition~\ref{prop:f-poly}, we also obtain $f_{(u,v)}(t) = v(1+t) + 1$. Indeed all trees but the maximal one have one ascent. The recursive computation of Proposition~\ref{prop:f-poly-recur} also gives $f_{(u,v)}(t) = (v+1)f_{(u+v)}(t) + vt f_{(u+v - 1)} = v + 1 + vt$.

We can also compute the $f$-polynomial of $\Perm{0,2,2}$ in Figure~\ref{fig:s022-complex} using our recursive formula in Proposition~\ref{prop:f-poly-recur}. We obtain 
$$f_{(0,2,2)}(t) = 3 f_{(0, 4)}(t) + 2t f_{(0, 3)}(t) = 3 (5 + 4t) + 2t (4 + 3t) = 15 + 20t + 6 t^2.$$

For the $\Perm{0,0,2}$ (Figure~\ref{fig:s002-complex}), we get

$$f_{(0,0,2)}(t) = f_{(0,2)}(t) f_{(0,2)}(t) = (3 + 2t)^2 = 9 + 12t + 4t^2$$.

We present many computations in Table~\ref{tab:fs}. You can also compute more example using our demo {\tt SageMath} worsheet~\cite{SageDemoII}.

\begin{proof}
The proof can be done bijectively. The base case is trivial: if $s$ is of length $1$, whatever the value of $s(1)$ is, there is a unique $s$-decreasing tree with no ascent giving $f(s) = 1$. We then define two different bijections corresponding to Cases~\eqref{prop-case:f-poly-recur-0} and~\eqref{prop-case:f-poly-recur-1} and illustrate them on examples in Figures~\ref{fig:fpoly-recurs0} and~\ref{fig:fpoly-recurs1} respectively. For Case~\eqref{prop-case:f-poly-recur-1}, the bijection is divided into two different cases, depending on whether or not $(n-1,n)$ is a selected tree-ascent. This corresponds to the two summands of the recursion. Looking at the examples, we believe that the bijections are straightforward. We explain them in detail below.

Let us prove Case~\eqref{prop-case:f-poly-recur-0}. We have $s = (\tilde{s}, 0, v)$. We prove that the pure intervals of $s$ are in bijection with products of pure intervals of $(\tilde{s}, v)$ and $(0,v)$. Let $T$ be an $s$-decreasing tree and $A$ a subset of tree-ascents of $T$. The root of $T$ is necessarily $n$ with $v+1$ subtrees. The node $n-1$ is the root of one of the subtrees. As we have $s(n-1) = 0$, the node $n-1$ has a unique child which we write $Q$ (it can be empty or has a root $i < n-1$). Let $s' = (\tilde{s}, v)$, we construct an $s'$-decreasing tree $T'$ by replacing the node $n-1$ by its unique child $Q$. All the other subtrees stay the same. We relabel the root to $n-1$ instead of $n$ to keep a standard labeling. If $(a,c)$ with $a < c < n - 1$ is a tree-ascent in~$T$, it is still a tree-ascent in $T'$: it is either in $Q$ or in any of the other subtrees of $n$. As $s(n-1) = 0$, we cannot have a tree-ascent $(a,n-1)$. Besides, if $(a,n)$ is a tree-ascent in $T$ with $a \neq n-1$, then $(a,n-1)$ is a tree-ascent in $T'$. Indeed, all conditions of Definition~\ref{def:tree-ascent} are still satisfied. We set $A' = \lbrace (a,c) \in A; a < c < n-1 \rbrace \cup \lbrace (a,n-1); (a,n) \in A \text{ and } a \neq n-1 \rbrace$. 

Now we define $s'' = (0,v)$ and $T''$ the $s''$-decreasing tree obtained by keeping only the nodes $n$ and $n-1$, which we relabel into $2$ and $1$ respectively. If $(n-1, n)$ is a tree-ascent of $T$, we set $A'' = \lbrace (1,2) \rbrace$, otherwise $A'' = \varnothing$. In the end, we have $|A'| + |A''| = |A|$ so the monomial corresponding to the pure interval $(T,A)$ is obtained by a product of monomials of $(T',A')$ and $(T'', A'')$. Besides, the reverse operation is easy: $T''$ gives the position where $n-1$ needs to be inserted. The selected tree-ascents stay the same up to relabeling. In particular, if $(a,n-1)$ is a tree-ascent in $T'$, then $(a,n)$ is a tree-ascent in~$T$ even if $a \in Q$ because it is still at the right end of the subtree. We illustrate this bijection on Figure~\ref{fig:fpoly-recurs0} with $s = (0,0,2,0,3)$.

\begin{figure}[ht]
\scalebox{.8}{\begin{tabular}{ccccc}
{ \newcommand{\nodea}{\node[draw,circle,fill=white] (a) {$5$}
;}\newcommand{\nodeb}{\node[draw,circle,fill=white] (b) {$2$}
;}\newcommand{\nodec}{\node[draw,circle,fill=white] (c) {$$}
;}\newcommand{\noded}{\node[draw,circle,fill=white] (d) {$$}
;}\newcommand{\nodee}{\node[draw,circle,fill=red!50] (e) {$4$}
;}\newcommand{\nodef}{\node[draw,circle,fill=red!50] (f) {$3$}
;}\newcommand{\nodeg}{\node[draw,circle,fill=white] (g) {$$}
;}\newcommand{\nodeh}{\node[draw,circle,fill=red!50] (h) {$1$}
;}\newcommand{\nodei}{\node[draw,circle,fill=white] (i) {$$}
;}\newcommand{\nodej}{\node[draw,circle,fill=white] (j) {$$}
;}\newcommand{\nodeba}{\node[draw,circle,fill=white] (ba) {$$}
;}\begin{tikzpicture}[baseline]
\matrix[column sep=0.1cm, row sep=.3cm,ampersand replacement=\&]{
         \&         \& \nodea  \&         \&         \&         \\ 
 \nodeb  \& \noded  \&         \& \nodee  \&         \& \nodeba \\ 
 \nodec  \&         \&         \& \nodef  \&         \&         \\ 
         \&         \& \nodeg  \& \nodeh  \& \nodej  \&         \\ 
         \&         \&         \& \nodei  \&         \&         \\
};

\path[ultra thick, red] (b) edge (c)
	(h) edge (i)
	(f) edge (g) edge (h) edge (j)
	(e) edge (f)
	(a) edge (b) edge (d) edge (e) edge (ba);
\end{tikzpicture}}
 & $\rightarrow$ 
& { \newcommand{\nodea}{\node[draw,circle,fill=white] (a) {$4$}
;}\newcommand{\nodeb}{\node[draw,circle,fill=white] (b) {$2$}
;}\newcommand{\nodec}{\node[draw,circle,fill=white] (c) {$$}
;}\newcommand{\noded}{\node[draw,circle,fill=white] (d) {$$}
;}\newcommand{\nodee}{\node[draw,circle,fill=red!50] (e) {$3$}
;}\newcommand{\nodef}{\node[draw,circle,fill=white] (f) {$$}
;}\newcommand{\nodeg}{\node[draw,circle,fill=red!50] (g) {$1$}
;}\newcommand{\nodeh}{\node[draw,circle,fill=white] (h) {$$}
;}\newcommand{\nodei}{\node[draw,circle,fill=white] (i) {$$}
;}\newcommand{\nodej}{\node[draw,circle,fill=white] (j) {$$}
;}\begin{tikzpicture}[baseline]
\matrix[column sep=0.1cm, row sep=.3cm,ampersand replacement=\&]{
         \&         \& \nodea  \&         \&         \&         \\ 
 \nodeb  \& \noded  \&         \& \nodee  \&         \& \nodej  \\ 
 \nodec  \&         \& \nodef  \& \nodeg  \& \nodei  \&         \\ 
         \&         \&         \& \nodeh  \&         \&         \\
};

\path[ultra thick, red] (b) edge (c)
	(g) edge (h)
	(e) edge (f) edge (g) edge (i)
	(a) edge (b) edge (d) edge (e) edge (j);
\end{tikzpicture}}
& & { \newcommand{\nodea}{\node[draw,circle,fill=white] (a) {$2$}
;}\newcommand{\nodeb}{\node[draw,circle,fill=white] (b) {$$}
;}\newcommand{\nodec}{\node[draw,circle,fill=white] (c) {$$}
;}\newcommand{\noded}{\node[draw,circle,fill=red!50] (d) {$1$}
;}\newcommand{\nodee}{\node[draw,circle,fill=white] (e) {$$}
;}\newcommand{\nodef}{\node[draw,circle,fill=white] (f) {$$}
;}\begin{tikzpicture}[baseline]
\matrix[column sep=0.1cm, row sep=.3cm,ampersand replacement=\&]{
         \&         \& \nodea  \&         \&         \\ 
 \nodeb  \& \nodec  \&         \& \noded  \& \nodef  \\ 
         \&         \&         \& \nodee  \&         \\
};

\path[ultra thick, red] (d) edge (e)
	(a) edge (b) edge (c) edge (d) edge (f);
\end{tikzpicture}} \\
$t^3$ & $=$ & $t^2$ & $\times$ & $t$ \\ \hline
{ \newcommand{\nodea}{\node[draw,circle,fill=white] (a) {$5$}
;}\newcommand{\nodeb}{\node[draw,circle,fill=white] (b) {$$}
;}\newcommand{\nodec}{\node[draw,circle,fill=white] (c) {$4$}
;}\newcommand{\noded}{\node[draw,circle,fill=red!50] (d) {$3$}
;}\newcommand{\nodee}{\node[draw,circle,fill=white] (e) {$2$}
;}\newcommand{\nodef}{\node[draw,circle,fill=white] (f) {$$}
;}\newcommand{\nodeg}{\node[draw,circle,fill=white] (g) {$$}
;}\newcommand{\nodeh}{\node[draw,circle,fill=white] (h) {$$}
;}\newcommand{\nodei}{\node[draw,circle,fill=red!50] (i) {$1$}
;}\newcommand{\nodej}{\node[draw,circle,fill=white] (j) {$$}
;}\newcommand{\nodeba}{\node[draw,circle,fill=white] (ba) {$$}
;}\begin{tikzpicture}[baseline]
\matrix[column sep=0.1cm, row sep=.3cm,ampersand replacement=\&]{
         \&         \&         \& \nodea  \&         \&         \\ 
 \nodeb  \&         \& \nodec  \&         \& \nodei  \& \nodeba \\ 
         \&         \& \noded  \&         \& \nodej  \&         \\ 
         \& \nodee  \& \nodeg  \& \nodeh  \&         \&         \\ 
         \& \nodef  \&         \&         \&         \&         \\
};

\path[ultra thick, red] (e) edge (f)
	(d) edge (e) edge (g) edge (h)
	(c) edge (d)
	(i) edge (j)
	(a) edge (b) edge (c) edge (i) edge (ba);
\end{tikzpicture}} & $\rightarrow$ 
& { \newcommand{\nodea}{\node[draw,circle,fill=white] (a) {$4$}
;}\newcommand{\nodeb}{\node[draw,circle,fill=white] (b) {$$}
;}\newcommand{\nodec}{\node[draw,circle,fill=red!50] (c) {$3$}
;}\newcommand{\noded}{\node[draw,circle,fill=white] (d) {$2$}
;}\newcommand{\nodee}{\node[draw,circle,fill=white] (e) {$$}
;}\newcommand{\nodef}{\node[draw,circle,fill=white] (f) {$$}
;}\newcommand{\nodeg}{\node[draw,circle,fill=white] (g) {$$}
;}\newcommand{\nodeh}{\node[draw,circle,fill=red!50] (h) {$1$}
;}\newcommand{\nodei}{\node[draw,circle,fill=white] (i) {$$}
;}\newcommand{\nodej}{\node[draw,circle,fill=white] (j) {$$}
;}\begin{tikzpicture}[baseline]
\matrix[column sep=0.1cm, row sep=.3cm,ampersand replacement=\&]{
         \&         \&         \& \nodea  \&         \&         \\ 
 \nodeb  \&         \& \nodec  \&         \& \nodeh  \& \nodej  \\ 
         \& \noded  \& \nodef  \& \nodeg  \& \nodei  \&         \\ 
         \& \nodee  \&         \&         \&         \&         \\
};

\path[ultra thick, red] (d) edge (e)
	(c) edge (d) edge (f) edge (g)
	(h) edge (i)
	(a) edge (b) edge (c) edge (h) edge (j);
\end{tikzpicture}}
& & { \newcommand{\nodea}{\node[draw,circle] (a) {$2$}
;}\newcommand{\nodeb}{\node[draw,circle] (b) {$$}
;}\newcommand{\nodec}{\node[draw,circle] (c) {$1$}
;}\newcommand{\noded}{\node[draw,circle] (d) {$$}
;}\newcommand{\nodee}{\node[draw,circle] (e) {$$}
;}\newcommand{\nodef}{\node[draw,circle] (f) {$$}
;}\begin{tikzpicture}[baseline]
\matrix[column sep=.3cm, row sep=.3cm,ampersand replacement=\&]{
         \&         \& \nodea  \&         \&         \\ 
 \nodeb  \& \nodec  \&         \& \nodee  \& \nodef  \\ 
         \& \noded  \&         \&         \&         \\
};

\path[ultra thick, red] (c) edge (d)
	(a) edge (b) edge (c) edge (e) edge (f);
\end{tikzpicture}} \\
$t^2$ & $=$ & $t^2$ & $\times$ & $1$
\end{tabular}}
\caption{Bijective illustration of Case~\eqref{prop-case:f-poly-recur-0} of the recursive $f$-polynomial computation.}
\label{fig:fpoly-recurs0}
\end{figure}
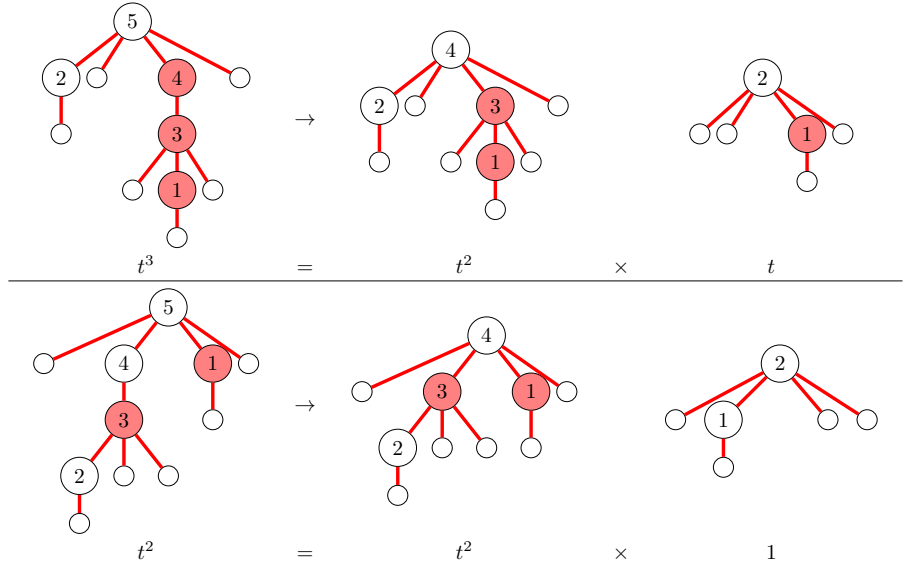

\begin{figure}[ht]
\scalebox{.8}{\begin{tabular}{ccc|ccc}
{ \newcommand{\nodea}{\node[draw,circle,fill=white] (a) {$5$}
;}\newcommand{\nodeb}{\node[draw,circle,fill=white] (b) {$4$}
;}\newcommand{\nodec}{\node[draw,circle,fill=white] (c) {$$}
;}\newcommand{\noded}{\node[draw,circle,fill=red!50] (d) {$3$}
;}\newcommand{\nodee}{\node[draw,circle,fill=white] (e) {$$}
;}\newcommand{\nodef}{\node[draw,circle,fill=white] (f) {$1$}
;}\newcommand{\nodeg}{\node[draw,circle,fill=white] (g) {$$}
;}\newcommand{\nodeh}{\node[draw,circle,fill=red!50] (h) {$2$}
;}\newcommand{\nodei}{\node[draw,circle,fill=white] (i) {$$}
;}\newcommand{\nodej}{\node[draw,circle,fill=white] (j) {$$}
;}\newcommand{\nodeba}{\node[draw,circle,fill=white] (ba) {$$}
;}\begin{tikzpicture}[baseline]
\matrix[column sep=0.1cm, row sep=.3cm,ampersand replacement=\&]{
         \&         \&         \&         \& \nodea  \&         \&         \\ 
         \& \nodeb  \&         \& \nodeh  \&         \& \nodej  \& \nodeba \\ 
 \nodec  \& \noded  \& \nodef  \& \nodei  \&         \&         \&         \\ 
         \& \nodee  \& \nodeg  \&         \&         \&         \&         \\
};

\path[ultra thick, red] (d) edge (e)
	(f) edge (g)
	(b) edge (c) edge (d) edge (f)
	(h) edge (i)
	(a) edge (b) edge (h) edge (j) edge (ba);
\end{tikzpicture}} & $\rightarrow$ & 
{ \newcommand{\nodea}{\node[draw,circle,fill=white] (a) {$4$}
;}\newcommand{\nodeb}{\node[draw,circle,fill=white] (b) {$$}
;}\newcommand{\nodec}{\node[draw,circle,fill=red!50] (c) {$3$}
;}\newcommand{\noded}{\node[draw,circle,fill=white] (d) {$$}
;}\newcommand{\nodee}{\node[draw,circle,fill=white] (e) {$1$}
;}\newcommand{\nodef}{\node[draw,circle,fill=white] (f) {$$}
;}\newcommand{\nodeg}{\node[draw,circle,fill=red!50] (g) {$2$}
;}\newcommand{\nodeh}{\node[draw,circle,fill=white] (h) {$$}
;}\newcommand{\nodei}{\node[draw,circle,fill=white] (i) {$$}
;}\newcommand{\nodej}{\node[draw,circle,fill=white] (j) {$$}
;}\begin{tikzpicture}[baseline]
\matrix[column sep=0.1cm, row sep=.7cm,ampersand replacement=\&]{
         \&         \&         \& \nodea  \&         \&         \\ 
 \nodeb  \& \nodec  \& \nodee  \& \nodeg  \& \nodei  \& \nodej  \\ 
         \& \noded  \& \nodef  \& \nodeh  \&         \&         \\
};

\path[ultra thick, red] (c) edge (d)
	(e) edge (f)
	(g) edge (h)
	(a) edge (g) edge (i) edge (j);

\path[ultra thick, dashed, red] (a) edge (b) edge (c) edge (e);
\end{tikzpicture}}
&
{ \newcommand{\nodea}{\node[draw,circle,fill=white] (a) {$5$}
;}\newcommand{\nodeb}{\node[draw,circle,fill=red!50] (b) {$4$}
;}\newcommand{\nodec}{\node[draw,circle,fill=white] (c) {$$}
;}\newcommand{\noded}{\node[draw,circle,fill=red!50] (d) {$3$}
;}\newcommand{\nodee}{\node[draw,circle,fill=white] (e) {$$}
;}\newcommand{\nodef}{\node[draw,circle,fill=white] (f) {$$}
;}\newcommand{\nodeg}{\node[draw,circle,fill=white] (g) {$1$}
;}\newcommand{\nodeh}{\node[draw,circle,fill=white] (h) {$$}
;}\newcommand{\nodei}{\node[draw,circle,fill=red!50] (i) {$2$}
;}\newcommand{\nodej}{\node[draw,circle,fill=white] (j) {$$}
;}\newcommand{\nodeba}{\node[draw,circle,fill=white] (ba) {$$}
;}\begin{tikzpicture}[baseline]
\matrix[column sep=0.1cm, row sep=.3cm,ampersand replacement=\&]{
         \&         \&         \&         \& \nodea  \&         \&         \\ 
         \& \nodeb  \&         \& \nodeg  \&         \& \nodei  \& \nodeba \\ 
 \nodec  \& \noded  \& \nodef  \& \nodeh  \&         \& \nodej  \&         \\ 
         \& \nodee  \&         \&         \&         \&         \&         \\
};

\path[ultra thick, red] (d) edge (e)
	(b) edge (c) edge (d) edge (f)
	(g) edge (h)
	(i) edge (j)
	(a) edge (b) edge (g) edge (i) edge (ba);
\end{tikzpicture}} & $\rightarrow$ &
{ \newcommand{\nodea}{\node[draw,circle,fill=white] (a) {$4$}
;}\newcommand{\nodeb}{\node[draw,circle,fill=white] (b) {$$}
;}\newcommand{\nodec}{\node[draw,circle,fill=red!50] (c) {$3$}
;}\newcommand{\noded}{\node[draw,circle,fill=white] (d) {$$}
;}\newcommand{\nodee}{\node[draw,circle,fill=white] (e) {$1$}
;}\newcommand{\nodef}{\node[draw,circle,fill=white] (f) {$$}
;}\newcommand{\nodeg}{\node[draw,circle,fill=red!50] (g) {$2$}
;}\newcommand{\nodeh}{\node[draw,circle,fill=white] (h) {$$}
;}\newcommand{\nodei}{\node[draw,circle,fill=white] (i) {$$}
;}\begin{tikzpicture}[baseline]
\matrix[column sep=0.1cm, row sep=.7cm,ampersand replacement=\&]{
         \&         \& \nodea  \&         \&         \\ 
 \nodeb  \& \nodec  \& \nodee  \& \nodeg  \& \nodei  \\ 
         \& \noded  \& \nodef  \& \nodeh  \&         \\
};

\path[ultra thick, red] (c) edge (d)
	(e) edge (f)
	(g) edge (h)
	(a) edge (e) edge (g) edge (i);
	
\path[ultra thick, dashed, red] (a) edge (b) edge (c);
\end{tikzpicture}}
\\
{ \newcommand{\nodea}{\node[draw,circle,fill=white] (a) {$5$}
;}\newcommand{\nodeb}{\node[draw,circle,fill=white] (b) {$$}
;}\newcommand{\nodec}{\node[draw,circle,fill=white] (c) {$4$}
;}\newcommand{\noded}{\node[draw,circle,fill=red!50] (d) {$3$}
;}\newcommand{\nodee}{\node[draw,circle,fill=white] (e) {$$}
;}\newcommand{\nodef}{\node[draw,circle,fill=white] (f) {$1$}
;}\newcommand{\nodeg}{\node[draw,circle,fill=white] (g) {$$}
;}\newcommand{\nodeh}{\node[draw,circle,fill=red!50] (h) {$2$}
;}\newcommand{\nodei}{\node[draw,circle,fill=white] (i) {$$}
;}\newcommand{\nodej}{\node[draw,circle,fill=white] (j) {$$}
;}\newcommand{\nodeba}{\node[draw,circle,fill=white] (ba) {$$}
;}\begin{tikzpicture}[baseline]
\matrix[column sep=0.1cm, row sep=.3cm,ampersand replacement=\&]{
         \&         \&         \& \nodea  \&         \&         \\ 
 \nodeb  \&         \& \nodec  \&         \& \nodej  \& \nodeba \\ 
         \& \noded  \& \nodef  \& \nodeh  \&         \&         \\ 
         \& \nodee  \& \nodeg  \& \nodei  \&         \&         \\
};

\path[ultra thick, red] (d) edge (e)
	(f) edge (g)
	(h) edge (i)
	(c) edge (d) edge (f) edge (h)
	(a) edge (b) edge (c) edge (j) edge (ba);
\end{tikzpicture}} & $\rightarrow$ & 
{ \newcommand{\nodea}{\node[draw,circle,fill=white] (a) {$4$}
;}\newcommand{\nodeb}{\node[draw,circle,fill=white] (b) {$$}
;}\newcommand{\nodec}{\node[draw,circle,fill=red!50] (c) {$3$}
;}\newcommand{\noded}{\node[draw,circle,fill=white] (d) {$$}
;}\newcommand{\nodee}{\node[draw,circle,fill=white] (e) {$1$}
;}\newcommand{\nodef}{\node[draw,circle,fill=white] (f) {$$}
;}\newcommand{\nodeg}{\node[draw,circle,fill=red!50] (g) {$2$}
;}\newcommand{\nodeh}{\node[draw,circle,fill=white] (h) {$$}
;}\newcommand{\nodei}{\node[draw,circle,fill=white] (i) {$$}
;}\newcommand{\nodej}{\node[draw,circle,fill=white] (j) {$$}
;}\begin{tikzpicture}[baseline]
\matrix[column sep=0.1cm, row sep=.7cm,ampersand replacement=\&]{
         \&         \&         \& \nodea  \&         \&         \\ 
 \nodeb  \& \nodec  \& \nodee  \& \nodeg  \& \nodei  \& \nodej  \\ 
         \& \noded  \& \nodef  \& \nodeh  \&         \&         \\
};

\path[ultra thick, red] (c) edge (d)
	(e) edge (f)
	(g) edge (h)
	(a) edge (b) edge (i) edge (j);

\path[ultra thick, dashed, red] (a) edge (c) edge (e) edge (g);
\end{tikzpicture}}
&
{ \newcommand{\nodea}{\node[draw,circle,fill=white] (a) {$5$}
;}\newcommand{\nodeb}{\node[draw,circle,fill=white] (b) {$$}
;}\newcommand{\nodec}{\node[draw,circle,fill=red!50] (c) {$4$}
;}\newcommand{\noded}{\node[draw,circle,fill=red!50] (d) {$3$}
;}\newcommand{\nodee}{\node[draw,circle,fill=white] (e) {$$}
;}\newcommand{\nodef}{\node[draw,circle,fill=white] (f) {$1$}
;}\newcommand{\nodeg}{\node[draw,circle,fill=white] (g) {$$}
;}\newcommand{\nodeh}{\node[draw,circle,fill=white] (h) {$$}
;}\newcommand{\nodei}{\node[draw,circle,fill=red!50] (i) {$2$}
;}\newcommand{\nodej}{\node[draw,circle,fill=white] (j) {$$}
;}\newcommand{\nodeba}{\node[draw,circle,fill=white] (ba) {$$}
;}\begin{tikzpicture}[baseline]
\matrix[column sep=0.1cm, row sep=.3cm,ampersand replacement=\&]{
         \&         \&         \& \nodea  \&         \&         \\ 
 \nodeb  \&         \& \nodec  \&         \& \nodei  \& \nodeba \\ 
         \& \noded  \& \nodef  \& \nodeh  \& \nodej  \&         \\ 
         \& \nodee  \& \nodeg  \&         \&         \&         \\
};

\path[ultra thick, red] (d) edge (e)
	(f) edge (g)
	(c) edge (d) edge (f) edge (h)
	(i) edge (j)
	(a) edge (b) edge (c) edge (i) edge (ba);
\end{tikzpicture}} & $\rightarrow$ &
{ \newcommand{\nodea}{\node[draw,circle,fill=white] (a) {$4$}
;}\newcommand{\nodeb}{\node[draw,circle,fill=white] (b) {$$}
;}\newcommand{\nodec}{\node[draw,circle,fill=red!50] (c) {$3$}
;}\newcommand{\noded}{\node[draw,circle,fill=white] (d) {$$}
;}\newcommand{\nodee}{\node[draw,circle,fill=white] (e) {$1$}
;}\newcommand{\nodef}{\node[draw,circle,fill=white] (f) {$$}
;}\newcommand{\nodeg}{\node[draw,circle,fill=red!50] (g) {$2$}
;}\newcommand{\nodeh}{\node[draw,circle,fill=white] (h) {$$}
;}\newcommand{\nodei}{\node[draw,circle,fill=white] (i) {$$}
;}\begin{tikzpicture}[baseline]
\matrix[column sep=0.1cm, row sep=.7cm,ampersand replacement=\&]{
         \&         \& \nodea  \&         \&         \\ 
 \nodeb  \& \nodec  \& \nodee  \& \nodeg  \& \nodei  \\ 
         \& \noded  \& \nodef  \& \nodeh  \&         \\
};

\path[ultra thick, red] (c) edge (d)
	(e) edge (f)
	(g) edge (h)
	(a) edge (b) edge (g) edge (i);
	
\path[ultra thick, dashed, red] (a) edge (c) edge (e);
\end{tikzpicture}}
\\
{ \newcommand{\nodea}{\node[draw,circle,fill=white] (a) {$5$}
;}\newcommand{\nodeb}{\node[draw,circle,fill=white] (b) {$$}
;}\newcommand{\nodec}{\node[draw,circle,fill=red!50] (c) {$3$}
;}\newcommand{\noded}{\node[draw,circle,fill=white] (d) {$$}
;}\newcommand{\nodee}{\node[draw,circle,fill=white] (e) {$4$}
;}\newcommand{\nodef}{\node[draw,circle,fill=white] (f) {$1$}
;}\newcommand{\nodeg}{\node[draw,circle,fill=white] (g) {$$}
;}\newcommand{\nodeh}{\node[draw,circle,fill=red!50] (h) {$2$}
;}\newcommand{\nodei}{\node[draw,circle,fill=white] (i) {$$}
;}\newcommand{\nodej}{\node[draw,circle,fill=white] (j) {$$}
;}\newcommand{\nodeba}{\node[draw,circle,fill=white] (ba) {$$}
;}\begin{tikzpicture}[baseline]
\matrix[column sep=0.1cm, row sep=.3cm,ampersand replacement=\&]{
         \&         \& \nodea  \&         \&         \&         \\ 
 \nodeb  \& \nodec  \&         \& \nodee  \&         \& \nodeba \\ 
         \& \noded  \& \nodef  \& \nodeh  \& \nodej  \&         \\ 
         \&         \& \nodeg  \& \nodei  \&         \&         \\
};

\path[ultra thick, red] (c) edge (d)
	(f) edge (g)
	(h) edge (i)
	(e) edge (f) edge (h) edge (j)
	(a) edge (b) edge (c) edge (e) edge (ba);
\end{tikzpicture}} & $\rightarrow$ & 
{ \newcommand{\nodea}{\node[draw,circle,fill=white] (a) {$4$}
;}\newcommand{\nodeb}{\node[draw,circle,fill=white] (b) {$$}
;}\newcommand{\nodec}{\node[draw,circle,fill=red!50] (c) {$3$}
;}\newcommand{\noded}{\node[draw,circle,fill=white] (d) {$$}
;}\newcommand{\nodee}{\node[draw,circle,fill=white] (e) {$1$}
;}\newcommand{\nodef}{\node[draw,circle,fill=white] (f) {$$}
;}\newcommand{\nodeg}{\node[draw,circle,fill=red!50] (g) {$2$}
;}\newcommand{\nodeh}{\node[draw,circle,fill=white] (h) {$$}
;}\newcommand{\nodei}{\node[draw,circle,fill=white] (i) {$$}
;}\newcommand{\nodej}{\node[draw,circle,fill=white] (j) {$$}
;}\begin{tikzpicture}[baseline]
\matrix[column sep=0.1cm, row sep=.7cm,ampersand replacement=\&]{
         \&         \&         \& \nodea  \&         \&         \\ 
 \nodeb  \& \nodec  \& \nodee  \& \nodeg  \& \nodei  \& \nodej  \\ 
         \& \noded  \& \nodef  \& \nodeh  \&         \&         \\
};

\path[ultra thick, red] (c) edge (d)
	(e) edge (f)
	(g) edge (h)
	(a) edge (b) edge (c) edge (j);

\path[ultra thick, dashed, red] (a) edge (e) edge (g) edge (i);
\end{tikzpicture}}
&
{ \newcommand{\nodea}{\node[draw,circle,fill=white] (a) {$5$}
;}\newcommand{\nodeb}{\node[draw,circle,fill=white] (b) {$$}
;}\newcommand{\nodec}{\node[draw,circle,fill=red!50] (c) {$3$}
;}\newcommand{\noded}{\node[draw,circle,fill=white] (d) {$$}
;}\newcommand{\nodee}{\node[draw,circle,fill=red!50] (e) {$4$}
;}\newcommand{\nodef}{\node[draw,circle,fill=white] (f) {$1$}
;}\newcommand{\nodeg}{\node[draw,circle,fill=white] (g) {$$}
;}\newcommand{\nodeh}{\node[draw,circle,fill=red!50] (h) {$2$}
;}\newcommand{\nodei}{\node[draw,circle,fill=white] (i) {$$}
;}\newcommand{\nodej}{\node[draw,circle,fill=white] (j) {$$}
;}\newcommand{\nodeba}{\node[draw,circle,fill=white] (ba) {$$}
;}\begin{tikzpicture}[baseline]
\matrix[column sep=0.1cm, row sep=.3cm,ampersand replacement=\&]{
         \&         \& \nodea  \&         \&         \&         \\ 
 \nodeb  \& \nodec  \&         \& \nodee  \&         \& \nodeba \\ 
         \& \noded  \& \nodef  \& \nodeh  \& \nodej  \&         \\ 
         \&         \& \nodeg  \& \nodei  \&         \&         \\
};

\path[ultra thick, red] (c) edge (d)
	(f) edge (g)
	(h) edge (i)
	(e) edge (f) edge (h) edge (j)
	(a) edge (b) edge (c) edge (e) edge (ba);
\end{tikzpicture}}
 & $\rightarrow$ &
{ \newcommand{\nodea}{\node[draw,circle,fill=white] (a) {$4$}
;}\newcommand{\nodeb}{\node[draw,circle,fill=white] (b) {$$}
;}\newcommand{\nodec}{\node[draw,circle,fill=red!50] (c) {$3$}
;}\newcommand{\noded}{\node[draw,circle,fill=white] (d) {$$}
;}\newcommand{\nodee}{\node[draw,circle,fill=white] (e) {$1$}
;}\newcommand{\nodef}{\node[draw,circle,fill=white] (f) {$$}
;}\newcommand{\nodeg}{\node[draw,circle,fill=red!50] (g) {$2$}
;}\newcommand{\nodeh}{\node[draw,circle,fill=white] (h) {$$}
;}\newcommand{\nodei}{\node[draw,circle,fill=white] (i) {$$}
;}\begin{tikzpicture}[baseline]
\matrix[column sep=0.1cm, row sep=.7cm,ampersand replacement=\&]{
         \&         \& \nodea  \&         \&         \\ 
 \nodeb  \& \nodec  \& \nodee  \& \nodeg  \& \nodei  \\ 
         \& \noded  \& \nodef  \& \nodeh  \&         \\
};

\path[ultra thick, red] (c) edge (d)
	(e) edge (f)
	(g) edge (h)
	(a) edge (b) edge (c) edge (i);
	
\path[ultra thick, dashed, red] (a) edge (e) edge (g);
\end{tikzpicture}}
\\
{ \newcommand{\nodea}{\node[draw,circle,fill=white] (a) {$5$}
;}\newcommand{\nodeb}{\node[draw,circle,fill=white] (b) {$$}
;}\newcommand{\nodec}{\node[draw,circle,fill=red!50] (c) {$3$}
;}\newcommand{\noded}{\node[draw,circle,fill=white] (d) {$$}
;}\newcommand{\nodee}{\node[draw,circle,fill=white] (e) {$1$}
;}\newcommand{\nodef}{\node[draw,circle,fill=white] (f) {$$}
;}\newcommand{\nodeg}{\node[draw,circle,fill=white] (g) {$4$}
;}\newcommand{\nodeh}{\node[draw,circle,fill=red!50] (h) {$2$}
;}\newcommand{\nodei}{\node[draw,circle,fill=white] (i) {$$}
;}\newcommand{\nodej}{\node[draw,circle,fill=white] (j) {$$}
;}\newcommand{\nodeba}{\node[draw,circle,fill=white] (ba) {$$}
;}\begin{tikzpicture}[baseline]
\matrix[column sep=0.1cm, row sep=.3cm,ampersand replacement=\&]{
         \&         \& \nodea  \&         \&         \&         \\ 
 \nodeb  \& \nodec  \& \nodee  \&         \& \nodeg  \&         \\ 
         \& \noded  \& \nodef  \& \nodeh  \& \nodej  \& \nodeba \\ 
         \&         \&         \& \nodei  \&         \&         \\
};

\path[ultra thick, red] (c) edge (d)
	(e) edge (f)
	(h) edge (i)
	(g) edge (h) edge (j) edge (ba)
	(a) edge (b) edge (c) edge (e) edge (g);
\end{tikzpicture}}
 & $\rightarrow$ & 
{ \newcommand{\nodea}{\node[draw,circle,fill=white] (a) {$4$}
;}\newcommand{\nodeb}{\node[draw,circle,fill=white] (b) {$$}
;}\newcommand{\nodec}{\node[draw,circle,fill=red!50] (c) {$3$}
;}\newcommand{\noded}{\node[draw,circle,fill=white] (d) {$$}
;}\newcommand{\nodee}{\node[draw,circle,fill=white] (e) {$1$}
;}\newcommand{\nodef}{\node[draw,circle,fill=white] (f) {$$}
;}\newcommand{\nodeg}{\node[draw,circle,fill=red!50] (g) {$2$}
;}\newcommand{\nodeh}{\node[draw,circle,fill=white] (h) {$$}
;}\newcommand{\nodei}{\node[draw,circle,fill=white] (i) {$$}
;}\newcommand{\nodej}{\node[draw,circle,fill=white] (j) {$$}
;}\begin{tikzpicture}[baseline]
\matrix[column sep=0.1cm, row sep=.7cm,ampersand replacement=\&]{
         \&         \&         \& \nodea  \&         \&         \\ 
 \nodeb  \& \nodec  \& \nodee  \& \nodeg  \& \nodei  \& \nodej  \\ 
         \& \noded  \& \nodef  \& \nodeh  \&         \&         \\
};

\path[ultra thick, red] (c) edge (d)
	(e) edge (f)
	(g) edge (h)
	(a) edge (b) edge (c) edge (e);

\path[ultra thick, dashed, red] (a) edge (g) edge (i) edge (j);
\end{tikzpicture}}
\end{tabular}}
\caption{Bijective illustration of Case~\eqref{prop-case:f-poly-recur-1} of the recursive $f$-polynomial computation.}
\label{fig:fpoly-recurs1}
\end{figure}

\begin{table}[t]
\caption{Some values of $f_s(t)$}
\scalebox{.8}{
\begin{tabular}{l|l|l}
$f_{(0,0, 1)}(t) = t^{2} + 4 t + 4 $ & $f_{(0,0, 0, 1)}(t) = t^{3} + 6 t^{2} + 12 t + 8 $ & $f_{(0,2, 0, 1)}(t) = 2 t^{3} + 13 t^{2} + 26 t + 16 $ \\
$f_{(0,0, 2)}(t) = 4 t^{2} + 12 t + 9 $ & $f_{(0,0, 0, 2)}(t) = 8 t^{3} + 36 t^{2} + 54 t + 27 $ & $f_{(0,2, 0, 2)}(t) = 12 t^{3} + 58 t^{2} + 90 t + 45 $ \\
$f_{(0,0, 3)}(t) = 9 t^{2} + 24 t + 16 $ & $f_{(0,0, 0, 3)}(t) = 27 t^{3} + 108 t^{2} + 144 t + 64 $ & $f_{(0,2, 0, 3)}(t) = 36 t^{3} + 153 t^{2} + 212 t + 96 $ \\
$f_{(0,1, 1)}(t) = t^{2} + 6 t + 6 $ & $f_{(0,0, 1, 1)}(t) = t^{3} + 12 t^{2} + 28 t + 18 $ & $f_{(0,2, 1, 1)}(t) = 2 t^{3} + 21 t^{2} + 48 t + 30 $ \\
$f_{(0,1, 2)}(t) = 4 t^{2} + 15 t + 12 $ & $f_{(0,0, 1, 2)}(t) = 8 t^{3} + 51 t^{2} + 90 t + 48 $ & $f_{(0,2, 1, 2)}(t) = 12 t^{3} + 76 t^{2} + 135 t + 72 $ \\
$f_{(0,1, 3)}(t) = 9 t^{2} + 28 t + 20 $ & $f_{(0,0, 1, 3)}(t) = 27 t^{3} + 136 t^{2} + 208 t + 100 $ & $f_{(0,2, 1, 3)}(t) = 36 t^{3} + 185 t^{2} + 288 t + 140 $ \\
$f_{(0,2, 1)}(t) = 2 t^{2} + 9 t + 8 $ & $f_{(0,0, 2, 1)}(t) = 4 t^{3} + 30 t^{2} + 57 t + 32 $ & $f_{(0,2, 2, 1)}(t) = 6 t^{3} + 44 t^{2} + 85 t + 48 $ \\
$f_{(0,2, 2)}(t) = 6 t^{2} + 20 t + 15 $ & $f_{(0,0, 2, 2)}(t) = 18 t^{3} + 96 t^{2} + 152 t + 75 $ & $f_{(0,2, 2, 2)}(t) = 24 t^{3} + 130 t^{2} + 210 t + 105 $ \\
$f_{(0,2, 3)}(t) = 12 t^{2} + 35 t + 24 $ & $f_{(0,0, 2, 3)}(t) = 48 t^{3} + 220 t^{2} + 315 t + 144 $ & $f_{(0,2, 2, 3)}(t) = 60 t^{3} + 282 t^{2} + 413 t + 192 $ \\
$f_{(0,3, 1)}(t) = 3 t^{2} + 12 t + 10 $ & $f_{(0,0, 3, 1)}(t) = 9 t^{3} + 56 t^{2} + 96 t + 50 $ & $f_{(0,2, 3, 1)}(t) = 12 t^{3} + 75 t^{2} + 132 t + 70 $ \\
$f_{(0,3, 2)}(t) = 8 t^{2} + 25 t + 18 $ & $f_{(0,0, 3, 2)}(t) = 32 t^{3} + 155 t^{2} + 230 t + 108 $ & $f_{(0,2, 3, 2)}(t) = 40 t^{3} + 198 t^{2} + 301 t + 144 $ \\
$f_{(0,3, 3)}(t) = 15 t^{2} + 42 t + 28 $ & $f_{(0,0, 3, 3)}(t) = 75 t^{3} + 324 t^{2} + 444 t + 196 $ & $f_{(0,2, 3, 3)}(t) = 90 t^{3} + 399 t^{2} + 560 t + 252 $ \\
 & $f_{(0,1, 0, 1)}(t) = t^{3} + 8 t^{2} + 18 t + 12 $ & $f_{(0,3, 0, 1)}(t) = 3 t^{3} + 18 t^{2} + 34 t + 20 $ \\
 & $f_{(0,1, 0, 2)}(t) = 8 t^{3} + 42 t^{2} + 69 t + 36 $ & $f_{(0,3, 0, 2)}(t) = 16 t^{3} + 74 t^{2} + 111 t + 54 $ \\
 & $f_{(0,1, 0, 3)}(t) = 27 t^{3} + 120 t^{2} + 172 t + 80 $ & $f_{(0,3, 0, 3)}(t) = 45 t^{3} + 186 t^{2} + 252 t + 112 $ \\
 & $f_{(0,1, 1, 1)}(t) = t^{3} + 14 t^{2} + 36 t + 24 $ & $f_{(0,3, 1, 1)}(t) = 3 t^{3} + 28 t^{2} + 60 t + 36 $ \\
 & $f_{(0,1, 1, 2)}(t) = 8 t^{3} + 57 t^{2} + 108 t + 60 $ & $f_{(0,3, 1, 2)}(t) = 16 t^{3} + 95 t^{2} + 162 t + 84 $ \\
 & $f_{(0,1, 1, 3)}(t) = 27 t^{3} + 148 t^{2} + 240 t + 120 $ & $f_{(0,3, 1, 3)}(t) = 45 t^{3} + 222 t^{2} + 336 t + 160 $ \\
 & $f_{(0,1, 2, 1)}(t) = 4 t^{3} + 33 t^{2} + 68 t + 40 $ & $f_{(0,3, 2, 1)}(t) = 8 t^{3} + 55 t^{2} + 102 t + 56 $ \\
 & $f_{(0,1, 2, 2)}(t) = 18 t^{3} + 104 t^{2} + 175 t + 90 $ & $f_{(0,3, 2, 2)}(t) = 30 t^{3} + 156 t^{2} + 245 t + 120 $ \\
 & $f_{(0,1, 2, 3)}(t) = 48 t^{3} + 235 t^{2} + 354 t + 168 $ & $f_{(0,3, 2, 3)}(t) = 72 t^{3} + 329 t^{2} + 472 t + 216 $ \\
 & $f_{(0,1, 3, 1)}(t) = 9 t^{3} + 60 t^{2} + 110 t + 60 $ & $f_{(0,3, 3, 1)}(t) = 15 t^{3} + 90 t^{2} + 154 t + 80 $ \\
 & $f_{(0,1, 3, 2)}(t) = 32 t^{3} + 165 t^{2} + 258 t + 126 $ & $f_{(0,3, 3, 2)}(t) = 48 t^{3} + 231 t^{2} + 344 t + 162 $ \\
 & $f_{(0,1, 3, 3)}(t) = 75 t^{3} + 342 t^{2} + 490 t + 224 $ & $f_{(0,3, 3, 3)}(t) = 105 t^{3} + 456 t^{2} + 630 t + 280 $ \\ 
\multicolumn{3}{l}{}  \\
\multicolumn{3}{l}{$f_{(0, 4, 4, 4)}(t) = 280 t^{3} + 1100 t^{2} + 1404 t + 585 $} \\
\multicolumn{3}{l}{$f_{(0, 5, 5, 5)}(t) = 585 t^{3} + 2170 t^{2} + 2640 t + 1056 $} \\
\multicolumn{3}{l}{$f_{(0, 6, 6, 6)}(t) = 1056 t^{3} + 3774 t^{2} + 4446 t + 1729 $} \\
\multicolumn{3}{l}{$f_{(0, 2, 2, 2, 2)}(t) = 120 t^{4} + 924 t^{3} + 2380 t^{2} + 2520 t + 945 $} \\
\multicolumn{3}{l}{$f_{(0, 3, 3, 3, 3)}(t) = 945 t^{4} + 5610 t^{3} + 11946 t^{2} + 10920 t + 3640 $} \\
\multicolumn{3}{l}{$f_{(0, 2, 1, 0, 2)}(t) = 24 t^{4} + 188 t^{3} + 498 t^{2} + 549 t + 216 $} \\
\multicolumn{3}{l}{$f_{(0, 2, 0, 1, 3, 2)}(t) = 640 t^{5} + 6278 t^{4} + 21941 t^{3} + 35474 t^{2} + 27108 t + 7938 $} \\
\multicolumn{3}{l}{$f_{(0, 1, 1, 2, 0, 3, 3)}(t) = 16200 t^{6} + 166377 t^{5} + 665262 t^{4} + 1349600 t^{3} + 1481796 t^{2} + 841320 t + 194040 $} \\
\end{tabular}
}
\label{tab:fs}
\end{table}

Let us prove Case~\eqref{prop-case:f-poly-recur-1}. Note that if $s$ is of length $2$, we have shown earlier that the recursive computation gives the expected result. We then assume that $s = (\tilde{s}, u, v)$ and $u \neq 0$. In this case, we prove that there is a bijection between pure intervals of $s$ and some marked pure intervals of $(\tilde{s}, u+v)$ and $(\tilde{s}, u + v - 1)$. Let $T$ be an $s$-decreasing tree and $A$ a subset of tree-ascents of $T$. Let us first suppose that $(n-1, n) \not\in A$. In this case, $(T,A)$ is sent to a marked pure interval of $s' = (\tilde{s}, u+v)$. We construct $T'$ by \emph{merging} the nodes $n$ and $n-1$. In $T$, $n-1$ has $u+1$ subtrees which become subtrees of the root in $T'$, at the former position of $n-1$. The root is relabeled $n-1$ in $T'$ and has now $u+v+1$ subtrees (it has lost one subtree and gained $u+1$). Any tree-ascent of $T$, $(a,c)$ with $a < c < n-1$ is still a tree-ascent in $T'$. Similarly, if $(a,n-1)$ was a tree-ascent in $T$, it is also a tree-ascent in $T'$ (except that now $n-1$ is the root). Finally, if $(a,n)$ was a tree-ascent in $T$ and $a \neq n-1$, then $(a,n-1)$ is a tree-ascent in $T'$ (whether $a$ was a descendant of $n-1$ or not). As we have by hypothesis that $(n-1, n) \not\in A$, we can \emph{keep} the selected tree-ascents of $T$ in $T'$. We get $A' = \lbrace (a,c) \in A; a < c \leq n-1 \rbrace \cup \lbrace (a,n-1); (a,n) \in A \rbrace$ with $|A'| = |A|$. 

This operation is surjective. To obtain a bijection, we need to mark $u+1$ consecutive edges from the root of $T'$: they give the subtrees and insertion position of the node $n-1$ in $T$. All tree-ascents of $T'$ are still tree-ascents in $T$: a tree-ascent $(a,n-1)$ in $T'$ corresponds either to $(a,n-1)$ in $T$ if $a$ belongs to first $u$ marked edges, otherwise it transforms into a $(a,n)$ tree-ascent. We thus obtain all pure intervals of $s$ where $(n-1, n) \not\in A$. As there are $v+1$ way to mark the edges of an $s'$-decreasing tree, we obtain that $f_s(t) = (v+1)f_{s'}(t) + f_s''$ where $f_s''$ enumerates the pure intervals of $s$ where $(n-1, n)$ is in $A$. This bijection is illustrated for $s = (0, 0, 0, 2, 3)$ in the first column of Figure~\ref{fig:fpoly-recurs1}: we show the $4$ possible markings on a given pure interval of $(0,0,0,5)$ and their pre-images. 

If $(n-1,n) \in A$, we send $(T,A)$ to a pure interval of $s'' = (\tilde{s}, u+v-1)$. As $(n-1,n)$ is a tree-ascent, we know that the last subtree of $n-1$ in $T$ is empty (this is true because $u \neq 0$). We then construct~$T''$ by \emph{deleting} this last subtree and then \emph{merging} the nodes $n-1$ and $n$. The new root in $T''$ now has $u + v$ subtrees. The same arguments as earlier work for tree-ascents different than $(n-1,n)$ and we can define $A'' = \lbrace (a,c) \in A; a < c \leq n-1 \rbrace \cup \lbrace (a,n-1); (a,n) \in A \text{ and } a \neq n-1 \rbrace$. As we \emph{loose} the tree-ascent $(n-1, n)$, we have $|A''| = |A| - 1$. Besides, to obtain a bijection we now need to mark $u$ consecutive edges of the root but we cannot mark the last $u$ edges. Indeed, we need $(n-1, n)$ to be a tree-ascent of $T$ which implies that $n-1$ can never be in the last subtree of $n$. There are $v$ possible ways to mark the edges of an $s''$-decreasing tree, we obtain $f_s'' = vt f_{s''}$. We illustrate the bijection for $s=(0,0,0,2,3)$ on the second column of Figure~\ref{fig:fpoly-recurs1}: we show the $3$ possible markings on given pure interval of $(0,0,0,4)$ and their pre-images.
\end{proof}


Looking at many different examples, we believe that the $s$-permutahedron can be realized as a polytopal complex for any weak composition $s$ (see Part~\ref{part_polytopal_conjectures}). Two essential conditions for this to hold are listed in the following theorem, which is the main result of Part~\ref{part_one} of this paper.

\begin{theorem}\label{sperm_combinatorial_complex}
For any weak composition $s$, the $s$-permutahedron $\Perm{s}$ is a combinatorial complex in the following sense. 
\begin{enumerate}
\item The face of a face is also a face: \\
If $I\in \Perm{s}$ is a face (a pure interval) and $J\subseteq I$ is a face (a pure interval) then $J\in \Perm{s}$.
\item The intersection of any two faces is also a face:\\
If $I,J\in \Perm{s}$ are two faces (pure intervals) then $I\cap J\in \Perm{s}$ is also a face (a pure interval).
\end{enumerate}
\end{theorem}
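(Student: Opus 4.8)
The plan is to treat the two assertions separately; the first is immediate and all the work lies in the second. For assertion~(1) there is, strictly speaking, nothing to prove: by Definition~\ref{def:s-perm} the complex $\Perm{s}$ is the collection of \emph{all} pure intervals of the $s$-weak order, so as soon as $J\subseteq I$ is a pure interval we have $J\in\Perm{s}$. (The genuinely geometric question of \emph{which} subintervals of a given face $I=[T,T+A]$ are its faces is a separate and subtler matter, since a pure interval need not be a cube --- for instance the two-dimensional interval in Figure~\ref{fig:pure-interval} is a pentagon; but this is not what assertion~(1) asks.)

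For assertion~(2), I would first use that the $s$-weak order is a lattice to express the set-theoretic intersection of $I=[T_1,T_1+A]$ and $J=[R_1,R_1+B]$ as an interval,
\[
I\cap J \;=\; \bigl[\,T_1\vee R_1,\ (T_1+A)\wedge(R_1+B)\,\bigr],
\]
which is nonempty exactly when $T_1\vee R_1\wole(T_1+A)\wedge(R_1+B)$; the empty case is trivial, so assume this holds. Writing $X:=T_1\vee R_1$ and $Y:=(T_1+A)\wedge(R_1+B)$, the goal is to show that $[X,Y]$ is pure. The next step is to make $X$ and $Y$ explicit at the level of tree-inversion cardinalities, using the join formula $\inv(T\vee R)=\tc{(\inv(T)\cup\inv(R))}$ together with the dual formula for the meet; packaging this explicit description of the minimum and maximum of $I\cap J$ is what I would record as Theorem~\ref{thm:intersection}.

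The decisive step is to feed this description into the characterization of pure intervals (Theorem~\ref{thm:pure-interval-char}). Concretely, I would study the \emph{variations} $\card_Y(c,a)-\card_X(c,a)\ge 0$ of the candidate interval $[X,Y]$ and isolate its \emph{essential variations}, then establish two things: first, that every essential variation $(a,c)$ is a tree-ascent of $X$, by checking conditions (i)--(iv) of Proposition~\ref{prop:tree-ascent-inversions} directly from the meet/join cardinality formulas; and second, that $Y$ equals $X$ together with exactly this set of essential tree-ascents, i.e.\ that all the remaining variations are precisely those forced by the transitive closure in the ``$+$'' operation. Corollary~\ref{cor:intersection} would then yield that $I\cap J$ is pure.

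I expect the main obstacle to be the second half of this last step: controlling the interplay between the transitive closure entering through the join at the bottom, the dual meet operation at the top, and the transitive closure built into the ``$+$'' operation, so as to certify that \emph{no spurious, non-pure variation} is created when the minimum is raised by $\vee$ and the maximum is simultaneously lowered by $\wedge$. The notions of variation and essential variation are designed precisely to separate the genuine increments --- honest tree-ascent rotations of $X$ --- from the transitivity-induced ones, so the real effort is the bookkeeping that shows the essential variations of $[X,Y]$ assemble into a planar, transitive, order-compatible family of tree-ascents of $X$. This is where I would concentrate the argument.
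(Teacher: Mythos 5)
Your outline follows the paper's own route: part~(1) is indeed immediate from Definition~\ref{def:s-perm}, and for part~(2) the paper likewise first writes $I\cap J=[T_1\vee R_1,\,(T_1+A)\wedge(R_1+B)]$ (Lemma~\ref{lem:lattice-interval-intersection}) and then verifies the hypotheses of the Characterization Theorem~\ref{thm:pure-interval-char} for this interval. So the strategy is right. Two corrections to the plan itself: it is the \emph{minimal} essential variations, not all essential variations, that correspond to the tree-ascents of the bottom element (Theorem~\ref{thm_min_essential_variations_ascents}; e.g.\ $(9,1)_0$ in Example~\ref{ex_variationstwo} is essential but is not an ascent of $A$); and Theorem~\ref{thm:intersection} is the statement that the intersection is pure, not the description of its endpoints.

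The substantive issue is that everything you defer as ``bookkeeping'' is the actual content, and the step you would need is not the one you name. There is no usable ``dual formula for the meet,'' and checking the tree-ascent conditions ``directly from the meet/join cardinality formulas'' is not how one gets control of $[X,Y]$. What the paper proves instead is an \emph{intersection stability} lemma (Lemma~\ref{lem:intersection-stability}): if $\card_{T_1}(b,a)=\card_{R_1}(b,a)=v$ then $\card_{T_1\vee R_1}(b,a)=v$, i.e.\ the join creates no spurious increments where the two bottoms agree; combined with the $+1$ property this yields $\Var([X,Y])=\Var(I)\cap\Var(J)$ (Proposition~\ref{prop:var-intersection}). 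The second genuine difficulty, which your plan does not anticipate, is that essentiality does not pass through intersection in either direction: a variation can be essential in $[X,Y]$ while being essential in only one of $I,J$ (the paper's example $(10,6)_2$ in Figure~\ref{fig:inter-pure-10}). Handling this requires the notion of \emph{compatible} variations (Definition~\ref{def:compatible-var}) and Propositions~\ref{prop:compatible-var} and~\ref{prop:union-essential-var}, after which Conditions~\eqref{cond:pure-interval-cb} and~\eqref{cond:pure-interval-ba} can be verified for $[X,Y]$. Without these ingredients the argument as proposed does not close; with them, it is exactly the paper's proof.
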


Part (1) of this theorem follows by definition. The intersection property in Part (2) is more involved and we need to develop some tools in order to prove it. First, we will give a complete characterization of pure intervals in Section~\ref{sec_characterization_pure_intervals} (Theorem~\ref{thm:pure-interval-char}), which will then be used to prove the intersection property in Section~\ref{sec_intersection_pure_intervals} (Theorem~\ref{thm:intersection}).


\section{Characterization of pure intervals}\label{sec_characterization_pure_intervals}
Although the definition of pure intervals is very simple and intuitive, we now aim to provide a characterization which is rather technical but conveniently useful for proofs (\Cref{thm:pure-interval-char}).
In order to do this, we need to introduce the notions of variations, essential variations, and minimal essential variations of an interval. 
Roughly speaking, the variations are the inversions that increase in the interval, while the minimal essential variations of a pure interval $[T_1,T_1+A]$ will characterize the tree-ascents $A$ (\Cref{thm_min_essential_variations_ascents}).   

\subsection{Variations and essential variations of an interval}
\begin{definition}[Variations]
\label{def:variations}
Let $T_1 \wole T_2$ be two $s$-decreasing trees for a given weak composition $s$. We say that a tree-inversion $(c,a)$ with $c > a$ \defn{varies} in the interval $[T_1,T_2]$ if $\card_{T_2}(c,a) > \card_{T_1}(c,a)$. In this case, we say that $(c,a)_v$ where $v = \card_{T_1}(c,a)$ is a \defn{variation} of $[T_1,T_2]$. We call $v$ the \defn{value} of the variation. The difference $\card_{T_2}(c,a) - \card_{T_1}(c,a)$ is the \defn{amplitude}. We sometimes omit the value $v$ and write that $(c,a)$ is a variation, meaning that there exists $v$ for which $(c,a)_v$ is a variation.

We say that $(c,a)_v$ is an \defn{essential variation} of $[T_1,T_2]$ if $(c,a)_v$ is a variation and there is no $b$ with $a < b < c$ such that $a$ belongs to a middle child of $b$ and $(c,b)$ varies. Finally, we say that an essential variation $(c,a)$ is \defn{minimal} if there is no $b$ with $a < b < c$ with $(b,a)$ an essential variation.

We write $\Var([T_1,T_2])$ the set of variations of the interval, and $\EVar(T_1,T_2)$ the set of essential variations.
\end{definition}

We present below two examples based on Figures~\ref{fig:pure-interval} and~\ref{fig:pure-interval10}. This can be computed using {\tt SageMath} as we show in~\cite{SageDemoII}.

\begin{example}\label{ex_variationsone}
On the pure interval of Figure~\ref{fig:pure-interval}, the variations with their respective values are $(3,1)_1$, $(3,2)_1$, $(4,1)_0$, $(4,2)_0$, $(4,3)_0$. They are all of amplitude $1$. Only $(3,2)_1$ and $(4,3)_0$ are essential variations. For example, $(4,2)_0$ is not an essential variation because $2$ belongs to the middle child of $3$ and $(4,3)$ varies. In this case, both essential variations are minimal and correspond to the tree-ascents in $A$ of the minimal tree. 
\end{example}

\begin{example}\label{ex_variationstwo}
The pure interval of Figure~\ref{fig:pure-interval10} has 15 variations, 10 of which are essential variations: $(4, 1)_0$, $(6, 2)_1$, $(9, 1)_0$, $(9, 4)_0$, $(9, 6)_1$, $(10, 1)_2$, $(10, 4)_2$, $(10, 5)_0$, $(10, 8)_0$, $(10, 9)_2$. For example, $(10,2)_2$ is a variation but not an essential variation. Besides, there are seven minimal essential variations: $(4, 1)_0$, $(6, 2)_1$, $(9, 4)_0$, $(9, 6)_1$, $(10, 5)_0$, $(10, 8)_0$, $(10, 9)_2$. For example, $(9, 1)_0$ is not minimal because $(4,1)_0$ is an essential variation. 
The seven minimal essential variations in this case also correspond to the tree-ascents in $A$ of the minimal tree. 
\end{example}

In \Cref{ex_variationsone,ex_variationstwo}, the minimal essential variations of both pure intervals $[T_1,T_1+A]$ correspond to the tree-ascents in $A$. This property holds in general and will be proved later on. 

\begin{theorem}[Minimal essential variations of pure intervals]
\label{thm_min_essential_variations_ascents}
The set of minimal essential variations of a pure interval $[T_1,T_1+A]$ is in correspondence with $A$.
More precisely, $(a,c)\in A$ if and only if there exists $v$ for which $(c,a)_v$ is a minimal essential variation.
\end{theorem}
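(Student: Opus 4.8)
The plan is to prove the equivalence by fixing a small value $a$ and showing that the tree-ascent $(a,c)\in A$ (if any) and the minimal essential variation ending in $a$ (if any) have the same top $c$, each existing exactly when the other does. Throughout I write $T_2=T_1+A$, let $M_A$ be the multiset obtained from $\inv(T_1)$ by raising $\card(c',a')$ by one for each $(a',c')\in A$, so that $\inv(T_2)=\tc{M_A}$ by \Cref{lem_maximal_equal_join}, and set $v=\card_{T_1}(c,a)$. Since cardinalities are bounded by $\maxs_s$, a variation $(c,a)_v$ has a unique value and automatically satisfies $v<s(c)$; I also recall from Remark~1.25 of \cite{CP22} that there is at most one tree-ascent $(a,\ast)$ of $T_1$ for each fixed $a$.

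For the forward implication I fix $(a,c)\in A$. Then $\card_{M_A}(c,a)=v+1$, so $(c,a)$ varies. It is essential for free: condition (iii) of \Cref{def:tree-ascent} guarantees that whenever $a$ is a descendant of some $b$ with $a<b<c$ it lies in the right descendant of $b$, hence $a$ never sits in a middle child of such a $b$ and the forbidden configuration of \Cref{def:variations} cannot occur. Minimality I will deduce from the backward implication: if some essential variation ended in $a$ with top $b<c$, then the essential variation ending in $a$ with \emph{smallest} top would be a minimal essential variation, which by the backward implication forces a tree-ascent $(a,b^\ast)\in A$ with $b^\ast<c$, contradicting the uniqueness of the ascent at $a$.

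The heart of the argument is the backward implication, which I phrase as: if $(c,a)$ is an essential variation and $(a,c)\notin A$, then $(c,a)$ is not minimal. Since $(a,c)\notin A$ gives $\card_{M_A}(c,a)=v$, the strict inequality $\card_{\tc{M_A}}(c,a)>v$ must be produced by a transitivity path $c=b_1>b_2>\dots>b_k=a$ with $k\ge 3$ and $\card_{M_A}(c,b_2)>v$ (\Cref{def:tc-tree-inversions}). Put $b=b_2$; the tail $b>b_3>\dots>a$ shows $\card_{T_2}(b,a)>0$, and transitivity of $T_1$ (\Cref{prop:tree-inversion-set}) gives $\card_{T_1}(c,a)\ge\card_{T_1}(c,b)$ whenever $\card_{T_1}(b,a)>0$. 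I then examine the position of $a$ below $b$ in $T_1$. If $a$ lies in a middle child of $b$, the displayed inequality forces $\card_{T_1}(c,b)\le v<\card_{M_A}(c,b)\le\card_{T_2}(c,b)$, so $(c,b)$ varies and $(c,a)$ is not essential, a contradiction. If $a$ lies in the right descendant of $b$, then $\card_{T_1}(b,a)=s(b)>0$ and $\card_{M_A}(c,b)\le\card_{T_1}(c,b)+1\le v+1$, forcing $\card_{M_A}(c,b)=v+1$, hence $(b,c)\in A$ with $\card_{T_1}(c,b)=v=\card_{T_1}(c,a)$; but then condition (iv) of \Cref{prop:tree-ascent-inversions} applied to the ascent $(b,c)$ yields $\card_{T_1}(c,a)>\card_{T_1}(c,b)$, a contradiction. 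Hence $\card_{T_1}(b,a)=0$, so $(b,a)$ varies, and it is automatically essential: any middle-child witness $b''$ for $(b,a)$ would satisfy $\card_{T_1}(b'',a)>0$ and $\card_{T_1}(b,b'')\ge\card_{T_1}(b'',a)>0$ by planarity, whence $\card_{T_1}(b,a)>0$ by transitivity, a contradiction. Thus $(b,a)$ is an essential variation with $a<b<c$ and $(c,a)$ is not minimal.

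Combining the two implications with the uniqueness of the ascent at $a$ and of the smallest-top essential variation ending in $a$ gives the desired bijection $(a,c)\mapsto (c,a)_v$ between $A$ and the set of minimal essential variations. The main obstacle lies entirely in the backward implication: one cannot read the variations of $T_2$ off $A$ directly because of the transitive closure, and a witnessing path may traverse ``augmented'' steps coming from ascents in $A$; the argument sidesteps a full analysis of these paths by extracting a single intermediate $b=b_2$ and resolving its position with the tree-ascent inequalities, the decisive point being the use of condition (iv) of \Cref{prop:tree-ascent-inversions} to eliminate the right-descendant case. A secondary point requiring care is the degenerate situation $s(a)=0$ (and more generally $s(b)=0$ along the path), where the ``left'' and ``right'' children coincide and condition (iv) is vacuous; there the position analysis must be carried out purely in terms of cardinalities, which is exactly how the argument above is framed.
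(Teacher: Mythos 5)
Your proof is correct, and it takes a genuinely different route from the paper. The paper never proves this theorem directly: it obtains it as a corollary of the full Characterization Theorem machinery, first showing that pure intervals are ``pure-candidate'' via the equality of variation paths and ascent paths (Lemmas~\ref{lem:middle-variation-pure}--\ref{lem:pure-var-path}), and then that minimal essential variations of pure-candidate intervals are tree-ascents whose rotations regenerate the interval (Propositions~\ref{prop:pure-candidate-minimal-ess-var} and~\ref{prop_transitive_min_essntial_variations}). You instead argue directly on the multiset $M_A$: the forward direction falls out of condition~(iii) of Definition~\ref{def:tree-ascent}, and for the backward direction you only extract the \emph{second} vertex $b=b_2$ of a transitivity path witnessing $\card_{\tc{M_A}}(c,a)>\card_{M_A}(c,a)$, then split on $\card_{T_1}(b,a)$ being $0$, strictly between $0$ and $s(b)$, or equal to $s(b)>0$ --- killing the middle case by essentiality of $(c,a)$, the top case by condition~(iv) of Proposition~\ref{prop:tree-ascent-inversions}, and producing the witness $(b,a)_0$ (automatically essential by planarity plus transitivity of $\inv(T_1)$) in the zero case. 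Your handling of the non-exhaustive-looking tree-position language by reformulating the cases purely in terms of cardinalities is exactly the right fix, and deducing minimality in the forward direction from the backward implication plus the uniqueness of the ascent $(a,\ast)$ is clean and non-circular. What each approach buys: yours is shorter and self-contained, avoiding variation paths and ascent paths entirely; the paper's heavier detour produces reusable tools (Proposition~\ref{prop_variations_essvariations_pureintervals} and the middle-variation properties of Proposition~\ref{prop:middle-b}) that are indispensable later for the Intersection Theorem, so the theorem comes essentially for free once that machinery exists. One trivial slip: $\inv(T_1+A)=\tc{M_A}$ holds by the definition of $T+A$ given before Definition~\ref{def:pure-intervals}, not by Lemma~\ref{lem_maximal_equal_join} (which asserts something else, namely that $T+A$ is the join of the single rotations).
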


\subsection{The Characterization Theorem}
In order to characterize the variation sets of pure intervals we need some further properties. 

\begin{lemma}
\label{lem:essential-var-middle-child}
For all $a < c$ such that $(c,a)_v$ is a variation but not an essential variation, there is a unique $b > a$ such that $(c,b)_v$ is an essential variation and $a$ belongs to a middle child of $b$.
\end{lemma}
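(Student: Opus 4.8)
The plan is to turn the (purely existential) definition of ``not essential'' into an explicit extremal choice. Throughout I work in the minimal tree $T_1$ of the interval $[T_1,T_2]$. The one geometric input I would isolate first is the following: if a node lies in the subtree of $b$ then, relative to any larger node $c$, it occupies the same region as $b$. Concretely, if $a$ is a descendant of $b$ with $a<b<c$, the whole subtree of $b$ sits inside a single one of the regions (left of $c$), $T_0^c,\dots,T_{s(c)}^c$, (right of $c$), and therefore
\[
\card_{T_1}(c,a)=\card_{T_1}(c,b).
\]
(Equivalently, since $a$ lies in a middle child of $b$ we have $0<\card_{T_1}(b,a)<s(b)$, and this identity is the content of the transitivity and planarity conditions of Definition~\ref{def:multi-sets} for the triple $a<b<c$.) This is exactly what makes the value $v$ in the statement unambiguous: for every $b$ with $a$ in a middle child of $b$ one automatically gets $\card_{T_1}(c,b)=\card_{T_1}(c,a)=v$, so ``$(c,b)$ varies'' is the same as ``$(c,b)_v$ is a variation''.

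Next I would introduce the candidate set
\[
S:=\{\, b : a<b<c,\ a \text{ lies in a middle child of } b,\ (c,b)\text{ varies} \,\}.
\]
By Definition~\ref{def:variations}, the hypothesis that $(c,a)_v$ is a variation that is \emph{not} essential says precisely that $S\neq\varnothing$. Every $b\in S$ is an ancestor of $a$, so $S$ is a finite chain for the ancestor order, which agrees with the order of the labels; in particular $\max S$ exists, and I claim $b:=\max S$ is the unique node required by the lemma.

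The combinatorial heart is a propagation statement along the ancestors of $a$. Set $B:=\{b:a<b<c,\ a\text{ in a middle child of }b\}$, so that $S\subseteq B$ and $B$ is again a chain. For $b,b''\in B$ with $b<b''$, the node $b$ lies on the path from $a$ up to $b''$; since $a$ sits in a middle child $T_i^{b''}$ with $0<i<s(b'')$, every node of that path, and in particular $b$, lies in $T_i^{b''}$, i.e.\ $b$ lies in a middle child of $b''$. Conversely, if $b\in B$ and some $b''$ with $b<b''<c$ has $b$ in a middle child of $b''$, then by the region identity above $a$ lies in that same middle child of $b''$, so $b''\in B$. Hence, for a fixed $b\in S$, the nodes $b''$ witnessing the \emph{non}-essentiality of $(c,b)$ (namely $b<b''<c$ with $b$ in a middle child of $b''$ and $(c,b'')$ varying) are exactly the elements of $S$ that exceed $b$. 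Therefore $(c,b)_v$ is an essential variation if and only if no element of $S$ is larger than $b$, i.e.\ if and only if $b=\max S$.

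This delivers both conclusions at once. For existence, $b=\max S$ lies in $S$, so $a$ is in a middle child of $b$ and $(c,b)_v$ is a variation, which is essential by the previous paragraph. For uniqueness, any $b$ meeting the three requirements of the lemma (namely $a$ in a middle child of $b$, $b<c$, and $(c,b)_v$ an essential variation) automatically lies in $S$ and is essential, hence equals $\max S$. The only genuinely non-formal ingredient is the region identity $\card_{T_1}(c,a)=\card_{T_1}(c,b)$; I expect that to be the point to state carefully, and I would prove it directly from the definition of $\card$ (each region around $c$ is a union of subtrees, hence closed under taking descendants) rather than leaning on any single transitivity or planarity inequality in isolation.
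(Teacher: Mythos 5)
Your proof is correct and follows essentially the same route as the paper: both pick the \emph{maximal} node $b$ with $a$ in a middle child of $b$ and $(c,b)$ varying, and both rest on the same propagation fact — that membership in a region (in particular a middle child) relative to a higher node passes from $b$ to all its descendants, which simultaneously shows the maximal witness is essential and that any smaller witness cannot be. Your packaging via the sets $S$ and $B$ and the explicit ``region identity'' $\card_{T_1}(c,a)=\card_{T_1}(c,b)$ is just a more structured write-up of the paper's two-paragraph argument.
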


\begin{proof}
Let us suppose that $(c,a)_v$ is a variation but not an essential variation. We know there is $b$ such that $a$ belongs to a middle child of $b$ and $(c,b)$ varies. We take such a $b$ to be maximal. Note that as $a$ is a descendant of $b$, then $\card_{T_1}(c,b) = \card_{T_1}(c,a) = v$. As $b$ is maximal, there is no $b' > b$ such that $(c,b')$ varies and $a$ belongs to a middle child of $b'$. This means that there is no $b' > b$ such that $(c,b')$ varies and $b$ belongs to a middle child of $b'$ (because $a$ is a descendant of $b$). Therefore, $(c,b)$ is an essential variation.

Now, let us suppose we have $c > b_1 > b_2 > a$ such that $(c,b_1)_v$ is an essential variation and $a$ belongs to a middle child of $b_1$. If $a$ is a descendant of $b_2$, then $b_2$ belongs to the same middle child of $b_1$ as $a$.
Therefore, $(c,b_2)$ is not an essential variation.
This proves the uniqueness part.
\end{proof}

For example, there are three non-essential variations on the pure interval of Figure~\ref{fig:pure-interval}. For $(3,1)_1$ the unique essential variation is $(3,2)_1$. For both $(4,2)_0$ and $(4,1)_0$, the unique essential variation is $(4,3)_0$. 

\begin{lemma}[Transitivity]
\label{lem:trans}
Let $[T_1, T_2]$ be an interval. Suppose that we have $c > b > a$ with $(c,b)$ and $(b,a)$ variations. Then $(c,a)$ also varies.
\end{lemma}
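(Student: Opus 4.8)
The plan is to work entirely with the tree-inversion sets $I:=\inv(T_1)$ and $J:=\inv(T_2)$. Since $T_1\wole T_2$ we have $I\subseteq J$, and by Proposition~\ref{prop:tree-inversion-set} both are transitive in the sense of Definition~\ref{def:multi-sets}. In this language the hypotheses read $\card_J(c,b)>\card_I(c,b)$ and $\card_J(b,a)>\card_I(b,a)$, and the goal is $\card_J(c,a)>\card_I(c,a)$.

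The first step is to squeeze $\card_J(c,a)$ from below using transitivity of $J$. Because $\card_J(b,a)>\card_I(b,a)\ge 0$, the triple $a<b<c$ satisfies $\card_J(b,a)>0$, so transitivity of $J$ forces $\card_J(c,a)\ge\card_J(c,b)$. Together with $\card_J(c,b)>\card_I(c,b)$ this yields $\card_J(c,a)>\card_I(c,b)$. It therefore suffices to prove the purely one-tree inequality $\card_I(c,a)\le\card_I(c,b)$, since chaining the two gives $\card_J(c,a)>\card_I(c,b)\ge\card_I(c,a)$, which is exactly the statement that $(c,a)$ varies.

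To prove $\card_I(c,a)\le\card_I(c,b)$ I would first use the variation of $(b,a)$ to place $a$ relative to $b$ inside $T_1$. Since $\card_{T_1}(b,a)=\card_I(b,a)<\card_J(b,a)\le s(b)$ (the last bound because $J\subseteq\maxs_s$), the node $a$ is neither in the right child of $b$ nor to the right of $b$; that is, $a$ lies weakly to the left of $b$. If $a$ is a descendant of $b$, then the whole subtree rooted at $b$ occupies a single position relative to $c$ and hence $\card_{T_1}(c,a)=\card_{T_1}(c,b)$. Otherwise $a$ is strictly to the left of $b$, and I would invoke the monotonicity of $\card_{T_1}(c,\cdot)$ along the left-to-right order of the planar tree to conclude $\card_{T_1}(c,a)\le\card_{T_1}(c,b)$.

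The hard part will be this last monotonicity claim, which is visually evident but deserves a careful argument from the planar structure. I would establish it by letting $d$ be the lowest common ancestor of $a$ and $b$, so that $a$ and $b$ sit in distinct children $T_p^d$ and $T_q^d$ of $d$ with $p<q$, and then checking the finitely many positions of $c$: when $c$ is a weak ancestor of $d$ both cardinalities equal $\card_{T_1}(c,d)$, and in every remaining position a direct computation shows that $a$ is weakly to the left of $b$ as seen from $c$, so that $\card_{T_1}(c,a)\le\card_{T_1}(c,b)$. This case check is the only place where genuine tree geometry enters; the rest is bookkeeping with the multiset definitions.
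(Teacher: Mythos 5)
Your argument is correct and its skeleton is exactly the paper's: the chain $\card_{T_2}(c,a)\ge\card_{T_2}(c,b)>\card_{T_1}(c,b)\ge\card_{T_1}(c,a)$, with the first inequality from transitivity of $\inv(T_2)$ applied to $\card_{T_2}(b,a)>0$. The only divergence is in how you obtain the last inequality $\card_{T_1}(c,a)\le\card_{T_1}(c,b)$: you re-derive it from the planar tree geometry (descendant case plus a lowest-common-ancestor case check), whereas the paper simply observes that $\card_{T_1}(b,a)<s(b)$ and invokes the planarity condition of Definition~\ref{def:multi-sets}, which holds for every tree-inversion set by Proposition~\ref{prop:tree-inversion-set}. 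So the ``hard part'' you flag is not actually open: it is precisely the planarity axiom, and your whole final paragraph can be replaced by a one-line citation. Your geometric case analysis is sound (and is essentially the argument that tree-inversion sets are planar in the first place), so nothing is wrong --- it is just redundant given the toolkit the paper has already set up.
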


\begin{proof}
This is immediate. Indeed, we have by hypothesis $\card_{T_1}(b,a) < s(b)$, we obtain by planarity (Definition~\ref{def:multi-sets}) that $\card_{T_1}(c,a) \leq \card_{T_1}(c,b)$. As both $(c,b)$ and $(b,a)$ vary, we obtain $\card_{T_2}(c,b) > \card_{T_1}(c,b)$ and $\card_{T_2}(b,a) > \card_{T_1}(b,a) \geq 0$ which gives $\card_{T_2}(c,a) \geq \card_{T_2}(c,b) > \card_{T_1}(c,b) \geq \card_{T_1}(c,a)$ by transitivity.
\end{proof}

\begin{definition}[$+1$-interval]
\label{def:plus-one-interval}
An interval $[T_1,T_2]$ is said to be a \defn{$+1$-interval} if all variations have amplitude one, \emph{i.e.}, for all $a < b$ we have
\begin{equation}
\card_{T_1}(b,a) \leq \card_{T_2}(b,a) \leq \card_{T_1}(b,a) + 1.
\end{equation}
\end{definition}

\begin{lemma}
\label{lem:plus-one}
Every pure interval is a $+1$-interval.
\end{lemma}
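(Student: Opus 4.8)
The plan is to work directly with tree-inversion multisets. Write $T_2 = T_1+A$ and recall that, by definition, $\inv(T_2)=\tc{M_A}$, where $M_A$ is obtained from $\inv(T_1)$ by increasing $\card_{T_1}(c,a)$ by one for each tree-ascent $(a,c)\in A$. Since $A$ is a set, each entry is increased at most once, so $\card_{M_A}(c,a)\le \card_{T_1}(c,a)+1$ for all $a<c$. As $T_1\wole T_2$ already gives $\card_{T_2}(c,a)\ge \card_{T_1}(c,a)$, the whole statement reduces to showing that the transitive closure never pushes an entry above $\card_{T_1}(c,a)+1$. For this I would unwind Definition~\ref{def:tc-tree-inversions}: $\card_{T_2}(c,a)=\card_{\tc{M_A}}(c,a)$ is the maximum of $\card_{M_A}(c,b)$ taken over all transitivity paths $c=b_1>b_2>\dots>b_k=a$ in $M_A$, where $b=b_2$ is the first intermediate value. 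Hence it suffices to bound $\card_{M_A}(c,b)$ by $\card_{T_1}(c,a)+1$ for every value $b$ sitting at the top of such a path.

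Next I would fix $a<c$ and such a value $b$, together with a chain $b=d_1>d_2>\dots>d_j=a$ with $\card_{M_A}(d_i,d_{i+1})>0$ for all $i$. The key claim is the monotonicity $\card_{T_1}(c,d_1)\le \card_{T_1}(c,d_2)\le\dots\le \card_{T_1}(c,d_j)$, that is, $\card_{T_1}(c,b)\le \card_{T_1}(c,a)$. I would prove this one step at a time, distinguishing two cases for each edge $d_i>d_{i+1}$. If $\card_{T_1}(d_i,d_{i+1})>0$, then transitivity of $\inv(T_1)$ (Proposition~\ref{prop:tree-inversion-set}) applied to the triple $d_{i+1}<d_i<c$ yields $\card_{T_1}(c,d_i)\le \card_{T_1}(c,d_{i+1})$. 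If instead $\card_{T_1}(d_i,d_{i+1})=0$, then since $\card_{M_A}(d_i,d_{i+1})>0$ the entry must have been increased, so $(d_{i+1},d_i)$ is a tree-ascent in $A$; condition~(\ref{cond:inv-tree-ascent-desc}) of Proposition~\ref{prop:tree-ascent-inversions}, applied with the ancestor $c>d_i$, forces the equality $\card_{T_1}(c,d_i)=\card_{T_1}(c,d_{i+1})$. In both cases $\card_{T_1}(c,d_i)\le \card_{T_1}(c,d_{i+1})$, and chaining over $i$ gives the claim.

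With the monotonicity in hand the conclusion is immediate: $\card_{M_A}(c,b)\le \card_{T_1}(c,b)+1\le \card_{T_1}(c,a)+1$, and taking the maximum over all admissible first values $b$ gives $\card_{T_2}(c,a)\le \card_{T_1}(c,a)+1$. Combined with $\card_{T_2}(c,a)\ge \card_{T_1}(c,a)$, this is exactly the $+1$-interval condition of Definition~\ref{def:plus-one-interval}.

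I expect the only delicate point to be the ``bumped'' edges of the transitivity path, namely the steps where $\card_{T_1}(d_i,d_{i+1})=0$ but $\card_{M_A}(d_i,d_{i+1})=1$. These are precisely the steps where transitivity of $\inv(T_1)$ gives no information, and a priori the transitive closure could there raise a cardinality by two; the point that makes everything work is that such a step is always a genuine tree-ascent of $T_1$, so the tree-ascent characterization (Proposition~\ref{prop:tree-ascent-inversions}~(\ref{cond:inv-tree-ascent-desc})) pins $\card_{T_1}(c,\cdot)$ to be constant across that step rather than decreasing. As an alternative, one could attempt to derive the statement from Lemma~\ref{lem_maximal_equal_join} by expressing $T_2$ as a join of single rotations (each of which is visibly a $+1$-interval), but controlling amplitudes through the transitive closure of the union seems to require essentially the same case analysis, so I would favor the direct argument above.
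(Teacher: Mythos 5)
Your proof is correct and follows essentially the same route as the paper's: both analyze a transitivity path in the bumped multiset $M_A$, handle the edges already present in $\inv(T_1)$ via transitivity of the inversion set, and handle the newly created edges by observing that they are tree-ascents whose condition~(i) in Proposition~\ref{prop:tree-ascent-inversions} keeps $\card_{T_1}(c,\cdot)$ constant across that step. The only difference is organizational: you run the argument forward as a monotonicity statement $\card_{T_1}(c,b)\leq\card_{T_1}(c,a)$ along the path, whereas the paper argues by contradiction and extracts a shorter transitivity path inside $\inv(T_1)$.
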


\begin{proof}
Let $[T,T+A]$ be a pure interval and take $b > a$. Suppose that $\card_T(b,a) = v$ and $\card_{T+A}(b,a) = v + k$ with $k > 1$. Let $S$ be the multi-set of inversions such that $\card_S(c,a) = \card_T(c,a) + 1$ if $(a,c) \in A$ and otherwise $\card_S(c,a) = \card_T(c,a)$. In particular, $\tc{S}$ is the tree-inversion set of $T+A$. There is a transitivity path in $S$ 
\begin{equation*}
b = b_1 > b_2 > \dots > b_k = a
\end{equation*}
with $\card_S(b_1, b_2) = v + k$ and $\card_S(b_i, b_{i+1}) > 0$. This path does not exist as such in $\inv(T)$ because $\inv(T)$ is transitive and $\card_T(b,a) = v$. Besides, by definition of $S$, we have $\card_T(b_i,b_{i+1}) \leq \card_S(b_i,b_{i+1}) \leq \card_T(b_i,b_{i+1}) +1$. This gives us in particular
\begin{equation}
\card_T(b_1, b_2) \geq v + k - 1 > v
\end{equation}
because $k > 1$ by hypothesis. This implies that there exist $i$ such that $\card_T(b_i, b_{i+1}) = 0$. We take the minimal $i$ that satisfies this property ($b_i$ is as close of $b$ as possible). In particular, by transitivity, $\card_T(b,b_i) \geq v + k - 1 > v$. By definition of $S$, we have that $(b_{i+1}, b_i)$ is a tree-ascent of $T$ which belongs to~$A$. By Condition \eqref{cond:tree-ascent-desc} of Definition~\ref{def:tree-ascent}, it means that $b_{i+1}$ is a descendant of $b_i$. In particular, they belong to the same subtree of $b$ and we have $\card_T(b,b_{i+1}) = \card_T(b,b_i) > v$.

Now, if $j > i$ with $\card_T(b_j, b_{j+1}) = 0$, by taking the minimal $j$ satisfying this property, we prove in a similar way that $\card_T(b,b_{j+1}) > v$. By induction, we obtain that for all $i$ with  $\card_T(b_i, b_{i+1}) = 0$, we have $\card_T(b,b_{i+1}) > v$. Let $i'$ be the maximal value satisfying the property. We have that

\begin{equation*}
b = b_1 > b_{i'} > b_{i'+1} > \dots > b_k = a
\end{equation*}

is a transitive path in $\inv(T)$, which implies that $\card_T(b,a) > v$ and leads to a contradiction. 
\end{proof}

Lemma~\ref{lem:plus-one} is not an equivalence; see Figure~\ref{fig:not-pure-interval} for an example of a $+1$-interval that is not a pure interval. The main result of this section is the following characterization of pure intervals.

\begin{theorem}[Characterization Theorem of pure intervals]
\label{thm:pure-interval-char}
Let $T_1$ and $T_2$ be two $s$-decreasing trees for a given weak composition $s$, then $[T_1,T_2]$ is a pure interval if and only if it is a $+1$-interval and for all $a < b < c$
\begin{enumerate}
\item if $(c,a)_v \in \Var([T_1,T_2])$ and $(b,a)_w \in \Var([T_1,T_2])$ then $(c,b)_v \in \Var([T_1,T_2])$;
\label{cond:pure-interval-cb}
\item if $(c,a)_v \in \EVar([T_1,T_2])$ and $(c,b)_v \in \EVar([T_1,T_2])$ and $s(b) \neq 0$ then $(b,a)_0 \in \Var([T_1,T_2])$.
\label{cond:pure-interval-ba}
\end{enumerate}
\end{theorem}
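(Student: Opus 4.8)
The statement is an equivalence, so the plan is to treat the two implications separately, the backward one being the substantial part. Observe first that the $+1$ condition is not a genuinely extra hypothesis on the ``pure'' side: by \Cref{lem:plus-one} every pure interval is a $+1$-interval, so in the forward direction only conditions~\eqref{cond:pure-interval-cb} and~\eqref{cond:pure-interval-ba} remain to be checked, whereas in the backward direction the $+1$ hypothesis is available throughout.

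For the forward direction I would work with the explicit description of the top tree. By \Cref{lem_maximal_equal_join}, together with the fact that rotating an ascent $(a,c)$ raises $\card(c,a')$ by one precisely for $a'$ equal to $a$ or a non-left descendant of $a$, the inversion set of $T_1+A$ is the transitive closure $\tc{M_A}$ of the multiset $M_A$ obtained from $\inv(T_1)$ by adding one to each $\card(c,a)$ with $(a,c)\in A$. Condition~\eqref{cond:pure-interval-cb} then follows from an analysis of the positive transitivity paths realizing the increases in $\tc{M_A}$: from variations $(c,a)$ and $(b,a)$ one reroutes the path witnessing the increase of $(c,a)$ through $b$, using that $\card_{T_2}(b,a)>0$ and the transitivity of $\inv(T_1)$ and $\inv(T_2)$, to witness an increase of $(c,b)$. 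Condition~\eqref{cond:pure-interval-ba} is the subtler one: two essential variations $(c,a)_v$ and $(c,b)_v$ of the same value force $a$ and $b$ into the same child of $c$, and the requirement that the node rotated by an ascent have an empty strict right child (condition~\eqref{cond:tree-ascent-smaller} of \Cref{def:tree-ascent}) is what makes $(b,a)$ increase from $0$; this is where the hypothesis $s(b)\neq 0$ enters.

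For the backward direction, assume $[T_1,T_2]$ is a $+1$-interval satisfying~\eqref{cond:pure-interval-cb} and~\eqref{cond:pure-interval-ba}, and take $A$ to be the set of tree-ascents $(a,c)$ for which some $(c,a)_v$ is a minimal essential variation. Step~1 is to verify that each such $(a,c)$ is genuinely a tree-ascent of $T_1$ by checking the four conditions of \Cref{prop:tree-ascent-inversions}: \eqref{cond:inv-tree-ascent-non-final} is immediate from $+1$, while \eqref{cond:inv-tree-ascent-desc}, \eqref{cond:inv-tree-ascent-middle} and~\eqref{cond:inv-tree-ascent-smaller} must be extracted from minimality and essentiality combined with~\eqref{cond:pure-interval-cb} and~\eqref{cond:pure-interval-ba}; in particular the empty-strict-right-child condition~\eqref{cond:inv-tree-ascent-smaller} is obtained from~\eqref{cond:pure-interval-ba} and \Cref{lem:essential-var-middle-child} by a short case analysis on the ancestry of the putative right child. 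Step~2 is to show $T_1+A=T_2$, equivalently $\Var(T_1+A)=\Var([T_1,T_2])$. Here the key is a closure description, valid for any $+1$-interval satisfying the two conditions: $\Var$ is the closure of its minimal essential variations under transitivity (\Cref{lem:trans}) and middle-child spreading (if $(c,b)$ varies and $a$ lies in a middle child of $b$ then $(c,a)$ varies, which needs only $+1$ and the transitivity of the endpoint trees). The inclusion of the closure into $\Var$ is exactly these two rules; the reverse inclusion is an induction on the span $c-a$, reducing a non-essential variation to an essential one via \Cref{lem:essential-var-middle-child}, and splitting a non-minimal essential variation $(c,a)$, via~\eqref{cond:pure-interval-cb} applied to $(c,a)$ and a witnessing essential $(b,a)$, into the smaller-span variations $(c,b)$ and $(b,a)$ that recombine by \Cref{lem:trans}. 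Since this closure depends only on $T_1$ and the minimal essential variations, and since $[T_1,T_1+A]$ is a $+1$-interval satisfying the two conditions (by the forward direction) with minimal essential variations exactly $A$ (\Cref{thm_min_essential_variations_ascents}), the two intervals have the same variation set, giving $T_1+A=T_2$.

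The main obstacle I expect is Step~1 of the backward direction: translating the rather soft hypotheses~\eqref{cond:pure-interval-cb}, \eqref{cond:pure-interval-ba} and the notions of minimality and essentiality into the four precise inversion inequalities of \Cref{prop:tree-ascent-inversions}. Condition~\eqref{cond:inv-tree-ascent-middle} in particular demands care, since one must rule out the configuration in which $a$ lies in a middle child of some $b$ with $\card(c,b)=\card(c,a)$ while $(c,b)$ does not vary; it is exactly conditions~\eqref{cond:pure-interval-cb} and~\eqref{cond:pure-interval-ba} that should forbid this, and making that argument airtight is the crux. A secondary point to keep clean is the dependence on \Cref{thm_min_essential_variations_ascents}, which must be arranged so as not to be circular with respect to this characterization.
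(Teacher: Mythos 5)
Your overall architecture coincides with the paper's: the equivalence is split into ``pure implies pure-candidate'' (\Cref{prop:pure-to-pure-candidate}) and ``pure-candidate implies pure'' (\Cref{prop:pure-candidate-to-pure}), and the latter into showing that minimal essential variations are tree-ascents (\Cref{prop:pure-candidate-minimal-ess-var}) and that all variations are generated from them (\Cref{prop_transitive_min_essntial_variations}); your ``closure under transitivity and middle-child spreading'' is essentially the paper's variation path (\Cref{def:variation-path}, \Cref{lem:variation-path-prop}). However, Step~2 of your backward direction has a genuine gap as written. You conclude $T_1+A=T_2$ by comparing the closures of the minimal essential variations of $[T_1,T_2]$ and of $[T_1,T_1+A]$, which requires knowing that the minimal essential variations of $[T_1,T_1+A]$ are \emph{exactly} $\{(c,a):(a,c)\in A\}$. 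That is \Cref{thm_min_essential_variations_ascents}, and it is not a formality: to check that each $(a,c)\in A$ stays minimal and essential in $[T_1,T_1+A]$, and that no new minimal essential variations appear, one must already know precisely which pairs vary in $[T_1,T_1+A]$ --- this is the ascent-path characterization (\Cref{def:ascent-path}, \Cref{lem:ascent-path-variation}, \Cref{lem:pure-var-path}), which your forward-direction sketch does not supply. You call the circularity ``secondary,'' but it is where the work hides. The paper avoids the detour: in \Cref{prop_transitive_min_essntial_variations} it shows $T_2\wole T_1+A$ directly, because the variation path of any variation of $[T_1,T_2]$ consists of minimal essential variations followed by at most one middle-child step and is therefore literally a transitivity path in the multiset defining $T_1+A$; the reverse inequality $T_1+A\wole T_2$ is immediate. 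I recommend that route.

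The second point to repair is the forward direction of Condition~\eqref{cond:pure-interval-ba}, which you compress to one sentence but which is the most delicate part of the whole theorem. Knowing that $a$ and $b$ lie in the same child of $c$ and that rotated nodes have empty strict right children does not by itself make $(b,a)$ vary from $0$: one must exclude the cases $\card_{T_1}(b,a)=s(b)$ and $b$ sitting in a middle child of an intermediate node, and the paper does this by first establishing the trichotomy of \Cref{lem:middle-variation-pure} for ascent paths and then running a minimality induction along the variation path from $c$ to $b$ that repeatedly invokes Statement~\eqref{cond:inv-tree-ascent-smaller} of \Cref{prop:tree-ascent-inversions}. Your plan names the right ingredient but not the argument; since the same ascent-path machinery is what rescues Step~2 above, developing it explicitly is unavoidable.
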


Note that for Condition~\eqref{cond:pure-interval-ba}, we require that $\card_{T_1}(c,a) = \card_{T_1}(c,b)$.

\begin{example}
You can check that the two conditions of \Cref{thm:pure-interval-char} are satisfied on the interval of Figure~\ref{fig:pure-interval10}. For~\eqref{cond:pure-interval-cb}, you can see for example that both $(10,4)_2$ and $(9,4)_0$ are variations and indeed $(10,9)_2$ is also a variation. Similarly, $(9,1)_0$ and $(4,1)_0$ are essential variations, and $(9,4)_0$ is a variation.

Besides \eqref{cond:pure-interval-ba} implies that as $(10,4)_0$ and $(10,9)_0$ are essential variations and $s(9) > 0$, then $(9,4)_0$ is a variation. Similarly, as $(9,1)_0$ and $(9,4)_0$ are essential variations and $s(4) > 0$, this implies that $(4,1)_0$ is a variation. Note that we also have that $(10,8)_0$ and $(10,5)_0$ are essential variations, but in this case $(8,5)_0$ is not because $s(8) = 0$. Finally, you can see that~\eqref{cond:pure-interval-ba} is not satisfied on variations: $(10,4)_2$ and $(10,3)_2$ are variations and $s(4) > 0$ but $(4,3)_0$ is not a variation. 
\end{example}

\begin{example}
On the other hand, Figure~\ref{fig:not-pure-interval} shows an interval, \emph{i.e.}, two trees $T_1 \wole T_2$, such that $[T_1, T_2]$ is a $+1$-interval but not a pure interval. In other words, $T_2$ is not obtained by increasing the cardinality of a subset of the tree-ascents of $T_1$. In this example, both conditions of \Cref{thm:pure-interval-char} are not satisfied. Indeed, for~\eqref{cond:pure-interval-cb}, $(4,1)_0$ and $(3,1)_1$ are variations but $(4,3)_0$ is not. For~\eqref{cond:pure-interval-ba}, $(5,4)_0$ and $(5,3)_0$ are essential variations and $s(4) > 0$ but $(4,3)_0$ is not a variation.
\end{example}

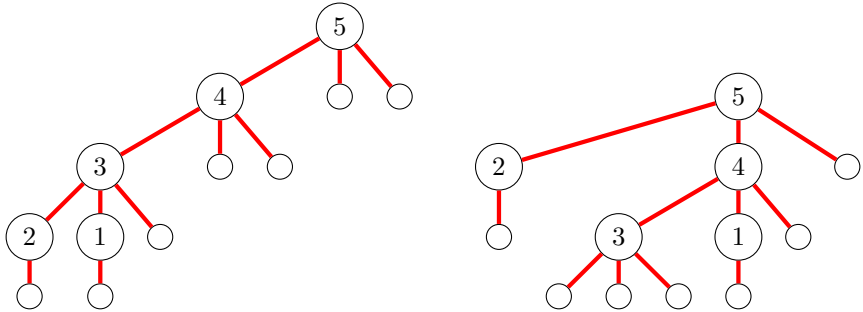
\begin{figure}[ht]
\begin{tabular}{cc}
{ \newcommand{\nodea}{\node[draw,circle] (a) {$5$}
;}\newcommand{\nodeb}{\node[draw,circle] (b) {$4$}
;}\newcommand{\nodec}{\node[draw,circle] (c) {$3$}
;}\newcommand{\noded}{\node[draw,circle] (d) {$2$}
;}\newcommand{\nodee}{\node[draw,circle] (e) {$$}
;}\newcommand{\nodef}{\node[draw,circle] (f) {$1$}
;}\newcommand{\nodeg}{\node[draw,circle] (g) {$$}
;}\newcommand{\nodeh}{\node[draw,circle] (h) {$$}
;}\newcommand{\nodei}{\node[draw,circle] (i) {$$}
;}\newcommand{\nodej}{\node[draw,circle] (j) {$$}
;}\newcommand{\nodeba}{\node[draw,circle] (ba) {$$}
;}\newcommand{\nodebb}{\node[draw,circle] (bb) {$$}
;}\begin{tikzpicture}[auto]
\matrix[column sep=.3cm, row sep=.3cm,ampersand replacement=\&]{
         \&         \&         \&         \&         \& \nodea  \&         \\ 
         \&         \&         \& \nodeb  \&         \& \nodeba \& \nodebb \\ 
         \& \nodec  \&         \& \nodei  \& \nodej  \&         \&         \\ 
 \noded  \& \nodef  \& \nodeh  \&         \&         \&         \&         \\ 
 \nodee  \& \nodeg  \&         \&         \&         \&         \&         \\
};

\path[ultra thick, red] (d) edge (e)
	(f) edge (g)
	(c) edge (d) edge (f) edge (h)
	(b) edge (c) edge (i) edge (j)
	(a) edge (b) edge (ba) edge (bb);
\end{tikzpicture}} &
{ \newcommand{\nodea}{\node[draw,circle] (a) {$5$}
;}\newcommand{\nodeb}{\node[draw,circle] (b) {$2$}
;}\newcommand{\nodec}{\node[draw,circle] (c) {$$}
;}\newcommand{\noded}{\node[draw,circle] (d) {$4$}
;}\newcommand{\nodee}{\node[draw,circle] (e) {$3$}
;}\newcommand{\nodef}{\node[draw,circle] (f) {$$}
;}\newcommand{\nodeg}{\node[draw,circle] (g) {$$}
;}\newcommand{\nodeh}{\node[draw,circle] (h) {$$}
;}\newcommand{\nodei}{\node[draw,circle] (i) {$1$}
;}\newcommand{\nodej}{\node[draw,circle] (j) {$$}
;}\newcommand{\nodeba}{\node[draw,circle] (ba) {$$}
;}\newcommand{\nodebb}{\node[draw,circle] (bb) {$$}
;}\begin{tikzpicture}[auto]
\matrix[column sep=.3cm, row sep=.3cm,ampersand replacement=\&]{
         \&         \&         \&         \& \nodea  \&         \&         \\ 
 \nodeb  \&         \&         \&         \& \noded  \&         \& \nodebb \\ 
 \nodec  \&         \& \nodee  \&         \& \nodei  \& \nodeba \&         \\ 
         \& \nodef  \& \nodeg  \& \nodeh  \& \nodej  \&         \&         \\
};

\path[ultra thick, red] (b) edge (c)
	(e) edge (f) edge (g) edge (h)
	(i) edge (j)
	(d) edge (e) edge (i) edge (ba)
	(a) edge (b) edge (d) edge (bb);
\end{tikzpicture}}
\end{tabular}
\caption{A $+1$-interval that is not a pure interval.}
\label{fig:not-pure-interval}
\end{figure}


In the following two sections we will focus on the proof of the Characterization Theorem of pure intervals, \Cref{thm:pure-interval-char}. The techniques used to prove this result will also lead to a proof of \Cref{thm_min_essential_variations_ascents}, which characterizes the subset of tree-ascents of a pure interval in terms of its minimal essential variations.
%

For the sake of clarity, we are going to give a name to the $+1$-intervals satisfying both Conditions~\eqref{cond:pure-interval-cb} and~\eqref{cond:pure-interval-ba} of \Cref{thm:pure-interval-char}. We call them \defn{pure-candidate intervals}. Our goal is to prove that pure-candidate intervals are the same as pure intervals. We separate this result into two parts: in \Cref{sec_purecandidate_pure} we show that pure-candidate intervals are pure, and in \Cref{sec_pure_purecandidate} we show that pure intervals are pure-candidate. 

\subsection{Proof Part 1: Pure-candidate intervals are pure intervals}\label{sec_purecandidate_pure}
\begin{proposition}
\label{prop:pure-candidate-to-pure}
If $[T_1,T_2]$ is a pure candidate interval, then it is a pure interval. More precisely, we have that
$[T_1,T_2] = [T_1, T_1+A]$ where $(a,c) \in A$ if and only if $(c,a)$ is a minimal essential variation of $[T_1, T_2]$.
\end{proposition}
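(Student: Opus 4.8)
The plan is to set $A := \{(a,c) : (c,a) \text{ is a minimal essential variation of } [T_1,T_2]\}$ and prove two things: \textbf{(I)} every $(a,c)\in A$ is a tree-ascent of $T_1$, so that $T_1+A$ is well defined; and \textbf{(II)} $T_1+A = T_2$, which I split into $T_1+A \wole T_2$ and $T_2 \wole T_1+A$. Defining $A$ this way and proving (II) is exactly the asserted ``more precisely'' statement. Throughout I use that $[T_1,T_2]$ is a $+1$-interval (this is part of the pure-candidate hypothesis), so every variation has amplitude one, together with the elementary \emph{descendant-stability} fact: whenever $\card_{T_2}(b,a)>0$ the node $a$ is a descendant of $b$ in $T_2$, so $\card_{T_2}(c,a)=\card_{T_2}(c,b)$ for every $c>b$; in particular this applies as soon as $\card_{T_1}(b,a)>0$.

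For (I) I would verify the four conditions of \Cref{prop:tree-ascent-inversions} for each $(a,c)\in A$ with $(c,a)_v$ a minimal essential variation. Condition~\eqref{cond:inv-tree-ascent-non-final} is immediate from amplitude one: $\card_{T_1}(c,a)=v<v+1=\card_{T_2}(c,a)\le s(c)$. For Condition~\eqref{cond:inv-tree-ascent-desc} (that $a$ is a descendant of $c$) I argue by contradiction: a variation forces $v<s(c)$, so if $a$ were not a descendant of $c$ it would lie strictly left of $c$ with $v=0$; letting $p>c$ be their least common ancestor, descendant-stability in $T_2$ makes $(p,a)$ vary while $(p,c)$ does not, contradicting Condition~\eqref{cond:pure-interval-cb} of \Cref{thm:pure-interval-char} on the triple $a<c<p$. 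For Condition~\eqref{cond:inv-tree-ascent-smaller} I suppose $s(a)>0$, $\card_{T_1}(a,a')=s(a)$ and $\card_{T_1}(c,a')=\card_{T_1}(c,a)=v$; then $a'$ stays in the right child of $a$, so $(c,a')_v$ varies, one checks it is essential (otherwise \Cref{lem:essential-var-middle-child} would place $a$ itself in a middle child of a varying node, destroying essentiality of $(c,a)$), and Condition~\eqref{cond:pure-interval-ba} then forces $(a,a')_0$ to vary, impossible since $\card_{T_1}(a,a')=s(a)$ is maximal.

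Condition~\eqref{cond:inv-tree-ascent-middle} is the technical heart and the step I expect to be the main obstacle. One fixes $b$ with $a<b<c$ and $\card_{T_1}(c,b)=\card_{T_1}(c,a)=v$, assumes $\card_{T_1}(b,a)<s(b)$, and derives a contradiction by case analysis on the position of $a$ relative to $b$. If $a$ lies in a middle child of $b$, essentiality of $(c,a)$ forbids $(c,b)$ from varying, yet descendant-stability forces $\card_{T_2}(c,a)=\card_{T_2}(c,b)$, contradicting that $(c,a)$ varies while $(c,b)$ does not. If $a$ lies in the left child of $b$ or strictly left of $b$, one first shows $(c,b)$ must vary (otherwise $a$ ends up strictly right of $b$ in $T_2$, making $(b,a)$ vary and contradicting Condition~\eqref{cond:pure-interval-cb}); then, according to whether $(c,b)$ is itself essential, one applies Condition~\eqref{cond:pure-interval-ba} to produce a variation $(b,a)_0$, or applies \Cref{lem:essential-var-middle-child} together with \Cref{lem:trans} to exhibit a varying node with $a$ in one of its middle children, in each case contradicting the minimality or essentiality of $(c,a)$. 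Orchestrating these sub-cases cleanly, while tracking exactly which configurations amplitude one and the two characterization conditions exclude, is the delicate part.

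With (I) in hand the inclusion $T_1+A\wole T_2$ is easy: writing $M_A$ for the multiset obtained from $\inv(T_1)$ by raising $\card(c,a)$ by one for each minimal essential variation, each raised entry equals $\card_{T_2}(c,a)$ by amplitude one and all others are $\le\card_{T_2}$, so $M_A\subseteq\inv(T_2)$; since the transitive closure is monotone (\Cref{def:tc-tree-inversions}) and $\inv(T_2)$ is transitive, $\inv(T_1+A)=\tc{M_A}\subseteq\inv(T_2)$. For the reverse inclusion I show every variation $(c,a)_v$ of $[T_1,T_2]$ is also a variation of $[T_1,T_1+A]$, by induction on $c-a$. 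If $(c,a)$ is a minimal essential variation it lies in $A$ and is raised directly. If it is essential but not minimal, some $(b,a)$ with $a<b<c$ is an essential variation; Condition~\eqref{cond:pure-interval-cb} makes $(c,b)$ vary, the inductive hypothesis handles $(c,b)$ and $(b,a)$, and \Cref{lem:trans} propagates the variation to $(c,a)$. If $(c,a)$ is not essential, \Cref{lem:essential-var-middle-child} gives $b$ with $(c,b)$ an essential variation and $a$ in a middle child of $b$, so $\card_{T_1}(b,a)>0$; the inductive hypothesis raises $(c,b)$ in $T_1+A$, and extending the realizing transitivity path in $M_A$ by the legal step $b>a$ raises $(c,a)$ as well. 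Combining both inclusions with amplitude one yields $\card_{T_1+A}=\card_{T_2}$ on every pair, hence $T_1+A=T_2$; this establishes the precise statement and is the direction feeding into \Cref{thm_min_essential_variations_ascents}.
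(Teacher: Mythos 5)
Your overall architecture is the same as the paper's: take $A$ to be the minimal essential variations, show each is a tree-ascent, and show $T_1+A=T_2$. Two things genuinely differ. You verify the inversion-theoretic characterization of tree-ascents (\Cref{prop:tree-ascent-inversions}) rather than the tree conditions of \Cref{def:tree-ascent}, which forces you to treat configurations where $a$ is \emph{not} a descendant of $b$ (excluded by hypothesis in the paper's check of its condition (iii)). And for $T_2\wole T_1+A$ you run an induction on $c-a$ with the trichotomy minimal-essential / essential / non-essential, where the paper instead builds an explicit variation path (\Cref{def:variation-path}, \Cref{lem:variation-path-prop}) and reads off a transitivity path in the incremented multiset. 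Your induction is correct and somewhat lighter: Case 2 needs only Condition~(1) and transitivity of $\inv(T_1+A)$, Case 3 only \Cref{lem:essential-var-middle-child} and $\card_{M_A}(b,a)\geq\card_{T_1}(b,a)>0$. (The paper still needs variation paths for the converse direction, \Cref{prop:pure-to-pure-candidate}, but not for this proposition.)

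The genuine problem is your ``descendant-stability'' fact, which is false as stated: $\card_{T_2}(b,a)>0$ does not imply that $a$ is a descendant of $b$ in $T_2$ --- $a$ may lie strictly to the right of $b$, giving $\card_{T_2}(b,a)=s(b)>0$ --- so you may not conclude $\card_{T_2}(c,a)=\card_{T_2}(c,b)$. In your check of condition (i) only the inequality $\card_{T_2}(p,a)\geq\card_{T_2}(p,c)$ is used, which is plain transitivity of $\inv(T_2)$, so that instance is harmless (it reproduces \Cref{lem:var-desc}). But in the middle-child sub-case of condition (iii) the claimed equality is load-bearing; the correct route is that of \Cref{lem:middle-var-pure-cand}: if $(c,b)$ did not vary, then $\card_{T_2}(c,a)>\card_{T_2}(c,b)$ forces $\card_{T_2}(b,a)=s(b)$, so $(b,a)$ varies and Condition~(1) makes $(c,b)$ vary after all. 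Two smaller gaps sit in the same condition. First, when $(c,b)$ is not essential and $a$ is merely to the left of $b$ rather than a descendant, the node $b'$ produced by \Cref{lem:essential-var-middle-child} need not have $a$ in a middle child, so essentiality of $(c,a)$ is not directly destroyed; one should instead apply Condition~(2) to the pair of essential variations $(c,a)_v$, $(c,b')_v$. Second, to contradict \emph{minimality} you need $(b,a)_0$ to be an \emph{essential} variation, not merely a variation; this requires the observation of \Cref{lem:no-middle-child} that, $(c,a)$ being essential, $a$ is in no middle child, so every variation $(b,a)$ is automatically essential. All of this is repairable with tools you already invoke elsewhere, but as written the step you yourself identify as the technical heart does not close.
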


We start by examining Condition~\eqref{cond:pure-interval-cb} and show some useful properties.


\begin{lemma}
\label{lem:var-desc}
Let $[T_1,T_2]$ be $+1$-interval satisfying Condition~\eqref{cond:pure-interval-cb}. We have that for all $a < c$, if $(c,a)$ varies then $a$ is a descendant of $c$ in $T_1$.
\end{lemma}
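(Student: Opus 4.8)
The plan is to argue by contradiction. Suppose $(c,a)$ varies but $a$ is \emph{not} a descendant of $c$ in $T_1$. Then $a$ must lie either strictly to the right of $c$ or strictly to the left of $c$, and I will rule out both possibilities. The right case is immediate: if $a$ is to the right of $c$ then $\card_{T_1}(c,a)=s(c)$, which is the maximal value allowed since $\inv(T_2)\subseteq\maxs_s$; hence $\card_{T_2}(c,a)\le s(c)=\card_{T_1}(c,a)$ and $(c,a)$ cannot vary, a contradiction. So the whole content lies in the left case, which I expect to be the main obstacle.

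For the left case, I would first translate the tree picture into inversions. Let $d$ be the least common ancestor of $a$ and $c$ in $T_1$. Since $a$ is not a descendant of $c$ we have $d>c>a$, and $a,c$ lie in distinct children of $d$, say $c\in T_i^d$ and $a\in T_j^d$; because $a$ is to the left of $c$ we have $j<i$, so that $\card_{T_1}(d,c)=i$ and $\card_{T_1}(d,a)=j<i$. Moreover $(c,a)$ varies with $\card_{T_1}(c,a)=0$, so by the $+1$-interval hypothesis (Definition~\ref{def:plus-one-interval}) we have $\card_{T_2}(c,a)=1$.

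Next I would exploit that $T_2$ is itself an $s$-decreasing tree, so $\inv(T_2)$ is transitive (Proposition~\ref{prop:tree-inversion-set}). Applying transitivity to the triple $a<c<d$ and using $\card_{T_2}(c,a)=1\neq 0$ forces $\card_{T_2}(d,a)\ge \card_{T_2}(d,c)$. Combining this with monotonicity of cardinalities along the interval and the $+1$-bound on the variation of $(d,a)$ yields the chain
\[
i=\card_{T_1}(d,c)\le \card_{T_2}(d,c)\le \card_{T_2}(d,a)\le \card_{T_1}(d,a)+1=j+1\le i .
\]
Hence every inequality is an equality: $\card_{T_2}(d,c)=i=\card_{T_1}(d,c)$, so $(d,c)$ does \emph{not} vary, while $\card_{T_2}(d,a)=j+1>\card_{T_1}(d,a)$, so $(d,a)$ \emph{does} vary.

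Finally I would invoke Condition~\eqref{cond:pure-interval-cb} of the Characterization Theorem on the triple $a<c<d$, reading it with bottom node $a$, middle node $c$, and top node $d$: since both $(d,a)$ and $(c,a)$ vary, the condition forces $(d,c)$ to vary, contradicting the conclusion $\card_{T_2}(d,c)=\card_{T_1}(d,c)$ just obtained. This rules out the left case, and together with the right case it shows that $a$ must be a descendant of $c$, as claimed. The delicate points to get right are the correct orientation of Condition~\eqref{cond:pure-interval-cb} and the strictness $j<i$ (which provides exactly the slack $j+1\le i$ that collapses the displayed chain).
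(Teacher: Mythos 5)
Your proof is correct and follows essentially the same route as the paper's: dispose of the right case via $\card_{T_1}(c,a)=s(c)$, then in the left case pass to an ancestor $d>c$ separating $a$ from $c$, and combine monotonicity, transitivity of $\inv(T_2)$, the $+1$ bound, and Condition~\eqref{cond:pure-interval-cb} to reach a contradiction. The only (cosmetic) difference is that the paper lets Condition~\eqref{cond:pure-interval-cb} force $(d,c)$ to vary and then contradicts the $+1$ bound on $(d,a)$, whereas you first pin down that $(d,c)$ cannot vary and then contradict Condition~\eqref{cond:pure-interval-cb} directly.
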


\begin{proof}
Suppose that it is not the case, in particular, $a$ cannot be a middle descendant of $c$ and so either $\card_{T_1}(c,a) = 0$ or $\card_{T_1}(c,a) = s(c)$. The second option is not possible because $(c,a)$ varies. So $\card_{T_1}(c,a) = 0$ and there exists $d > c$ such that $\card_{T_1}(d,a) < \card_{T_1}(d,c)$. In $T_2$, we have $\card_{T_2}(c,a) = \card_{T_1}(c,a) + 1 = 1$, which gives by transitivity that $\card_{T_2}(d,a) \geq \card_{T_2}(d,c) \geq \card_{T_1}(d,c) > \card_{T_1}(d,a)$. In particular, $(d,a)$ varies. We have $d > c > a$ with variations $(d,a)$ and $(c,a)$, using Condition~\eqref{cond:pure-interval-cb} of pure-candidate intervals, we obtain that $(d,c)$ also varies. But if $\card_{T_2}(d,c) > \card_{T_1}(d,c)$, the amplitude of the $(d,a)$ variation is greater than 1 and we reach a contradiction as we work in a $+1$-interval by hypothesis.
\end{proof}

\begin{lemma}
\label{lem:essential-double-var}
Let $[T_1,T_2]$ be $+1$-interval satisfying Condition~\eqref{cond:pure-interval-cb}. If $(c,a)_v$ and $(b,a)_w$ are variations and $(c,a)_v$ is an essential variation then $(c,b)_v$ is an essential variation.
\end{lemma}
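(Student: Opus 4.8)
The plan is to deduce that $(c,b)_v$ varies directly from Condition~\eqref{cond:pure-interval-cb}, and then to rule out any witness to its non-essentiality by transporting such a witness down from $b$ to $a$ via \Cref{lem:var-desc}, reaching a contradiction with the essentiality of $(c,a)_v$. Throughout we work under the standing assumption $a<b<c$ forced by the inversions appearing in the statement.

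First I would apply Condition~\eqref{cond:pure-interval-cb} verbatim: since $(c,a)_v$ and $(b,a)_w$ are variations of $[T_1,T_2]$, the hypothesis gives that $(c,b)_v$ is a variation. In particular this records that $\card_{T_1}(c,b)=v=\card_{T_1}(c,a)$, so the value attached to $(c,b)$ is indeed $v$ and the only remaining task is to prove that this variation is \emph{essential}.

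Next, suppose toward a contradiction that $(c,b)_v$ is not essential. By \Cref{def:variations} there is then some $b'$ with $b<b'<c$ such that $b$ belongs to a middle child of $b'$ and $(c,b')$ varies. The crux of the argument is to move this witness from $b$ to $a$: because $(b,a)_w$ is a variation and $[T_1,T_2]$ is a $+1$-interval satisfying Condition~\eqref{cond:pure-interval-cb}, \Cref{lem:var-desc} ensures that $a$ is a descendant of $b$ in $T_1$. A middle child of $b'$ is a full subtree of $b'$, so the entire subtree rooted at $b$ lies inside the middle child $T_i^{b'}$ of $b'$ that contains $b$; consequently its descendant $a$ lies in that same middle child of $b'$ as well.

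Finally I would assemble the contradiction. We have produced $b'$ with $a<b'<c$ such that $a$ belongs to a middle child of $b'$ and $(c,b')$ varies, which is precisely a witness showing that $(c,a)_v$ is \emph{not} essential, contrary to hypothesis. Hence no such $b'$ exists and $(c,b)_v$ is essential, as desired. The step I expect to merit the most care is the transport of the witness, namely the passage from ``$a$ is a descendant of $b$'' and ``$b$ lies in a middle child of $b'$'' to ``$a$ lies in the same middle child of $b'$''; this is immediate from the fact that a middle child is a complete subtree, but since the whole proof hinges on it I would state it explicitly rather than leave it implicit.
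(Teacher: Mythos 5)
Your proof is correct and follows essentially the same route as the paper's: apply Condition~\eqref{cond:pure-interval-cb} to get that $(c,b)_v$ varies, invoke \Cref{lem:var-desc} to see that $a$ is a descendant of $b$, and then transfer any middle-child witness from $b$ to $a$ to contradict the essentiality of $(c,a)_v$. The only difference is that you phrase the last step as a contradiction while the paper states it directly; the content is identical.
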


\begin{proof}
By Condition~\eqref{cond:pure-interval-cb}, $(c,b)_v$ is a variation.  Using Lemma~\ref{lem:var-desc}, $a$ is a descendant of $b$ because $(b,a)$ is a variation. So if $b$ is a middle descendant of any $b'$ with $c > b' >b$, so is $a$. As $(c,a)$ is an essential variation, this implies that $(c,b)$ is also an essential variation.
\end{proof}

It is actually possible to prove that Condition~\eqref{cond:pure-interval-cb} is equivalent to the one of Lemma~\ref{lem:essential-double-var}, \emph{i.e.}, if the condition is true on essential variations then it is true for all variations. The proof is bit technical and not necessary at the moment.

We will prove Proposition~\ref{prop:pure-candidate-to-pure} in two steps: first, we show that minimal essential variations are indeed tree-ascents of $T_1$ (\Cref{prop:pure-candidate-minimal-ess-var}), then that every variation of the interval is obtained by transitivity from the minimal essential variations (\Cref{prop_transitive_min_essntial_variations}). 

\subsubsection{minimal essential variations are tree-ascents}

The following lemmas explore some useful properties of variations and essential variations in pure-candidate intervals.

\begin{lemma}[Weak transitivity of essential variations]
\label{lem:essential-var-weak-trans}
Let $[T_1, T_2]$ be a pure-candidate interval. Suppose that we have $c > b > a$ with $(c,b)_v$ and $(b,a)_0$ essential variations. Then $(c,a)_v$ is also an essential variation.
\end{lemma}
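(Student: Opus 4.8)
The plan is to first show that $(c,a)_v$ is a variation with the claimed value $v$, and then argue that it cannot fail to be essential. Since $(c,b)$ and $(b,a)$ are both variations, the Transitivity Lemma (\Cref{lem:trans}) immediately gives that $(c,a)$ varies. To see that its value is exactly $v$, I would invoke \Cref{lem:var-desc}: in a $+1$-interval satisfying Condition~\eqref{cond:pure-interval-cb}, a varying inversion forces a descendant relation, so $b$ is a descendant of $c$ and $a$ is a descendant of $b$. As $(b,a)_0$ has value $0$, the node $a$ lies in the left child of $b$, hence in the very same child of $c$ that contains $b$; therefore $\card_{T_1}(c,a) = \card_{T_1}(c,b) = v$, so $(c,a)_v$ is indeed a variation. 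I would also record here the crucial observation that $s(b) \neq 0$: since $(b,a)_0$ varies we have $\card_{T_2}(b,a) \geq 1$, while $\card_{T_2}(b,a) \leq s(b)$ because $T_2$ is an $s$-decreasing tree.

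For essentiality, I would argue by contradiction: assume $(c,a)_v$ is a variation that is not essential. Then \Cref{lem:essential-var-middle-child} produces a unique $b^{*}$ with $a < b^{*} < c$ such that $(c,b^{*})_v$ is an essential variation and $a$ belongs to a middle child of $b^{*}$. Since $a$ is a descendant of $b^{*}$, the node $b^{*}$ lies on the path from $a$ to the root, the same path that contains the ancestor $b$; so $b^{*}$ and $b$ are comparable in the tree and I can split into the three cases $b^{*}=b$, $b^{*}>b$, and $b^{*}<b$. The case $b^{*}=b$ is impossible because $a$ sits in the left child of $b$, not a middle child. If $b^{*}>b$, then $b$ lies on the path from $a$ up to $b^{*}$ and hence occupies the same middle child of $b^{*}$ as $a$; together with the fact that the essential variation $(c,b^{*})$ varies, this contradicts the essentiality of $(c,b)_v$.

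The remaining case $b^{*}<b$ is the main obstacle and is where Condition~\eqref{cond:pure-interval-ba} enters. Here $b^{*}$ lies strictly between $a$ and $b$ on the ancestor path, so $b^{*}$ is in the left child of $b$ and $\card_{T_1}(b,b^{*}) = 0$. Now both $(c,b^{*})_v$ and $(c,b)_v$ are essential variations with $b^{*} < b < c$ and common value $v$, and we already know $s(b) \neq 0$; hence Condition~\eqref{cond:pure-interval-ba} forces $(b,b^{*})_0$ to be a variation. On the other hand, $a$ belongs to a middle child of $b^{*}$ with $a < b^{*} < b$, so the essentiality of $(b,a)_0$ (no $b''$ strictly between $a$ and $b$ with $a$ in a middle child of $b''$ and $(b,b'')$ varying) forbids $(b,b^{*})$ from varying. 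This contradiction closes the last case, and therefore $(c,a)_v$ is essential. The delicate points I would double-check are the bookkeeping of values (that every inversion in play carries value $v$ or $0$ as required, so that Condition~\eqref{cond:pure-interval-ba} and \Cref{lem:essential-var-middle-child} apply verbatim) and the automatic non-vanishing of $s(b)$, which is exactly what rescues the argument in this final case.
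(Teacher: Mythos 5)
Your proof is correct and follows the same overall strategy as the paper: establish that $(c,a)$ varies via Lemma~\ref{lem:trans}, assume non-essentiality to obtain a witness $b^{*}$ (the paper calls it $b'$) with $(c,b^{*})_v$ essential and $a$ in a middle child of $b^{*}$, rule out $b^{*}=b$, and split on the relative order of $b$ and $b^{*}$. Your case $b^{*}<b$ is identical to the paper's: Condition~\eqref{cond:pure-interval-ba} applied to $(c,b)_v$ and $(c,b^{*})_v$ yields the variation $(b,b^{*})_0$, which contradicts the essentiality of $(b,a)_0$. Where you diverge is the case $b^{*}>b$: the paper applies Condition~\eqref{cond:pure-interval-ba} a second time to get $(b^{*},b)_0\in\Var$ and then extracts a contradiction from planarity and the fact that $(b,a)$ varies, whereas you argue directly that $b$, being an ancestor of $a$ and a descendant of $b^{*}$, sits in the same middle child of $b^{*}$ as $a$, so $(c,b^{*})$ varying already contradicts the essentiality of $(c,b)_v$. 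Your version of this case is more elementary (it uses only Lemma~\ref{lem:var-desc} and the definition of essential variation, not Condition~\eqref{cond:pure-interval-ba}) and arguably cleaner, since the paper's planarity step is the least transparent part of its argument. Your bookkeeping is also sound: deducing $\card_{T_1}(c,a)=v$ from the descendant relations (rather than from the $+1$ property as the paper does) is valid, and the observation $s(b)\neq 0$ is exactly what is needed to invoke Condition~\eqref{cond:pure-interval-ba} in the remaining case.
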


This is a version of Lemma~\ref{lem:trans} now specific to \emph{essential} variations. Note that Lemma~\ref{lem:trans} is not directly true for essential variations and that this version is not true in general: we need a pure-candidate interval. See for example the pure interval of Figure~\ref{fig:pure-interval10}. We have that $(10,4)_2$ and $(4,1)_0$ are essential variations, the lemma implies that $(10,1)_0$ is also an essential variation. On the other hand, $(10,9)_2$ and $(9,6)_1$ are essential variations but $(10,6)_2$ is not. On the example of Figure~\ref{fig:not-pure-interval} which is not a pure interval, we see that we have $(5,4)_0$ an essential variation as well as $(4,1)_0$ but $(5,1)_0$ is not an essential variation (it is a variation but $1$ is a middle child of $3$ and $(5,3)$ varies).

\begin{proof}
By Lemma~\ref{lem:trans}, we know that $(c,a)$ varies in $[T_1,T_2]$. More precisely, $\card_{T_1}(c,a) \leq v$ and $\card_{T_2}(c,a) > v$. Because $[T_1,T_2]$ is a pure-candidate interval, it is in particular a $+1$-interval and so $(c,a)_v$ is a variation of amplitude one. We need to prove that it is an essential variation. Suppose that it is not: there exists $b'$ with $c > b' > a$ such that $(c,b')_v$ is an essential variation and $a$ belongs to a middle child of $b'$. We have $\card_{T_1}(b,a) = 0$ so $a$ does not belong to a middle child of $b$ and $b \neq b'$. Suppose that $c > b > b'$. We have $(c,b)_v$ and $(c,b')_v$ essential variations and $s(b) \neq 0$ (because $(b,a)$ varies), by Condition~\eqref{cond:pure-interval-ba} on pure-candidate intervals, we obtain that $(b,b')_0$ is a variation, which contradicts the fact that $(b,a)$ is an essential variation. Now if $c > b' > b$, because $s(b') \neq 0$ (as $a$ belongs to a middle child of $b'$), we obtain that $(b',b)_0$ is a variation. By planarity (Definition~\ref{def:multi-sets}), this gives $\card_{T_1}(b,a) = s(b)$ which contradicts the fact that $(b,a)$ varies. 
\end{proof}

\begin{lemma}
\label{lem:middle-var-pure-cand}
Let $[T_1,T_2]$ be a pure-candidate interval. Suppose that $(c,a)_v$ is an essential variation. If $b$ is such that $c > b > a$ with $\card_{T_1}(c,b) = v$ and $\card_{T_1}(b,a) < s(b)$, then $(c,b)_v$ varies.
\end{lemma}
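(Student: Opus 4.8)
The plan is to argue by contradiction. Assume $(c,b)$ does not vary and derive a contradiction, exploiting that a pure-candidate interval is, by definition, a $+1$-interval satisfying Condition~\eqref{cond:pure-interval-cb}. Since $(c,a)$ varies with value $v$ and the interval is $+1$, we have $\card_{T_2}(c,a) = v+1$; and the assumption that $(c,b)$ does not vary, together with $\card_{T_1}(c,b) = v$, gives $\card_{T_2}(c,b) = v$. Note also that the hypothesis $\card_{T_1}(b,a) < s(b)$ forces $s(b) \geq 1$, a fact I will need below.

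The heart of the argument is a case split according to the position of $a$ relative to $b$ in the tree $T_2$, measured by $\card_{T_2}(b,a)$. If $\card_{T_2}(b,a) = s(b)$, then $(b,a)$ varies, since its cardinality strictly increased from $\card_{T_1}(b,a) < s(b)$. Applying Condition~\eqref{cond:pure-interval-cb} to the two variations $(c,a)_v$ and $(b,a)$ then yields that $(c,b)_v$ varies, contradicting our assumption. This is the easy half and uses no geometry, only Condition~\eqref{cond:pure-interval-cb} and the $+1$ property.

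The delicate case is $\card_{T_2}(b,a) < s(b)$. Here I would argue that in the $s$-decreasing tree $T_2$ the node $a$ cannot lie to the right of $b$: since $s(b) \geq 1$, the inequality $\card_{T_2}(b,a) < s(b)$ means $a$ is either strictly left of $b$ or a descendant of $b$ sitting in a non-rightmost child. In both situations the planar structure forces $\card_{T_2}(c,a) \leq \card_{T_2}(c,b)$ — a descendant of $b$ occupies the same child of $c$ as $b$ and hence has equal cardinality, while a node left of $b$ lies in an earlier-or-equal child of $c$. This gives $v+1 = \card_{T_2}(c,a) \leq \card_{T_2}(c,b) = v$, the desired contradiction. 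Combining the two cases shows $(c,b)$ must vary.

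The main obstacle is precisely this last monotonicity step, namely deducing $\card_{T_2}(c,a) \leq \card_{T_2}(c,b)$ from $\card_{T_2}(b,a) < s(b)$. It is a purely structural property of $s$-decreasing trees (equivalently, of planar transitive inversion sets): it is \emph{not} a per-triple consequence of the planarity or transitivity conditions of Definition~\ref{def:multi-sets} applied to $\{a,b,c\}$ alone, and it genuinely needs $s(b) \geq 1$. Indeed, when $s(b) = 0$ one can have $\card(b,a) = 0 = s(b)$ with $a$ sitting to the \emph{right} of $b$, so the inequality can fail — but that regime is excluded by the hypothesis $\card_{T_1}(b,a) < s(b)$. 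I would therefore isolate this monotonicity as a one-line preliminary observation about positions in an $s$-decreasing tree and invoke it in case~two. I finally remark that essentiality of $(c,a)_v$ is not actually used in this implication; only that $(c,a)$ varies with value $v$, combined with the $+1$ property and Condition~\eqref{cond:pure-interval-cb}, drives the whole proof.
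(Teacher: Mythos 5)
Your proof is correct and follows essentially the same route as the paper's: assuming $(c,b)$ does not vary, one obtains $\card_{T_2}(c,a) > \card_{T_2}(c,b)$, which forces $\card_{T_2}(b,a)=s(b)$ so that $(b,a)$ varies, and then Condition~\eqref{cond:pure-interval-cb} yields the contradiction. The ``monotonicity step'' you isolate is precisely the planarity condition of Definition~\ref{def:multi-sets} in the form it is used in the proof of Lemma~\ref{lem:trans} (namely $\card(b,a)<s(b)$ implies $\card(c,a)\le\card(c,b)$), so no separate structural lemma is needed, and your observation that essentiality of $(c,a)_v$ is never used also matches the paper's argument.
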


\begin{proof}
Suppose that $(c,b)$ does not vary. As $(c,a)$ varies, we have $\card_{T_2}(c,a) > \card_{T_1}(c,a)$. And because $(c,b)$ does not vary, we have $\card_{T_2}(c,b) = \card_{T_1}(c,b)$. We obtain that $\card_{T_2}(c,a) > \card_{T_2}(c,b)$ which implies by planarity (Definition~\ref{def:multi-sets}) that $\card_{T_2}(b,a) = s(b)$, \emph{i.e.}, $(b,a)$ varies. As $(c,a)$ and $(b,a)$ vary, Condition~\eqref{cond:pure-interval-cb} implies that $(c,b)$ also varies which leads to a contradiction. 
\end{proof}

\begin{lemma}
\label{lem:no-middle-child}
Let $[T_1,T_2]$ be a pure-candidate interval. Suppose that $(c,a)$ is an essential variation. Then, there is no $b$ such that $c > b > a$ and $a$ is a middle descendant of $b$.

In particular, if $b < c$ with $(b,a)$ a variation, then $(b,a)$ is an essential variation.
\end{lemma}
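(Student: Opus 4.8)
The plan is to prove the main assertion by contradiction and then obtain the \emph{in particular} clause as a formal corollary. So I would assume that $(c,a)_v$ is an essential variation and that, contrary to the claim, there is some $b$ with $c>b>a$ such that $a$ is a middle descendant of $b$ in $T_1$, and aim to contradict the essentiality of $(c,a)$. The first step is to pin down the positions of $a$ and $b$ relative to $c$ inside $T_1$. Since $(c,a)$ varies, \Cref{lem:var-desc} tells us that $a$ is a descendant of $c$. As $a$ is simultaneously a descendant of $b$ with $b<c$, and labels strictly decrease along every root-to-node path, $c$ must sit above $b$ on the path down to $a$; hence $b$ is a descendant of $c$ and $a,b$ lie in the same child subtree of $c$. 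This yields the numerical identity $\card_{T_1}(c,b)=\card_{T_1}(c,a)=v$. I expect this tree-geometry step to be the one requiring the most care, although it reduces entirely to the monotonicity of labels along an ancestor chain.

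With this identity in hand the contradiction is immediate. Because $a$ is a middle descendant of $b$ we have $0<\card_{T_1}(b,a)<s(b)$, so in particular $\card_{T_1}(b,a)<s(b)$. The hypotheses of \Cref{lem:middle-var-pure-cand} are therefore met for the essential variation $(c,a)_v$ and this $b$ (we have just checked $\card_{T_1}(c,b)=v$ and $\card_{T_1}(b,a)<s(b)$), and that lemma forces $(c,b)_v$ to vary. But then $b$ is a value with $a<b<c$ such that $a$ belongs to a middle child of $b$ and $(c,b)$ varies, which is exactly the configuration excluded by the definition of an essential variation of $(c,a)$ (\Cref{def:variations}). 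This contradiction establishes that no such $b$ exists.

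Finally, for the \emph{in particular} clause I would argue directly from what was just proved. Suppose $b<c$ with $(b,a)$ a variation, and suppose for contradiction that $(b,a)$ is not essential. Then by \Cref{def:variations} there is $b'$ with $a<b'<b$ such that $a$ belongs to a middle child of $b'$ and $(b,b')$ varies; in particular $a$ is a middle descendant of $b'$ and $c>b'>a$. This is precisely the situation ruled out by the main assertion, applied to the essential variation $(c,a)$, a contradiction. Hence $(b,a)$ is an essential variation, completing the proof.
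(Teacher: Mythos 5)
Your proof is correct and follows essentially the same route as the paper: both deduce $\card_{T_1}(c,b)=\card_{T_1}(c,a)=v$ and $\card_{T_1}(b,a)<s(b)$ from the middle-descendant hypothesis, invoke Lemma~\ref{lem:middle-var-pure-cand} to force $(c,b)_v$ to vary, and contradict the essentiality of $(c,a)$, with the second clause obtained by the same reduction to the first. The only difference is that you spell out the ancestor-chain argument justifying $\card_{T_1}(c,b)=v$, which the paper leaves implicit.
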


\begin{proof}
This is a direct consequence of Lemma~\ref{lem:middle-var-pure-cand}. If $a$ is a middle descendant of $b$, then $\card_{T_1}(c,b) = \card_{T_1}(c,a) = v$ and $\card_{T_1}(b,a) < s(b)$ which implies that $(c,b)$ varies and so $(c,a)$ is not an essential variation.

Now if $(b,a)$ is a variation but not an essential variation, this means that $a$ is a middle descendant of some $b'$ with $b > b' > a$ which contradicts the first part of the Lemma.
\end{proof}

In particular, Lemma~\ref{lem:no-middle-child} implies that the variation $(b,a)_0$ in Condition~\eqref{cond:pure-interval-ba} is always an essential variation. We can now prove the following proposition.

\begin{proposition}[Minimal essential variations are tree-ascents]
\label{prop:pure-candidate-minimal-ess-var}
Let $(c,a)_v$ be a minimal essential variation of a pure-candidate interval $[T_1, T_2]$. Then $(a,c)$ is a tree-ascent of $T_1$.
\end{proposition}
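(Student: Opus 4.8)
The plan is to verify that the pair $(a,c)$ satisfies the four inversion-theoretic conditions~\eqref{cond:inv-tree-ascent-desc}--\eqref{cond:inv-tree-ascent-smaller} of Proposition~\ref{prop:tree-ascent-inversions} for the tree $T_1$, writing $v=\card_{T_1}(c,a)$ throughout. The engine of the argument is one repeated \emph{promotion-and-collision} mechanism: whenever I can exhibit a second inversion $(c,x)$ that varies with the \emph{same} value $v$ as the essential variation $(c,a)_v$, Lemma~\ref{lem:essential-var-middle-child} lets me replace $x$ by a node $x^\ast$ (either $x^\ast=x$, or $x$ lies in a middle child of $x^\ast$) for which $(c,x^\ast)_v$ is an \emph{essential} variation; then Condition~\eqref{cond:pure-interval-ba} applied to the two essential variations with top $c$ and value $v$ forces a value-$0$ variation on the pair $\{a,x^\ast\}$. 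According to whether $x^\ast>a$ or $x^\ast<a$, this value-$0$ variation either contradicts the minimality of $(c,a)_v$ (Lemma~\ref{lem:no-middle-child} upgrades it to an essential variation lying strictly below $a$) or contradicts a cardinality that is already forced to equal some $s(\cdot)$.

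Conditions~\eqref{cond:inv-tree-ascent-desc} and~\eqref{cond:inv-tree-ascent-non-final} are immediate. For~\eqref{cond:inv-tree-ascent-desc}, Lemma~\ref{lem:var-desc} gives that $a$ is a descendant of $c$ in $T_1$, so $a$ and $c$ lie in the same child of every $d>c$, whence $\card_{T_1}(d,a)=\card_{T_1}(d,c)$. For~\eqref{cond:inv-tree-ascent-non-final}, since $(c,a)$ varies we have $\card_{T_2}(c,a)>v$, and as $\card_{T_2}(c,a)\le s(c)$ this yields $v<s(c)$.

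For Condition~\eqref{cond:inv-tree-ascent-middle} I argue by contraposition. Suppose $a<b<c$ with $\card_{T_1}(c,b)=\card_{T_1}(c,a)=v$ and $\card_{T_1}(b,a)<s(b)$ (which forces $s(b)>0$). Lemma~\ref{lem:middle-var-pure-cand} then gives that $(c,b)_v$ varies, and the promotion-and-collision argument with $x=b$ produces an essential variation $(x^\ast,a)_0$ with $a<x^\ast<c$ and $s(x^\ast)\neq0$, contradicting the minimality of $(c,a)_v$. For Condition~\eqref{cond:inv-tree-ascent-smaller}, suppose $s(a)>0$ and $a'<a$ with $\card_{T_1}(a,a')=s(a)$, and assume for contradiction that $\card_{T_1}(c,a')\le v$. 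Transitivity in $T_1$ (using $\card_{T_1}(a,a')=s(a)>0$) forces $\card_{T_1}(c,a')=v$, and transitivity in $T_2$ then shows $(c,a')_v$ varies, so I promote it to an essential variation $(c,b^{\ddagger})_v$. Splitting on the position of $b^{\ddagger}$: if $b^{\ddagger}>a$ the collision contradicts minimality exactly as in~\eqref{cond:inv-tree-ascent-middle}; if $b^{\ddagger}<a$ the collision forces $\card_{T_1}(a,b^{\ddagger})=0$, which I contradict by showing that $b^{\ddagger}$ must in fact lie in the rightmost child of $a$, so that $\card_{T_1}(a,b^{\ddagger})=s(a)>0$.

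I expect Condition~\eqref{cond:inv-tree-ascent-smaller} to be the main obstacle. Unlike in~\eqref{cond:inv-tree-ascent-middle}, the competing node $a'$ sits \emph{below} $a$, so the promoted node $b^{\ddagger}$ can land on either side of $a$, and excluding the inconsistent configurations requires genuine planarity bookkeeping from Definition~\ref{def:multi-sets} rather than the abstract variation lemmas alone. The two delicate points are that the case $b^{\ddagger}=a$ must be ruled out (it would place $a'$ in a middle child of $a$, contradicting $\card_{T_1}(a,a')=s(a)$), and that in the subcase $b^{\ddagger}<a$ one must argue that the whole subtree of $b^{\ddagger}$ lies to one side of $a$: this rules out $a'$ being to the right of $a$ while $b^{\ddagger}$ is to its left, forcing $a'$ (and hence $b^{\ddagger}$) into the rightmost child of $a$ and producing the required collision with $s(a)$.
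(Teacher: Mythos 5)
Your proof is correct, and it rests on exactly the same engine as the paper's: Lemma~\ref{lem:var-desc} for the descendant condition, Lemma~\ref{lem:middle-var-pure-cand} to force a second variation $(c,\cdot)_v$, Condition~\eqref{cond:pure-interval-ba} to produce a value-$0$ variation, and Lemma~\ref{lem:no-middle-child} to upgrade it to an essential variation contradicting minimality. The difference is the target: you verify the four inversion-theoretic conditions of Proposition~\ref{prop:tree-ascent-inversions}, whereas the paper verifies the tree-based conditions of Definition~\ref{def:tree-ascent} directly. That choice costs you the ``promotion'' step via Lemma~\ref{lem:essential-var-middle-child} in two places where the paper gets essentiality for free: for Condition~\eqref{cond:tree-ascent-middle} the paper's hypothesis is that $a$ is a \emph{descendant} of $b$, which makes $(c,b)_v$ essential immediately; and for Condition~\eqref{cond:tree-ascent-smaller} the paper takes $a'$ to be precisely the root of the strict right child of $a$, for which $(c,a')_v$ is essential because any middle-ancestor of $a'$ below $c$ would also be one of $a$, and the collision $\card_{T_1}(a,a')=0$ versus $s(a)$ is then a one-line contradiction. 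Your more general $a'$ forces the three-way split on where the promoted node $b^{\ddagger}$ lands relative to $a$; your handling of it is sound — the case $b^{\ddagger}=a$ collides with $\card_{T_1}(a,a')=s(a)$, the case $b^{\ddagger}>a$ contradicts minimality, and the case $b^{\ddagger}<a$ contradicts $(a,b^{\ddagger})_0\in\Var$ via planarity on the triple $a'<b^{\ddagger}<a$ (since $a'$ in a middle child of $b^{\ddagger}$ gives $\card_{T_1}(a,b^{\ddagger})\geq\card_{T_1}(b^{\ddagger},a')>0$). So your version proves the same statement with a bit more bookkeeping; the paper's buys brevity by exploiting the freedom to choose the witness $a'$ and the stronger positional hypothesis in the tree-based definition.
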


\begin{proof}

We check the conditions of Definition~\ref{def:tree-ascent}. Condition~\eqref{cond:tree-ascent-desc} is satisfied by Lemma~\eqref{lem:var-desc}. As $(c,a)$ varies, we have $\card_{T_1}(c,a) < s(c)$ and so Condition~\eqref{cond:tree-ascent-non-final} is satisfied.

Let us prove Condition~\eqref{cond:tree-ascent-middle}, \emph{i.e.}, if there is $b$ with $c > b > a$ and $a$ a descendant of $b$, then $a$ is a right descendant of $b$. Suppose that $\card_{T_1}(b,a) < s(b)$, by Lemma~\ref{lem:middle-var-pure-cand}, we have that $(c,b)_v$ is a variation. This is actually an essential variation because $a$ is a descendant of $b$ and $(c,a)_v$ is an essential variation. We then have essential variations $(c,a)_v$ and $(c,b)_v$ and we have supposed that $\card_{T_1}(b,a) < s(b)$ which implies that $s(b) > 0$. By Condition~\eqref{cond:pure-interval-ba} we have that $(b,a)_0$ is a variation. And by Lemma~\ref{lem:no-middle-child}, it is necessarily an essential variation. This contradicts the fact that $(c,a)$ is a minimal essential variation.

Let us prove Condition~\eqref{cond:tree-ascent-smaller}, \emph{i.e.}, if it exists, the strict right child of $a$ is empty. Suppose that it is not and that $a'$ is the root of its right subtree. As $s(a) > 0$, we have $\card_{T_2}(c,a') \geq \card_{T_2}(c,a) > \card_{T_1}(c,a) = \card_{T_1}(c,a')$ and so $(c,a')_v$ is a variation. Moreover, it is an essential variation because if $a'$ is a middle descendant of any node $b$, so is $a$. Condition~\eqref{cond:pure-interval-ba} then states that $(a,a')_0$ is a variation and so $\card_{T_1}(a,a') = 0$ which contradicts our initial statement. 
\end{proof}

\subsubsection{Transitive closure of minimal essential variations}

To prove that every variation is obtained by transitivity from the minimal essential variations, we introduce the fundamental notion of \emph{variation path}.

\begin{definition}[Variation path]
\label{def:variation-path}
Let $I = [T_1, T_2]$ be an interval and $(c,a)_v$ a variation. The \defn{variation path} between $c$ and $a$ is given by
\begin{equation*}
c > c_1 > \dots > c_k
\end{equation*}
where the set $\lbrace c_i \rbrace_{i=1}^k$ consists of all values $c_i \geq a$ such that $(c,c_i)_v$ is an essential variation and either $c_i = a$ or $\card_{T_1}(c_i,a) < s(c_i)$. 
\end{definition}

\begin{example}
\label{ex:var-path-pure}
Here are all variation paths for the variations of Figure~\ref{fig:pure-interval}:
\begin{itemize}
\item $(3,2)_1$: $3 > 2$;
\item $(3,1)_1$: $3 > 2$;
\item $(4,3)_0$: $4 > 3$;
\item $(4,2)_0$: $4 > 3$;
\item $(4,1)_0$: $4 > 3$.
\end{itemize}
\end{example}

\begin{example}
\label{ex:var-path-pure10}
Here are all variation paths for the variations of Figure~\ref{fig:pure-interval10}:
\begin{itemize}
\item $(10,8)_0$: $10 > 8$;
\item $(10,5)_0$: $10 > 5$;
\item $(10,9)_2$ and $(10,6)_2$ and $(10,2)_2$: $10 > 9$;
\item $(10,4)_2$ and $(10,3)_2$: $10 > 9 > 4$;
\item $(10,1)_2$: $10 > 9 > 4 > 1$;
\item $(9,4)_0$ and $(9,3)_0$: $9 > 4$;
\item $(9,1)_0$: $9 > 4 > 1$;
\item $(9,6)_1$ and $(9,2)_1$: $9 > 6$;
\item $(6,2)_1$: $6 > 2$;
\item $(4,1)_0$: $4 > 1$.
\end{itemize}
\end{example}

\begin{lemma}
\label{lem:variation-path-prop}
If $I = [T_1, T_2]$ is a pure-candidate interval, then the variation path of any variation $(c,a)_v$ satisfies the following properties
\begin{enumerate}[(i)]
\item for all $j > i$, then $(c_i,c_j)_0$ is an essential variation;
\label{cond:variation-path-0}
\item either $c_k = a$ or $a$ belongs to a middle child of $c_k$;
\label{cond:variation-path-middle-child}
\item $(c,c_1)$ is a minimal essential variation as well as $(c_i, c_{i+1})$ for all $i < k$. 
\label{cond:variation-path-minimal}
\end{enumerate}
\end{lemma}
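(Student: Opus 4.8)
The plan is to establish the three properties in order, using property~(i) as the main engine for the other two. Throughout I set $c_0 := c$ so the path reads $c_0 > c_1 > \dots > c_k$, and I recall two facts that will be used repeatedly: by the path definition each $(c_0,c_i)_v$ is an essential variation with $\card_{T_1}(c_0,c_i) = v$, and by Lemma~\ref{lem:var-desc} (valid since a pure-candidate interval satisfies Condition~\eqref{cond:pure-interval-cb}) whenever $(x,y)$ varies, $y$ is a descendant of $x$ in $T_1$. For property~(i), fix $j > i$; then $i < k$, so $c_i > c_k \geq a$, whence $c_i \neq a$ and the path condition gives $\card_{T_1}(c_i,a) < s(c_i)$, so $s(c_i) > 0$. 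Since $(c,c_i)_v$ and $(c,c_j)_v$ are both essential variations, Condition~\eqref{cond:pure-interval-ba} applied with top $c$, middle $c_i$, bottom $c_j$ yields $(c_i,c_j)_0 \in \Var([T_1,T_2])$. To upgrade this to an essential variation I would invoke Lemma~\ref{lem:no-middle-child}: since $(c,c_j)$ is essential and $(c_i,c_j)$ is a variation with $c_i < c$, the lemma forces $(c_i,c_j)_0 \in \EVar([T_1,T_2])$.

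For property~(ii) I assume $c_k > a$ (otherwise there is nothing to prove). First I would observe that $(c,a)_v$ cannot be an essential variation, since otherwise $a$ would itself satisfy the requirements to be a path value and minimality of $c_k$ would force $c_k = a$. Then Lemma~\ref{lem:essential-var-middle-child} produces a unique $b > a$ with $(c,b)_v$ essential and $a$ in a middle child of $b$; in particular $0 < \card_{T_1}(b,a) < s(b)$, so $b$ meets all conditions to be a path value and $b \in \{c_i\}$. The crux is to prove $b = c_k$. Assuming instead $b > c_k$, property~(i) gives that $(b,c_k)_0$ is an essential variation, so $c_k$ is a descendant of $b$ with $\card_{T_1}(b,c_k)=0$, placing $c_k$ in the strict left child $T_0^b$, whereas $a$ lies in a middle child $T_j^b$ with $j \geq 1$. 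Positionally this puts $a$ strictly to the right of $c_k$ and outside its subtree, forcing $\card_{T_1}(c_k,a) = s(c_k)$ and contradicting the path condition $\card_{T_1}(c_k,a) < s(c_k)$. Hence $b = c_k$ and $a$ lies in a middle child of $c_k$.

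For property~(iii), the essentiality of $(c_0,c_1)$ is immediate from the path definition, and that of each $(c_i,c_{i+1})_0$ for $1 \leq i < k$ follows from property~(i). It remains to prove minimality, which I would handle uniformly for $0 \leq i < k$: suppose there were $z$ with $c_{i+1} < z < c_i$ and $(z,c_{i+1})$ an essential variation. Combining $(c,c_{i+1})_v \in \Var([T_1,T_2])$ with $(z,c_{i+1}) \in \Var([T_1,T_2])$ via Condition~\eqref{cond:pure-interval-cb} gives $(c,z)_v \in \Var([T_1,T_2])$, and Lemma~\ref{lem:essential-double-var} upgrades this to $(c,z)_v \in \EVar([T_1,T_2])$. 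To conclude that $z$ is a path value, and thereby contradict $c_i,c_{i+1}$ being consecutive, I must verify $\card_{T_1}(z,a) < s(z)$. When $c_{i+1} = a$ this is automatic, since $(z,a)$ varies. When $c_{i+1} > a$ I would argue by contradiction: if $\card_{T_1}(z,a) = s(z)$, then $a$ occupies the rightmost slot of $z$, while $c_{i+1}$, being a descendant of $z$ with $\card_{T_1}(z,c_{i+1}) < s(z)$, sits strictly to its left; hence $a$ is right of $c_{i+1}$ and not a descendant of it, giving $\card_{T_1}(c_{i+1},a) = s(c_{i+1})$ against the path condition for $c_{i+1}$.

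The routine algebraic inputs here (Conditions~\eqref{cond:pure-interval-cb} and~\eqref{cond:pure-interval-ba}, together with the earlier lemmas) combine cleanly; the part I expect to be most delicate is the positional bookkeeping in steps~(ii) and~(iii), namely translating the cardinality data $\card_{T_1}(x,y)$ into honest statements about the planar tree structure (left child versus middle child versus rightmost slot, and descendancy) so as to pin the intermediate candidate node $b$ or $z$ exactly into the path and thus contradict consecutiveness and minimality.
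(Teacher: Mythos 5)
Your proof is correct. Parts (i) and (ii) follow the paper's own argument essentially verbatim: Condition~\eqref{cond:pure-interval-ba} together with \Cref{lem:no-middle-child} for (i), and \Cref{lem:essential-var-middle-child} plus the positional consequence of $\card_{T_1}(c_i,c_k)=0$ for (ii). Part (iii) is where you genuinely diverge. The paper first proves minimality of $(c,c_1)$ only (an intruder $b>c_1$ cannot lie on the path, which forces $\card_{T_1}(b,a)=s(b)$ and yields a contradiction), and then handles $(c_i,c_{i+1})$ for $i\geq 1$ by a reduction: it shows that the variation path of the auxiliary variation $(c_i,a)_0$ is exactly the tail $c_{i+1}>\dots>c_k$ of the original path, a step that relies on the weak transitivity of essential variations (\Cref{lem:essential-var-weak-trans}). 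You instead run one uniform argument for all $0\leq i<k$: the intruder $z$ is promoted to an essential variation $(c,z)_v$ via Condition~\eqref{cond:pure-interval-cb} and \Cref{lem:essential-double-var}, and then shown to satisfy $\card_{T_1}(z,a)<s(z)$ by the positional argument (since $c_{i+1}$ is a descendant of $z$ in a non-rightmost slot, $\card_{T_1}(z,a)=s(z)$ would force $\card_{T_1}(c_{i+1},a)=s(c_{i+1})$, against the path condition), so $z$ would have to appear in the path strictly between $c_i$ and $c_{i+1}$. This buys a shorter proof that dispenses with \Cref{lem:essential-var-weak-trans} and the auxiliary path entirely; the positional bookkeeping you flag as delicate is sound, and is in fact a cleaner justification than the paper's corresponding appeal to planarity at the same spot, which as written does not quite match the planarity condition of Definition~\ref{def:multi-sets} and really needs exactly the tree-position argument you give.
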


\begin{proof}
Property~\eqref{cond:variation-path-0} is a consequence of Condition~\eqref{cond:pure-interval-ba} of pure-candidate intervals. Indeed, by definition of the variation path we have that $(c,c_i)_v$ and $(c,c_j)_v$ are essential variations and $s(c_i) > 0$ as $0 \leq \card_{T_1}(c_i,a) < s(c_i)$. This implies that $(c_i, c_j)_0$ is an essential variation (using Condition~\eqref{cond:pure-interval-ba} and Lemma~\ref{lem:no-middle-child}).

Property~\eqref{cond:variation-path-middle-child} is a direct consequence of Lemma~\ref{lem:essential-var-middle-child}. Indeed, suppose that $c_k \neq a$, \emph{i.e.}, $(c,a)$ is not an essential variation. Then, there is a unique $b > a$ such that $(c,b)_v$ is an essential variation with $a$ a middle child of $b$. In particular, $\card_{T_1}(b,a) < s(b)$ so $b \in \lbrace c_i \rbrace_{i=1}^k$. Now, if $b \neq c_k$, then $b = c_i$ with $i < k$. By Property~\eqref{cond:variation-path-0}, we have $\card_{T_1}(c_i,c_k) = 0$, which implies $\card_{T_1}(c_k,a) = s(c_k)$ and contradicts the definition of the variation path. 

We now prove Property~\eqref{cond:variation-path-minimal}. We know by definition that $(c,c_1)_v$ is an essential variation. Let us suppose that it is not minimal, \emph{i.e.}, there is $b$ with $c > b > c_1$ such that $(b,c_1)$ is an essential variation. Using Lemma~\ref{lem:essential-double-var}, we obtain that $(c,b)_v$ is an essential variation. As $b$ does not belong to the variation path, we have by definition that $\card_{T_1}(b,a) = s(b)$. We obtain a contradiction to the planarity condition on tree-inversions (see Definition~\ref{def:multi-sets}) on $b > c_1 > a$. Indeed, we should have either $\card_{T_1}(c_1,a) = s(c_1)$ which is impossible by definition of the variation path, or $\card_{T_1}(b,c_1) \geq \card_{T_1}(b,a) = s(b)$ which is impossible as we have supposed $(b,c_1)$ to be a variation. 

This gives us that $(c,c_1)$ is a minimal variation. Now, let us look at $(c_i,c_{i+1})$. First, see that if $i \neq k$, $(c_i,a)_0$ is a variation. Indeed, $(c_i,c_k)_0$ is an essential variation and either $c_k = a$ or $a$ is a middle child of $c_k$ so $(c_i,a)$ varies by transitivity. Now if $c_{i+1}$ is the first element of the variation path between $c_i$ and~$a$, we are done by applying our previous argument on this new variation path. We need to prove that this is always the case, \emph{i.e.}, if $b$ belongs to the variation path between $c_i$ and $a$, it also belongs to the variation path between $c$ and $a$. Let $b$ be such that $c_i > b > a$ with $(c_i,b)_0$ an essential variation and $\card_{T_1}(b,a) < s(b)$. We know that $(c,c_i)_v$ is an essential variation, then by weak transitivity of essential variations (see Lemma~\ref{lem:essential-var-weak-trans}), we obtain that $(c,b)_v$ is also an essential variation and so $b$ belongs to the variation path. 
\end{proof}

\begin{proposition}[Transitive closure of minimal essential variations]
\label{prop_transitive_min_essntial_variations}
Let $[T_1, T_2]$ be a pure-candidate interval and let $A$ be the set of couples $(a,c)$ such that $(c,a)$ is a minimal essential variation of $[T_1,T_2]$. Then $T_2 = T_1 + A$. 
\end{proposition}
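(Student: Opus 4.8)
The plan is to prove the multiset equality $\inv(T_2)=\inv(T_1+A)$, where by construction $\inv(T_1+A)=\tc{S}$ with $S$ the multiset obtained from $\inv(T_1)$ by increasing $\card(c,a)$ by one for each minimal essential variation $(c,a)$ of $[T_1,T_2]$. By Proposition~\ref{prop:pure-candidate-minimal-ess-var} these minimal essential variations are tree-ascents of $T_1$, so $T_1+A$ is a well-defined $s$-decreasing tree and the notation $\tc{S}$ is legitimate. I would then establish the equality through the two inclusions $T_1+A\wole T_2$ and $T_2\wole T_1+A$.

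First, I would show $T_1+A\wole T_2$ by checking $S\subseteq\inv(T_2)$. For a minimal essential variation $(c,a)$ of value $v=\card_{T_1}(c,a)$, the fact that $[T_1,T_2]$ is a $+1$-interval gives $\card_{T_2}(c,a)=v+1=\card_S(c,a)$; for every other pair we have $\card_S(c,a)=\card_{T_1}(c,a)\le\card_{T_2}(c,a)$ since $T_1\wole T_2$. As $\inv(T_2)$ is transitive, monotonicity of the transitive closure (as already used in the proof of \Cref{lem_maximal_equal_join}) yields $\tc{S}\subseteq\tc{\inv(T_2)}=\inv(T_2)$, that is, $T_1+A\wole T_2$.

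The crux is the reverse inclusion $T_2\wole T_1+A$, i.e. $\card_{T_2}(c,a)\le\card_{\tc{S}}(c,a)$ for every pair. For non-varying pairs this is immediate. For a variation $(c,a)_v$, where $\card_{T_2}(c,a)=v+1$, I would invoke its variation path $c>c_1>\dots>c_k$. By Lemma~\ref{lem:variation-path-prop}\eqref{cond:variation-path-minimal} the steps $(c,c_1)$ and $(c_i,c_{i+1})$ are minimal essential variations, hence lie in $A$ and satisfy $\card_S(c_i,c_{i+1})\ge 1$; moreover $(c,c_1)_v$ has value $v$, so $\card_S(c,c_1)=v+1$. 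By Lemma~\ref{lem:variation-path-prop}\eqref{cond:variation-path-middle-child}, either $c_k=a$ or $a$ belongs to a middle child of $c_k$, in which case $\card_S(c_k,a)\ge\card_{T_1}(c_k,a)>0$. Therefore $c>c_1>\dots>c_k(>a)$ is a genuine transitivity path in $S$ all of whose edges are strictly positive and whose leading edge has cardinality $v+1$, so the definition of the transitive closure (Definition~\ref{def:tc-tree-inversions}) gives $\card_{\tc{S}}(c,a)\ge v+1=\card_{T_2}(c,a)$.

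Combining the two inclusions yields $\inv(T_2)=\tc{S}=\inv(T_1+A)$, hence $T_2=T_1+A$. The main obstacle is precisely this second inclusion: one must certify that the cardinality of every varying inversion, however it was produced, is reproduced by transitivity from the incremented minimal essential variations alone. The variation-path machinery of Lemma~\ref{lem:variation-path-prop} is exactly the device that linearizes an arbitrary variation into a descending chain of minimal essential variations, and the only delicate bookkeeping is to verify that all edges of this chain remain strictly positive in $S$ (so that it is a bona fide transitivity path) while its leading edge carries exactly the value $v+1$.
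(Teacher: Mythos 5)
Your proposal is correct and follows essentially the same route as the paper: both construct the multiset $S$ by incrementing exactly the minimal essential variations, obtain $T_1+A\wole T_2$ from $S\subseteq\inv(T_2)$ and monotonicity of the transitive closure, and then use the variation path together with Lemma~\ref{lem:variation-path-prop} to exhibit a transitivity path in $S$ whose leading edge has cardinality $v+1$, forcing $\card_{T_1+A}(c,a)\ge v+1=\card_{T_2}(c,a)$. The only difference is cosmetic: you spell out the $+1$-interval bookkeeping and the role of Proposition~\ref{prop:pure-candidate-minimal-ess-var} slightly more explicitly than the paper does.
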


\begin{proof}
We want to prove that for all $c > a$, we have $\card_{T_2}(c,a) = \card_{T_1 + A}(c,a)$. It is clear that $T_1 + A \leq T_2$. Indeed, let $S$ be the multi-set of inversions such that $\card_S(c,a) = \card_{T_1}(c,a) + 1$ if $(c,a)$ is a minimal essential variation of $[T_1, T_2]$ and $\card_S(c,a) = \card_{T_1}(c,a)$ otherwise ($\tc{S} = \inv(T+A)$). Clearly, as the minimal essential variations are a subset of the variations of $[T_1, T_2]$, we have $S \subseteq \inv(T_2)$ which implies that it is also the case for the transitive closure. In particular, if $(c,a)$ is not a variation, $\card_{T_2}(c,a) = \card_{T_1}(c,a) = \card_{T_1 + A}(c,a)$. Let us assume that $(c,a)_v$ is a variation in $[T_1, T_2]$ and take $c > c_1 > \dots > c_k \geq a$ the variation path of $(c,a)$. By Lemma~\ref{lem:variation-path-prop}, $(c,c_1)_v$ and $(c_i, c_{i+1})_0$ are minimal essential variations for all $i < k$. This means that $\card_{S}(c,c_1) = v +1$ and $\card_{S}(c_i,c_{i+1}) = 1$. Besides, if $c_k \neq a$, then $a$ is a middle descendant of $c_k$ and $\card_{T_1}(c_k,a) > 0$. By taking the transitive closure of the multi-set of inversions $S$, we obtain indeed $\card_{T_1 + A}(c,a) \geq v+1$. 
\end{proof}

Now we are ready to prove Proposition~\ref{prop:pure-candidate-to-pure}, which states that pure-candidate intervals are pure intervals.

\begin{proof}[Proof of Proposition~\ref{prop:pure-candidate-to-pure}]
Let $[T_1, T_2]$ be a pure-candidate interval and let $A$ be the set of couples $(a,c)$ such that $(c,a)$ is a minimal essential variation of $[T_1,T_2]$. 
By \Cref{prop_transitive_min_essntial_variations}, we have that $T_2 = T_1 + A$. 
Furthermore, we know that $A$ is subset of tree-ascents of $T_1$ by Proposition~\ref{prop:pure-candidate-minimal-ess-var}.
This proves that $[T_1, T_2]$ is a pure interval. 
\end{proof}

\subsection{Proof Part 2: Pure intervals are pure-candidate intervals}\label{sec_pure_purecandidate}
\begin{proposition}
\label{prop:pure-to-pure-candidate}
Let $[T,T+A]$ be a pure interval, then it is a pure-candidate interval.
\end{proposition}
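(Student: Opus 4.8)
The plan is to check directly that the pure interval $[T_1,T_2]$, with $T_1=T$ and $T_2=T+A$, satisfies the two defining properties of a pure-candidate interval. The $+1$ property is already provided by Lemma~\ref{lem:plus-one}, so only Conditions~\eqref{cond:pure-interval-cb} and~\eqref{cond:pure-interval-ba} of the Characterization Theorem~\ref{thm:pure-interval-char} remain. Throughout I would use the presentation $\inv(T_2)=\tc{M_A}$, where $M_A$ is obtained from $\inv(T_1)$ by raising each seed cardinality $\card(c,a)$ with $(a,c)\in A$ by one, together with the facts that $T_1$ and $T_2$ are planar and transitive (Definition~\ref{def:multi-sets}), that all amplitudes are one, and that every element of $A$ is a tree-ascent of $T_1$, so that Proposition~\ref{prop:tree-ascent-inversions} and Definition~\ref{def:tree-ascent} are available.

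The first step is to establish the analogue of Lemma~\ref{lem:var-desc} valid for pure intervals without assuming the conditions: if $(c,a)$ varies then $a$ is a descendant of $c$ in $T_1$. Since $\card_{T_1}(c,a)<\card_{T_2}(c,a)\le s(c)$ forces $\card_{T_1}(c,a)<s(c)$, the only alternative to be excluded is $\card_{T_1}(c,a)=0$ with $a$ strictly left of $c$; I would rule it out by following a transitivity path in $M_A$ realising $\card_{\tc{M_A}}(c,a)>0$, noting that each positive step is either a genuine inversion of $T_1$ or a seed (hence a tree-ascent, giving a descendant step by Condition~\eqref{cond:tree-ascent-desc}), and using planarity of $T_1$ to propagate the descendant relation down the path. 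Granting this, Condition~\eqref{cond:pure-interval-cb} follows: if $(c,a)_v$ and $(b,a)_w$ vary with $a<b<c$, then $a$ is a descendant of both $b$ and $c$, so among the ancestors of $a$ the node $c$ sits above $b$; hence $b$ is a descendant of $c$, $a$ a descendant of $b$, and both $a,b$ lie in the same child of $c$, forcing $\card_{T_1}(c,b)=\card_{T_1}(c,a)=v$. It then remains to transport the increase $\card_{T_2}(c,a)=v+1$ to $b$: using that $a$ stays a descendant of $b$ inside $c$'s subtree in $T_2$, together with transitivity and planarity of $T_2$, I would obtain $\card_{T_2}(c,b)=v+1$, so $(c,b)_v$ varies.

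For Condition~\eqref{cond:pure-interval-ba}, suppose $(c,a)_v$ and $(c,b)_v$ are essential variations with $a<b<c$ and $s(b)\neq0$. The descendant property places both $a$ and $b$ in the same child $T_v^c$ of $c$, with $v<s(c)$, and since $a$ (being smaller than $b$) cannot be an ancestor of $b$, the only possibilities are $\card_{T_1}(b,a)=0$, $a$ in a middle child of $b$, or $\card_{T_1}(b,a)=s(b)$. The middle-child case is excluded immediately, for it would make $(c,b)$ a varying inversion witnessing that $(c,a)$ is not essential. The delicate case is $\card_{T_1}(b,a)=s(b)$, in which $(b,a)$ could not vary; I would exclude it using the tree-ascent structure, in particular Condition~\eqref{cond:tree-ascent-smaller} of Definition~\ref{def:tree-ascent}, to argue that an essential variation $(c,b)_v$ with $s(b)\neq0$ is incompatible with $a$ lying to the right of $b$ at the same $c$-level. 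This leaves $\card_{T_1}(b,a)=0$, and I would conclude by showing $(b,a)$ genuinely varies: as both $a$ and $b$ move to level $v+1$ relative to $c$ while $a$ starts weakly left of $b$ inside $T_v^c$, planarity of $T_2$ forces $\card_{T_2}(b,a)\ge1$, that is $(b,a)_0\in\Var([T_1,T_2])$.

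I expect the main obstacle to be Condition~\eqref{cond:pure-interval-ba}: both the exclusion of the configuration $\card_{T_1}(b,a)=s(b)$ and the final verification that $(b,a)$ actually varies hinge on controlling precisely how the rotations encoded by $A$ displace $a$ and $b$ relative to $c$, which is exactly where the transitive closure is most delicate. The strictly-left case of the descendant lemma is a secondary technical point, handled by the same planarity propagation along transitivity paths, and the consistent bookkeeping of the shared value $v$ recorded in the subscripts is the part I would be most careful about.
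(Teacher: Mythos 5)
Your overall plan (verify Conditions~\eqref{cond:pure-interval-cb} and~\eqref{cond:pure-interval-ba} directly from the presentation $\inv(T_2)=\tc{M_A}$) is the right one, and several intermediate steps are sound: the descendant lemma for variations, the deduction $\card_{T_1}(c,b)=\card_{T_1}(c,a)=v$, and the three-way split on $\card_{T_1}(b,a)$ in Condition~\eqref{cond:pure-interval-ba}. But in both conditions the concluding step rests on planarity/transitivity of $T_2$, and that is genuinely insufficient. For Condition~\eqref{cond:pure-interval-cb}: from $\card_{T_2}(c,a)=v+1$ and $\card_{T_2}(b,a)=w+1>0$ one cannot conclude $\card_{T_2}(c,b)=v+1$. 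The configuration $\card_{T_2}(c,b)=v$, $\card_{T_2}(c,a)=v+1$, $\card_{T_2}(b,a)=s(b)$ (i.e.\ $a$ escapes $b$'s subtree and lands in a later child of $c$) satisfies both the transitivity and planarity conditions of Definition~\ref{def:multi-sets}, so your assertion that ``$a$ stays a descendant of $b$ inside $c$'s subtree in $T_2$'' is exactly the claim that needs proof, not a consequence of the axioms of tree-inversion sets. For Condition~\eqref{cond:pure-interval-ba}, the final step is false as stated: if $a$ and $b$ both end up in child $v+1$ of $c$ in $T_2$ with $a$ to the left of $b$, then $\card_{T_2}(b,a)=0$, which is perfectly planar and transitive; so planarity of $T_2$ cannot produce the variation $(b,a)_0$.

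The missing idea in both places is the control of the transitivity paths in $M_A$ that realize each increase, i.e.\ the ascent-path machinery (Definition~\ref{def:ascent-path}, Lemmas~\ref{lem:middle-variation-pure}, \ref{lem:ascent-path-variation} and~\ref{lem:pure-var-path}). That analysis shows that any $b$ with $\card_{T_1}(c,b)=\card_{T_1}(c,a)=v$ is either a node of the ascent path of $(c,a)$, a middle descendant of such a node, or satisfies $\card_{T_1}(b,a)=s(b)$. This trichotomy is what settles both conditions: for~\eqref{cond:pure-interval-cb} the first two cases force $(c,b)$ to vary and the third contradicts that $(b,a)$ varies; for~\eqref{cond:pure-interval-ba} the essentiality of $(c,b)$ kills the middle-descendant case, the case $\card_{T_1}(b,a)=s(b)$ is excluded by an induction along the ascent path from $c$ to $b$ using Statement~\eqref{cond:inv-tree-ascent-smaller} of Proposition~\ref{prop:tree-ascent-inversions} together with a minimality choice of $b$ (you correctly point at this condition but the induction must actually be carried out), and in the remaining case $b$ lies on the ascent path of $(c,a)$, so its tail $b>\dots>a$ is itself an ascent path and Lemma~\ref{lem:ascent-path-variation} yields the variation $(b,a)_0$. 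Without this (or an equivalent analysis of the transitive closure), both conditions remain unproved.
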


In order to prove this proposition we need to show that the variations and essential variations of any interval $[T, T+A]$ satisfy Conditions~\eqref{cond:pure-interval-cb} and~\eqref{cond:pure-interval-ba} of \Cref{thm:pure-interval-char}. 
For this, it is usefull to characterize the variations and essential variations of a pure interval in terms of its \emph{ascent-paths}, a notion that we now define. 

\subsubsection{Ascent paths}
In the following, we adopt a notation for tree-ascents that is similar to the one we use for variations: we write $(a,c)_v$ a tree-ascent $(a,c)$ where $\card(c,a) = v$.

\begin{definition}[Ascent paths]
\label{def:ascent-path}
Let $[T,T+A]$ be a pure interval and $c > a$ with $\card_T(c,a) = v$. We say that there is an \defn{ascent-path} $c > c_1 > \dots > c_k \geq a$ if
\begin{enumerate}[(i)]
\item $(c_1,c)_v \in A$;
\label{cond:ascent-path-first}
\item $(c_i, c_{i-1})_0 \in A$ for all $1 < i \leq k$;
\label{cond:ascent-path-middle}
\item either $c_k = a$ or $a$ is a middle descendant of $c_k$.
\label{cond:ascent-path-last}
\end{enumerate}
\end{definition}

You can check on Examples~\ref{ex:var-path-pure} and~\ref{ex:var-path-pure10} that all variation paths are actually ascent paths. We are indeed going to prove that they are the same.

\begin{remark}
\label{rem:ascent-path}
As we have a series of tree-ascents, some properties of the ascent-paths are immediate
\begin{enumerate}[(i)]
\item $a$ is a descendant of $c$ in $T$;
\item for all $i$, $c > c_1 > \dots c_i$ is also an ascent-path, in particular $c_i$ is a descendant of $c$ with $\card_T(c,c_i) = v$;
\item for all $i < j$, $c_i > \dots > c_j$ is also an ascent-path, in particular $c_j$ is a descendant of $c_i$ with $\card_T(c_i, c_j) = 0$;
\item for all $i$, $c_i > \dots > c_k \geq a$ is also an ascent-path, in particular $a$ is a descendant of $c_i$ with $\card_T(c_i, a) = 0$.
\end{enumerate}
\end{remark}

\begin{lemma}
\label{lem:middle-variation-pure}
Let $c > b > a$ such that there is an ascent-path $c > c_1 > \dots c_k \geq a$ between $c$ and $a$ with $\card_T(c,a) = \card_T(c,b) = v$ (in particular, $v < s(c)$). Then, one of the following is true:
\begin{enumerate}[(i)]
\item $b = c_i$ for a certain $i$;
\label{case:middle-var-ci}
\item $b$ is a middle descendant of $c_i$ for a certain $i$;
\label{case:middle-var-middle}
\item $\card_T(b,a) = s(b)$.
\label{case:middle-var-end}
\end{enumerate}
\end{lemma}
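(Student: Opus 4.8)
The plan is to locate $b$ precisely relative to the ``spine'' $c>c_1>\dots>c_k\ge a$ of the ascent-path and then read off which of the three alternatives holds from its planar position. First I would collect the structural facts furnished by Remark~\ref{rem:ascent-path} and the tree-ascent conditions of Definition~\ref{def:tree-ascent}: all the $c_i$ are descendants of $c$ lying in the single child $T^c_v$ (since $\card_T(c,c_i)=v$), they are nested in left children ($\card_T(c_i,c_j)=0$ for $i<j$), and $a$ is a descendant of every $c_i$, with $a=c_k$ or $a$ a middle descendant of $c_k$. Crucially, because each $(c_i,c_{i-1})$ (and $(c_1,c)$) is a tree-ascent, Condition~\eqref{cond:tree-ascent-middle} tells me that each spine node lies in the \emph{right} child of every intermediate ancestor separating it from the preceding spine node, while Condition~\eqref{cond:tree-ascent-smaller} tells me that each $c_i$ has \emph{empty} strict right child. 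These last two facts are the engine of the whole argument.

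Then I would split on the position of $b$ via $p:=\mathrm{LCA}(a,b)$, noting that $p\neq a$ since $b>a$ cannot be a descendant of $a$. In the case $p=b$ (so $b$ is an ancestor of $a$), the hypotheses $c>b$ and $\card_T(c,b)=v$ force $b$ onto the path from the root of $T^c_v$ down to $a$; comparing $b$ with the spine gives three outcomes: $b=c_i$ yields~\eqref{case:middle-var-ci}; $b$ lying strictly between two consecutive spine nodes (or above $c_1$) puts the next spine node, hence $a$, in the right child of $b$, so $\card_T(b,a)=s(b)$, which is~\eqref{case:middle-var-end}; and $b$ lying strictly between $c_k$ and $a$ places $b$ in the same middle child of $c_k$ as $a$, giving~\eqref{case:middle-var-middle}. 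In the remaining case $a$ and $b$ are incomparable; if at $p$ the subtree containing $a$ lies to the right of the one containing $b$, then $a$ is right of $b$ and again $\card_T(b,a)=s(b)$, that is~\eqref{case:middle-var-end}.

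The delicate case is when $a$ and $b$ are incomparable with $a$ lying to the left of $b$ at $p$. Here I would first observe that $b$ must be a descendant of $c$: if $b$ were merely left of $c$, then $a$, being inside $c$, would lie to the right of $b$, contradicting $a$ left of $b$ (and $b$ cannot be right of $c$ since $\card_T(c,b)=v<s(c)$). Hence $b\in T^c_v$ and $p$ lies on the path from the root of $T^c_v$ to $a$. The intermediate-ancestor positions for $p$ are excluded, because by Condition~\eqref{cond:tree-ascent-middle} they would force $a$ into the right child of $p$, contradicting that $a$'s child index at $p$ is strictly smaller than $b$'s. If $p$ lies strictly below $c_k$, then $b$ sits in the same middle child of $c_k$ as $a$ and we obtain~\eqref{case:middle-var-middle}. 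The only surviving possibility is $p=c_i$: here $b$'s child index $\beta$ at $c_i$ exceeds $a$'s, so $\beta\ge 1$, and $\beta=s(c_i)$ is impossible since the strict right child of $c_i$ is empty by Condition~\eqref{cond:tree-ascent-smaller}; therefore $0<\beta<s(c_i)$ and $b$ is a middle descendant of $c_i$, which is~\eqref{case:middle-var-middle}. The main obstacle is precisely this last configuration --- ruling out that $b$ sits in the strict right child of a spine node --- and it is resolved exactly by the emptiness of that child guaranteed by the tree-ascent Condition~\eqref{cond:tree-ascent-smaller}.
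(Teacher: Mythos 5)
Your proof is correct, and it organizes the case analysis differently from the paper's. The paper locates $b$ purely by its \emph{label}, finding the consecutive spine values $c_i > b > c_{i+1}$, and then reasons entirely with cardinalities via the inversion-theoretic characterization of tree-ascents (Statements~\eqref{cond:inv-tree-ascent-middle} and~\eqref{cond:inv-tree-ascent-smaller} of Proposition~\ref{prop:tree-ascent-inversions}); when $b$ falls into the strict-right-child slot of some $c_j$ (cardinality $s(c_j)$), it invokes Statement~\eqref{cond:inv-tree-ascent-smaller} to push the comparison up to $c_{j-1}$ and iterates along the spine until it either finds a middle-descendant witness or contradicts $\card_T(c,b)=v$. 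You instead locate $b$ by its \emph{planar position} through $\mathrm{LCA}(a,b)$ and read the conclusion off directly from Conditions~\eqref{cond:tree-ascent-middle} and~\eqref{cond:tree-ascent-smaller} of Definition~\ref{def:tree-ascent}, which are the tree-shaped counterparts of the same two statements. The payoff of your route is that no upward iteration is needed: the LCA pins down the unique spine node responsible for the configuration in one step, and the delicate point (excluding $b$ from the strict right child of a spine node) is resolved exactly where the paper resolves it, by the emptiness of that child. The paper's route has the advantage of staying inside the multiset-of-inversions formalism used throughout the rest of the argument, so it composes more directly with the surrounding cardinality computations, whereas yours is arguably more transparent geometrically. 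Both are complete; I see no gap in your case analysis (the split into $b$ an ancestor of $a$, $a$ right of $b$, and $a$ left of $b$ is exhaustive, and in the last case your exclusion of the intermediate-ancestor positions for $p$ and of $p=c_0=c$ is justified).
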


\begin{proof}
Suppose that $b \neq c_i$ for all $i$. We first look at the case where $b > c_k$. Let $c_0 := c$ and $(c_{i+1}, c_i)$ with $0 \leq i < k$ be the tree-ascent surrounding $b$, \emph{i.e.}, $c_{i} > b > c_{i+1}$. Either $c_i = c$ and $\card_T(c,b) = \card_T(c_i, c_{i+1})$ by hypothesis or $\card_T(c_i, c_{i+1}) = 0$. If $\card_T(c_{i},b) = \card_T(c_i, c_{i+1})$, then Statement~\eqref{cond:inv-tree-ascent-middle} of Proposition~\ref{prop:tree-ascent-inversions} gives that $\card_T(b,c_{i+1}) = s(b)$ and we are in Case~\eqref{case:middle-var-end}. If $\card_T(c_{i},b) \neq \card_T(c_i, c_{i+1})$, then $c_i \neq c$ and $\card_T(c_i, b) > \card_T(c_i, c_{i+1}) = 0$. As $(c_i, c_{i-1})$ is a tree-ascent, either, $\card_T(c_i, b) < s(c_i)$ and we are in Case~\eqref{case:middle-var-middle} or Statement~\eqref{cond:inv-tree-ascent-smaller} of Proposition~\ref{prop:tree-ascent-inversions} gives $\card_T(c_{i-1},b) > \card_T(c_{i-1},c_i)$ (because the right child of $c_i$ is empty). This implies that $i-1 > 0$ and we can use the same reasoning until we reach an element $c_j$ with $0 < j \leq i$ such that $0 < \card_T(c_j,b) < s(c_j)$: \emph{i.e.}, $b$ is a middle descendant of $c_j$.  

In the case where $b < c_k$, if $(a,c_k)_0$ is a tree-ascent, then the previous case still applies. If not, $a$ is a middle child of $c_k$ and $(c_k, c_{k-1})$ is a tree-ascent. If $\card_T(c_k, b) = 0 < \card_T(c_k, a)$, we are in Case~\eqref{case:middle-var-end}. If $\card_T(c_k, b) < s(c_k)$, then $b$ is a middle descendant of $c_k$. If $\card_T(c_k, b) = s(c_k)$, we have $s(c_k) > 0$ (it has a middle child). We get that $\card_T(c_{k-1}, b) > \card_T(c_{k-1}, c_k)$ and we can apply the same reasoning as earlier to find $c_j$ such that $b$ is a middle descendant of $c_j$. 
\end{proof}

\begin{example}
Look again at Figure~\ref{fig:pure-interval10}. There is an ascent path between $10 > 9 > 4 > 1$ using the tree-ascents $(1,4)_0$, $(4,9)_0$, and $(9,10)_2$. We look at all $b$ with $10 > b > 1$ and $\card_{T_1}(10,b) = 2$. 
\begin{itemize}
\item $4$ and $9$ belong to the ascent-path and correspond to Case~\eqref{case:middle-var-ci};
\item $2$, $3$ et $6$ are middle descendants of either $4$ or $9$, they correspond to Case~\eqref{case:middle-var-middle};
\item $7$ corresponds to Case~\eqref{case:middle-var-end} as we have $\card_{T_1}(7,1) = 1 = s(7)$.
\end{itemize} 
\end{example}


%

\begin{lemma}
\label{lem:ascent-path-variation}
If $c > c_1 \dots > c_k \geq a$ is an ascent path in a pure interval $[T,T+A]$, with $\card_T(c,a) = v$, then $(c,a)_v$ is a variation of $[T,T+A]$. It is an essential variation if and only if $c_k = a$. 

More specifically, for all $b$ such that $c > b > c_k$, then $c_k$ is never a middle descendant of $b$.
\end{lemma}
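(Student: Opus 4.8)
The plan is to prove the three assertions in order: extract the variation claim from a single transitivity path, then establish the ``more specifically'' statement, and finally read off the essential/non-essential dichotomy as a consequence. First I would show that $(c,a)_v$ varies. Let $S$ be the multi-set obtained from $\inv(T)$ by raising $\card(y,x)$ by one for each tree-ascent $(x,y)\in A$, so that $\inv(T+A)=\tc{S}$. Writing $c_0:=c$, Definition~\ref{def:ascent-path} gives $\card_S(c_0,c_1)=\card_T(c,c_1)+1=v+1$ and $\card_S(c_i,c_{i+1})=1$ for $1\le i\le k-1$, so $c_0>c_1>\dots>c_k$ is a transitivity path in $S$ whose first edge has cardinality $v+1$; by Definition~\ref{def:tc-tree-inversions} this yields $\card_{\tc{S}}(c,c_k)\ge v+1$. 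If $c_k=a$ this already gives $\card_{T+A}(c,a)\ge v+1>v=\card_T(c,a)$. If instead $a$ is a middle descendant of $c_k$, then $\card_T(c_k,a)>0$, and appending the edge $(c_k,a)$ (which satisfies $\card_S(c_k,a)\ge\card_T(c_k,a)>0$) extends the path to $a$ without changing its first edge, so again $\card_{\tc{S}}(c,a)\ge v+1>v$. In both cases $(c,a)_v$ is a variation, and the same computation together with $\card_T(c,c_k)=v$ (Remark~\ref{rem:ascent-path}) shows that $(c,c_k)_v$ also varies.

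The main work, and the step I expect to be the real obstacle, is the ``more specifically'' claim: no $b$ with $c>b>c_k$ has $c_k$ as a middle descendant. If $c_k$ is not a descendant of $b$ there is nothing to prove, so assume $b$ is an ancestor of $c_k$; since labels decrease towards the leaves and $c_k<b<c$, the node $b$ lies on the ancestor chain of $c_k$ strictly between $c_k$ and $c$, on which $c=c_0>c_1>\dots>c_{k-1}$ all sit. If $b=c_i$ for some $i<k$, then $\card_T(c_i,c_k)=0$ by Remark~\ref{rem:ascent-path}, so $c_k$ lies in the left child of $c_i$ and is not a middle descendant. Otherwise there is a unique index $i$ with $c_i>b>c_{i+1}$, and $b$ is then a descendant of $c_i$ and a strict ancestor of $c_{i+1}$. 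For $i\ge 1$ we have $\card_T(c_i,c_{i+1})=0$, so $c_{i+1}$ and its ancestor $b$ both lie in $T_0^{c_i}$, giving $\card_T(c_i,b)=0=\card_T(c_i,c_{i+1})$; Condition~\eqref{cond:inv-tree-ascent-middle} of Proposition~\ref{prop:tree-ascent-inversions} applied to the tree-ascent $(c_{i+1},c_i)$ then forces $\card_T(b,c_{i+1})=s(b)$. The case $i=0$ is identical with $v$ in place of $0$, using $\card_T(c,c_1)=v$ and the tree-ascent $(c_1,c)_v$. In either case $c_{i+1}$, hence its descendant $c_k$, lies in the right child of $b$, so $\card_T(b,c_k)=s(b)$ and $c_k$ is a right (not middle) descendant of $b$.

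Finally I would deduce the essential-variation dichotomy from the two parts above. If $c_k=a$, the ``more specifically'' claim says that $a$ is never a middle descendant of any $b$ with $a<b<c$, so the defining condition of an essential variation in Definition~\ref{def:variations} is vacuously satisfied and $(c,a)_v$ is essential. If $c_k\neq a$, then $a$ is a middle descendant of $c_k$ by Definition~\ref{def:ascent-path}, and we showed above that $(c,c_k)_v$ varies; taking $b=c_k$ (with $a<c_k<c$) exhibits exactly the forbidden witness, so $(c,a)_v$ is \emph{not} essential. This completes the equivalence. The only delicate point throughout is the careful bookkeeping of which tree-ascent condition applies to which consecutive pair $c_i,c_{i+1}$ and the verification that an ancestor $b$ of $c_k$ between two path nodes lands in the expected subtree; once the chain structure of the ancestors of $c_k$ is pinned down, the tree-ascent characterization does the rest.
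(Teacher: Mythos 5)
Your proof is correct. Parts one and three match the paper's argument: the variation claim is obtained from the same transitivity path in the multi-set $S$, and the essential/non-essential dichotomy is read off from the ``more specifically'' claim exactly as the paper does. The difference lies in the middle step. The paper proves that $c_k$ is never a middle descendant of $b$ by invoking Lemma~\ref{lem:middle-variation-pure} (applied to the ascent path from $c$ to $c_k$) and checking that each of its three outcomes excludes the middle-descendant configuration. You instead argue directly on the ancestor chain of $c_k$: after discarding the trivial case where $c_k$ is not a descendant of $b$, you locate $b$ between consecutive path nodes $c_i > b > c_{i+1}$, deduce $\card_T(c_i,b)=\card_T(c_i,c_{i+1})$ from the fact that $b$ lies on the path from $c_i$ down to $c_{i+1}$, and apply Condition~\eqref{cond:inv-tree-ascent-middle} of Proposition~\ref{prop:tree-ascent-inversions} to the tree-ascent $(c_{i+1},c_i)$ to force $\card_T(b,c_{i+1})=s(b)$, hence $\card_T(b,c_k)=s(b)$. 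This is in effect a self-contained re-derivation of the relevant case of Lemma~\ref{lem:middle-variation-pure}: the underlying engine (the inversion-theoretic characterization of tree-ascents) is identical, but restricting attention to ancestors of $c_k$ lets you bypass the ``$b$ is a middle descendant of some $c_i$'' branch of the paper's three-way case analysis. Either route works; the paper's is shorter given that Lemma~\ref{lem:middle-variation-pure} has already been established, while yours is more transparent about where the claim actually comes from.
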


%

\begin{proof}
It is quite clear that $(c,a)$ varies. By hypothesis, $\card_T(c,a) = v$. As each step in $c > c_1 > \dots > c_k$, $(c_i, c_{i-1})$ is a tree-ascent of $A$, the cardinality increases and you get $\card_{T+A}(c,c_1) = v+1$ and $\card_{T+A}(c_i, c_{i+1}) = 1$. Besides, if $a \neq c_k$, then it is a middle descendant of $c_k$ meaning $\card_{T+A}(c_k,a) \geq \card_T(c_k,a) > 0$. By transitivity, $\card_{T+A}(c,a) \geq v+1$. 

If $a \neq c_k$, then $(c,c_k)$ varies and $(c,a)$ is not an essential variation. Reversely, assuming the second part of the lemma is true, we get that $(c,c_k)$ is always an essential variation. 

Now let $b$ be such that $c > b > c_k$. If $\card_{T_1}(c,b) \neq v$, then $c_k$ cannot be a middle descendant of $b$. We assume $\card_{T_1}(c,b) = v$ and look at the different possibilities of Lemma~\ref{lem:middle-variation-pure} on the ascent-path between $c$ and $c_k$. If $b = c_i$ for a certain $i$, then $\card_T(b,c_k) = 0$. If $b$ is a middle descendant of a certain $c_i$, then $c_k$ lies to the left of $b$ and $\card_T(b,c_k) = 0$. The only case left is $\card_T(b,c_k) = s(b)$. We get that $c_k$ is never middle descendant of $b$.
\end{proof}

\begin{lemma}
\label{lem:pure-var-path}
Let $(c,a)_v$ be a variation of a pure interval $[T,T+A]$, then the variation path $c > c_1 > \dots > c_k \geq a$ of $(c,a)_v$ as defined by Definition~\ref{def:variation-path} is an ascent-path.
\end{lemma}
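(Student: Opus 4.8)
The plan is to verify directly that the variation path $c > c_1 > \dots > c_k \geq a$ of $(c,a)_v$, defined in Definition~\ref{def:variation-path}, satisfies the three conditions of an ascent path in Definition~\ref{def:ascent-path}. Throughout I write $M_A$ for the multi-set obtained from $\inv(T)$ by raising $\card(c',a')$ by one for each $(a',c')\in A$, so that $\inv(T+A)=\tc{M_A}$, and I freely use that a pure interval is a $+1$-interval (Lemma~\ref{lem:plus-one}). I would dispose of Condition~\eqref{cond:ascent-path-last} first, as it is essentially a restatement of the definition of the path: if $(c,a)_v$ is itself an essential variation, then $a\in\{c_i\}$ and hence $c_k=a$; otherwise Lemma~\ref{lem:essential-var-middle-child} supplies a unique $b>a$ with $(c,b)_v$ essential and $a$ in a middle child of $b$, and one checks that this $b$ is exactly the minimal element $c_k$ of the path, since any strictly smaller $c_i$ would force $\card_T(c_i,a)=s(c_i)$, contradicting membership in the path.

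The core of the argument is Condition~\eqref{cond:ascent-path-first}, that the leading step $(c_1,c)_v$ lies in $A$. As $(c,c_1)$ varies in a $+1$-interval, $\card_{\tc{M_A}}(c,c_1)=v+1$, so there is a transitivity path $c=d_0>d_1>\dots>d_\ell=c_1$ in $M_A$ with $\card_{M_A}(c,d_1)=v+1$ and $\card_{M_A}(d_j,d_{j+1})>0$ for all $j$. I would show that $\ell=1$, which gives $(c_1,c)_v\in A$ directly. Suppose $\ell\geq 2$. Each $M_A$-step is either a genuine inversion or a tree-ascent, both of which force $d_{j+1}$ to be a descendant of $d_j$, so $c_1$ is a descendant of $d_1$ in $T$. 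The first edge then arises in one of two ways. Either $\card_T(c,d_1)=v+1$, which is impossible because $c_1$ being a descendant of $d_1$ forces $\card_T(c,c_1)=\card_T(c,d_1)=v+1\neq v$. Or $(d_1,c)\in A$ with $\card_T(c,d_1)=v$; here I would invoke the tree-ascent characterization of Proposition~\ref{prop:tree-ascent-inversions}. Condition~\eqref{cond:inv-tree-ascent-middle} (equivalently Condition~\eqref{cond:tree-ascent-middle} of Definition~\ref{def:tree-ascent}) forbids $d_1$ from sitting in a middle child of any node below $c$, so $(c,d_1)_v$ is an essential variation; Condition~\eqref{cond:inv-tree-ascent-smaller} applied with $a'=a$ gives $\card_T(d_1,a)<s(d_1)$ (its hypothesis $\card_T(d_1,a)=s(d_1)$ would force $\card_T(c,a)>v$, contradicting $\card_T(c,a)=v$); and $s(d_1)=0$ is impossible, since then no $M_A$-step could leave $d_1$ and the path could not continue to $c_1<d_1$. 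Thus $d_1$ would be a bona fide element of the variation path strictly larger than $c_1$, contradicting the maximality of $c_1$. Hence $\ell=1$ and $(c_1,c)_v\in A$.

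It remains to treat the interior steps, Condition~\eqref{cond:ascent-path-middle}, for which I would use strong induction on the top value $c$. The reduction passes from $(c,a)_v$ to the variation $(c_1,a)$: I first establish the auxiliary fact $\card_T(c_1,a)=0$ whenever $k\geq 2$ (the node $b=c_k$ furnished by Lemma~\ref{lem:essential-var-middle-child} is then distinct from $c_1$, so $a$ lies in no middle child of $c_1$, while $\card_T(c_1,a)<s(c_1)$ excludes the right child, leaving $a$ a left descendant of $c_1$), and then verify that $(c_1,a)_0$ is again a variation of $[T,T+A]$ whose variation path is precisely $c_1>c_2>\dots>c_k$. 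Applying the induction hypothesis to $(c_1,a)_0$ makes this tail an ascent path, yielding $(c_2,c_1)_0\in A$ and $(c_i,c_{i-1})_0\in A$ for $i>2$; together with $(c_1,c)_v\in A$ from the previous paragraph, the full path is an ascent path. The descendant relations needed to feed the induction are read off from Remark~\ref{rem:ascent-path} applied to the already-constructed prefix, in tandem with Lemma~\ref{lem:ascent-path-variation}.

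I expect the main obstacle to be the transitivity-closure bookkeeping in the proof of Condition~\eqref{cond:ascent-path-first}: one must reason about an \emph{arbitrary} transitivity path realizing $\card_{\tc{M_A}}(c,c_1)=v+1$ and eliminate every way the increase could be produced indirectly, and it is exactly there that the fine structure of tree-ascents (Proposition~\ref{prop:tree-ascent-inversions}) and the degenerate case $s(d_1)=0$ must be handled with care. A secondary delicate point is confirming, in the inductive step, that the tail $c_1>\dots>c_k$ is genuinely the variation path of $(c_1,a)_0$ and not merely a sub-list of it.
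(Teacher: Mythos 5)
Your strategy---verifying the three ascent-path conditions directly on the variation path and inducting on $c$---is genuinely different from the paper's proof, which instead takes a \emph{length-minimal} transitivity path from $c$ all the way down to $a$ in the multi-set $S$, shows that minimality forces every step to be a tree-ascent of $A$, and only afterwards identifies the resulting ascent path with the variation path. Your route, however, breaks down at exactly the points where that global analysis does the work. Already in your proof of Condition~\eqref{cond:ascent-path-first}, the assertion that ``each $M_A$-step is either a genuine inversion or a tree-ascent, both of which force $d_{j+1}$ to be a descendant of $d_j$'' is false for inversion steps: $\card_T(d_j,d_{j+1})>0$ is perfectly compatible with $d_{j+1}$ lying to the right of $d_j$ rather than below it. So ``$c_1$ is a descendant of $d_1$'' is not established and your elimination of the case $\card_T(c,d_1)=v+1$ collapses. (The intended conclusion $\card_T(c,c_1)\geq v+1$ can be recovered by propagating cardinalities down the path, using transitivity of $\inv(T)$ for inversion steps and Statement~\eqref{cond:inv-tree-ascent-desc} of Proposition~\ref{prop:tree-ascent-inversions} for pure ascent steps---but that is not the argument you gave.)

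More seriously, the inductive reduction for Condition~\eqref{cond:ascent-path-middle} rests on two claims that are merely announced: that $(c_1,a)_0$ is a variation of $[T,T+A]$ at all, and that its variation path is exactly the tail $c_1>\dots>c_k$. Neither is available at that stage. No planarity or transitivity argument in $T+A$ forces $\card_{T+A}(c_1,a)>0$ from $\card_{T+A}(c,a)=\card_{T+A}(c,c_1)=v+1$; in the paper this fact is precisely what the chain of ascents $(c_{i+1},c_i)_0\in A$ below $c_1$ delivers, i.e., it is an \emph{output} of the lemma, not an input. Likewise, showing that each $(c_1,c_i)_0$ is an essential variation is Property~\eqref{cond:variation-path-0} of Lemma~\ref{lem:variation-path-prop}, whose proof uses Condition~\eqref{cond:pure-interval-ba} of pure-candidate intervals---a property of pure intervals that is only established downstream of the present lemma, so invoking it here would be circular. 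Your treatment of Condition~\eqref{cond:ascent-path-last} has a smaller instance of the same problem: ruling out an element $c_i$ of the variation path strictly below the node $b$ of Lemma~\ref{lem:essential-var-middle-child} is not the one-line check you state, since the configuration $\card_T(b,c_i)=s(b)$ with $\card_T(c_i,a)=0$ survives your stated reason (``would force $\card_T(c_i,a)=s(c_i)$''); the paper needs Lemma~\ref{lem:middle-variation-pure} to exclude it. In short, the skeleton is plausible, but the load-bearing steps are precisely the ones left as ``one checks'' and ``verify that''.
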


\begin{proof}
Let $(c,a)_v$ be a variation of $[T,T+A]$. Let $S$ be the multi-set of inversions set such that $\card_S(c,a) = \card_T(c,a) + 1$ if $(a,c) \in A$ and $\card_S(c,a) = \card_T(c,a)$ otherwise, so $\tc{S}$ is the tree-inversion set of $T+A$. By definition, there is a transitivity path in $S$, $c = c_0 > c_1 > \dots > c_k > c_{k+1} = a$ such that $\card_S(c,c_1) = v + 1$ and $\card_S(c_i,c_{i-1}) > 1$. We choose the path so that $k$ is minimal and we prove that this is an ascent-path. In the following, we suppose that $k \geq 1$ as otherwise, $(a,c)$ is a tree-ascent of $A$ and there is nothing to prove. 

Let us look first at $(c,c_1)$. Suppose that we have $\card_T(c,c_1) = v + 1$ and look at the next step, $(c_1,c_2)$ (this always exists but sometimes $c_2 = a$). If $\card_T(c_1,c_2) > 0$, then $\card_T(c,c_2) \geq v +1$ and $c > c_2 > \dots > a$ is a shorter transitivity path. This implies that $\card_T(c_1,c_2) = 0$. As $\card_S(c_1,c_2) > 0$, it means that $(c_2,c_1)$ is a tree-ascent of $A$. By Condition~\eqref{cond:inv-tree-ascent-desc} of Proposition~\ref{prop:tree-ascent-inversions}, we obtain $\card_T(c,c_2) = \card_T(c,c_1) = v+1$ and again $c > c_2 > \dots \geq c_k$ is a shorter transitivity path. This means that $\card_T(c,c_1) = v$ and that $(c_1,c) \in A$.

A similar reasoning can be made for $(c_{i-1}, c_i)$ for all $1 < i \leq k$. We have $c_{i-1} > c_i > c_{i+1}$. If $\card_T(c_{i-1}, c_i) > 0$, then $\card_T(c_{i}, c_{i+1}) = 0$ otherwise one can build a shorter transitivity path skipping $c_{i}$. Then $\card_T(c_{i-1}, c_{i}) > 0$ implies that $(c_{i+1}, c_{i})$ is a tree-ascent of $A$ and $\card_T(c_{i-1},c_{i}) = \card_T(c_{i-1},c_{i}) = 0$ otherwise, again, $c_{i}$ could be skipped in the transitivity path. The only case left is $(c_k, a)$. We know that $\card_S(c_k,a) > 0$. If $\card_T(c_k,a) = 0$, this means that $(a,c_k)_0$ is a tree-ascent of $A$ and $c > c_1 > \dots > c_{k+1} = a$ is an ascent-path. If $\card_T(c_k,a) > 0$, as $(c_k,c_{k-1})$ is a tree-ascent, the strict right child of $c_k$ is empty, \emph{i.e.} $a$ is a middle child of $c_k$. 

We have proved that having a variation $(c,a)$ implies to have an ascent-path between $c$ and $a$, in particular, $a$ has to be a descendant of $c$. There is left to prove that this ascent-path is indeed the variation path of $(c,a)_v$. Note that for all $i$, $c > c_1 > \dots > c_i$ is an ascent-path as well and by Lemma~\ref{lem:ascent-path-variation} we obtain that $(c,c_i)_v$ is an essential variation. For all $i < k$, we have $\card_T(c_i,a) = 0 < s(c_i)$ (there is a variation $(c_{i+1},c_i)$ so $s(c_i) > 0$) and so $c_i$ belongs to the variation path. Now either $(c_k,a)_0$ is a variation or $a$ is a middle child of $c_k$, so in both cases, $\card_T(c_k,a) < s(c_k)$ and $c_k$ also belongs to the variation path. We need to prove that the variation path consists only of those elements. Let $(c,b)_v$ be an essential variation such that $c > b > a$ and $\card_T(b,a) < s(b)$. We use Lemma~\ref{lem:middle-variation-pure}. Case~\eqref{case:middle-var-end} is forbidden by hypothesis. Case~\eqref{case:middle-var-middle} is not possible because $(c,b)_v$ is an essential variation. The only possibility left is Case~\eqref{case:middle-var-ci}: $b$ is one of the $c_i$.
\end{proof}

\begin{proposition}
\label{prop:ascent-var-paths}
Variation paths and ascent paths of a pure interval are the same.
\end{proposition}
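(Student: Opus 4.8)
One inclusion is already available: \Cref{lem:pure-var-path} says that the variation path of any variation $(c,a)_v$ is an ascent-path, so every variation path appears among the ascent-paths. The plan is to prove the converse — that every ascent-path is the variation path of its associated variation — by a double-inclusion argument. Concretely, I would fix an arbitrary ascent-path $P\colon c>c_1>\dots>c_k\geq a$ and put $v=\card_T(c,a)$. By \Cref{lem:ascent-path-variation}, $(c,a)_v$ is a variation, so its variation path $Q$ is well defined (\Cref{def:variation-path}) and is itself an ascent-path by \Cref{lem:pure-var-path}; it then suffices to show that $P$ and $Q$ have the same underlying set of nodes.

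For the inclusion $P\subseteq Q$, I would check that each $c_i$ satisfies the two defining requirements of \Cref{def:variation-path}. First, by \Cref{rem:ascent-path} the truncation $c>c_1>\dots>c_i$ is again an ascent-path with $\card_T(c,c_i)=v$, so \Cref{lem:ascent-path-variation} (applied to this truncated path, whose final node is $c_i$) shows that $(c,c_i)_v$ is an essential variation. Second, the side condition ``$c_i=a$ or $\card_T(c_i,a)<s(c_i)$'' holds: for $i<k$, \Cref{rem:ascent-path} gives $\card_T(c_i,a)=0$, while the tree-ascent $(c_{i+1},c_i)_0\in A$ forces $s(c_i)>0$, since by \Cref{def:tree-ascent} the descendant $c_{i+1}$ of $c_i$ must avoid the right child of $c_i$, which is impossible when $s(c_i)=0$; and for $i=k$, condition (iii) of \Cref{def:ascent-path} gives either $c_k=a$ or $a$ a middle descendant of $c_k$, the latter yielding $0<\card_T(c_k,a)<s(c_k)$. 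Hence each $c_i$ lies in $Q$.

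For $Q\subseteq P$, take a node $b$ of $Q$, so $(c,b)_v$ is an essential variation (with the same value $v$) and either $b=a$ or $\card_T(b,a)<s(b)$. If $b=a$, then $(c,a)_v$ is essential, so the criterion of \Cref{lem:ascent-path-variation} forces $c_k=a=b\in P$. If $b\neq a$, then $c>b>a$ and $\card_T(c,b)=v=\card_T(c,a)$, so \Cref{lem:middle-variation-pure} applies along $P$: its case $\card_T(b,a)=s(b)$ is ruled out by the side condition, and the case where $b$ is a middle descendant of some $c_i$ is ruled out because $(c,c_i)_v$ varies (shown above) and $b$ lies in a middle child of $c_i$, which would make $(c,b)_v$ non-essential by the very definition of essential variation (\Cref{def:variations}). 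The only remaining case is $b=c_i\in P$. This gives $P=Q$, so every ascent-path is a variation path, completing the equality of the two families.

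The conceptual content is light, since \Cref{lem:pure-var-path,lem:ascent-path-variation,lem:middle-variation-pure} do the heavy lifting; the main obstacle is careful bookkeeping in translating between the two vocabularies — the ``tree-ascent / middle-descendant'' description governing ascent-paths and the ``essential variation / side-condition'' description governing variation paths — and, in particular, invoking essentiality to discard the two unwanted cases of \Cref{lem:middle-variation-pure}. A pleasant byproduct of $P=Q$ is that the ascent-path attached to a given variation is unique.
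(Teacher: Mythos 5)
Your proof is correct; the difference from the paper lies in how the converse inclusion (every ascent-path is a variation path) is closed. The paper's own proof is essentially a uniqueness argument: the variation path of a variation is unique by definition, the ascent-path between two fixed values $c$ and $a$ is also unique (the bottom node $c_k$ is forced by Lemma~\ref{lem:essential-var-middle-child}, and each $c_i$ admits a unique tree-ascent $(c_i,*)$), and Lemma~\ref{lem:pure-var-path} exhibits the variation path of the associated variation as an ascent-path between the same endpoints, so the two must coincide. You instead verify the set equality $P=Q$ element by element: $P\subseteq Q$ via the truncation trick together with Lemma~\ref{lem:ascent-path-variation}, and $Q\subseteq P$ by running Lemma~\ref{lem:middle-variation-pure} and discarding its two unwanted cases using essentiality of $(c,b)_v$ and the side condition $\card_T(b,a)<s(b)$. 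Both routes lean on the same two workhorse lemmas; yours is longer but bypasses the uniqueness-of-ascent-paths observation entirely, whereas the paper's is a one-liner once that observation is in place. The individual steps you invoke all check out: $s(c_i)>0$ for $i<k$ follows from $(c_{i+1},c_i)$ being a tree-ascent, $\card_T(c_i,a)=0$ for $i<k$ follows since $a$ sits in the same subtree of $c_i$ as $c_k$, and the "middle descendant" case of Lemma~\ref{lem:middle-variation-pure} is indeed incompatible with $(c,b)_v$ being essential because $(c,c_i)_v$ varies. Your closing remark that uniqueness of the ascent-path is a byproduct is consistent with (and in the paper is an ingredient of) the published argument.
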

\begin{proof}
This is a direct consequence of Lemma~\ref{lem:pure-var-path}. The variation path is unique for each variation by definition. The ascent path (if it exists) between two values $c$ and $a$ is also unique: indeed if $a \neq c_k$, then the choice for $c_k$ is unique by Lemma~\ref{lem:essential-var-middle-child} and then for $1 \leq i \leq k$, there is unique tree-ascent $(c_i, *)$. Each ascent path corresponds to a variation and we have proved in Lemma~\ref{lem:pure-var-path} that the variation path of this variation is the ascent path.
\end{proof}

\subsubsection{Variations and essential variations of pure intervals}
As a straight forward consequence of \Cref{lem:ascent-path-variation,lem:pure-var-path}, we also obtain the following characterization of variations and essential variations of pure intervals.

\begin{proposition}
\label{prop_variations_essvariations_pureintervals}
Let $[T,T+A]$ be a pure interval. Then, 
 $(c,a)_v$ is a variation if and only if there is an ascent-path $c > c_1 > \dots > c_k \geq a$.
  It is an essential variation if and only if $c_k = a$. 
\end{proposition}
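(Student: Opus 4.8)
The plan is to deduce both equivalences directly by stitching together the two preceding lemmas, since all the genuine combinatorial work has already been carried out there. For the ``if'' direction, I would start from a given ascent-path $c > c_1 > \dots > c_k \geq a$ with $\card_T(c,a) = v$ and simply invoke \Cref{lem:ascent-path-variation}: that lemma already asserts that $(c,a)_v$ is then a variation of $[T,T+A]$, and that it is an essential variation precisely when $c_k = a$. So this direction requires no new argument at all beyond citing the lemma.

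For the ``only if'' direction, I would take a variation $(c,a)_v$ and consider its variation path $c > c_1 > \dots > c_k \geq a$ as given by \Cref{def:variation-path}. By \Cref{lem:pure-var-path}, this variation path is an ascent-path, which immediately produces the required ascent-path between $c$ and $a$. This establishes the first equivalence in both directions. For the essential-variation refinement on this side, I would argue straight from \Cref{def:variation-path}: the path collects exactly those $c_i \geq a$ with $(c,c_i)_v$ an essential variation and $\card_{T_1}(c_i,a) < s(c_i)$ (or $c_i=a$), so $a$ itself appears in the path—that is, $c_k = a$—if and only if $(c,a)_v$ is itself an essential variation.

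The one point I would take care to spell out, rather than brushing over, is the consistency of the terminal element $c_k$ between the two constructions: the ``$c_k=a$'' condition appears once in \Cref{lem:ascent-path-variation} (for an abstract ascent-path) and once implicitly through \Cref{def:variation-path} (for the variation path). To guarantee these refer to the same endpoint I would appeal to \Cref{prop:ascent-var-paths}, which identifies variation paths with ascent-paths and records their uniqueness; this makes the value $c_k$ well defined independently of which description one uses. I do not anticipate a real obstacle here—the statement is genuinely a corollary of \Cref{lem:ascent-path-variation,lem:pure-var-path}—so the only thing to be careful about is this bookkeeping so that the essential-variation characterization transfers cleanly across the two directions.
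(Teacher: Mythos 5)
Your proposal is correct and takes essentially the same route as the paper, which states this proposition as a direct consequence of Lemma~\ref{lem:ascent-path-variation} (for the ``if'' direction and the essential-variation criterion) and Lemma~\ref{lem:pure-var-path} (for the ``only if'' direction via the variation path). Your extra care in invoking Proposition~\ref{prop:ascent-var-paths} to make the endpoint $c_k$ well defined across the two descriptions is exactly the bookkeeping the paper leaves implicit.
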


Using this, we are now ready to prove that pure intervals are pure-candidate intervals.

\begin{proof}[Proof of Proposition~\ref{prop:pure-to-pure-candidate}]
Let $T$ be an $s$-decreasing tree and $A$ a subset of the tree-ascents of $T$. 
We will prove that the pure interval $[T,T+A]$ is a pure-candidate interval, meaning that it satisfies 
Conditions~\eqref{cond:pure-interval-cb} and~\eqref{cond:pure-interval-ba} of \Cref{thm:pure-interval-char}. 

We first prove that $[T,T+A]$ satisfies Condition \eqref{cond:pure-interval-cb}. Let $(c,a)_v$ and $(b,a)_w$ be variations of $[T,T+1]$ with $c > b > a$. Lemma~\ref{lem:pure-var-path} states that there is an ascent-path between $c$ and $a$ and an ascent-path between $b$ and $a$. In particular, $a$ is a descendant of both $c$ and $b$ which implies $\card_{T_1}(c,b) = \card_{T_1}(c,a) = v$. We now use Lemma~\ref{lem:middle-variation-pure} on $c > b > a$ and the ascent-path $c > c_1 > \dots > c_k \geq a$. The three possible cases give
\begin{enumerate}[(i)]
\item $b = c_i$ for a certain $i$: then $(c,b)_v$ is a variation.
\item $b$ is a middle descendant of a certain $c_i$: $(c,b)_v$ is variation by transitivity because $(c,c_i)_v$ is a variation.
\item $\card_T(b,a) = s(b)$: this is impossible because $(b,a)$ varies.
\end{enumerate}

Now, let us prove Condition~\eqref{cond:pure-interval-ba}. Suppose that there exists $c > b > a$ with $(c,a)_v$ and $(c,b)_v$ essential variations for some $v$ and $s(b) > 0$ and $(b,a)_0$ is not a variation. We choose $b$ such that the distance between $c$ and $b$ is minimal. In particular, if $c > b_1 > \dots > b_{k'} = b$ is the variation path between $c$ and $b$, we have that $(b_i, a)_0$ is a variation for $i <k'$ . Indeed, $c > b_i > b > a$ and $(c,b_i)_v$ is an essential variation and $s(b_i) > 0$ (as $(b_{i+1}, b_i)$ is a tree-ascent). We now look at the ascent-path $c > c_1 > \dots > c_k = a$ between $c$ and $a$ and again use Lemma~\ref{lem:middle-variation-pure} on $c > b > a$. If $b = c_i$ for some $i$, then there is an ascent-path $b = c_i > \dots > c_k = a$ and by Lemma~\ref{lem:ascent-path-variation}, we have an essential variation $(b,a)_0$. As $(c,b)$ is an essential variation, $b$ cannot be a middle descendant of any $c_i$. Only the last case remains where $\card_T(b,a) = s(b) > 0$. We write, $c_0 := c$. The ascent-path between $c$ and $b$ gives us that $(b,b_{k'-1})$ is a tree-ascent.  As $s(b) > 0$, then $\card_T(b,a) = s(b)$ implies that $\card_T(b_{k'-1}, a) > \card_T(b_{k'-1}, b)$ by Statement~\eqref{cond:inv-tree-ascent-smaller} of Proposition~\ref{prop:tree-ascent-inversions}. Either $b_{k'-1} = c$ and this contradicts $\card_T(c,a) = \card_T(b,a)$ or $(b_{k'-1}, a)_0$ is an essential variation by minimality of the distance $c - b$ and again we reach a contradiction. 
\end{proof}

\subsubsection{Proof of Theorems~\ref{thm:pure-interval-char} and~\ref{thm_min_essential_variations_ascents}}

We now have all the ingredients to prove Theorems~\ref{thm:pure-interval-char} and~\ref{thm_min_essential_variations_ascents}. 

\begin{proof}[Proof of Theorem~\ref{thm:pure-interval-char}]
A pure candidate interval is by definition an interval satisfying the two conditions in Theorem~\ref{thm:pure-interval-char}. So, we need to show that pure intervals and pure candidate intervals are the same. This was shown in Proposition~\ref{prop:pure-candidate-to-pure} and Proposition~\ref{prop:pure-to-pure-candidate}.
\end{proof}

\begin{proof}[Proof of Theorem~~\ref{thm_min_essential_variations_ascents}]
Let $[T,T+A]$ be a pure interval. By Proposition~\ref{prop:pure-to-pure-candidate}, it is also a pure candidate interval. So, we can apply Proposition~\ref{prop:pure-candidate-to-pure} to deduce that $(a,c) \in A$ if and only if $(c,a)$ is a minimal essential variation.
\end{proof}

\subsection{Properties of variations of pure intervals}
Now that we have proved that pure-candidate intervals and pure intervals are the same: all the properties of variations / essential variations that we have shown to prove either Proposition~\ref{prop:pure-candidate-to-pure} or Proposition~\ref{prop:pure-to-pure-candidate} are satisfied in pure intervals. In particular each variation is given by a certain variation path (from Definition~\ref{def:variation-path}), which is also an ascent-path (Definition~\ref{def:ascent-path}). The specific properties of variations in pure intervals are crucial in our next section. We regroup some of them in this proposition for clarity. We call them the \emph{middle variation} properties as they all concern the variation of $(c,b)$ depending on properties of $(c,a)$ where $c > b > a$.

\begin{proposition}[Middle variation properties]
\label{prop:middle-b}
Let $[T,T+A]$ be a pure interval and $c > b > a$. The following properties hold.
\begin{enumerate}[(1)]
\item If $(c,a)_v$ is a variation, and $\card_T(c,b) = v$ and $\card_T(b,a) < s(b)$, then then $(c,b)_v$ is a variation.
\label{case-prop:a-before-b}
\item If $(c,a)_v$ is a variation and $a$ is a middle descendant of $b$, then $(c,b)_v$ is a variation.
\label{case-prop:middle-var}
\item If $(c,a)_v$ is an essential variation, then $a$ is not a middle descendant of $b$.
\label{case-prop:ev-no-middle}
\item If $(c,a)_v$ is an essential variation and $(b,a)_w$ a variation, then $(b,a)_w$ and $(c,b)_v$ are essential variations.
\label{case-prop:double-var}
\item If $(c,b)_v$ is a variation with $s(b) > 0$ and $\card_T(b,a) = s(b)$ and $\card_T(c,a) = v$, then there exists $b'$ with $c > b' > b$ and $a$ is a middle descendant of $b'$ and $(c,b')$ a variation.
\label{case-prop:a-middle-desc}
\item If $(c,a)_v$ is an essential variation and $(c,b)_v$ is a variation, then $\card_T(b,a) = 0$.
\label{case-prop:ca-ev-ba-0}
\item If $(c,a)_v$ is a variation and $\card_T(c,b) = v$ and $\card_T(b,a) = 0$ and $s(b) > 0$, then either $(b,a)$ is a variation or there is $b'$ with $c > b' > b$ with $b$ a middle descendant of $b'$ and $(c,b')_v$ a variation.
\label{case-prop:ba-var}
\item If $(c,a)_v$ is a variation and $(c,b)_v$ is an essential variation with $s(b) > 0$ and  $\card_T(b,a) = 0$ then $(b,a)$ is a variation.
\label{case-prop:cb-ev-ba-var}
\end{enumerate}
\end{proposition}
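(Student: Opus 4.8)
The plan is to build on the ascent-path description of variations that is now available for every pure interval: since Theorem~\ref{thm:pure-interval-char} is proved, pure intervals are pure-candidate intervals, so we may freely use both the pure-candidate lemmas and the ascent-path results. The two workhorses will be Proposition~\ref{prop_variations_essvariations_pureintervals} (each variation $(c,a)_v$ is carried by an ascent-path $c>c_1>\dots>c_k\ge a$, via Lemma~\ref{lem:pure-var-path} and Lemma~\ref{lem:ascent-path-variation}) and the positional trichotomy of Lemma~\ref{lem:middle-variation-pure}. Seven of the eight items are short deductions from these; the genuinely new work is item~\eqref{case-prop:a-middle-desc}, which needs a climbing argument along an ascent-path.

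First the routine ones. Property~\eqref{case-prop:ev-no-middle} is exactly Lemma~\ref{lem:no-middle-child}. For~\eqref{case-prop:a-before-b} I would take the ascent-path of $(c,a)_v$ and apply Lemma~\ref{lem:middle-variation-pure} to the triple $c>b>a$ (legitimate since $\card_T(c,a)=\card_T(c,b)=v$): the hypothesis $\card_T(b,a)<s(b)$ excludes the third case, and in the other two cases $(c,b)_v$ varies — directly when $b=c_i$ (Lemma~\ref{lem:ascent-path-variation}), and by transitivity in $T_2$ through the varying inversion $(c,c_i)$ when $b$ is a middle descendant of $c_i$, using $\card_{T_2}(c_i,b)\ge\card_T(c_i,b)>0$. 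Property~\eqref{case-prop:middle-var} is then immediate from~\eqref{case-prop:a-before-b}, since $a$ a middle descendant of $b$ forces $\card_T(c,b)=v$ and $0<\card_T(b,a)<s(b)$. Property~\eqref{case-prop:double-var} combines Lemma~\ref{lem:essential-double-var} (which gives that $(c,b)_v$ is essential) with the ``in particular'' clause of Lemma~\ref{lem:no-middle-child} (which gives that $(b,a)_w$ is essential).

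For~\eqref{case-prop:ba-var} I would again apply Lemma~\ref{lem:middle-variation-pure} to the ascent-path of $(c,a)_v$ and the triple $c>b>a$: here $\card_T(b,a)=0$ with $s(b)>0$ rules out the third case, the case $b=c_i$ yields an ascent-path from $b$ to $a$ and hence the variation $(b,a)_0$, and the case ``$b$ a middle descendant of $c_i$'' produces the stated alternative $b'=c_i$. Property~\eqref{case-prop:cb-ev-ba-var} then follows by feeding its hypotheses into~\eqref{case-prop:ba-var}: the second alternative would make $b$ a middle descendant of some $b'$ with $c>b'>b$, contradicting that $(c,b)_v$ is essential (Lemma~\ref{lem:no-middle-child}), so $(b,a)$ must vary. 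Property~\eqref{case-prop:ca-ev-ba-0} follows from~\eqref{case-prop:a-middle-desc}: if $s(b)=0$ then $\card_T(b,a)=0$ is automatic; otherwise $a$, not being a middle descendant of $b$ by Lemma~\ref{lem:no-middle-child}, satisfies either $\card_T(b,a)=0$ (done) or $\card_T(b,a)=s(b)$, and in the latter case~\eqref{case-prop:a-middle-desc} places $a$ as a middle descendant of some $b'$ with $c>b'>b$, again contradicting Lemma~\ref{lem:no-middle-child}.

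The crux is~\eqref{case-prop:a-middle-desc}, and I expect this to be the main obstacle. Since $(c,b)_v$ varies it carries an ascent-path $c=c_0>c_1>\dots>c_k\ge b$, and $\card_T(b,a)=s(b)>0$ together with $\card_T(c,a)=v=\card_T(c,b)$ places $a$ to the right of $b$ inside the common child $T_v^c$. I would first test whether $a$ is already a middle descendant of $c_k$, i.e.\ $0<\card_T(c_k,a)<s(c_k)$, in which case $b'=c_k$ works and $(c,c_k)_v$ varies by Lemma~\ref{lem:ascent-path-variation}. Otherwise $\card_T(c_k,a)=s(c_k)$, and I climb one tree-ascent at a time: given the tree-ascent $(c_i,c_{i-1})\in A$ and $\card_T(c_i,a)=s(c_i)>0$, Statement~\eqref{cond:inv-tree-ascent-smaller} of Proposition~\ref{prop:tree-ascent-inversions} (the empty strict-right-child condition) forces $\card_T(c_{i-1},a)>\card_T(c_{i-1},c_i)=0$. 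Stopping at the first index $j$ with $0<\card_T(c_j,a)<s(c_j)$ exhibits $a$ as a middle descendant of $b'=c_j$ with $(c,c_j)_v$ varying. The climb must terminate before $c_0=c$: reaching $c_1$ with $\card_T(c_1,a)=s(c_1)$ and the tree-ascent $(c_1,c)\in A$ would, by the same Statement~\eqref{cond:inv-tree-ascent-smaller}, give $\card_T(c,a)>\card_T(c,c_1)=v$, contradicting $\card_T(c,a)=v$. The careful bookkeeping of this termination and of the position of $a$ relative to each $c_i$ — mirroring the climbing step inside the proof of Lemma~\ref{lem:middle-variation-pure} — is the one place where genuine care is required.
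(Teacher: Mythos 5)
Your proposal is correct and follows essentially the same route as the paper: each item is reduced to the ascent-path trichotomy of Lemma~\ref{lem:middle-variation-pure} together with Lemmas~\ref{lem:ascent-path-variation}, \ref{lem:essential-double-var} and~\ref{lem:no-middle-child}, and the crux~\eqref{case-prop:a-middle-desc} is handled by climbing the ascent-path of $(c,b)_v$ using Statement~\eqref{cond:inv-tree-ascent-smaller} of Proposition~\ref{prop:tree-ascent-inversions}, exactly as in the paper (which merely phrases the same climb contrapositively). The only points to tidy are small expository ones already implicit in your argument, e.g.\ noting $\card_T(c_k,a)>0$ by transitivity before concluding ``otherwise $\card_T(c_k,a)=s(c_k)$''.
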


\begin{proof}
Property~\eqref{case-prop:a-before-b} is Lemma~\ref{lem:middle-var-pure-cand}. It is also a consequence of Lemma~\ref{lem:middle-variation-pure}. This implies Property~\eqref{case-prop:middle-var}: if $a$ is a middle descendant of $b$, then $\card_T(c,b) = v$ and $\card_T(b,a) < s(b)$. Then Property~\eqref{case-prop:ev-no-middle} is a consequence of~\eqref{case-prop:middle-var}: if $(c,b)$ varies and $a$ is a middle descendant of $b$, then $(c,a)$ is not an essential variation. Property~\ref{case-prop:double-var} is Lemma~\ref{lem:essential-double-var} with the addition that $(b,a)$ is also an essential variation which is a consequence in particular of~\ref{case-prop:a-before-b}.

We prove Property~\eqref{case-prop:a-middle-desc}. Suppose that we have $(c,b)_v$ a variation, $s(b) > 0$,  $\card_T(b,a) = s(b)$  and $a$ is never a middle descendant of any $b'$ with $c > b' > b$ such that $(c,b')$ varies. We prove that it implies $\card_T(c,a) > v$. We look at the ascent-path $c = c_0 > c_1 > \dots > c_k \geq b$ between $c$ and $b$. Either $c_k = b$ and by hypothesis $\card_T(c_k,a) = s(c_k) > 0$. Or $b$ is middle descendant of $c_k$ and as we have $\card_T(b,a) = s(b) > 0$, we get by transitivity $\card_T(c_k,a) \geq \card_T(c_k,b) > 0$. As $(c,c_k)$ varies, by hypothesis $a$ cannot be a middle descendant of $c_k$, so this gives again $\card_T(c_k,a) = s(c_k) > 0$. As we have an ascent-path, $(c_k, c_{k-1})$ is a tree-ascent. Using Statement~\eqref{cond:inv-tree-ascent-smaller} of Proposition~\ref{prop:tree-ascent-inversions}, we obtain $\card_T(c_{k-1},a) > \card_T(c_{k-1},c_k)$. Either $c_{k-1} = c$ and we are done, or as $a$ cannot be a middle descendant of $c_{k-1}$, we get $\card_T(c_{k-1},a) = s(c_{k-1})$. And $s(c_{k-1}) > 0$ because $(c_{k-1},c_k)$ varies. Besides, $(c_{k-1},c_{k-2})$ is a tree-ascent so we apply the same reasoning until we reach $c_0 = c$. This implies Property~\eqref{case-prop:ca-ev-ba-0}. Indeed, $(c,a)_v$ is an essential variation so $a$ cannot be a middle descendant of $b$ (Property~\eqref{case-prop:ev-no-middle}) so either $\card_T(b,a) = s(0)$ or $\card_T(b,a) = s(b)$. But in this last case, Property~\eqref{case-prop:a-middle-desc} tells us that $a$ is a middle descendant of $b'$ which is forbidden.

Property~\eqref{case-prop:ba-var} is a direct consequence of Lemma~\ref{lem:middle-variation-pure}. As $s(b) > 0$ and $\card_T(b,a) = 0$, only cases~\eqref{case:middle-var-ci} and~\eqref{case:middle-var-middle} are possible. In case~\eqref{case:middle-var-ci}, $b = c_i$ for $c_i$ an element of the variation path. If $i < k$, then $(b,a)$ varies as in Remark~\ref{rem:ascent-path}. If $i = k$, then $a$ is a middle descendant of $b$ and so $\card_T(b,a) > 0$. In case~\eqref{case:middle-var-middle}, we have that $b$ is a middle descendant of some other node as described in the property. This implies Propery~\eqref{case-prop:cb-ev-ba-var}: as $(c,b)$ is an essential variation, $b$ cannot be a middle descendant of any $b'$ and then we have that $(b,a)$ is a variation.
\end{proof}

\section{Intersection of pure intervals}\label{sec_intersection_pure_intervals}

\subsection{The Intersection Theorem}\label{sec_intersection_theorem}
The goal is to prove the following theorem.

\begin{theorem}
\label{thm:intersection}
The intersection of two pure intervals is a pure interval.
\end{theorem}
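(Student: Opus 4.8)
The plan is to realize the set-theoretic intersection as an interval and then invoke the Characterization Theorem (\Cref{thm:pure-interval-char}). Let $I=[T_1,T_2]$ and $J=[R_1,R_2]$ be two pure intervals. Since the $s$-weak order is a lattice, their intersection as subsets is
$I\cap J=[\,T_1\join R_1,\ T_2\meet R_2\,]$, which is nonempty precisely when $P:=T_1\join R_1\wole Q:=T_2\meet R_2$ (if $P\not\wole Q$ the intersection is empty, which we dispose of by the standing convention on faces). It therefore suffices to prove that $[P,Q]$ is a pure interval, and by \Cref{thm:pure-interval-char} this reduces to verifying that $[P,Q]$ is a $+1$-interval and satisfies Conditions~\eqref{cond:pure-interval-cb} and~\eqref{cond:pure-interval-ba}.

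First I would dispatch the easy parts. Because $T_1\wole P\wole Q\wole T_2$, we have $\card_{T_1}(c,a)\le\card_P(c,a)\le\card_Q(c,a)\le\card_{T_2}(c,a)$ for all $a<c$; since $I$ is a $+1$-interval (\Cref{lem:plus-one}) the outer two cardinalities differ by at most one, so $0\le\card_Q(c,a)-\card_P(c,a)\le 1$ and $[P,Q]$ is a $+1$-interval. The same squeeze yields one half of the key structural fact: if $(c,a)_v$ varies in $[P,Q]$, then $\card_P(c,a)=\card_{T_1}(c,a)=\card_{R_1}(c,a)=v$ and $\card_Q(c,a)=\card_{T_2}(c,a)=\card_{R_2}(c,a)=v+1$, so in particular $\Var([P,Q])\subseteq\Var(I)\cap\Var(J)$ with matching values.

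The heart of the argument, and the step I expect to be the main obstacle, is to control the join and meet well enough to obtain the reverse inclusion $\Var(I)\cap\Var(J)\subseteq\Var([P,Q])$ and the essential-variation inclusion $\EVar([P,Q])\subseteq\EVar(I)\cap\EVar(J)$. The difficulty is that $P$ is computed by a transitive closure $\tc{(\inv(T_1)\cup\inv(R_1))}$, which could a priori raise $\card(c,a)$ above $v$, while dually the meet $Q$ could depress it below $v+1$. To rule this out I would exploit the ascent-path description of variations of the \emph{pure} intervals $I$ and $J$ (\Cref{prop_variations_essvariations_pureintervals}): any transitivity path in $\inv(T_1)\cup\inv(R_1)$ witnessing $\card_P(c,a)>v$ must splice together inversions drawn from the ascent-paths of $(c,a)$ in $T_1$ and in $R_1$, and the middle-variation properties of \Cref{prop:middle-b}, applied separately inside $I$ and inside $J$ together with planarity, should force $\card_{T_1}(c,a)>v$ or $\card_{R_1}(c,a)>v$, a contradiction; a dual analysis bounds the meet from below. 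The essential-variation inclusion is the subtler point, since essentiality is measured against the middle-child structure of the \emph{bottom} tree, which is $P$ for $[P,Q]$ but $T_1$, respectively $R_1$, for $I$ and $J$; here I would again route the comparison through variation paths, checking that the middle descendants of a node along such a path behave consistently in $P$, $T_1$, and $R_1$.

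Granting these two inclusions, Conditions~\eqref{cond:pure-interval-cb} and~\eqref{cond:pure-interval-ba} transfer from $I$ and $J$ to $[P,Q]$. For~\eqref{cond:pure-interval-cb}, if $(c,a)_v$ and $(b,a)_w$ vary in $[P,Q]$ then they vary in both $I$ and $J$; applying Condition~\eqref{cond:pure-interval-cb} to each of the pure intervals $I$ and $J$ (valid by \Cref{thm:pure-interval-char}) gives $(c,b)_v\in\Var(I)\cap\Var(J)=\Var([P,Q])$. For~\eqref{cond:pure-interval-ba}, if $(c,a)_v,(c,b)_v\in\EVar([P,Q])$ with $s(b)\neq 0$, then they lie in $\EVar(I)\cap\EVar(J)$, so Condition~\eqref{cond:pure-interval-ba} for $I$ (and for $J$) yields $(b,a)_0\in\Var(I)\cap\Var(J)=\Var([P,Q])$. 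Hence $[P,Q]$ meets all hypotheses of \Cref{thm:pure-interval-char} and is a pure interval, which establishes \Cref{thm:intersection}.
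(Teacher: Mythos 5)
Your overall architecture is the same as the paper's: realize the intersection as the lattice interval $[T_1\join R_1,\,T_2\meet R_2]$ (\Cref{lem:lattice-interval-intersection}), identify its variations with the intersection of the variations of the two pure intervals, and then verify the two conditions of \Cref{thm:pure-interval-char}. Your squeeze argument for the $+1$ property and for the inclusion $\Var([P,Q])\subseteq\Var(I)\cap\Var(J)$ is correct and clean. The reverse inclusion, which you rightly flag as an obstacle, is genuinely nontrivial: in the paper it rests on the intersection-stability Lemma~\ref{lem:intersection-stability} (if $\card_{T_1}(b,a)=\card_{R_1}(b,a)=v$ then $\card_{T_1\join R_1}(b,a)=v$), proved by analyzing a minimal transitivity path in $\inv(T_1)\cup\inv(R_1)$ and deriving a contradiction from Condition~\eqref{cond:pure-interval-cb} together with the $+1$ property. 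Your sketch points in that direction but is not yet a proof.

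The genuine gap is the claimed inclusion $\EVar([P,Q])\subseteq\EVar(I)\cap\EVar(J)$: it is \emph{false}, and your verification of Condition~\eqref{cond:pure-interval-ba} collapses without it. The paper exhibits a counterexample (Figure~\ref{fig:inter-pure-10}): the variation $(10,6)_2$ is an essential variation of the intersection but is essential in only one of the two pure intervals, because $6$ is a middle descendant of $9$ in the other minimal tree. The problem is exactly the one you gesture at — essentiality is measured against the middle-child structure of the bottom tree, and the join $P=T_1\join R_1$ can push $\card(b,a)$ up to $s(b)$ so that $a$ ceases to be a middle descendant of $b$ in $P$ even though it is one in $T_1$ or $R_1$. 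The correct statements, which require real work, are: (i) the essential variations of $[P,Q]$ are precisely the \emph{compatible} variations, i.e.\ common variations $(c,a)_v$ such that whenever $a$ is a middle descendant of some $b$ in one minimal tree, $\card(b,a)=s(b)$ in the other (Proposition~\ref{prop:compatible-var}); and (ii) every essential variation of $[P,Q]$ is essential in \emph{at least one} of $I,J$, not both (Proposition~\ref{prop:union-essential-var}). With only this weaker information, Condition~\eqref{cond:pure-interval-ba} cannot be obtained by applying it once in $I$ or once in $J$; the paper's proof must first establish $\card_{T_1}(b,a)=\card_{R_1}(b,a)=0$ and then show that $(b,a)$ varies in each interval separately, via a case analysis on the middle-variation properties of Proposition~\ref{prop:middle-b}. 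So while your reduction and the treatment of Condition~\eqref{cond:pure-interval-cb} are sound modulo the reverse variation inclusion, the argument for Condition~\eqref{cond:pure-interval-ba} needs to be rebuilt around compatibility rather than the false $\EVar$ intersection identity.
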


\begin{example}
Figure~\ref{fig:s022-complex} shows all the pure intervals for the $s$-weak lattice with $s=(0,2,2)$. The lattice is drawn such that each pure interval corresponds to a cell. The dimension of the cell is the number of selected tree-ascents in the pure interval. In this case, the intersection of two pure intervals correspond to the geometric intersection of the cells. In Figure~\ref{fig:inter-pure}, we show two cells of dimension two intersecting in a cell of dimension one and two cells with an empty intersection.
\end{example}

\begin{figure}
\begin{equation*}
\begin{aligned}\scalebox{.6}{{ \newcommand{\nodea}{\node[draw,circle,fill=white] (a) {$3$}
;}\newcommand{\nodeb}{\node[draw,circle,fill=white] (b) {$$}
;}\newcommand{\nodec}{\node[draw,circle,fill=red!50] (c) {$2$}
;}\newcommand{\noded}{\node[draw,circle,fill=red!50] (d) {$1$}
;}\newcommand{\nodee}{\node[draw,circle,fill=white] (e) {$$}
;}\newcommand{\nodef}{\node[draw,circle,fill=white] (f) {$$}
;}\newcommand{\nodeg}{\node[draw,circle,fill=white] (g) {$$}
;}\newcommand{\nodeh}{\node[draw,circle,fill=white] (h) {$$}
;}\begin{tikzpicture}[auto]
\matrix[column sep=0.1cm, row sep=.3cm,ampersand replacement=\&]{
         \&         \& \nodea  \&         \&         \\ 
 \nodeb  \&         \& \nodec  \&         \& \nodeh  \\ 
         \& \noded  \& \nodef  \& \nodeg  \&         \\ 
         \& \nodee  \&         \&         \&         \\
};

\path[ultra thick, red] (d) edge (e)
	(c) edge (d) edge (f) edge (g)
	(a) edge (b) edge (c) edge (h);
\end{tikzpicture}}}\end{aligned}
 \cap 
\begin{aligned}\scalebox{.6}{{ \newcommand{\nodea}{\node[draw,circle,fill=white] (a) {$3$}
;}\newcommand{\nodeb}{\node[draw,circle,fill=white] (b) {$$}
;}\newcommand{\nodec}{\node[draw,circle,fill=red!50] (c) {$2$}
;}\newcommand{\noded}{\node[draw,circle,fill=white] (d) {$$}
;}\newcommand{\nodee}{\node[draw,circle,fill=red!50] (e) {$1$}
;}\newcommand{\nodef}{\node[draw,circle,fill=white] (f) {$$}
;}\newcommand{\nodeg}{\node[draw,circle,fill=white] (g) {$$}
;}\newcommand{\nodeh}{\node[draw,circle,fill=white] (h) {$$}
;}\begin{tikzpicture}[auto]
\matrix[column sep=0.1cm, row sep=.3cm,ampersand replacement=\&]{
         \&         \& \nodea  \&         \&         \\ 
 \nodeb  \&         \& \nodec  \&         \& \nodeh  \\ 
         \& \noded  \& \nodee  \& \nodeg  \&         \\ 
         \&         \& \nodef  \&         \&         \\
};

\path[ultra thick, red] (e) edge (f)
	(c) edge (d) edge (e) edge (g)
	(a) edge (b) edge (c) edge (h);
\end{tikzpicture}}}\end{aligned}
= 
\begin{aligned}\scalebox{.6}{{ \newcommand{\nodea}{\node[draw,circle,fill=white] (a) {$3$}
;}\newcommand{\nodeb}{\node[draw,circle,fill=white] (b) {$$}
;}\newcommand{\nodec}{\node[draw,circle,fill=red!50] (c) {$2$}
;}\newcommand{\noded}{\node[draw,circle,fill=white] (d) {$$}
;}\newcommand{\nodee}{\node[draw,circle,fill=white] (e) {$1$}
;}\newcommand{\nodef}{\node[draw,circle,fill=white] (f) {$$}
;}\newcommand{\nodeg}{\node[draw,circle,fill=white] (g) {$$}
;}\newcommand{\nodeh}{\node[draw,circle,fill=white] (h) {$$}
;}\begin{tikzpicture}[auto]
\matrix[column sep=0.1cm, row sep=.3cm,ampersand replacement=\&]{
         \&         \& \nodea  \&         \&         \\ 
 \nodeb  \&         \& \nodec  \&         \& \nodeh  \\ 
         \& \noded  \& \nodee  \& \nodeg  \&         \\ 
         \&         \& \nodef  \&         \&         \\
};

\path[ultra thick, red] (e) edge (f)
	(c) edge (d) edge (e) edge (g)
	(a) edge (b) edge (c) edge (h);
\end{tikzpicture}}}\end{aligned}
\end{equation*}
\begin{equation*}
\begin{aligned}\scalebox{.6}{{ \newcommand{\nodea}{\node[draw,circle,fill=white] (a) {$3$}
;}\newcommand{\nodeb}{\node[draw,circle,fill=white] (b) {$$}
;}\newcommand{\nodec}{\node[draw,circle,fill=red!50] (c) {$2$}
;}\newcommand{\noded}{\node[draw,circle,fill=red!50] (d) {$1$}
;}\newcommand{\nodee}{\node[draw,circle,fill=white] (e) {$$}
;}\newcommand{\nodef}{\node[draw,circle,fill=white] (f) {$$}
;}\newcommand{\nodeg}{\node[draw,circle,fill=white] (g) {$$}
;}\newcommand{\nodeh}{\node[draw,circle,fill=white] (h) {$$}
;}\begin{tikzpicture}[auto]
\matrix[column sep=0.1cm, row sep=.3cm,ampersand replacement=\&]{
         \&         \& \nodea  \&         \&         \\ 
 \nodeb  \&         \& \nodec  \&         \& \nodeh  \\ 
         \& \noded  \& \nodef  \& \nodeg  \&         \\ 
         \& \nodee  \&         \&         \&         \\
};

\path[ultra thick, red] (d) edge (e)
	(c) edge (d) edge (f) edge (g)
	(a) edge (b) edge (c) edge (h);
\end{tikzpicture}}}\end{aligned}
 \cap 
\begin{aligned}\scalebox{.6}{\input{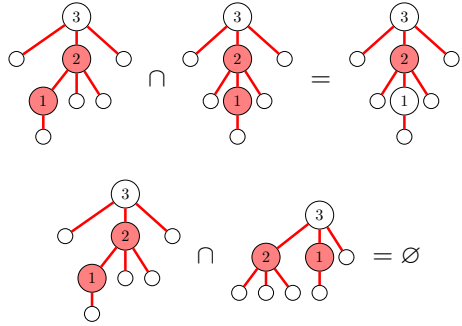}}\end{aligned}
= 
\varnothing
\end{equation*}
\caption{Example of intersections of pure intervals for $s=(0,2,2)$}
\label{fig:inter-pure}
\end{figure}

First, we prove that the intersection of two intervals is always an interval. This is actually a general result on lattices and the proof is immediate.

\begin{lemma}
\label{lem:lattice-interval-intersection}
Let $L$ be a lattice and $I_1 = [x_1, y_1]$, $I_2 = [x_2, y_2]$ two intervals of $L$. Then, $I_1 \cap I_2 \neq \emptyset$ if and only if $x_1 \join x_2 \leq y_1 \meet y_2$, and in this case we have $I_1 \cap I_2 = [x_1 \join x_2, y_1 \meet y_2]$. 
\end{lemma}

\begin{proof}
Suppose that $I_1 \cap I_2 \neq \emptyset$, then there is $x \in I_1 \cap I_2$. We have $x_1 \leq x$ and $x_2 \leq x$, which implies $x_1 \join x_2 \leq x$. Similarly, $x \leq y_1 \meet y_2$. We obtain $x_1 \join x_2 \leq x \leq y_1 \meet y_2$, \emph{i.e.}, $I_1 \cap I_2 \subseteq [x_1 \join x_2, y_1 \meet y_2]$. 

Now suppose $x_1 \join x_2 \leq y_1 \meet y_2$ and take $x \in [x_1 \join x_2, y_1 \meet y_2]$. By definition, we have $x_1 \leq x_1 \join x_2 \leq x \leq y_1 \meet y_2 \leq y_1$, so $x \in I_1$. Similarly, $x \in I_2$. This gives $I_1 \cap I_2 \neq \emptyset$ and $[x_1 \join x_2, y_1 \meet y_2] \subseteq I_1 \cap I_2$.
\end{proof}

So we know that by intersecting two pure intervals, we always obtain an interval. We will prove that the variations of the intersection satisfy the conditions of \Cref{thm:pure-interval-char}. For this we need to understand what are the variations and essential variations of the intersection. 

\subsection{Variations of the intersection}
As a first step, we prove the following lemma.


\begin{lemma}[intersection stability]
\label{lem:intersection-stability}
Let $[T,T+A]$ and $[T',T'+A']$ be two pure intervals with a non-empty intersection. Let $X = T \join T'$. Suppose that there is $b > a$ with $\card_T(b,a) = \card_{T'}(b,a) = v$. Then $\card_X(b,a) = v$.
\end{lemma}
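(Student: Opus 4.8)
The plan is to pin $\card_X(b,a)$ between the two bounds $v$ and $v+1$, and then exclude the value $v+1$. For the lower bound, recall from the join formula that $\inv(X)=\tc{(\inv(T)\cup\inv(T'))}$; since the union already satisfies $\card_{\inv(T)\cup\inv(T')}(b,a)=\max(v,v)=v$ and the transitive closure can only increase cardinalities, we get $\card_X(b,a)\ge v$. For the upper bound, the hypothesis that the intersection is non-empty together with Lemma~\ref{lem:lattice-interval-intersection} gives $X=T\join T'\wole (T+A)\meet(T'+A')$, so $\card_X(b,a)\le\min\!\big(\card_{T+A}(b,a),\card_{T'+A'}(b,a)\big)$; and since both pure intervals are $+1$-intervals (Lemma~\ref{lem:plus-one}), each of these two cardinalities is at most $\card_T(b,a)+1=v+1$ and $\card_{T'}(b,a)+1=v+1$ respectively. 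Hence $\card_X(b,a)\in\{v,v+1\}$, and the whole statement reduces to ruling out $\card_X(b,a)=v+1$.

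So I would argue by contradiction and assume $\card_X(b,a)=v+1$. Then the bounds above force $\card_{T+A}(b,a)=\card_{T'+A'}(b,a)=v+1$, so $(b,a)_v$ is a variation of amplitude one of both pure intervals; in particular, by Lemma~\ref{lem:var-desc}, $a$ is a descendant of $b$ in both $T$ and $T'$. Since $\card_X(b,a)=v+1$ strictly exceeds the union value $v$, this extra increment must come from the transitive closure: by Definition~\ref{def:tc-tree-inversions} there is a transitivity path $b=b_1>b_2>\dots>b_k=a$ in $U:=\inv(T)\cup\inv(T')$ realizing $\card_U(b_1,b_2)=v+1$. A path of length one would give $\card_U(b,a)=v+1$, contradicting $\card_U(b,a)=v$; hence $k\ge 3$ and there is a genuine intermediate node $b_2$ with $b>b_2>a$ and $\card_U(b,b_2)=v+1$, while the remaining sub-path $b_2>\dots>b_k=a$ witnesses $\card_X(b_2,a)\ge\card_U(b_2,b_3)>0$.

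The key step, which I expect to be the main obstacle, is to turn the single ``large'' edge $(b,b_2)$ into a contradiction. Since $\card_U(b,b_2)=v+1$, one of $T,T'$ realizes this value; exchanging the two trees if necessary, assume $\card_T(b,b_2)=v+1$. As $\card_T(b,a)=v<v+1=\card_T(b,b_2)$, the transitivity of $\inv(T)$ (Proposition~\ref{prop:tree-inversion-set} and Definition~\ref{def:multi-sets}) applied to the triple $a<b_2<b$ forces $\card_T(b_2,a)=0$. On the other hand, $\card_X(b_2,a)\ge 1$ together with $X\wole T+A$ and the $+1$-property gives $\card_{T+A}(b_2,a)=1$, so $(b_2,a)_0$ is a variation of $[T,T+A]$; hence by Lemma~\ref{lem:var-desc} the node $a$ is a descendant of $b_2$ in $T$. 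Now $a$ is a descendant of both $b$ and $b_2$ with $b>b_2$, so in the decreasing tree $T$ the ancestors of $a$ form a chain with decreasing labels, and $b_2$ lies on the path from $b$ down to $a$; consequently $b_2$ and $a$ sit in the same child subtree of $b$, which means $\card_T(b,b_2)=\card_T(b,a)=v$. This contradicts $\card_T(b,b_2)=v+1$, and the symmetric argument handles the case $\card_{T'}(b,b_2)=v+1$. Therefore $\card_X(b,a)=v+1$ is impossible, and $\card_X(b,a)=v$, as claimed.
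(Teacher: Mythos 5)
Your proof is correct, and it takes a genuinely different route from the paper's. The paper argues by choosing a counterexample with $|b-a|$ minimal \emph{and} a transitivity path of minimal length, reduces to a length-three path $b>b_2>a$, and then invokes Condition~\eqref{cond:pure-interval-cb} of Theorem~\ref{thm:pure-interval-char} to force $\card_{T+A}(b,b_2)=v+2$, contradicting the $+1$ property. You dispense with both minimality arguments: you take the first edge of \emph{any} maximizing transitivity path, use transitivity of $\inv(T)$ on $a<b_2<b$ to get $\card_T(b_2,a)=0$, and then apply Lemma~\ref{lem:var-desc} twice to place $a$ below both $b$ and $b_2$ in $T$, so that $b_2$ and $a$ lie in the same child subtree of $b$ and $\card_T(b,b_2)=\card_T(b,a)=v$, contradicting $\card_T(b,b_2)=v+1$. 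Both arguments ultimately rest on the fact that pure intervals are pure-candidate: the paper uses Condition~\eqref{cond:pure-interval-cb} directly, while you use Lemma~\ref{lem:var-desc}, whose hypotheses (a $+1$-interval satisfying Condition~\eqref{cond:pure-interval-cb}) are met by $[T,T+A]$ only because of Lemma~\ref{lem:plus-one} and Proposition~\ref{prop:pure-to-pure-candidate} --- you should cite these explicitly when invoking it. What your version buys is the elimination of the double minimality bookkeeping and a contradiction that is visible directly in the tree (two ancestors of $a$ forcing equal cardinalities); what the paper's version buys is an argument that stays entirely at the level of variations, which matches the toolkit it reuses throughout Section~\ref{sec_intersection_pure_intervals}.
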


\begin{proof}
Let us suppose that there exist $a < b$ with $\card_T(b,a) = \card_{T'}(b,a) = v$ and $\card_X(b,a) > v$. We choose~$a$ such that $|b-a|$ is minimal. As the intersection of $[T,T+A]$ and $[T',T'+A']$ is non-empty, in particular, $X \in [T,T+A]$: the inversion $(b,a)$ varies in $[T,T+A]$ and using the $+1$ property of Lemma~\ref{lem:plus-one}, we obtain $\card_X(b,a) = v+1$. Let $S = \inv(T) \cup \inv(T')$ such that $\inv(X) = \tc{S}$. There is a transitivity path in $S$
\begin{equation*}
b = b_1 > \dots > b_k = a.
\end{equation*}
with $\card_S(b_1,b_2) = v+1$ and $\card_S(b_i,b_{i+1}) > 0$. We choose the transitivity path such that $k$ is minimal. Note that $k > 2$ because  $\card_T(b,a) = \card_{T'}(b,a) = v$.  Suppose that $b_3 \neq a$. If either $\card_T(b_1,b_3) = v + 1$ or $\card_{T'}(b_1,b_3) = v+1$, then $k$ is not minimal. By transitivity, we have that $\card_X(b_1,b_3) = v + 1$, and the +1 property gives us $\card_T(b_1,b_3) = \card_{T'}(b_1,b_3) = v$. This contradicts the minimality of $|b-a|$. 

We now have $b > b_2 > a$ a transitivity path in $S$ which does not exist either in $T$ nor $T'$. Without loss of generality and using the +1 property, we can assume that
\begin{align}
\card_T(b,b_2) &= v + 1 \\
\card_T(b_2, a) &= 0 \\
\card_{T'}(b,b_2) &= v \\
\card_{T'}(b_2, a) &= 1.
\end{align}
We have $X \in [T,T+A]$ and $\card_X(b_2,a) = 1 > \card_T(b_2,a)$, which means that $(b_2,a)$ varies in $[T,T+A]$. We also have $\card_X(b,a) > \card_T(b,a)$ by hypothesis, so $(b,a)$ varies in $[T,T+A]$. Using Condition~\eqref{cond:pure-interval-cb} of \Cref{thm:pure-interval-char}, we obtain that $(b,b_2)$ also varies in $[T,T+A]$ so $\card_{T+A}(b,b_2) = v+2$. By transitivity, this gives $\card_{T+A}(b,a) = v+2$ which contradicts the fact that $\card_T(b,a) = v$ by the $+1$ property. 
\end{proof}

\begin{proposition}[Variation intersection]
\label{prop:var-intersection}
Let $[T,T+A]$ and $[T',T'+A']$ be two pure intervals such that they have a non empty intersection $[X,Y]$ where $X = T \join T'$ and $Y = (T+A) \meet (T'+A')$, then 
\begin{equation}
\Var([X,Y]) = \Var([T,T+A]) \cap \Var([T',T'+A']).
\end{equation}

Note that the intersection is taken considering variations with their values, \emph{i.e.}, $(b,a)_v \in  \Var([T,T+A]) \cap \Var([T',T'+A'])$ if and only if $(b,a)_v \in  \Var([T,T+A])$ and $(b,a)_v \in  \Var([T',T'+A'])$.
\end{proposition}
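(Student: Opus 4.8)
The plan is to prove the two inclusions separately: the first is essentially immediate from the lattice inequalities and the $+1$-property, while the second relies on intersection stability together with the explicit construction of a common lower bound. Throughout, recall from \Cref{lem:lattice-interval-intersection} that the non-empty intersection produces a genuine interval with $X \wole Y$, and from \Cref{lem:plus-one} that all four intervals in sight are $+1$-intervals.

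For the inclusion $\Var([X,Y]) \subseteq \Var([T,T+A]) \cap \Var([T',T'+A'])$, I would take a variation $(b,a)_v$ of $[X,Y]$, so that $\card_X(b,a) = v$ and $\card_Y(b,a) > v$. Since $T \wole X$ and $T' \wole X$ we get $\card_T(b,a) \le v$ and $\card_{T'}(b,a) \le v$, while $Y \wole T+A$ and $Y \wole T'+A'$ give $\card_{T+A}(b,a) > v$ and $\card_{T'+A'}(b,a) > v$. The $+1$-property then squeezes $v+1 \le \card_{T+A}(b,a) \le \card_T(b,a)+1 \le v+1$, forcing $\card_T(b,a) = v$ and $\card_{T+A}(b,a) = v+1$, and symmetrically $\card_{T'}(b,a) = v$ and $\card_{T'+A'}(b,a) = v+1$. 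Hence $(b,a)_v$ varies in both intervals, with the same value $v$, as required.

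For the reverse inclusion I would take $(b,a)_v$ varying in both, so that $\card_T(b,a) = \card_{T'}(b,a) = v$ and, by the $+1$-property, $\card_{T+A}(b,a) = \card_{T'+A'}(b,a) = v+1$. Intersection stability (\Cref{lem:intersection-stability}) immediately yields $\card_X(b,a) = v$, so it only remains to show $\card_Y(b,a) = v+1$; the bound $\card_Y(b,a) \le \card_{T+A}(b,a) = v+1$ is automatic. To get the matching lower bound it suffices to exhibit a \emph{single} valid common lower bound $Z$ of $T+A$ and $T'+A'$ with $\card_Z(b,a) = v+1$, since then $Z \wole Y$ forces $\card_Y(b,a) \ge v+1$. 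The natural candidate is the tree $Z$ with $\inv(Z) = \tc{M}$, where $M$ agrees with $\inv(X)$ everywhere except that $\card_M(b,a) = v+1$. The containments $Z \wole T+A$ and $Z \wole T'+A'$ are then painless: as multisets $M \subseteq \inv(T+A)$ (they coincide away from $(b,a)$ because $X \wole Y \wole T+A$, and at $(b,a)$ both equal $v+1$), and since transitive closure preserves multiset inclusion into a transitive set we get $\tc{M} \subseteq \inv(T+A)$, and symmetrically for $T'+A'$.

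The main obstacle is verifying that $Z$ is an honest $s$-decreasing tree, i.e.\ that $\tc{M}$ is \emph{planar} (transitivity and the bound $\tc{M} \subseteq \inv(T+A) \subseteq \maxs_s$ are clear, so by \Cref{prop:tree-inversion-set} planarity is the only remaining point). Note this cannot be bypassed by a single rotation: the variation $(b,a)$ need not be a minimal essential variation, so $(a,b)$ need not be a tree-ascent of $X$. Raising $\card(b,a)$ to $v+1$ only threatens the planarity triples of the form $(a,b,c')$ with $c' > b$, where one must still guarantee $\card_{\tc{M}}(c',b) \ge \card_{\tc{M}}(b,a) = v+1$ whenever $v+1 < s(b)$. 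I expect to control exactly these using the ascent-path/variation-path descriptions of $(b,a)_v$ in the two pure intervals (\Cref{prop:middle-b}), which dictate precisely how $(c',b)$ must itself vary once $(c',a)$ and $(b,a)$ vary, together with intersection stability to transfer the relevant cardinalities to $X$. Once $\tc{M}$ is shown planar, $Z$ is a valid common lower bound with $\card_Z(b,a) = v+1$, whence $\card_Y(b,a) = v+1 > v$ and $(b,a)_v \in \Var([X,Y])$, completing the second inclusion and hence the proposition.
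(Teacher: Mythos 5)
Your first inclusion (the squeeze via the $+1$-property) and your use of intersection stability to pin down $\card_X(b,a)=v$ are correct and match the paper's argument. The gap is in the construction you rely on for the lower bound $\card_Y(b,a)\geq v+1$: the tree $Z$ with $\inv(Z)=\tc{M}$, where $M$ agrees with $\inv(X)$ except that $\card_M(b,a)=v+1$, does not exist in general, so the ``main obstacle'' you flag is not merely left unverified --- it cannot be overcome for this choice of $M$. Indeed, suppose $v+1<s(b)$ and there is $c'>b$ such that $(c',b)_v$ is also a variation of both intervals; this happens already when the two pure intervals coincide, e.g.\ in Figure~\ref{fig:pure-interval10} with $(b,a)=(4,1)$, $c'=9$, $v=0$ and $s(4)=2$. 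By intersection stability, $\card_X(c',b)=v$. A transitivity path between $c'$ and $b$ only involves values $\geq b$, so it can never use the occurrence you added at $(b,a)$; hence $\card_{\tc{M}}(c',b)=\card_X(c',b)=v$. But planarity of $\tc{M}$ on the triple $a<b<c'$ requires $\card_{\tc{M}}(c',b)\geq\card_{\tc{M}}(b,a)=v+1$ since $v+1<s(b)$. So $\tc{M}$ is not planar, hence not an $s$-tree-inversion set by Proposition~\ref{prop:tree-inversion-set}, and $Z$ is not an $s$-decreasing tree. You correctly isolated exactly the inequality that must be guaranteed, but no analysis of ascent paths will establish it: it is false. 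The only repair is to bump $(c',b)$ as well, and this cascades until you have bumped essentially all common variations --- i.e., until you have constructed $\inv(Y)$ itself, which is circular at this stage.

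The paper's proof avoids the realizability question altogether. It bases the bumped multi-set at $\inv(T)$ rather than at $\inv(X)$: with $S$ equal to $\inv(T)$ increased by one at $(b,a)$, one gets $S\subseteq\inv(T+A)$ immediately and $S\subseteq\inv(T'+A')$ because non-emptiness of the intersection gives $T\wole T'+A'$; the containment $S\subseteq\inv(Y)$ is then deduced from $Y=(T+A)\meet(T'+A')$ purely at the level of multi-sets, without ever asserting that $\tc{S}$ is the inversion set of a tree. If you want to keep your route through an explicit common lower bound realized as an $s$-decreasing tree, you need a substantially larger candidate than $M$.
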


\begin{proof}
Let $b > a$, be such that $(b,a)_v$ is a variation of both $[T,T+A]$ and $[T',T'+A']$. In particular, $\card_T(b,a) = \card_{T'}(b,a) = v$ and $\card_{T+A}(b,a) = \card_{T'+A'}(b,a) = v+1$. Using intersection stability of Lemma~\ref{lem:intersection-stability}, we obtain $\card_X(b,a) = v$. Now, let $S$ be the multi-set of inversions obtained by increasing by one the cardinality of $(b,a)$ in $\inv(T)$. By definition, $\inv(T) \subseteq \inv(T+A)$. Besides, as the intersection is non-empty, $T \wole T' + A'$ which means $\inv(T) \subseteq \inv(T' + A')$. We also have $\card_{T+A}(b,a) = \card_{T'+A'}(b,a) = v+1 = \card_S(b,a)$ and this gives us $S \subseteq \inv(T+A)$ and $S \subseteq \inv(T'+A')$. As $Y = (T+A) \meet (T'+A')$, this gives that $S \subseteq \inv(Y)$ and so $\card_Y(b,a) \geq v +1$: $(b,a)_v$ is a variation of $[X,Y]$.

Conversely, if $(b,a)$ does not vary in $T$. We have $\card_T(b,a) = \card_{T+A}(b,a) = v$ and because $T \leq X \leq Y \leq T +A$, this implies that $\card_X(b,a) = \card_Y(b,a) = v$ and $(b,a)$ is not a variation of $[X,Y]$. 
\end{proof}

Proposition~\ref{prop:var-intersection} can be summarized in a sentence: the variations of the intersection are the intersections of the variations. Note that this is true only if the intersection is non-empty. Besides, this does not extend to \emph{essential} variations. Indeed, in this case, the intersection of essential variations is only included in the set of essential variations of the intersection. 

We present two examples also computed in our {\tt SageMath} demo worksheet~\cite{SageDemoII}.

\begin{example}
Figure~\ref{fig:inter-pure} shows two examples of intersection of pure intervals for $s=(0,2,2)$. On the first one, the intersection is not empty. The variations of the two pure intervals are respectively $\lbrace (3,2)_1, (3,1)_1, (2,1)_0 \rbrace$ and $\lbrace (3,2)_1, (3,1)_1, (2,1)_1 \rbrace$. The variations of the intersection are $\lbrace (3,2)_1, (3,1)_1 \rbrace$. In this case, the intersection has only one essential variation, $(3,2)_1$ which is the only essential variation found in both pure intervals. 

The second intersection of Figure~\ref{fig:inter-pure} is an empty one but we see that it does not imply that the intersection of the variations is empty. Indeed, in this case $(3,1)_1$ is a variation in both pure intervals.
\end{example}

\begin{example}
Figure~\ref{fig:inter-pure-10} shows an intersection of pure intervals of size $10$. For clarity, we have written the pure intervals along with their maximal trees. You can check that the minimal tree of the intersection is the join of the two minimal trees and it is smaller than the meet of the two maximal trees. The variations of the intersection are $\lbrace (9, 4)_0, (9, 3)_0, (9, 1)_0, (10, 6)_2, (10, 2)_2, (6, 2)_1 \rbrace$. This is indeed the intersection of the variations in both intervals. However, the intersection has three essential variations: $\lbrace (9, 4)_0, (10, 6)_2, (6, 2)_1 \rbrace$. We see that $(9,4)_0$ and $(6,2)_1$ are essential variations in both pure intervals (they are the only ones present in both) but $(10,6)_2$ is not (it is an essential variation only in the second one). 
\end{example}

\begin{figure}
\begin{center}
\begin{tabular}{ccc}
\scalebox{.6}{{ \newcommand{\nodea}{\node[draw,circle] (a) {$10$}
;}\newcommand{\nodeb}{\node[draw,circle,fill=red!50] (b) {$8$}
;}\newcommand{\nodec}{\node[draw,circle,fill=red!50] (c) {$5$}
;}\newcommand{\noded}{\node[draw,circle] (d) {$$}
;}\newcommand{\nodee}{\node[draw,circle] (e) {$$}
;}\newcommand{\nodef}{\node[draw,circle] (f) {$$}
;}\newcommand{\nodeg}{\node[draw,circle,fill=red!50] (g) {$9$}
;}\newcommand{\nodeh}{\node[draw,circle] (h) {$7$}
;}\newcommand{\nodei}{\node[draw,circle] (i) {$$}
;}\newcommand{\nodej}{\node[draw,circle,fill=red!50] (j) {$4$}
;}\newcommand{\nodeba}{\node[draw,circle,fill=red!50] (ba) {$1$}
;}\newcommand{\nodebb}{\node[draw,circle] (bb) {$$}
;}\newcommand{\nodebc}{\node[draw,circle] (bc) {$3$}
;}\newcommand{\nodebd}{\node[draw,circle] (bd) {$$}
;}\newcommand{\nodebe}{\node[draw,circle] (be) {$$}
;}\newcommand{\nodebf}{\node[draw,circle] (bf) {$$}
;}\newcommand{\nodebg}{\node[draw,circle] (bg) {$$}
;}\newcommand{\nodebh}{\node[draw,circle,fill=red!50] (bh) {$6$}
;}\newcommand{\nodebi}{\node[draw,circle] (bi) {$$}
;}\newcommand{\nodebj}{\node[draw,circle,fill=red!50] (bj) {$2$}
;}\newcommand{\nodeca}{\node[draw,circle] (ca) {$$}
;}\newcommand{\nodecb}{\node[draw,circle] (cb) {$$}
;}\newcommand{\nodecc}{\node[draw,circle] (cc) {$$}
;}\newcommand{\nodecd}{\node[draw,circle] (cd) {$$}
;}\begin{tikzpicture}[auto]
\matrix[column sep=.3cm, row sep=.3cm,ampersand replacement=\&]{
         \&         \&         \&         \& \nodea  \&         \&         \&         \&         \&         \&         \\ 
         \& \nodeb  \&         \& \nodef  \&         \&         \&         \&         \& \nodeg  \&         \& \nodecd \\ 
         \& \nodec  \&         \&         \& \nodeh  \&         \&         \&         \& \nodebh \& \nodecc \&         \\ 
 \noded  \&         \& \nodee  \& \nodei  \&         \&         \& \nodej  \& \nodebi \& \nodebj \& \nodecb \&         \\ 
         \&         \&         \&         \& \nodeba \&         \& \nodebc \& \nodebg \& \nodeca \&         \&         \\ 
         \&         \&         \&         \& \nodebb \& \nodebd \& \nodebe \& \nodebf \&         \&         \&         \\
};

\path[ultra thick, red] (c) edge (d) edge (e)
	(b) edge (c)
	(ba) edge (bb)
	(bc) edge (bd) edge (be) edge (bf)
	(j) edge (ba) edge (bc) edge (bg)
	(h) edge (i) edge (j)
	(bj) edge (ca)
	(bh) edge (bi) edge (bj) edge (cb)
	(g) edge (h) edge (bh) edge (cc)
	(a) edge (b) edge (f) edge (g) edge (cd);
\end{tikzpicture}}} & 
\scalebox{.6}{{ \newcommand{\nodea}{\node[draw,circle] (a) {$10$}
;}\newcommand{\nodeb}{\node[draw,circle] (b) {$$}
;}\newcommand{\nodec}{\node[draw,circle] (c) {$8$}
;}\newcommand{\noded}{\node[draw,circle] (d) {$5$}
;}\newcommand{\nodee}{\node[draw,circle] (e) {$$}
;}\newcommand{\nodef}{\node[draw,circle] (f) {$$}
;}\newcommand{\nodeg}{\node[draw,circle] (g) {$7$}
;}\newcommand{\nodeh}{\node[draw,circle] (h) {$$}
;}\newcommand{\nodei}{\node[draw,circle] (i) {$$}
;}\newcommand{\nodej}{\node[draw,circle] (j) {$9$}
;}\newcommand{\nodeba}{\node[draw,circle] (ba) {$$}
;}\newcommand{\nodebb}{\node[draw,circle] (bb) {$4$}
;}\newcommand{\nodebc}{\node[draw,circle] (bc) {$$}
;}\newcommand{\nodebd}{\node[draw,circle] (bd) {$3$}
;}\newcommand{\nodebe}{\node[draw,circle] (be) {$1$}
;}\newcommand{\nodebf}{\node[draw,circle] (bf) {$$}
;}\newcommand{\nodebg}{\node[draw,circle] (bg) {$$}
;}\newcommand{\nodebh}{\node[draw,circle] (bh) {$$}
;}\newcommand{\nodebi}{\node[draw,circle] (bi) {$$}
;}\newcommand{\nodebj}{\node[draw,circle] (bj) {$6$}
;}\newcommand{\nodeca}{\node[draw,circle] (ca) {$$}
;}\newcommand{\nodecb}{\node[draw,circle] (cb) {$$}
;}\newcommand{\nodecc}{\node[draw,circle] (cc) {$2$}
;}\newcommand{\nodecd}{\node[draw,circle] (cd) {$$}
;}\begin{tikzpicture}[auto]
\matrix[column sep=.3cm, row sep=.3cm,ampersand replacement=\&]{
         \&         \&         \&         \& \nodea  \&         \&         \&         \&         \&         \&         \&         \\ 
 \nodeb  \&         \& \nodec  \&         \& \nodeg  \&         \&         \& \nodej  \&         \&         \&         \&         \\ 
         \&         \& \noded  \& \nodeh  \&         \& \nodei  \& \nodeba \& \nodebb \&         \&         \& \nodebj \&         \\ 
         \& \nodee  \&         \& \nodef  \&         \&         \& \nodebc \& \nodebd \& \nodebi \& \nodeca \& \nodecb \& \nodecc \\ 
         \&         \&         \&         \&         \&         \& \nodebe \& \nodebg \& \nodebh \&         \&         \& \nodecd \\ 
         \&         \&         \&         \&         \&         \& \nodebf \&         \&         \&         \&         \&         \\
};

\path[ultra thick, red] (d) edge (e) edge (f)
	(c) edge (d)
	(g) edge (h) edge (i)
	(be) edge (bf)
	(bd) edge (be) edge (bg) edge (bh)
	(bb) edge (bc) edge (bd) edge (bi)
	(cc) edge (cd)
	(bj) edge (ca) edge (cb) edge (cc)
	(j) edge (ba) edge (bb) edge (bj)
	(a) edge (b) edge (c) edge (g) edge (j);
\end{tikzpicture}}} \\
\scalebox{.6}{{ \newcommand{\nodea}{\node[draw,circle,fill=white] (a) {$10$}
;}\newcommand{\nodeb}{\node[draw,circle,fill=white] (b) {$5$}
;}\newcommand{\nodec}{\node[draw,circle,fill=white] (c) {$$}
;}\newcommand{\noded}{\node[draw,circle,fill=white] (d) {$$}
;}\newcommand{\nodee}{\node[draw,circle,fill=red!50] (e) {$8$}
;}\newcommand{\nodef}{\node[draw,circle,fill=red!50] (f) {$7$}
;}\newcommand{\nodeg}{\node[draw,circle,fill=white] (g) {$$}
;}\newcommand{\nodeh}{\node[draw,circle,fill=white] (h) {$$}
;}\newcommand{\nodei}{\node[draw,circle,fill=white] (i) {$9$}
;}\newcommand{\nodej}{\node[draw,circle,fill=red!50] (j) {$4$}
;}\newcommand{\nodeba}{\node[draw,circle,fill=white] (ba) {$$}
;}\newcommand{\nodebb}{\node[draw,circle,fill=red!50] (bb) {$3$}
;}\newcommand{\nodebc}{\node[draw,circle,fill=red!50] (bc) {$1$}
;}\newcommand{\nodebd}{\node[draw,circle,fill=white] (bd) {$$}
;}\newcommand{\nodebe}{\node[draw,circle,fill=white] (be) {$$}
;}\newcommand{\nodebf}{\node[draw,circle,fill=white] (bf) {$$}
;}\newcommand{\nodebg}{\node[draw,circle,fill=white] (bg) {$$}
;}\newcommand{\nodebh}{\node[draw,circle,fill=white] (bh) {$$}
;}\newcommand{\nodebi}{\node[draw,circle,fill=red!50] (bi) {$6$}
;}\newcommand{\nodebj}{\node[draw,circle,fill=white] (bj) {$$}
;}\newcommand{\nodeca}{\node[draw,circle,fill=red!50] (ca) {$2$}
;}\newcommand{\nodecb}{\node[draw,circle,fill=white] (cb) {$$}
;}\newcommand{\nodecc}{\node[draw,circle,fill=white] (cc) {$$}
;}\newcommand{\nodecd}{\node[draw,circle,fill=white] (cd) {$$}
;}\begin{tikzpicture}[auto]
\matrix[column sep=0.1cm, row sep=.3cm,ampersand replacement=\&]{
         \&         \&         \&         \&         \&         \& \nodea  \&         \&         \&         \&         \&         \&         \\ 
         \& \nodeb  \&         \&         \& \nodee  \&         \&         \&         \&         \& \nodei  \&         \&         \& \nodecd \\ 
 \nodec  \&         \& \noded  \&         \& \nodef  \&         \&         \& \nodej  \&         \& \nodebh \&         \& \nodebi \&         \\ 
         \&         \&         \& \nodeg  \&         \& \nodeh  \& \nodeba \& \nodebb \& \nodebg \&         \& \nodebj \& \nodeca \& \nodecc \\ 
         \&         \&         \&         \&         \&         \& \nodebc \& \nodebe \& \nodebf \&         \&         \& \nodecb \&         \\ 
         \&         \&         \&         \&         \&         \& \nodebd \&         \&         \&         \&         \&         \&         \\
};

\path[ultra thick, red] (b) edge (c) edge (d)
	(f) edge (g) edge (h)
	(e) edge (f)
	(bc) edge (bd)
	(bb) edge (bc) edge (be) edge (bf)
	(j) edge (ba) edge (bb) edge (bg)
	(ca) edge (cb)
	(bi) edge (bj) edge (ca) edge (cc)
	(i) edge (j) edge (bh) edge (bi)
	(a) edge (b) edge (e) edge (i) edge (cd);
\end{tikzpicture}}} & 
\scalebox{.6}{{ \newcommand{\nodea}{\node[draw,circle] (a) {$10$}
;}\newcommand{\nodeb}{\node[draw,circle] (b) {$5$}
;}\newcommand{\nodec}{\node[draw,circle] (c) {$$}
;}\newcommand{\noded}{\node[draw,circle] (d) {$$}
;}\newcommand{\nodee}{\node[draw,circle] (e) {$$}
;}\newcommand{\nodef}{\node[draw,circle] (f) {$9$}
;}\newcommand{\nodeg}{\node[draw,circle] (g) {$8$}
;}\newcommand{\nodeh}{\node[draw,circle] (h) {$7$}
;}\newcommand{\nodei}{\node[draw,circle] (i) {$$}
;}\newcommand{\nodej}{\node[draw,circle] (j) {$$}
;}\newcommand{\nodeba}{\node[draw,circle] (ba) {$4$}
;}\newcommand{\nodebb}{\node[draw,circle] (bb) {$$}
;}\newcommand{\nodebc}{\node[draw,circle] (bc) {$$}
;}\newcommand{\nodebd}{\node[draw,circle] (bd) {$3$}
;}\newcommand{\nodebe}{\node[draw,circle] (be) {$$}
;}\newcommand{\nodebf}{\node[draw,circle] (bf) {$1$}
;}\newcommand{\nodebg}{\node[draw,circle] (bg) {$$}
;}\newcommand{\nodebh}{\node[draw,circle] (bh) {$$}
;}\newcommand{\nodebi}{\node[draw,circle] (bi) {$$}
;}\newcommand{\nodebj}{\node[draw,circle] (bj) {$6$}
;}\newcommand{\nodeca}{\node[draw,circle] (ca) {$$}
;}\newcommand{\nodecb}{\node[draw,circle] (cb) {$$}
;}\newcommand{\nodecc}{\node[draw,circle] (cc) {$2$}
;}\newcommand{\nodecd}{\node[draw,circle] (cd) {$$}
;}\begin{tikzpicture}[auto]
\matrix[column sep=.3cm, row sep=.3cm,ampersand replacement=\&]{
         \&         \&         \&         \& \nodea  \&         \&         \&         \&         \&         \&         \&         \\ 
         \& \nodeb  \&         \& \nodee  \&         \&         \&         \& \nodef  \&         \&         \& \nodebj \&         \\ 
 \nodec  \&         \& \noded  \&         \& \nodeg  \&         \&         \& \nodeba \& \nodebi \& \nodeca \& \nodecb \& \nodecc \\ 
         \&         \&         \&         \& \nodeh  \&         \& \nodebb \& \nodebc \& \nodebd \&         \&         \& \nodecd \\ 
         \&         \&         \& \nodei  \&         \& \nodej  \&         \& \nodebe \& \nodebf \& \nodebh \&         \&         \\ 
         \&         \&         \&         \&         \&         \&         \&         \& \nodebg \&         \&         \&         \\
};

\path[ultra thick, red] (b) edge (c) edge (d)
	(h) edge (i) edge (j)
	(g) edge (h)
	(bf) edge (bg)
	(bd) edge (be) edge (bf) edge (bh)
	(ba) edge (bb) edge (bc) edge (bd)
	(f) edge (g) edge (ba) edge (bi)
	(cc) edge (cd)
	(bj) edge (ca) edge (cb) edge (cc)
	(a) edge (b) edge (e) edge (f) edge (bj);
\end{tikzpicture}}} \\ \hline
Intersection & \\
\scalebox{.6}{{ \newcommand{\nodea}{\node[draw,circle,fill=white] (a) {$10$}
;}\newcommand{\nodeb}{\node[draw,circle,fill=white] (b) {$5$}
;}\newcommand{\nodec}{\node[draw,circle,fill=white] (c) {$$}
;}\newcommand{\noded}{\node[draw,circle,fill=white] (d) {$$}
;}\newcommand{\nodee}{\node[draw,circle,fill=white] (e) {$8$}
;}\newcommand{\nodef}{\node[draw,circle,fill=white] (f) {$$}
;}\newcommand{\nodeg}{\node[draw,circle,fill=white] (g) {$9$}
;}\newcommand{\nodeh}{\node[draw,circle,fill=white] (h) {$7$}
;}\newcommand{\nodei}{\node[draw,circle,fill=white] (i) {$$}
;}\newcommand{\nodej}{\node[draw,circle,fill=red!50] (j) {$4$}
;}\newcommand{\nodeba}{\node[draw,circle,fill=white] (ba) {$$}
;}\newcommand{\nodebb}{\node[draw,circle,fill=white] (bb) {$3$}
;}\newcommand{\nodebc}{\node[draw,circle,fill=white] (bc) {$1$}
;}\newcommand{\nodebd}{\node[draw,circle,fill=white] (bd) {$$}
;}\newcommand{\nodebe}{\node[draw,circle,fill=white] (be) {$$}
;}\newcommand{\nodebf}{\node[draw,circle,fill=white] (bf) {$$}
;}\newcommand{\nodebg}{\node[draw,circle,fill=white] (bg) {$$}
;}\newcommand{\nodebh}{\node[draw,circle,fill=white] (bh) {$$}
;}\newcommand{\nodebi}{\node[draw,circle,fill=red!50] (bi) {$6$}
;}\newcommand{\nodebj}{\node[draw,circle,fill=white] (bj) {$$}
;}\newcommand{\nodeca}{\node[draw,circle,fill=red!50] (ca) {$2$}
;}\newcommand{\nodecb}{\node[draw,circle,fill=white] (cb) {$$}
;}\newcommand{\nodecc}{\node[draw,circle,fill=white] (cc) {$$}
;}\newcommand{\nodecd}{\node[draw,circle,fill=white] (cd) {$$}
;}\begin{tikzpicture}[auto]
\matrix[column sep=0.1cm, row sep=.3cm,ampersand replacement=\&]{
         \&         \&         \&         \& \nodea  \&         \&         \&         \&         \&         \&         \&         \\ 
         \& \nodeb  \&         \& \nodee  \&         \&         \&         \& \nodeg  \&         \&         \&         \& \nodecd \\ 
 \nodec  \&         \& \noded  \& \nodef  \& \nodeh  \&         \&         \& \nodebh \&         \& \nodebi \&         \&         \\ 
         \&         \&         \& \nodei  \&         \& \nodej  \&         \&         \& \nodebj \& \nodeca \& \nodecc \&         \\ 
         \&         \&         \&         \& \nodeba \& \nodebb \& \nodebg \&         \&         \& \nodecb \&         \&         \\ 
         \&         \&         \&         \& \nodebc \& \nodebe \& \nodebf \&         \&         \&         \&         \&         \\ 
         \&         \&         \&         \& \nodebd \&         \&         \&         \&         \&         \&         \&         \\
};

\path[ultra thick, red] (b) edge (c) edge (d)
	(e) edge (f)
	(bc) edge (bd)
	(bb) edge (bc) edge (be) edge (bf)
	(j) edge (ba) edge (bb) edge (bg)
	(h) edge (i) edge (j)
	(ca) edge (cb)
	(bi) edge (bj) edge (ca) edge (cc)
	(g) edge (h) edge (bh) edge (bi)
	(a) edge (b) edge (e) edge (g) edge (cd);
\end{tikzpicture}}} & 
\scalebox{.6}{{ \newcommand{\nodea}{\node[draw,circle] (a) {$10$}
;}\newcommand{\nodeb}{\node[draw,circle] (b) {$5$}
;}\newcommand{\nodec}{\node[draw,circle] (c) {$$}
;}\newcommand{\noded}{\node[draw,circle] (d) {$$}
;}\newcommand{\nodee}{\node[draw,circle] (e) {$8$}
;}\newcommand{\nodef}{\node[draw,circle] (f) {$$}
;}\newcommand{\nodeg}{\node[draw,circle] (g) {$9$}
;}\newcommand{\nodeh}{\node[draw,circle] (h) {$7$}
;}\newcommand{\nodei}{\node[draw,circle] (i) {$$}
;}\newcommand{\nodej}{\node[draw,circle] (j) {$$}
;}\newcommand{\nodeba}{\node[draw,circle] (ba) {$4$}
;}\newcommand{\nodebb}{\node[draw,circle] (bb) {$$}
;}\newcommand{\nodebc}{\node[draw,circle] (bc) {$3$}
;}\newcommand{\nodebd}{\node[draw,circle] (bd) {$1$}
;}\newcommand{\nodebe}{\node[draw,circle] (be) {$$}
;}\newcommand{\nodebf}{\node[draw,circle] (bf) {$$}
;}\newcommand{\nodebg}{\node[draw,circle] (bg) {$$}
;}\newcommand{\nodebh}{\node[draw,circle] (bh) {$$}
;}\newcommand{\nodebi}{\node[draw,circle] (bi) {$$}
;}\newcommand{\nodebj}{\node[draw,circle] (bj) {$6$}
;}\newcommand{\nodeca}{\node[draw,circle] (ca) {$$}
;}\newcommand{\nodecb}{\node[draw,circle] (cb) {$$}
;}\newcommand{\nodecc}{\node[draw,circle] (cc) {$2$}
;}\newcommand{\nodecd}{\node[draw,circle] (cd) {$$}
;}\begin{tikzpicture}[auto]
\matrix[column sep=.3cm, row sep=.3cm,ampersand replacement=\&]{
         \&         \&         \&         \& \nodea  \&         \&         \&         \&         \&         \&         \&         \\ 
         \& \nodeb  \&         \& \nodee  \&         \&         \&         \& \nodeg  \&         \&         \& \nodebj \&         \\ 
 \nodec  \&         \& \noded  \& \nodef  \& \nodeh  \&         \&         \& \nodeba \& \nodebi \& \nodeca \& \nodecb \& \nodecc \\ 
         \&         \&         \& \nodei  \&         \& \nodej  \& \nodebb \& \nodebc \& \nodebh \&         \&         \& \nodecd \\ 
         \&         \&         \&         \&         \&         \& \nodebd \& \nodebf \& \nodebg \&         \&         \&         \\ 
         \&         \&         \&         \&         \&         \& \nodebe \&         \&         \&         \&         \&         \\
};

\path[ultra thick, red] (b) edge (c) edge (d)
	(e) edge (f)
	(h) edge (i) edge (j)
	(bd) edge (be)
	(bc) edge (bd) edge (bf) edge (bg)
	(ba) edge (bb) edge (bc) edge (bh)
	(g) edge (h) edge (ba) edge (bi)
	(cc) edge (cd)
	(bj) edge (ca) edge (cb) edge (cc)
	(a) edge (b) edge (e) edge (g) edge (bj);
\end{tikzpicture}}}
\end{tabular}
\end{center}
\caption{Example of an intersection of pure intervals in size $10$}
\label{fig:inter-pure-10}
\end{figure}
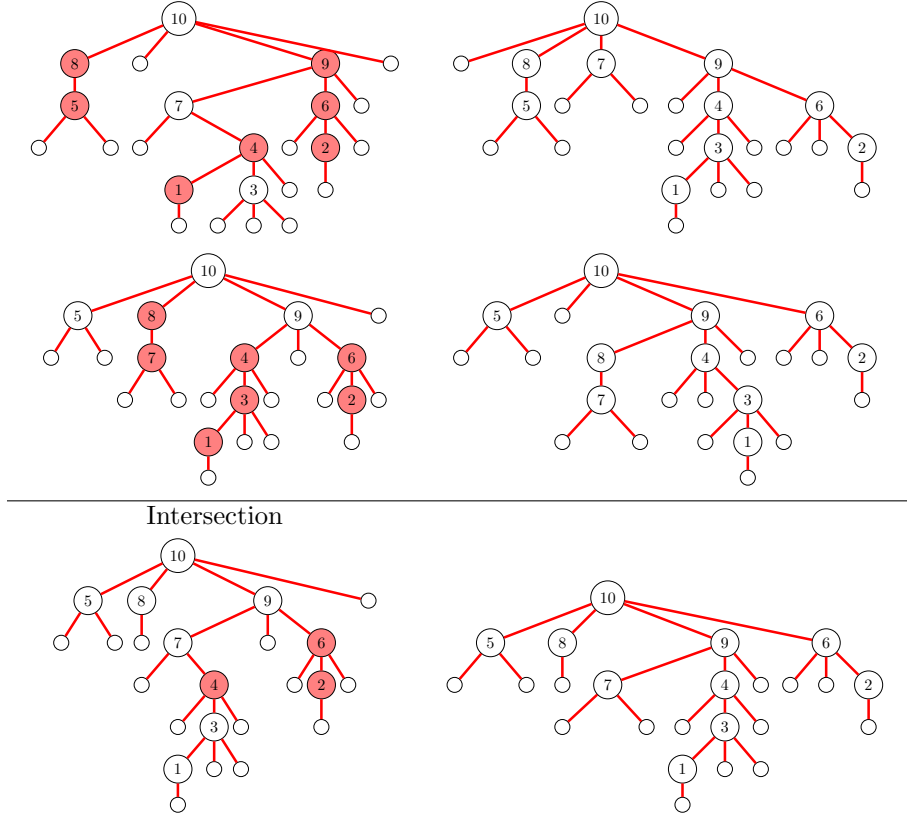

\subsection{Essential variations of the intersection}
To prove that the essential variations satisfy the conditions of \Cref{thm:pure-interval-char}, we need to characterize the variations of the two intervals that give essential variations in the intersection. We will do this using the following compatibility notion.

\begin{definition}
\label{def:compatible-var}
Let $[T,T+A]$ and $[T',T'+A']$ be two pure intervals with a non-empty intersection~$[X,Y]$. We say that $(c,a)_v$ is a \defn{compatible} variation of the intersection if $(c,a)_v$ is a variation in both $[T,T+A]$ and $[T',T'+A']$ and for all $b$ with $c > b > a$ such that $a$ is a middle descendant of $b$ in~$T'$ (resp. $T$) then $\card_T(b,a) = s(b)$ (resp. $\card_{T'}(b,a) = s(b)$).
\end{definition}

Note that if $(c,a)_v$ is an essential variation of both $[T,T+A]$ and $[T',T'+A']$ then it is compatible because by Property~\eqref{case-prop:ev-no-middle} of Proposition~\ref{prop:middle-b}, $a$ is never a middle descendant of any $b$, $c > b > a$ in neither $T$ nor $T'$.

\begin{proposition}
\label{prop:compatible-var}
Let $[T,T+A]$ and $[T',T'+A']$ be two pure intervals with a non-empty intersection~$[X,Y]$. Then $(c,a)_v$ is an essential variation of $[X,Y]$ if and only if, $(c,a)_v$ is a compatible variation of the intersection.
\end{proposition}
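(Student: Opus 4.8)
The plan is to prove both directions of \Cref{prop:compatible-var} by contraposition, building on the description of the variations of the intersection from \Cref{prop:var-intersection} and on one structural identity for the minimal tree $X = T \join T'$. The main technical tool I would set up first is the cardinality identity $\card_X(c,a) = \max(\card_T(c,a), \card_{T'}(c,a))$ for all $c > a$. Indeed $\inv(X) \supseteq \inv(T) \cup \inv(T')$ gives $\card_X \geq \max$, while $X$ lies in both pure intervals, so the $+1$ property of \Cref{lem:plus-one} forces $\card_X(c,a) \leq \card_T(c,a) + 1$ and $\card_X(c,a) \leq \card_{T'}(c,a) + 1$; together with the intersection stability of \Cref{lem:intersection-stability} in the equality case, these bounds pin $\card_X(c,a)$ down to the maximum and also show $|\card_T(c,a) - \card_{T'}(c,a)| \leq 1$. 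In particular, $a$ is a middle descendant of $b$ in $X$ precisely when $0 < \max(\card_T(b,a), \card_{T'}(b,a)) < s(b)$. Recall also that, by \Cref{prop:var-intersection}, being a variation of $[X,Y]$ is the same as being a variation of both intervals, so both notions in the statement live inside $\Var([X,Y])$.

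For the implication that a compatible variation is an essential variation of $[X,Y]$, I would argue the contrapositive. If $(c,a)_v \in \Var([X,Y])$ is not essential, there is $b$ with $c > b > a$ such that $a$ is a middle descendant of $b$ in $X$. By the identity, $0 < \max(\card_T(b,a), \card_{T'}(b,a)) < s(b)$, so if the maximum is realised by $T'$ then $a$ is a middle descendant of $b$ in $T'$, while $\card_T(b,a) \leq \card_{T'}(b,a) < s(b)$ gives $\card_T(b,a) \neq s(b)$ (and symmetrically with $T$ and $T'$ interchanged). This is exactly a violation of \Cref{def:compatible-var}, so $(c,a)_v$ is not compatible; note that this direction does not even use that $(c,b)$ varies.

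The reverse implication is the core of the proof, and I would again use the contrapositive: assuming $(c,a)_v \in \Var([X,Y])$ is not compatible, I produce a middle descendant witnessing that it is not essential. By symmetry there is $b$ with $c > b > a$, with $a$ a middle descendant of $b$ in $T'$ and $\card_T(b,a) \neq s(b)$, hence $\card_T(b,a) < s(b)$; the identity then makes $a$ a middle descendant of $b$ in $X$, so $\card_X(c,b) = \card_X(c,a) = v$. The goal is to show that $(c,b)_v$ varies in \emph{both} intervals, which by \Cref{prop:var-intersection} makes it a variation of $[X,Y]$ and thus prevents $(c,a)_v$ from being essential. On the $T'$ side this is immediate from Property~\eqref{case-prop:middle-var} of \Cref{prop:middle-b}, since $a$ is a middle descendant of $b$ there. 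On the $T$ side, if $\card_T(b,a) > 0$ then $a$ is a middle descendant of $b$ in $T$ and the same property applies; the only obstacle is the case $\card_T(b,a) = 0$, where $a$ need not even be a descendant of $b$ in $T$.

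The hard part will be exactly this last case, which I would resolve by first determining $\card_T(c,b)$. The bounds $\card_T(c,b) \leq \card_X(c,b) = v \leq \card_T(c,b) + 1$ leave only $\card_T(c,b) \in \{v-1, v\}$. Since $(c,a)_v$ is a variation of $[T,T+A]$, \Cref{lem:var-desc} makes $a$ a descendant of $c$ in $T$; a short positional argument then excludes $\card_T(c,b) = v-1$, for $\card_T(c,b) < \card_T(c,a) = v$ would place $a$ strictly to the right of $b$ in $T$ and force $\card_T(b,a) = s(b)$, contradicting $\card_T(b,a) = 0 \neq s(b)$. Hence $\card_T(c,b) = v$, and Property~\eqref{case-prop:a-before-b} of \Cref{prop:middle-b}, applied with $\card_T(c,b) = v$ and $\card_T(b,a) = 0 < s(b)$, yields that $(c,b)_v$ varies in $[T,T+A]$. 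This completes the transfer and the desired contradiction, and assembling the two implications proves \Cref{prop:compatible-var}.
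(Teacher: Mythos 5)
Your proof is correct, and at bottom it travels the same road as ours: both arguments hinge on the intersection stability of \Cref{lem:intersection-stability}, the $+1$ property of \Cref{lem:plus-one}, and Properties~\eqref{case-prop:a-before-b} and~\eqref{case-prop:middle-var} of \Cref{prop:middle-b}. The genuine difference is organizational: you isolate the identity $\card_X(c,a)=\max\left(\card_T(c,a),\card_{T'}(c,a)\right)$ as a standalone tool, correctly derived from the two lemmas just cited, whereas we leave it implicit and redo the underlying computation each time it is needed. This buys you something real in the harder direction: our proof must split on whether $a$ remains a middle descendant of $b$ in $X$, and the case where it does not is precisely where we extract $\card_T(b,a)=s(b)$; your identity shows that under the non-compatibility hypothesis that case simply cannot occur, so the whole direction collapses to showing that $(c,b)_v$ varies in both intervals. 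The remaining steps coincide with our Case 1: your positional argument ruling out $\card_T(c,b)=v-1$ when $\card_T(b,a)=0$ supplies the justification for the step where we assert that $\card_T(c,b)<v$ together with $\card_T(c,a)=v$ would force $\card_T(b,a)=s(b)$, and the conclusion via Property~\eqref{case-prop:a-before-b} is identical. Your treatment of the easy direction is also slightly cleaner than our case analysis on the positions of $a$ relative to $b$ in $T$ and $T'$, since the max identity converts ``$a$ is a middle descendant of $b$ in $X$'' directly into a violation of \Cref{def:compatible-var}.
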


\begin{example}
Let us call $T$ and $T'$ respectively the two minimal trees of the pure intervals shown in Figure~\ref{fig:inter-pure-10}.
The variation $(10,6)_2$ is the only compatible variation which is not an essential variation in both $T$ and $T'$: in $T'$, $(10,6)_2$ is an essential variation so $6$ is never a middle descendant of any $b$ with $10 > b > 6$. In $T$, $6$ is a middle descendant of $9$ and we check that $\card_{T'}(9,6) = 2 = s(9)$. It is indeed an essential variation of the intersection.

On the other hand, $(9,1)_0$ is an essential variation of $T$ and a variation in $T'$. In $T'$, $1$ is a middle descendant of $3$ and $4$ but we have $\card_T(3,1) = 0 < s(3)$ and $\card_T(4,1) = 0 < s(4)$ so it is not a compatible variation. Indeed, we see that $(9,1)_0$ is a variation of the intersection but not an essential variation. 
\end{example}

\begin{proof}
Let us first suppose that a variation $(c,a)_v$ is compatible for the intersection and show that it is an essential variation of the intersection~$[X,Y]$. By definition, it is a variation of both $[T,T+A]$ and $[T',T'+A']$ so by Proposition~\ref{prop:var-intersection}, it is a variation of $[X,Y]$. Now, take $b$ with~$c > b > a$. If $\card_T(b,a) = \card_{T'}(b,a) = 0$, then by intersection stability (Lemma~\ref{lem:intersection-stability}), $\card_X(b,a) = 0$. Now if $a$ is a middle descendant of $b$ in $T'$, the compatibility implies that $\card_T(b,a) = s(b)$ and so $\card_X(b,a) = s(b)$. Similarly, if $a$ is a middle descendant of $b$ in $T$, the compatibility gives $\card_X(b,a) \geq \card_{T'}(b,a) = s(b)$. So $a$ is never a middle descendant of $b$ in $X$ which implies that $(c,a)_v$ is an essential variation.

We want to show the reverse implication, namely: the essential variations consists only of the compatible variations. Let us choose $(c,a)_v$ such that it is an essential variation of $[X,Y]$. Proposition~\ref{prop:var-intersection} tells us that $(c,a)_v$ is a variation for both $[T,T+A]$ and $[T',T'+A']$. Let us suppose that there exists $b$, with $c > b > a$ and $a$ is a middle descendant of $b$ in $T'$. Remember that by Statement~\eqref{case-prop:middle-var} of Proposition~\ref{prop:middle-b}, this means that $(c,b)_v$ varies in $[T',T'+A']$.

Suppose first that $a$ is still a middle descendant of $b$ in $X$. We know that $\card_X(c,a) = v$. As $a$ is a descendant of $b$, this implies that $\card_X(c,b) = v$. We know $\card_T(c,b) \leq \card_X(c,b)$. If $\card_T(c,b) < v$, as $\card_T(c,a) = v$ this implies in particular that $\card_T(b,a) = s(b)$ which is not possible as we have $\card_X(b,a) < s(b)$, so $\card_T(c,b) = v$. As $\card_T(b,a) < s(b)$, we can apply Property~\eqref{case-prop:a-before-b} of Proposition~\ref{prop:middle-b} and we get that $(c,b)_v$ is a variation of $[T,T+A]$. As $(c,b)_v$ is also a variation of $[T',T'+A']$, using Proposition~\ref{prop:var-intersection}, we obtain that $(c,b)_v$ is a variation of $[X,Y]$ and then $(c,a)_v$ is not an essential variation of $[X,Y]$.

This means that $a$ is no longer a middle descendant of $b$ in $X$. As the cardinality of $(b,a)$ can only increase, this means $\card_X(b,a) = s(b)$. By the $+1$ property (Lemma~\ref{lem:plus-one}), $\card_{T'}(b,a) = s(b) - 1$ and $s(b) - 1 \leq \card_T(b,a) \leq s(b)$. But if $\card_T(b,a) = s(b) - 1$, the intersection stability (Lemma~\ref{lem:intersection-stability}) tells us that $\card_X(b,a) = s(b) - 1$ which is not possible. We have proved that if $a$ is a middle descendant of $b$ in $T'$, then $\card_T(b,a) = s(b)$. Symmetrically, we prove that if $a$ is a middle descendant of $b$ in $T$, then $\card_{T'}(b,a) = s(b)$.
\end{proof}

\begin{remark}
\label{rem:ba-inter-variation}
Suppose that $(c,a)$ is an essential variation of an intersection $[X,Y]$ of two pure intervals $[T,T+A]$ and $[T',T'+A']$, then for all $b$ such that $a$ is a middle descendant of $b$ in $T$ with $c > b > a$, we have $\card_{T'}(b,a) = s(b)$ whereas $\card_T(b,a) < s(b)$. This means in particular that $\card_X(b,a) = s(b)$ and implies
\begin{enumerate}
\item by the $+1$ property $\card(b,a)$ can vary by at most $1$ and so $\card_T(b,a) = s(b) - 1$ ($a$ is in the before last subtree of $b$);
\item $(b,a)_{s(b) - 1}$ is a variation of $[T,T+A]$. 
\end{enumerate}
\end{remark}

\begin{proposition}
\label{prop:union-essential-var}
Let $[T,T+A]$ and $[T',T'+A']$ be two pure intervals with a non-empty intersection~$[X,Y]$. If $(c,a)_v$ is an essential variation of $[X,Y]$, then $(c,a)_v$ is an essential variation of either $[T,T+A]$ or $[T',T'+A']$.
\end{proposition}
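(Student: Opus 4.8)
The plan is to argue by contradiction, extracting two obstructing nodes and using the compatibility characterization to pin them down. Assume $(c,a)_v$ is an essential variation of $[X,Y]$ but is \emph{not} an essential variation of $[T,T+A]$ \emph{nor} of $[T',T'+A']$. By \Cref{prop:var-intersection}, $(c,a)_v$ is a variation of both $[T,T+A]$ and $[T',T'+A']$ with value $v$. Since it is a non-essential variation of $[T,T+A]$, \Cref{lem:essential-var-middle-child} supplies a unique $b$ with $c>b>a$ such that $a$ is a middle descendant of $b$ in $T$ and $(c,b)_v$ is an essential variation of $[T,T+A]$. Symmetrically, there is a unique $b'$ with $c>b'>a$ such that $a$ is a middle descendant of $b'$ in $T'$ and $(c,b')_v$ is an essential variation of $[T',T'+A']$. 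The goal is to show that this double configuration cannot occur.

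Next I would feed these witnesses into the compatibility machinery. Since $(c,a)_v$ is essential in $[X,Y]$, \Cref{prop:compatible-var} makes it a compatible variation, so \Cref{def:compatible-var} together with \Cref{rem:ba-inter-variation} yields the precise data $\card_{T'}(b,a)=s(b)$ and $\card_T(b',a)=s(b')$, and the refined fact that $a$ sits in the second-to-last subtree, i.e.\ $\card_T(b,a)=s(b)-1$ and $\card_{T'}(b',a)=s(b')-1$, with $(b,a)_{s(b)-1}$ varying in $[T,T+A]$ and $(b',a)_{s(b')-1}$ varying in $[T',T'+A']$. If $b=b'$ then $a$ would be a middle descendant of $b$ in both trees, contradicting $\card_{T'}(b,a)=s(b)$; hence $b\neq b'$, and without loss of generality $c>b>b'>a$. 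Because $a$ is a descendant of $b'$ in $T'$, I get $\card_{T'}(b,b')=\card_{T'}(b,a)=s(b)$, which also forces $b$ to be an ancestor of $b'$ in $T'$ and therefore $\card_{T'}(c,b)=v$. Now applying Property~\eqref{case-prop:ca-ev-ba-0} of \Cref{prop:middle-b} in $T'$ to the essential variation $(c,b')_v$ and the putative variation $(c,b)_v$ would give $\card_{T'}(b,b')=0$; since $\card_{T'}(b,b')=s(b)>0$, this shows that $(c,b)_v$ does \emph{not} vary in $[T',T'+A']$.

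The final step is to turn the remaining positional data into a contradiction, and this is where I expect the main difficulty. Using transitivity and planarity (\Cref{def:multi-sets}) one locates $b'$ relative to $b$ inside $T$ (either $b'$ is an ancestor of $a$ lying in the subtree $T^b_{s(b)-1}$, or $a$ lies strictly to the right of $b'$), and one exploits the nonemptiness of the intersection, namely $T\wole T'+A'$ and $T'\wole T+A$, which translates into inequalities between the cardinalities $\card_T$, $\card_{T+A}$, $\card_{T'}$, $\card_{T'+A'}$ on the triple $\{a,b',b\}$. The idea is to combine these inequalities with the ascent-path description of variations (\Cref{prop_variations_essvariations_pureintervals}) and the middle-variation properties of \Cref{prop:middle-b} to show that $b'$ must remain a descendant of $b$ after the rotations, so that $(c,b)_v$ is forced to vary in $[T',T'+A']$ as well, contradicting the conclusion of the previous paragraph; equivalently, one shows that $a$ becomes a genuine middle-descendant witness inside $X=T\join T'$, contradicting the essentiality of $(c,a)_v$ in $[X,Y]$. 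The hard part is precisely this global coherence argument: the pairwise cardinality constraints on $\{a,b',b\}$ alone are consistent, so the contradiction must be extracted from the interaction of the two ascent-path structures through the join and meet, which is the delicate point to carry out carefully.
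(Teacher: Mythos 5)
Your setup coincides with the paper's: you invoke \Cref{prop:var-intersection} and \Cref{prop:compatible-var} to get that $(c,a)_v$ is a compatible variation, extract the two witnesses $b$ (in $T$) and $b'$ (in $T'$), and use compatibility plus \Cref{rem:ba-inter-variation} to pin down $\card_{T'}(b,a)=s(b)$, $\card_T(b,a)=s(b)-1$, $\card_T(b',a)=s(b')$, $b\neq b'$, and $\card_{T'}(b,b')=s(b)$. But the proof stops there: the concluding contradiction is exactly the part you defer to a ``global coherence argument'' through the join and meet, and your stated reason for needing one --- that ``the pairwise cardinality constraints on $\{a,b',b\}$ alone are consistent'' --- is false. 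The contradiction is purely local on the triple $b>b'>a$ inside the single pure interval $[T,T+A]$. The missing observation is that since $X=T\join T'$ lies in $[T,T+A]$ and $\card_X(b,a)\geq\card_{T'}(b,a)=s(b)>\card_T(b,a)$, the pair $(b,a)_{s(b)-1}$ is a variation of $[T,T+A]$; likewise $\card_T(b,b')\leq\card_T(b,a)=s(b)-1$ by transitivity (as $\card_T(b',a)=s(b')>0$), so $(b,b')_{s(b)-1}$ is also a variation of $[T,T+A]$. Choosing $b$ minimal (rather than the maximal witness you take from \Cref{lem:essential-var-middle-child}) makes both of these \emph{essential} variations of the same value, and then Condition~\eqref{cond:pure-interval-ba} of \Cref{thm:pure-interval-char} applied to $b>b'>a$ forces $\card_T(b',a)=0$, contradicting $\card_T(b',a)=s(b')>0$. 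No ascent-path or join/meet interaction is needed.

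Two further problems in the route you do sketch. First, $\card_{T'}(b,b')=s(b)$ does \emph{not} force $b$ to be an ancestor of $b'$ in $T'$ (the value $s(b)$ is also attained when $b'$ lies to the right of $b$), so $\card_{T'}(c,b)=v$ is not established and your application of Property~\eqref{case-prop:ca-ev-ba-0} of \Cref{prop:middle-b} to conclude that $(c,b)_v$ does not vary in $[T',T'+A']$ is not justified as written. Second, even granting that conclusion, the plan to later force $(c,b)_v$ to vary in $[T',T'+A']$ is left entirely unexecuted, so as it stands the argument does not close.
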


\begin{proof}
We need to prove that if $(c,a)_v$ is compatible, then it is an essential variation for one of the intervals. Suppose that it is not. We have $b$ and $b'$ such that $a$ is a middle descendant of $b$ in $T$ and $b'$ in $T'$. The compatibility implies that $\card_T(b',a) = s(b')$ and $\card_{T'}(b,a) = s(b)$ so in particular $b \neq b'$. Beside, we can chose $b$ (resp. $b'$) to be minimal, \emph{i.e.}, $a$ is not a middle descendant in $T$ (resp. $T')$ of any node of value smaller than $b$ (resp. $b'$). Without loss of generality, we suppose $b' < b$.  In $T'$, we have $\card_{T'}(b,a) = s(b)$ and $a$ is a descendant of $b'$ so $\card_{T'}(b,b') = s(b)$. Using Remark~\ref{rem:ba-inter-variation}, $\card_T(b,a) = s(b) - 1$. Now the compatibility gives us $\card_T(b',a) = s(b')$ which implies that $\card_T(b,b') \leq \card_T(b,a)$. By the $+1$ property, we obtain $\card_T(b,b') = s(b) - 1$. Moreover, both $(b,a)$ and $(b,b')$ vary as their cardinality increases in $T'$. They are essential variations because we have chosen $b$ to be minimal. By Condition~\eqref{cond:pure-interval-ba} of pure intervals on $b > b' > a$, as $s(b') > 0$, then $\card_T(b',a) = 0$ which contradicts $\card_T(b',a) = s(b')$.
\end{proof}

\subsection{Proof of the Intersection Theorem}
Now that we characterized the variations and the essential variations of the intersection, we are ready to prove the Intersection Theorem~\ref{thm:intersection}.

\begin{proof}[Proof of Theorem~\ref{thm:intersection}]
Let $[T,T+A]$ and $[T',T'+A']$ be two pure intervals with a non-empty intersection~$[X,Y]$. We prove that the variations and essential variations of $[X,Y]$ satisfy the conditions of \Cref{thm:pure-interval-char}.

We start with Condition~\eqref{cond:pure-interval-cb}. We take $c > b > a$ such that $(c,a)_v$ and $(b,a)_w$ are variations of $[X,Y]$. By \Cref{prop:var-intersection}, the variations of $[X,Y]$ are the intersection of the variations of $[T, T+A]$ and $[T', T'+A']$. So $(c,a)_v$ and $(b,a)_w$ are variations in both $[T,T+A]$ and $[T',T+A']$. These are pure intervals which implies that $(c,b)_v$ is a variation in both. By \Cref{prop:var-intersection}, $(c,b)_v$ is a variation of~$[X,Y]$.

We now prove Condition~\eqref{cond:pure-interval-ba}. We take $c > b > a$ such that $(c,a)_v$ and $(c,b)_v$ are essential variations of $[X,Y]$ and suppose $s(b) \neq 0$. By \Cref{prop:compatible-var}, $(c,a)_v$ and $(c,b)_v$ are compatible variations of $[T, T+A]$ and $[T', T'+A']$. We need to show that $(b,a)_0$ is a variation of $[X,Y]$, \emph{i.e.}, that it is a variation in both  $[T, T+A]$ and $[T', T'+A']$.

Let us first prove that $\card_T(b,a) = \card_{T'}(b,a) = 0$. We have proved in Proposition~\ref{prop:union-essential-var} that $(c,a)_v$ is an essential variation of at least one of the intervals. Without loss of generality, we suppose that $(c,a)_v$ is an essential variation of $[T,T+A]$. Using Statement~\eqref{case-prop:ca-ev-ba-0} of Proposition~\ref{prop:middle-b}, we obtain that $\card_T(b,a) = 0$. Suppose that $\card_{T'}(b,a) > 0$. If $a$ is a middle descendant of $b$, then as $(c,a)_v$ is compatible, this means $\card_T(b,a) = s(b)$ which contradicts our previous conclusion that $\card_T(b,a) = 0 < s(b)$. So $\card_{T'}(b,a) = s(b)$. We use Statement~\eqref{case-prop:a-middle-desc} of Proposition~\ref{prop:middle-b}: $a$ is a middle descendant of some $b'$ in $T'$ with $c > b' >b$. We choose $b'$ to be minimal so that $a$ is not a middle descendant of any other node of value between $b$ and $b'$. As $\card_{T'}(b,a) = s(b)$, we have $\card_{T'}(b',b) \leq \card_{T'}(b',a) < s(b')$. As $(c,a)$ is a compatible variation, in $T$ we have $\card_T(b',a) = s(b')$ and as $\card_T(b,a) = 0$ this gives $\card_T(b',b) \geq \card_T(b',a) = s(b')$. So $(b',b)$ is a variation of $[T',T'+A']$ as well as $(b',a)$. We can apply again Statement~\eqref{case-prop:a-middle-desc} on $b' > b > a$ which contradicts the minimality of $b'$. 

We prove now that $(b,a)$ varies in $[T,T+A]$. If $(c,b)_v$ is an essential variation, then it is the case by Statement~\eqref{case-prop:cb-ev-ba-var} of Proposition~\ref{prop:middle-b}. If not, then there exists $b'$ with $c > b' > b$ and $b$ is a middle descendant of $b'$ in $T$. We choose $b'$ to be minimal, so $b$ is not a middle descendant of any other node of value between $b$ and $b'$. Because $(c,a)$ is a compatible variation, this implies that $\card_{T'}(b',b) = s(b')$ and, moreover, by Proposition~\ref{prop:union-essential-var} $(c,b)_v$ is an essential variation of $[T',T'+A']$. Using Statement~\eqref{case-prop:cb-ev-ba-var}, we obtain that $(b,a)$ varies in $T'$. In particular, $a$ is a descendant of $b$ in $T'$ and $\card_{T'}(b',a) = s(b')$. In $T$, we have $\card_T(b,a) = 0$, and so $\card_T(b',a) \leq \card_T(b',b) < s(b')$. These cardinalities are higher in $T'$ so it implies that both $(b',a)$ and $(b',b)$ are variations of $[T,T+A]$ and by the $+1$ property, we have $\card_T(b',b) = \card_T(b',a) = s(b') - 1$. Besides, the minimality of $b'$ gives that $(b',b)$ is an essential variation and by Statement~\eqref{case-prop:cb-ev-ba-var} of the middle variations Proposition~\ref{prop:middle-b} on $b'>b>a$, we obtain that $(b,a)$ varies.

In this last proof, we have not made any assumptions on the variations of $[T,T+A]$ beside being compatible, especially we have not assumed that $(c,a)$ was an essential variation. So the proof applies in the same way to $[T',T'+A']$ and $(b,a)_0$ is also a variation of $[T',T'+A']$. This gives that $(b,a)_0$ is indeed a variation of $[X,Y]$.
\end{proof}

\begin{corollary}
\label{cor:intersection}
Let $[T,T+A]$ and $[T',T'+A']$ be two pure intervals with non-empty intersection. Then their intersection is the pure interval $[T'',T''+A'']$ where $T''=T \join T'$ and $A''$ is the set of couples $(a,c)$ such that $(c,a)$ is a minimal compatible variation of the two intervals, \emph{i.e.}, there is no $b$ with $a < b < c$ such that $(b,a)$ is a compatible variation.
\end{corollary}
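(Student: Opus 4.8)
The plan is to read off the corollary as a direct consequence of the three results that immediately precede it, without any new combinatorial work. First I would invoke \Cref{thm:intersection} to conclude that the intersection $[X,Y]$ is itself a pure interval, where by \Cref{lem:lattice-interval-intersection} we have $X = T \join T'$ and $Y = (T+A)\meet(T'+A')$. Since $[X,Y]$ is pure, it can be written (uniquely) in the form $[X, X+A'']$ for some subset $A''$ of tree-ascents of $X$. This already establishes that $T'' = T \join T'$, so only the identification of $A''$ remains.

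Next I would apply \Cref{thm_min_essential_variations_ascents} to the pure interval $[X, X+A'']$. That theorem states that the subset $A''$ of tree-ascents is in bijection with the minimal essential variations of the interval: concretely, $(a,c) \in A''$ if and only if $(c,a)_v$ is a minimal essential variation of $[X,Y]$ for some value $v$. (The value is determined automatically, since for the $+1$-interval $[X,Y]$ one has $v=\card_X(c,a)$.) Thus the entire task reduces to translating ``minimal essential variation of $[X,Y]$'' into the language of the two intervals $[T,T+A]$ and $[T',T'+A']$.

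This translation is exactly what \Cref{prop:compatible-var} provides: a variation $(c,a)_v$ is an essential variation of $[X,Y]$ if and only if it is a compatible variation of the intersection. Feeding this equivalence into the notion of minimality closes the argument. Indeed, an essential variation $(c,a)$ of $[X,Y]$ is minimal precisely when there is no $b$ with $a<b<c$ such that $(b,a)$ is an essential variation of $[X,Y]$; by \Cref{prop:compatible-var} the essential variations of $[X,Y]$ are the compatible variations, so this condition is equivalent to there being no such $b$ with $(b,a)$ compatible, which is exactly the definition of a minimal compatible variation. Hence $(a,c)\in A''$ if and only if $(c,a)$ is a minimal compatible variation, as claimed.

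I do not expect a genuine obstacle here, since all the hard analysis was carried out in \Cref{thm:intersection}, \Cref{prop:var-intersection}, and \Cref{prop:compatible-var}. The only point requiring care is the bookkeeping around minimality: one must check that the ``no intermediate variation $(b,a)$'' clause defining minimality is preserved when the essential variations of $[X,Y]$ are replaced by compatible variations, which is immediate from the if-and-only-if in \Cref{prop:compatible-var}.
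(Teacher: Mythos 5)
Your proposal is correct and follows essentially the same route as the paper, which derives the corollary from Proposition~\ref{prop:pure-candidate-to-pure} (equivalently, your invocation of Theorem~\ref{thm:intersection} together with Theorem~\ref{thm_min_essential_variations_ascents}, since the latter is itself a consequence of that proposition) combined with Proposition~\ref{prop:compatible-var}. The only point needing care, the translation of minimality from essential variations of $[X,Y]$ to compatible variations, is handled correctly by the if-and-only-if in Proposition~\ref{prop:compatible-var}.
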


\begin{proof}
This is a consequence of Proposition~\ref{prop:pure-candidate-to-pure} and Proposition~\ref{prop:compatible-var}.
\end{proof}

\part{The $s$-associahedron}
\label{part_two}

\section{The $s$-Tamari lattice (background)}

We recall the definition of the $s$-Tamari lattice and its connection with the $\nu$-Tamari lattice of Pr\'eville-Ratelle and Viennot, as shown in~\cite{CP22}. We refer to~\cite{CP22} for more detailed explanations. 

\subsection{The $s$-Tamari lattice}
\begin{definition}
\label{def:s-tam-tree}
An $s$-decreasing tree $T$ is called an \defn{$s$-Tamari tree} if for any $a<b<c$ the number of~$(c,a)$ inversions is less than or equal to the number of~$(c,b)$ inversions: 
\[
\card_T(c,a)\leq \card_T(c,b).
\]
In terms of the tree this means that the node labels in $T^c_i$ are smaller than all the labels in $T^c_j$ for $i<j$.
The multi set of inversions of an $s$-Tamari tree is called an \defn{$s$-Tamari inversion set}.

The \defn{$s$-Tamari poset} is the restriction of the $s$-weak order to the set of $s$-Tamari trees. 
\end{definition}

The $s$-Tamari trees play the role of $231$-avoiding permutations of the $s$-weak order, and the number of $s$-Tamari trees may be regarded as the \defn{$s$-Catalan number}. 

An example of the $s$-Tamari lattice for $s=(0,2,2)$ is illustrated on the left of Figure~\ref{fig:bij-stam-nutrees}. 
The $s$-Catalan number in this case is 12 (a Fuss-Catalan number), which counts the number of elements of the lattice. 

\begin{theorem}[{\cite[Theorem~2.2]{CP22}}]
\label{thm:stam-sublattice}
The $s$-Tamari poset is a sublattice of the $s$-week order. In particular, it is a lattice.
\end{theorem}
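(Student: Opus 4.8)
The plan is to prove the stronger statement that the set of $s$-Tamari trees is closed under both the join $\join$ and the meet $\meet$ of the $s$-weak order; since a subset of a lattice that is closed under both operations is automatically a sublattice (and hence a lattice in its own right), this yields the theorem. Throughout I would only have to verify the extra inequality $\card(c,a)\le \card(c,b)$ for $a<b<c$ that defines $s$-Tamari trees in Definition~\ref{def:s-tam-tree}: the fact that $T\join R$ and $T\meet R$ are genuine $s$-decreasing trees (equivalently, that their inversion multi-sets are planar, transitive and contained in $\maxs_s$) is already guaranteed, because the $s$-weak order is a lattice on $s$-decreasing trees. So the whole content is the propagation of the column-monotonicity condition $\card(c,a)\le\card(c,b)$.

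For the join, recall that $\inv(T\join R)=\tc{(\inv(T)\cup\inv(R))}$. I would isolate two facts. First, the union (Definition~\ref{def:union-tree-inversions}) of two multi-sets satisfying the Tamari inequality again satisfies it: this is immediate from $\card_{I\cup J}(c,a)=\max(\card_I(c,a),\card_J(c,a))$ together with the termwise inequalities $\card_I(c,a)\le\card_I(c,b)$ and $\card_J(c,a)\le\card_J(c,b)$. Second, and this is the heart of the matter, the transitive closure (Definition~\ref{def:tc-tree-inversions}) of a multi-set $S$ satisfying the Tamari inequality again satisfies it. Here I would use that $\card_{\tc S}(c,a)$ equals the maximum of $\card_S(c,e)$ over all transitivity paths $c>e>\dots>a$, since the value of the closure is recorded by the first step of an optimal path. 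Given $a<b<c$ and an optimal path $c>e^*>\dots>a$ for $(c,a)$, let $j$ be the first index at which the path reaches a value $\le b$. If that value equals $b$ we truncate the path there; if it is strictly below $b$, say the boundary step is $p_{j-1}>b>p_j$, then the Tamari inequality of $S$ applied to $p_j<b<p_{j-1}$ gives $\card_S(p_{j-1},b)\ge\card_S(p_{j-1},p_j)>0$, so we may replace that step by $p_{j-1}>b$. In either case we obtain a valid transitivity path $c>\dots>b$ whose first step has cardinality $\ge\card_{\tc S}(c,a)$, proving $\card_{\tc S}(c,b)\ge\card_{\tc S}(c,a)$. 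Combining the two facts shows $\inv(T\join R)$ is an $s$-Tamari inversion set whenever $\inv(T)$ and $\inv(R)$ are.

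For the meet I would avoid computing a meet formula altogether and instead use the left--right reflection $\rho$ of trees, which reverses the order of the children $T^y_0,\dots,T^y_{s(y)}$ of every node and hence acts on cardinalities by $\card_{\rho T}(c,a)=s(c)-\card_T(c,a)$. This $\rho$ is an involutive anti-automorphism of the $s$-weak order, since it reverses every inclusion of inversion multi-sets, so $\rho(T\meet R)=\rho T\join\rho R$. Moreover $\rho$ exchanges $s$-Tamari trees with the \emph{co-Tamari} trees defined by the reverse inequality $\card(c,a)\ge\card(c,b)$ for $a<b<c$. A dual of the join argument shows that co-Tamari trees are also closed under $\join$: the union step is identical, and for the transitive closure one takes an optimal path $c>q_1>\dots>b$ for $(c,b)$ and modifies its last step $q_{l-1}>b$ to $q_{l-1}>a$, which is legal because the co-Tamari inequality of $S$ gives $\card_S(q_{l-1},a)\ge\card_S(q_{l-1},b)>0$. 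Hence if $T,R$ are $s$-Tamari, then $\rho T,\rho R$ are co-Tamari, $\rho T\join\rho R$ is co-Tamari, and $T\meet R=\rho(\rho T\join\rho R)$ is $s$-Tamari.

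The routine parts are the two union lemmas and the verification that $\rho$ is an order-reversing involution of $s$-decreasing trees. The main obstacle is the transitive-closure step: one must argue that passing to $\tc{S}$ cannot destroy the column-monotonicity of the inversion multi-set, and the path-surgery argument above — locating the single step where the optimal path crosses the level $b$ and repairing it with the defining inequality — is the crucial point. Once this is settled for both the Tamari and the co-Tamari condition, closure under $\join$ and $\meet$ follows, and with it the sublattice property.
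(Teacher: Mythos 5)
Your argument is correct, but note that this paper does not actually prove the statement---it is imported as Theorem~2.2 of the prequel \cite{CP22}---so there is no in-paper proof to compare against line by line. Your overall strategy (show the set of $s$-Tamari trees is closed under $\join$ and $\meet$ of the $s$-weak order) is the standard one, and the join half is handled exactly as one would expect: the union trivially preserves the column-monotonicity $\card(c,a)\le\card(c,b)$, and your path-surgery for the transitive closure---locate the first index where an optimal transitivity path for $(c,a)$ crosses the level $b$, and either truncate there or reroute the single crossing step $p_{j-1}>p_j$ to $p_{j-1}>b$ using the Tamari inequality of $S$ at the triple $p_j<b<p_{j-1}$---is sound, including the boundary case $j=1$ where the first step itself changes and one needs the inequality once more at $(p_1,b,c)$. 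The genuinely nice feature of your write-up is the treatment of the meet: rather than establishing and analyzing an explicit meet formula (which in \cite{CP22} involves a separate closure operator dual to $\tc{(\cdot)}$), you observe that mirroring every node's children is an order-reversing involution $\rho$ of the $s$-weak order with $\card_{\rho T}(c,a)=s(c)-\card_T(c,a)$, so that $T\meet R=\rho(\rho T\join\rho R)$ and the meet case reduces to closure of the ``co-Tamari'' trees under $\join$, proved by the symmetric surgery on the \emph{last} step of an optimal path. This buys you a shorter, formula-free argument for the meet at the modest cost of verifying that $\rho$ is well defined on $s$-decreasing trees and acts on inversion multisets as stated---both of which you do. I see no gap.
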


\begin{theorem}[{\cite[Theorem~2.20]{CP22}}]\label{thm_sTam_quotientLattice}
If $s$ contains no zeros (except at the first position which is irrelevant), then the $s$-Tamari lattice is a quotient lattice of the $s$-weak order.
\end{theorem}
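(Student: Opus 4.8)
The plan is to exhibit an explicit lattice congruence on the $s$-weak order whose classes are in bijection with the $s$-Tamari trees, and then to identify the resulting quotient with the $s$-Tamari lattice. Since the $s$-Tamari poset is a sublattice of the $s$-weak order by Theorem~\ref{thm:stam-sublattice}, for any $s$-decreasing tree $T$ the set of $s$-Tamari trees below $T$ is nonempty (it contains the global minimum, which is $s$-Tamari) and closed under $\join$, hence it has a unique maximal element. I would therefore define the projection
\[
\pidown(T) \;=\; \bigvee\{\,R \wole T : R \text{ is an } s\text{-Tamari tree}\,\},
\]
the largest $s$-Tamari tree $\wole T$, and let $\Theta$ be the equivalence relation with $T \mathbin{\Theta} T'$ if and only if $\pidown(T)=\pidown(T')$. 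By construction $\pidown$ is contractive ($\pidown(T)\wole T$), idempotent, and fixes exactly the $s$-Tamari trees; in particular each $\Theta$-class contains a unique $s$-Tamari tree, namely its minimum.

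Next I would verify that $\Theta$ is a lattice congruence by the standard criterion for finite lattices: every class must be an interval, and both the bottom projection $\pidown$ and the top projection $\piup$ (sending $T$ to the maximum of its class) must be order preserving. Order-preservation of $\pidown$ is immediate from the sublattice property, since $T\wole T'$ makes every $s$-Tamari tree below $T$ also lie below $T'$, so $\pidown(T)\wole\pidown(T')$. The real content is showing that the fibers $\pidown^{-1}(R)$ are intervals and that $\piup$ is order preserving. For this I would descend to the level of cover relations, which are exactly the $s$-tree rotations of Section~1: a lattice congruence of a finite lattice is equivalently specified by its set of contracted cover relations, subject to the \emph{polygon property} that in every rank-two interval the contracted edges on opposite sides match. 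Using Proposition~\ref{prop:tree-ascent-inversions}, I would characterize in terms of tree-inversions and tree-ascents precisely which rotations $T\wocover T''$ are \emph{$s$-Tamari-trivial} (i.e. satisfy $\pidown(T)=\pidown(T'')$), and then check the polygon property across the two shapes of rank-two intervals.

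Once $\Theta$ is a lattice congruence, the identification of the quotient is clean and formal. The class-minima are precisely the $s$-Tamari trees, and because the $s$-Tamari trees are closed under both $\join$ and $\meet$ (Theorem~\ref{thm:stam-sublattice}), the induced quotient operations on classes coincide with the ambient operations on $s$-Tamari trees: $[R]\vee[R']=[R\join R']$ and $[R]\wedge[R']=[R\meet R']$, with $R\join R'$ and $R\meet R'$ already $s$-Tamari. The quotient order on class-minima restricts the $s$-weak order to the $s$-Tamari trees, which is the $s$-Tamari poset. Hence $L/\Theta$ is isomorphic as a lattice to the $s$-Tamari lattice, as claimed.

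The hard part is the polygon property, and this is exactly where the no-zeros hypothesis enters. When every internal node has at least two children, the rotations behave uniformly and the $s$-Tamari-trivial edges form a polygon-closed set; if $s$ has an interior zero, a node with a single child produces degenerate rank-two intervals in which the trivial edges fail to pair up, so $\piup$ stops being order preserving and $\Theta$ is no longer a congruence. I would therefore isolate the local combinatorial lemma that, under the no-zeros assumption, a rotation is $s$-Tamari-trivial precisely when the inversion $\card(c,a)$ it increases is forced by planarity and transitivity to move together with some $\card(c,b)$ that breaks the $s$-Tamari monotonicity $\card(c,a)\le\card(c,b)$, and then derive the polygon property from this description. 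With that lemma in hand, the verification that $\Theta$ is a congruence, and hence the final identification of the quotient described above, becomes routine.
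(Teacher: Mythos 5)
This statement is quoted as background from the prequel~\cite{CP22} (Theorem~2.20 there); the present paper contains no proof of it, so the only possible comparison is with the argument in the prequel. Your overall framework is the standard and reasonable one: define $\pidown(T)$ as the join of all $s$-Tamari trees weakly below $T$ (well defined and itself $s$-Tamari by Theorem~\ref{thm:stam-sublattice}), let $\Theta$ identify trees with the same image, and invoke the finite-lattice criterion that $\Theta$ is a congruence once the fibers are intervals and both $\pidown$ and $\piup$ are order preserving. The formal endgame — identifying the quotient with the $s$-Tamari lattice via the class minima and the sublattice closure under $\join$ and $\meet$ — is also correct as you state it.

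The problem is that everything between the setup and the endgame is missing, and that middle part is the entire mathematical content of the theorem. You never show that a fiber $\pidown^{-1}(R)$ has a unique maximal element (so that $\piup$ is even well defined), nor that it is an interval, nor that $\piup$ is order preserving; you only say you \emph{would} characterize the contracted rotations via Proposition~\ref{prop:tree-ascent-inversions} and \emph{would} check the polygon property on rank-two intervals. The ``local combinatorial lemma'' you defer to — deciding exactly which $s$-tree rotations are collapsed by $\pidown$, in terms of the cardinalities $\card(c,a)$ and the planarity/transitivity constraints of Definition~\ref{def:multi-sets} — is precisely where all the work lies, and it is also the only place where the no-zeros hypothesis can actually be used; asserting that an interior zero ``produces degenerate rank-two intervals in which the trivial edges fail to pair up'' is a plausible heuristic, not an argument. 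As written, the proposal is a correct roadmap with the destination filled in and the road left unbuilt: to count as a proof it needs an explicit description of the contracted cover relations, a verification of the polygon (or, equivalently, the interval-fiber and order-preservation) conditions using the inversion-set calculus, and an explicit indication of the step that fails when some $s(i)=0$ for $i\geq 2$.
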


The cover relations of the $s$-Tamari lattice can be described as certain rotations on $s$-Tamari trees. 

\begin{definition}
\label{def:Tamari-ascent}
Let $T$ be an $s$-Tamari tree of some weak composition $s$. We say that~$(a,c)$ with $a < c$ is a \defn{Tamari-ascent} of $T$ if $a$ is a non-right child of $c$.
It was shown in~\cite[Lemma~2.23]{CP22} that if~$(a,c)$ is a Tamari-ascent of $T$ then by increasing the cardinality of $(c,a)$ by one in $\inv(T)$ and taking the transitive closure, we obtain an $s$-Tamari inversion set. This operation is called an \defn{$s$-Tamari rotation} of $T$.
\end{definition}

\begin{theorem}[{\cite[Theorem~2.25]{CP22}}]\label{thm_sTamari_covering_relations}
The cover relations of the $s$-Tamari lattice are in correspondence with $s$-Tamari rotations.
\end{theorem}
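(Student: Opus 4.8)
The plan is to establish the claimed correspondence through two complementary statements, using that the $s$-Tamari poset is a sublattice of the $s$-weak order (\Cref{thm:stam-sublattice}) and the transitivity and planarity of $s$-tree-inversion sets (\Cref{prop:tree-inversion-set}): (A) every $s$-Tamari rotation of a tree $T$ produces a cover of $T$ in the $s$-Tamari poset, and (B) every cover of $T$ arises from a unique $s$-Tamari rotation. First I would isolate a structural lemma describing exactly which inversions change under a rotation. If $(a,c)$ is a Tamari-ascent, so that $a$ is the root of a non-right subtree $T_i^c$ with $i=\card_T(c,a)<s(c)$, and $T'$ is the rotated tree, then, since $\inv(T)$ is already transitively closed, bumping $\card_T(c,a)$ to $i+1$ and taking the transitive closure (\Cref{def:tc-tree-inversions}) affects exactly the pairs $(c,d)$ with $d=a$ or $d$ a non-left descendant of $a$ (those $d$ with $\card_T(a,d)>0$), raising each from $i$ to $i+1$. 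No pair $(e,x)$ with $e>c$ changes, because any descendant of $c$ already carries the same cardinality with respect to an ancestor $e$ of $c$ as $c$ itself does; I would check this directly, with attention to the case $i=0$, where the bumped edge creates genuinely new transitivity paths.

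For (A), \cite[Lemma~2.23]{CP22} guarantees that $T'$ is an $s$-Tamari tree, and $\inv(T)\subsetneq\inv(T')$ gives $T\woless T'$. To see this is a cover, suppose $R$ is an $s$-Tamari tree with $T\woless R\wole T'$. By the structural lemma, $R$ agrees with $T$ outside the set of pairs $(c,d)$ above, and on those pairs each cardinality is either $\card_T$ or $\card_T+1$. If $\card_R(c,a)=i+1$, transitivity of $\inv(R)$ along $c>a>d$ (using $\card_R(a,d)=\card_T(a,d)>0$) forces $\card_R(c,d)=i+1$ for every such $d$, so $R=T'$; if instead $\card_R(c,a)=i$, then for each $d<a<c$ the $s$-Tamari inequality $\card_R(c,d)\le\card_R(c,a)=i$ forces $\card_R(c,d)=i$, so $R=T$. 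Hence $T$ is covered by $T'$.

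For (B), given a cover $T\wocover T'$ I would locate the responsible Tamari-ascent. Choose $c$ maximal with $\card_T(c,a)<\card_{T'}(c,a)$ for some $a$. A short planarity argument shows such an $a$ must be a descendant of $c$ in $T$: were $a$ to the left of $c$, the fixed structure above $c$ would keep it to the left of $c$ in $T'$ as well, so $(c,a)$ could not increase. Let $v=\card_T(c,a)$ and let $a^*$ be the root of $T_v^c$; the $s$-Tamari property of $T'$ gives $\card_{T'}(c,a^*)\ge\card_{T'}(c,a)>v$, while $\card_T(c,a^*)=v<s(c)$, so $(a^*,c)$ is a Tamari-ascent. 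Its rotation $T''$ changes only the pairs $(c,d)$ described above, and for each such $d$ transitivity of $\inv(T')$ along $c>a^*>d$ yields $\card_{T'}(c,d)\ge\card_{T'}(c,a^*)\ge v+1$; hence $\inv(T'')\subseteq\inv(T')$, that is, $T\woless T''\wole T'$. Since $T\wocover T'$ is a cover and $T''$ is an $s$-Tamari tree, $T''=T'$. The correspondence is then a bijection, since $(c,a)$ is the only pair whose top index is $c$ among the changed inversions of its rotation, so distinct Tamari-ascents give distinct covers.

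I expect the main obstacle to be the structural lemma together with the selection step in (B): pinning down precisely which inversions a Tamari rotation alters (especially the transitive-closure bookkeeping when $v=0$) and then verifying that the Tamari-ascent read off from a maximal $c$ really lands below the prescribed cover. Both steps rest on careful manipulation of the transitive-closure rule (\Cref{def:tc-tree-inversions}) and the planarity condition (\Cref{def:multi-sets}), rather than on any new conceptual ingredient.
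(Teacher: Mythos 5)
This statement is imported from the prequel: the present paper only quotes \Cref{thm_sTamari_covering_relations} as background, citing \cite[Theorem~2.25]{CP22}, and contains no proof of it, so there is no in-paper argument to compare yours against. Judged on its own, your proposal is sound and essentially complete. The structural lemma you isolate --- that a Tamari rotation at $(a,c)$ raises exactly the cardinalities $\card(c,d)$ for $d=a$ or $d$ a non-left descendant of $a$, each by exactly one --- is the genuine technical core, and it is consistent with what the paper quotes from \cite[Lemma~1.29]{CP22} for $s$-weak-order rotations; your reason why no pair $(e,x)$ with $e\neq c$ can change (a new transitivity path through the bumped edge must enter at $c$, and its first-edge value is already dominated by $\card_T(e,c)$, hence by $\card_T(e,x)$ via planarity) is the right argument, including the delicate $v=0$ case where the bumped edge creates genuinely new paths. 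Both halves then go through: in (A) the dichotomy on $\card_R(c,a)$, combined with transitivity of $\inv(R)$ in the upper case and the $s$-Tamari inequality of \Cref{def:s-tam-tree} in the lower case, pins $R$ to $\{T,T'\}$; in (B) taking $c$ maximal among increased top indices, identifying $a^*$ as the root of $T_v^c$, and using the $s$-Tamari inequality of $T'$ to certify $(a^*,c)$ as a Tamari-ascent whose rotation lands weakly below $T'$ is the correct selection step, after which the cover hypothesis forces equality. Two small points to tidy in a full write-up: fix one witness $a$ before defining $v$ and $a^*$ (a priori several values of $v$ could occur for the maximal $c$, though your argument shows a posteriori only one does), and state the final uniqueness claim as: the set of changed inversions determines $c$ (the common top index) and then $a$ (the maximal changed bottom index, namely the subtree root), so distinct Tamari-ascents yield distinct covers.
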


\subsection{The $\nu$-Tamari lattice}
The $\nu$-Tamari lattice is another lattice structure introduced in~\cite{PrevilleRatelleViennot}, which can be defined in terms of a family of combinatorial object called $\nu$-trees~\cite{ceballos_vtamarivtrees_2018}.

Let $\nu$ be a lattice path on the plane consisting of a finite number of north and east unit steps, which are represented by the letters $N$ and $E$.
We denote by $A_\nu$ the set of lattice points weakly above $\nu$ inside the smallest rectangle containing $\nu$. We say that two points $p,q \in A_\nu$ are \defn{$\nu$-incompatible} if and only if $p$ is southwest or northeast to $q$ and the south-east corner of the smallest rectangle containing $p$ and $q$ is weakly above~$\nu$. Otherwise, we say that $p$ and $q$ are \defn{$\nu$-compatible}.

\begin{definition}
A $\nu$-tree is a maximal collection of pairwise $\nu$-compatible elements in $A_\nu$.
\end{definition}

Although $\nu$-trees are just sets of lattice points above $\nu$, they can be regarded as planar binary trees in the classical sense, by connecting consecutive nodes (or lattice points) in the same row or column~\cite{ceballos_vtamarivtrees_2018}. 
The result is a planar binary tree with a root a the top left corner of~$A_\nu$.

Let $\vT$ be a $\nu$-tree and $p,r\in \vT$ be two elements which do not lie in the same row or same column. We denote by $p\square r$ the smallest rectangle containing~$p$ and~$r$, and write~$p\llcorner r$~(resp. $p\urcorner r$) for the lower left corner (resp. upper right corner) of~$p\square r$.  

Let $p,q,r\in \vT$ be such that $q=p\llcorner r$ and no other elements in $\vT$ besides $p,q,r$ lie in~$p\square r$. 
The node~$q$ is called a \defn{$\nu$-ascent} of $T$, 
and the \defn{$\nu$-tree rotation} of $\vT$ at $q$ is defined as the set $\vT'=\bigl(\vT\setminus\{q\})\cup\{q'\}$, where~\mbox{$q'=p\urcorner r$}.  
    As shown in~\cite[Lemma~2.10]{ceballos_vtamarivtrees_2018}, the the rotation of a $\nu$-tree is also a $\nu$-tree. 

\begin{definition}
The \defn{$\nu$-Tamari lattice} is the poset on $\nu$-trees whose cover relations are given by $\nu$-tree rotations.  
\end{definition}

An example of this lattice for $\nu=NEENEEN$ is shown on the right of Figure~\ref{fig:bij-stam-nutrees}.

\subsection{The $s$-Tamari lattice and the $\nu$-Tamari lattice are isomorphic}\label{sec_sTamarivTamari}
Let $s=(s(1),\dots,s(n))$ be a weak composition and~$\nu(s)$ be the lattice path $\nu(s):=NE^{s(1)}\dots NE^{s(n)}$. We denote by $\reverse s$ the reversed sequence $(s(n),\dots,s(2),s(1))$. 
In~\cite{CP22}, we showed that the $s$-Tamari lattice and the~$\nu(\reverse s)$-Tamari lattice are isomorphic. The bijection relating these two lattices is given as follows.

Let $T$ be an $s$-Tamari tree. We use the reverse preorder transversal to label the nodes of $T$ with the numbers $0,1,\dots ,n$, such that a node $x$ has label $i$ if the number of internal nodes traversed striktly before $x$ is equal to $i$. We denote by $\streestovtrees(T)$ the unique $\nu$-tree containing as many nodes at height~$i$ as there are nodes in $T$ with label $i$. Such $\nu$-tree can be uniquely constructed using the right flushing algorithm from~\cite{ceballos_vtamarivtrees_2018}:
denote by $h_i$ the number of nodes that need to be added at height~$i$. We add these nodes from bottom to top, from right to left, avoiding forbidden positions. The forbidden positions are those above a node that is not the left most node in a row. 
See Figure~\ref{fig:bij-stam-nutrees} for an illustration.

\begin{figure}[htbp]
\begin{center}
\begin{tabular}{cc}
\includegraphics[height= 8.5cm]{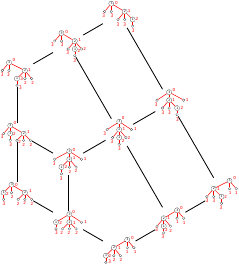}
&
\includegraphics[height= 8.5cm]{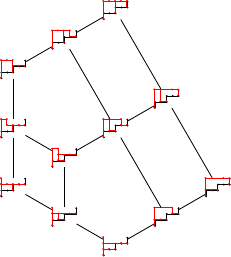}
\end{tabular}
\caption{Bijection between the $s$-Tamari lattice and the~\mbox{$\nu(\protect\reverse{s})$-Tamari} lattice for $s=(0,2,2)$. (Figure 16 of~\cite{CP22})}
\label{fig:bij-stam-nutrees}
\end{center}
\end{figure}

\begin{proposition}[{\cite[Corollary 2.34]{CP22}}]\label{prop_sTam_vTam_trees}
The map $\streestovtrees$ is an isomorphism between the $s$-Tamari lattice and the~$\nu(\reverse s)$-Tamari lattice.
\end{proposition}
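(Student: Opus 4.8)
The plan is to prove that $\streestovtrees$ is an order isomorphism by matching the two Hasse diagrams rotation by rotation. Since both the $s$-Tamari and the $\nu(\reverse s)$-Tamari posets are finite lattices whose order is the transitive closure of their cover relations, an order isomorphism is automatically a lattice isomorphism; so it suffices to produce a bijection of the underlying sets that sends cover relations to cover relations in both directions.

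First I would verify that $\streestovtrees$ is a well-defined bijection. Well-definedness relies on the uniqueness of the output of the right flushing algorithm from \cite{ceballos_vtamarivtrees_2018}: given the height multiplicities $(h_0,\dots,h_n)$ recorded from the reverse preorder labeling of an $s$-Tamari tree $T$, there is exactly one $\nu(\reverse s)$-tree realizing them. Here one must check that these multiplicities are admissible, i.e.\ that the flushing never runs out of legal (non-forbidden) positions; this follows from the Tamari condition $\card_T(c,a) \le \card_T(c,b)$, which forces the label distribution of $T$ to be compatible with the staircase shape of $\nu(\reverse s)$. For bijectivity I would exhibit the inverse explicitly: from a $\nu(\reverse s)$-tree one reads off the number of nodes at each height, and the Tamari condition guarantees that these numbers reconstruct a unique $s$-Tamari tree through the reverse preorder structure. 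Alternatively, since both families are finite, one can match cardinalities after the cover-relation analysis below and conclude bijectivity from injectivity.

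Second, and this is the crux, I would show that the Tamari-ascents of $T$ are in bijection with the $\nu$-ascents of $\streestovtrees(T)$, and that the corresponding rotations commute with $\streestovtrees$. Recall (Definition~\ref{def:Tamari-ascent}) that a Tamari-ascent is a non-right child $a$ of $c$, with rotation increasing $\card(c,a)$ by one followed by transitive closure; on the other side a $\nu$-ascent is a lower-left corner $q=p\llcorner r$ of a rectangle $p\square r$ containing no other tree nodes, and its rotation replaces $q$ by $p\urcorner r$. The key lemma to establish is that the reverse preorder labels of the two nodes involved in a Tamari rotation determine the row and column of the corresponding corner $q$, and that performing the $s$-Tamari rotation alters the height multiplicities $(h_i)$ in exactly the way that moving $q$ to $p\urcorner r$ does; consequently $\streestovtrees(T')$ is precisely the $\nu$-tree rotation of $\streestovtrees(T)$ at $q$. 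I would prove this by a local analysis: a Tamari rotation reshuffles only one subtree, and hence shifts a single contiguous block of labels by one in the reverse preorder traversal, which under right flushing corresponds to moving a single flushed node up one row and across to its forced column.

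The main obstacle is this last compatibility step, because a Tamari rotation, although local in the tree, can \emph{relabel} many nodes (the reverse preorder positions shift for every node traversed after the rotated subtree), and one must argue that the net effect on the right flushing is nonetheless a single corner move. I expect the cleanest route is to track the inversion data directly: use Theorem~\ref{thm_sTamari_covering_relations} together with the explicit description of which cardinalities increase under an $s$-Tamari rotation to pin down exactly the affected height, and then match this against the unique coordinate change of a $\nu$-tree rotation coming from the definition of the $\nu$-Tamari lattice. Once the cover relations are matched in both directions, $\streestovtrees$ is an isomorphism of Hasse diagrams, and combined with Theorem~\ref{thm:stam-sublattice} this yields the desired lattice isomorphism.
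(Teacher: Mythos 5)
First, a caveat about the comparison you are being measured against: this paper does not prove Proposition~\ref{prop_sTam_vTam_trees} at all. The statement is imported verbatim from the prequel (\cite[Corollary~2.34]{CP22}) as background for Part~\ref{part_two}, so there is no in-document proof to compare with; the actual argument lives in \cite{CP22}, together with the right-flushing machinery of \cite{ceballos_vtamarivtrees_2018}. That said, your overall strategy --- establish that $\streestovtrees$ is a bijection, show that it sends Tamari-ascents to $\nu$-ascents and intertwines the two rotations, and conclude that a bijection of finite lattices preserving and reflecting cover relations is a lattice isomorphism --- is the standard route and is essentially the one followed in the prequel.

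As a proof, however, the proposal has two genuine gaps, both of which you flag but neither of which you close. First, injectivity: the entire content of well-definedness of the inverse is the claim that an $s$-Tamari tree is uniquely determined by its vector of reverse-preorder label multiplicities $(h_0,\dots,h_n)$. This is exactly where the Tamari condition $\card_T(c,a)\le\card_T(c,b)$ must do real work (the claim is false for general $s$-decreasing trees), and you assert it rather than prove it. Your fallback --- ``match cardinalities \dots\ and conclude bijectivity from injectivity'' --- does not rescue this, because injectivity \emph{is} the reconstruction claim, and the equality of the two cardinalities is not known a priori; what is actually needed is an explicit left-flushing inverse as in \cite{ceballos_vtamarivtrees_2018}. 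Second, rotation compatibility: you correctly identify this as the crux and correctly diagnose the danger (a single $s$-Tamari rotation shifts the reverse-preorder labels of every node traversed after the rotated subtree), but your resolution is only a plan. The argument must show that the multiset $(h_i)$ changes by moving exactly one unit between two specific heights, and that right flushing converts this change into precisely the corner move $q\mapsto p\urcorner r$ of a single $\nu$-tree rotation; pinning down which labels change, using the description of cover relations in Theorem~\ref{thm_sTamari_covering_relations}, is a nontrivial computation rather than a formality. In short, the proposal is a correct road map with its two load-bearing lemmas left unproven.
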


\section{The $s$-associahedron}
In this section we extend the $s$-Tamari lattice to a full polyhedral complex that we call the $s$-associahedron, and show that it coincides with the $\nu$-associahedron introduced by Ceballos, Padrol, and Sarmiento in~\cite{CeballosPadrolSarmiento-geometryNuTamari}. 

\subsection{The $s$-associahedron}

Let $T$ be an $s$-Tamari tree and $A$ be a (possibly empty) subset of Tamari ascents of $T$.  
Similarly as in the case of the $s$-weak order, 
we denote by $T+A$ the $s$-Tamari tree whose inversion set is obtained by increasing all cardinality $\card_T(c,a)$ where $(a,c) \in A$ by one  and then taking the transitive closure. 

\begin{definition}
\label{def_pure_sTamari_intervals}
Let $T_1 \wole T_2$ be two $s$-Tamari trees for a given weak composition $s$. We say that the interval $[T_1, T_2]$ is a \defn{pure $s$-Tamari interval} if $T_2 = T_1 + A$ with $A$ a subset of Tamari-ascents of $T_1$. 
\end{definition}

Examples of pure $s$-Tamari intervals are shown in Figures~\ref{fig_pure_sTamari_intervals_one_two} and~\ref{fig_pure_sTamari_intervals_three}.

\begin{figure}[ht]
\includegraphics[width=0.4\textwidth]{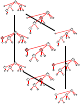}
\includegraphics[width=0.55\textwidth]{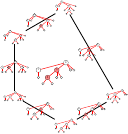}
\caption{Examples of 2-dimensional pure $s$-Tamari intervals.}
\label{fig_pure_sTamari_intervals_one_two}
\end{figure}

\begin{figure}[h!]
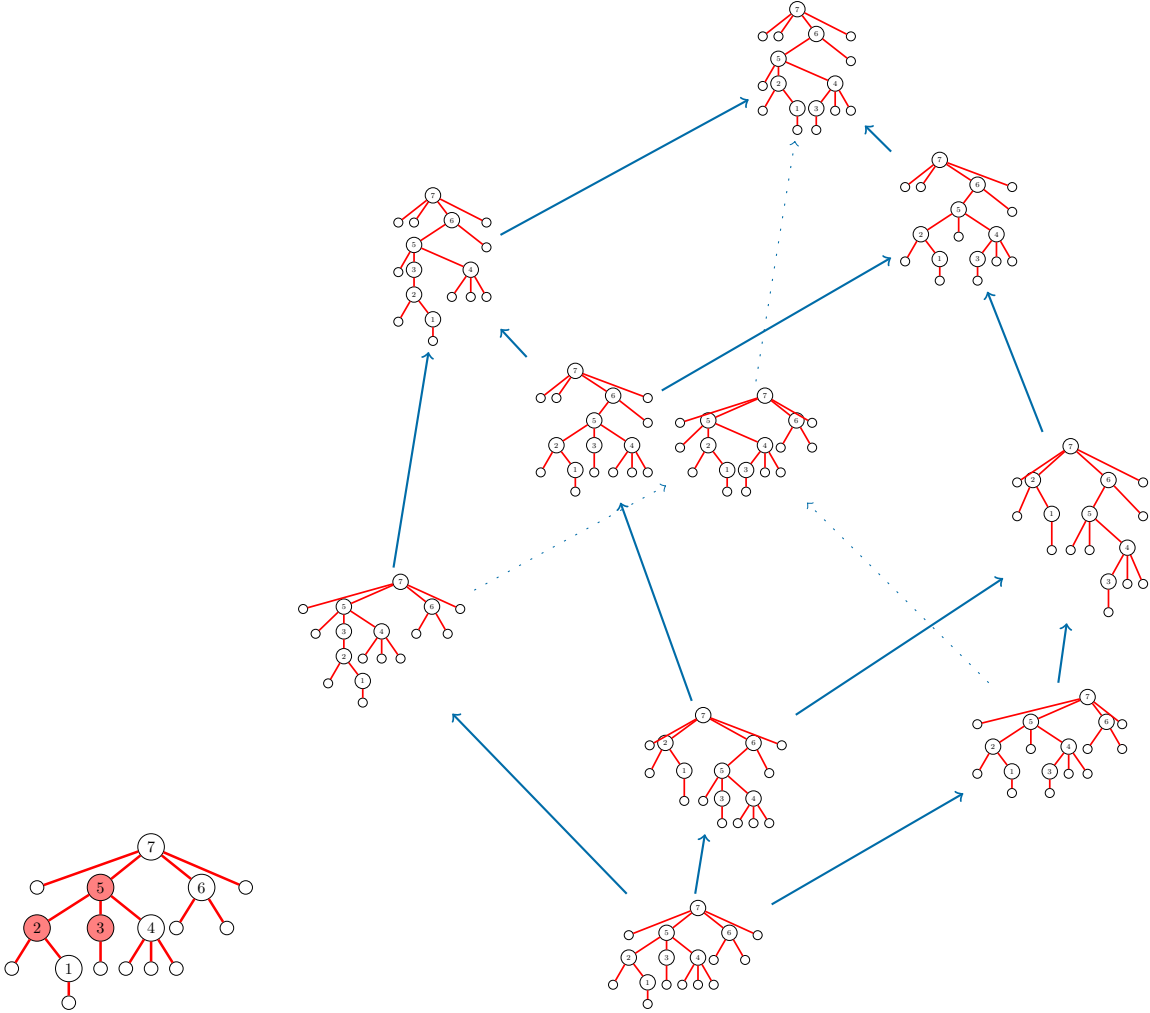

\begin{tabular}{cc}
\scalebox{.6}{{ \newcommand{\nodea}{\node[draw,circle] (a) {$7$}
;}\newcommand{\nodeb}{\node[draw,circle] (b) {$$}
;}\newcommand{\nodec}{\node[draw,circle, fill=red!50] (c) {$5$}
;}\newcommand{\noded}{\node[draw,circle, fill=red!50] (d) {$2$}
;}\newcommand{\nodee}{\node[draw,circle] (e) {$$}
;}\newcommand{\nodef}{\node[draw,circle] (f) {$1$}
;}\newcommand{\nodeg}{\node[draw,circle] (g) {$$}
;}\newcommand{\nodeh}{\node[draw,circle, fill=red!50] (h) {$3$}
;}\newcommand{\nodei}{\node[draw,circle] (i) {$$}
;}\newcommand{\nodej}{\node[draw,circle] (j) {$4$}
;}\newcommand{\nodeba}{\node[draw,circle] (ba) {$$}
;}\newcommand{\nodebb}{\node[draw,circle] (bb) {$$}
;}\newcommand{\nodebc}{\node[draw,circle] (bc) {$$}
;}\newcommand{\nodebd}{\node[draw,circle] (bd) {$6$}
;}\newcommand{\nodebe}{\node[draw,circle] (be) {$$}
;}\newcommand{\nodebf}{\node[draw,circle] (bf) {$$}
;}\newcommand{\nodebg}{\node[draw,circle] (bg) {$$}
;}\begin{tikzpicture}[every node/.style={inner sep = 3pt}]
\matrix[column sep=.1cm, row sep=.3cm,ampersand replacement=\&]{
         \&         \&         \&         \&         \& \nodea  \&         \&         \&         \&         \\ 
         \&  \nodeb \&         \& \nodec  \&         \&         \&         \& \nodebd \&         \& \nodebg \\ 
         \& \noded  \&         \& \nodeh  \&         \& \nodej  \& \nodebe \&         \& \nodebf \&         \\ 
 \nodee  \&         \& \nodef  \& \nodei  \& \nodeba \& \nodebb \& \nodebc \&         \&         \&         \\ 
         \&         \& \nodeg  \&         \&         \&         \&         \&        \&         \&         \\
};

\path[ultra thick, red] (f) edge (g)
	(d) edge (e) edge (f)
	(h) edge (i)
	(j) edge (ba) edge (bb) edge (bc)
	(c) edge (d) edge (h) edge (j)
	(bd) edge (be) edge (bf)
	(a) edge (b) edge (c) edge (bd) edge (bg);
\end{tikzpicture}}} &
\begin{tikzpicture}%
	[x={(-5cm, 5cm)},
	y={(1cm, 6cm)},
	z={(12cm, 7cm)},
	scale=0.4,
	back/.style={loosely dotted, thin},
	edge/.style={color=blue!95!black, thick},
	facet/.style={fill=red!95!black,fill opacity=0.800000},
	vertex/.style={inner sep=1pt,anchor=base},
	vertexback/.style={inner sep=1pt,anchor=base}]
\scriptsize

\coordinate (frontpentagon1) at (0,0,0);
\coordinate (frontpentagon2) at (0,1,0);
\coordinate (frontpentagon3) at (1,2,0);
\coordinate (frontpentagon4) at (2,2,0);
\coordinate (frontpentagon5) at (2,0,0);

\coordinate (backpentagon1) at (0,0,1);
\coordinate (backpentagon2) at (0,1,1);
\coordinate (backpentagon3) at (1,2,1);
\coordinate (backpentagon4) at (2,2,1);
\coordinate (backpentagon5) at (2,0,1);

\node[vertex] (nfront1) at (frontpentagon1) {
\scalebox{.4}{\input{figures/dtrees/prism_n1}}
};
\node[vertex] (nfront2) at (frontpentagon2) {
\scalebox{.4}{\input{figures/dtrees/prism_n2}}
};
\node[vertex] (nfront3) at (frontpentagon3) {
\scalebox{.4}{\input{figures/dtrees/prism_n3}}
};
\node[vertex] (nfront4) at (frontpentagon4) {
\scalebox{.4}{\input{figures/dtrees/prism_n4}}
};
\node[vertex] (nfront5) at (frontpentagon5) {
\scalebox{.4}{\input{figures/dtrees/prism_n5}}
};

\node[vertex] (nback1) at (backpentagon1) {
\scalebox{.4}{\input{figures/dtrees/prism_b1}}
};
\node[vertex] (nback2) at (backpentagon2) {
\scalebox{.4}{\input{figures/dtrees/prism_b2}}
};
\node[vertex] (nback3) at (backpentagon3) {
\scalebox{.4}{\input{figures/dtrees/prism_b3}}
};
\node[vertex] (nback4) at (backpentagon4) {
\scalebox{.4}{\input{figures/dtrees/prism_b4}}
};
\node[vertexback] (nback5) at (backpentagon5) {
\scalebox{.4}{\input{figures/dtrees/prism_b5}}
};

\def\listEdgesFront{nfront1/nfront2, nfront2/nfront3, nfront3/nfront4, nfront1/nfront5, nfront5/nfront4, nback1/nback2, nback2/nback3, nback3/nback4, nfront1/nback1, nfront2/nback2, nfront3/nback3, nfront4/nback4}
\foreach \x/\y in \listEdgesFront{
    \draw[edge,->] (\x) -- (\y);
}

\def\listEdgesBack{nback1/nback5, nback5/nback4, nfront5/nback5};
\foreach \x/\y in \listEdgesBack{
    \draw[edge,back,->] (\x) -- (\y);
}

\end{tikzpicture}
\end{tabular}
\caption{Example of a 3-dimensional pure $s$-Tamari interval.}
\label{fig_pure_sTamari_intervals_three}
\end{figure}

The following lemma is the analog of Lemma~\ref{lem_maximal_equal_join} for the $s$-Tamari lattice.

\begin{lemma}\label{lem_maximal_equal_join_stamari}
Let $T_1$ and $T_2$ be two $s$-Tamari trees such that 
$T_2 = T_1+A$ with $A$ a subset of Tamari-ascents of $T_1$. Then $T_2$ can be obtained as the join
$
T_2 = \bigvee_{a\in A} (T_1+a)
$ 
over all $s$-Tamari trees $T_1+a$ obtained from $T_1$ by rotating a Tamari-ascent~$a\in A$.  
\end{lemma}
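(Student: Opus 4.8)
The plan is to reduce this statement to Lemma~\ref{lem_maximal_equal_join} by exploiting that the $s$-Tamari lattice sits inside the $s$-weak order as a sublattice. First I would observe that, for a subset $A$ of Tamari-ascents, the tree $T_1 + A$ is defined by exactly the same multi-set manipulation as in the $s$-weak order: one increases $\card_{T_1}(c,a)$ by one for each $(a,c) \in A$ and then takes the transitive closure. Likewise, each single Tamari rotation $T_1 + a$ produces an $s$-Tamari tree whose inversion set is $\tc{M_{(a,c)}}$, where $M_{(a,c)}$ is the multi-set obtained from $\inv(T_1)$ by raising the single cardinality $\card(c,a)$ by one (Definition~\ref{def:Tamari-ascent}).

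The key reduction step is Theorem~\ref{thm:stam-sublattice}: since the $s$-Tamari poset is a sublattice of the $s$-weak order, the join $\bigvee_{a \in A}(T_1 + a)$ computed among $s$-Tamari trees coincides with the join of the same trees computed in the $s$-weak order. In particular, the lattice-join formula $\inv(T \join R) = \tc{(\inv(T) \cup \inv(R))}$ remains valid for $s$-Tamari trees, and all of the $s$-decreasing trees appearing in the argument ($T_1$, the $T_1 + a$, $T_1 + A$, and their join) are genuine $s$-Tamari trees.

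With this in hand, the argument of Lemma~\ref{lem_maximal_equal_join} applies essentially verbatim. Writing $M_A := \cup_{(a,c) \in A} M_{(a,c)}$, the inclusions $M_{(a,c)} \subseteq M_A$ give $\tc{M_{(a,c)}} \subseteq \tc{M_A}$, hence $T_1 + a \wole T_1 + A$ for every $a \in A$, and taking the join yields $\bigvee_{a \in A}(T_1 + a) \wole T_1 + A$. For the reverse inequality, set $M_2 := \cup_{(a,c) \in A} \tc{M_{(a,c)}}$, so that the join has inversion set $\tc{M_2}$; comparing cardinalities entrywise shows $M_A \subseteq M_2$, and applying the transitive closure gives $T_1 + A \wole \bigvee_{a \in A}(T_1 + a)$. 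Combining the two inequalities yields the claimed equality $T_2 = \bigvee_{a \in A}(T_1 + a)$.

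The only point that genuinely needs checking—and the main, rather mild, obstacle—is that the join of the rotated $s$-Tamari trees does not leave the $s$-Tamari lattice, so that the multi-set computations take place in the correct poset; this is precisely what the sublattice property of Theorem~\ref{thm:stam-sublattice} guarantees. Beyond that, no step of the original proof used any feature distinguishing tree-ascents from Tamari-ascents, so the combinatorial core transfers without modification.
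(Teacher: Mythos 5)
Your proposal is correct and follows essentially the same route as the paper: the authors likewise reduce to Lemma~\ref{lem_maximal_equal_join}, observing that the rotation operation (increase a cardinality, take the transitive closure) is identical for Tamari-ascents and that the sublattice property of Theorem~\ref{thm:stam-sublattice} ensures the join is computed by the same union-and-transitive-closure formula. Your write-up merely spells out the two multi-set inclusions in more detail than the paper's brief proof does.
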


\begin{proof}
The proof is similar to the one of Lemma~\ref{lem_maximal_equal_join}. Indeed even though the definition of tree-ascents differs from the definition of Tamari-ascents, the \emph{rotation} is the same: increase the cardinality of the inversion and take the transitive closure. Besides, as $s$-Tamari is a sublattice of the $s$-weak order, the definition of the join is the same as in the $s$-weak order (by union and transitive closure of tree-inversions). Then all the arguments for proving Lemma~\ref{lem_maximal_equal_join} still work.
\end{proof}

%

\begin{definition}[The $s$-associahedron]\label{def_sAssociahedron}
The \defn{$s$-associahedron} $\Asso{s}$ is the collection of pure $s$-Tamari intervals $[T,T+A]$.
Here, $T$ denotes an $s$-Tamari tree and $A$ a subset of Tamari-ascents of $T$. 
The \defn{dimension} of $[T,T+A]$ is said to be equal to $|A|$. In particular, 
\begin{enumerate}
\item the vertices of $\Asso{s}$ are $s$-Tamari trees $T$, and
\item two $s$-Tamari trees are connected by an edge if and only if they are related by an $s$-Tamari rotation. 
\end{enumerate} 
We refer to pure $s$-Tamari intervals $[T,T+A]$ as \defn{faces} of $\Asso{s}$, and say that one face is contained in another if the containment holds as intervals in the $s$-Tamari lattice. 
\end{definition}

Figure~\ref{fig_associahedron_s022} illustrates an example of the s-associahedron $\Asso{0, 2, 2}$. As we can see, it is a polytopal complex whose faces are labeled by pure $s$-Tamari intervals, and whose edge graph is the Hasse diagram of the s-Tamari lattice. Its $f$-polynomial is
\begin{equation*}
12 + 16 t + 5 t^2.
\end{equation*}

Indeed, there are $12$ $s$-Tamari trees (faces of dimension $0$), $16$ edges (faces of dimension $1$) and $5$ pure $s$-Tamari intervals of dimension $2$, which correspond to the $5$ polygons. 

The following proposition is straightforward from the definition.

\begin{proposition}\label{prop:f-poly_stamari}
The $f$-polynomial of the $s$-associahedron $\Asso{s}$ is given by
\[
\sum_{T} (1+t)^{\tasc(T)},
\]
where the sum ranges over all $s$-Tamari trees $T$ and $\tasc(T)$ denotes the number of $s$-Tamari ascents of~$T$. 
\end{proposition}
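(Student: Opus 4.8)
The plan is to mirror the proof of Proposition~\ref{prop:f-poly}. Writing
\[
f_s(t)=\sum_{T}(1+t)^{\tasc(T)}=c_0+c_1t+c_2t^2+\cdots,
\]
where the sum runs over all $s$-Tamari trees $T$, I would show that $c_k$ equals the number of $k$-dimensional faces of $\Asso{s}$. The binomial theorem gives $(1+t)^{\tasc(T)}=\sum_{A}t^{|A|}$, where $A$ ranges over all subsets of the set of Tamari-ascents of $T$, so every such subset $A$ of size $k$ contributes exactly one $t^k$. By Definition~\ref{def_sAssociahedron}, each pair $(T,A)$ produces the $k$-dimensional face $[T,T+A]$, so the whole statement reduces to checking that the assignment $(T,A)\mapsto[T,T+A]$ is a bijection between such pairs and the faces of $\Asso{s}$, after which grouping faces by minimal element yields $c_k=\sum_{T}\binom{\tasc(T)}{k}$ and hence $f_s(t)=\sum_{T}(1+t)^{\tasc(T)}$.

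Surjectivity is immediate, since by definition every face is a pure $s$-Tamari interval of the form $[T,T+A]$. For injectivity, I would first observe that the minimal element of the interval recovers $T$ uniquely, so two pairs giving the same face must share the same tree $T$; it then remains to prove that the map $A\mapsto T+A$ is injective on subsets of Tamari-ascents of a fixed $T$. Equivalently, both the integer $|A|$ (so that the \emph{dimension} is well defined) and the set $A$ itself must be determined intrinsically by the interval $[T,T+A]$.

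The main obstacle is precisely this injectivity, which is the $s$-Tamari analog of \Cref{thm_min_essential_variations_ascents}. I expect it can be handled by the same strategy used for the $s$-permutahedron: recover $A$ from $[T,T+A]$ as a distinguished family of (minimal essential) variations, using \Cref{lem_maximal_equal_join_stamari} to write $T+A=\bigvee_{a\in A}(T+a)$ and thereby control which cardinalities are genuinely forced to increase rather than being raised only by transitive closure. Alternatively, one can transport the question through the tree bijection $\streestovtrees$ of \Cref{prop_sTam_vTam_trees}, under which Tamari-ascents correspond to $\nu$-ascents, and read off $A$ from the purely combinatorial data of $\nu$-trees; this reduces the injectivity to the analogous (and more transparent) statement for $\nu$-tree rotations. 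Once injectivity is established, the binomial identity $\sum_{A\subseteq\operatorname{TAsc}(T)}t^{|A|}=(1+t)^{\tasc(T)}$ finishes the argument.
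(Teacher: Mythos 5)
Your proposal is correct and follows essentially the same route as the paper, whose proof simply expands $(1+t)^{\tasc(T)}$ and observes that each $k$-subset of Tamari-ascents of $T$ contributes a $t^k$. The injectivity of $(T,A)\mapsto[T,T+A]$ that you flag as the main obstacle is treated as immediate in the paper (the face data of $\Asso{s}$ is effectively the pair $(T,A)$ by Definition~\ref{def_sAssociahedron}), and is in any case settled by the $\nu$-tree correspondence of Lemma~\ref{lem_vfaces1} and Corollary~\ref{cor_vfaces2}, exactly along the lines of your second suggested resolution via $\streestovtrees$.
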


\begin{proof}
The proof is exactly the same as the proof of Proposition~\ref{prop:f-poly}.
Let 
\[
\sum_{T} (1+t)^{\tasc(T)}
= f_0+f_1t+f_2t^2+\dots.
\]
where the sum runs over all $s$-Tamari trees $T$. We need to show that $f_k$ counts the number of $k$-dimensional faces of $\Perm{s}$. 
This follows from the fact that every subset of ascents $A$ of $T$, of size $k$, contributes a $t^k$ to the term $(1+t)^{\tasc(T)}$. 
\end{proof}

The constant term $f_0$ of this polynomial is the number of elements of the $s$-weak order, which is the \defn{$s$-Catalan number}. Moreover, 
the coefficients of $t^k$ in the polynomial $\sum_{T} t^{\tasc(T)}$ may be regarded as $s$-generalizations of the Narayana numbers. The \defn{$s$-Narayana number} $\operatorname{Nar}_s(k)$ counts the number of $s$-Tamari trees with exactly $k$ Tamari ascents. They have been already considered in~\cite[Section~4]{CeballosPadrolSarmiento-geometryNuTamari} and~\cite{ceballos_signature_2018} in this general set up, and in~\cite{ArmstrongRhoadesWilliams2013} for the special case of signatures arising in rational Catalan combinatorics. 

\subsection{The $\nu$-associahedron}\label{sec_nu_associahedron}
In~\cite{CeballosPadrolSarmiento-geometryNuTamari}, Ceballos, Padrol and Sarmiento proved that the Hasse diagram of the $\nu$-Tamari lattice can be realized as the edge graph of a polyhedral complex induced by an arrangement of tropical hyperplanes. They named this polyhedral complex the $\nu$-associahedron and gave a complete characterization of its faces in terms of certain combinatorial objects called ``covering~$(I,\overline J)$-forrests", see~\cite[Definition~5.1 and Theorem~5.2]{CeballosPadrolSarmiento-geometryNuTamari}. We present here an equivalent description of their $\nu$-associahedron phrased in terms of $\nu$-trees, which is more convenient for our purposes. 

Let $\nu$ be a lattice path consisting of north an east unit steps. Recall that a $\nu$-tree is a maximal $\nu$-compatible collection of elements in $A_\nu$ (lattice points weakly above $\nu$), and that the $\nu$-Tamari lattice is the lattice of $\nu$-trees whose covering relation is given by $\nu$-Tamari rotations. This lattice was extended to a full simplicial complex in~\cite{CeballosPadrolSarmiento-geometryNuTamari} called the \defn{$\nu$-Tamari complex}. Its faces, which we call \defn{$\nu$-faces}, are $\nu$-compatible collections of elements in  $A_\nu$ ordered by inclusion~\cite{ceballos_vtamarivtrees_2018}. A $\nu$-face $\vF$ is said to be \defn{covering} if it contains the root (top left corner in $A_\nu$) and at least one point in each row and each column. The covering $\nu$-faces are exactly the interior faces of the $\nu$-Tamari complex, see~\cite[Lemma~4.3]{CeballosPadrolSarmiento-geometryNuTamari}.   

\begin{definition}
The \defn{$\nu$-associahedron} $\Asso{\nu}$ is the polyhedral complex of interior faces of the~\mbox{$\nu$-Tamari} complex, ordered by reversed inclusion. 
Equivalently, $\Asso{\nu}$ is the polyhedral complex of covering~\mbox{$\nu$-faces}, ordered by reversed inclusion.  
The \defn{dimension of a covering $\nu$-face} $\vF$ is $\ell(\nu)+1-|\vF|$, where $\ell(\nu)$ is the length of $\nu$.  
In particular,
 \begin{enumerate}
\item the vertices of $\Asso{\nu}$ are $\nu$-trees, and
\item two $\nu$-trees are connected by an edge if and only if they are related by a $\nu$-Tamari rotation. 
\end{enumerate} 
\end{definition}

An example of the $\nu$-associahedron for $\nu=NEENEEN$ is shown in Figure~\ref{fig_vassociahedron_NEENEEN}. 
Note that in this case, it is isomorphic to the $s$-associahedron for $s=(0,2,2)$ illustrated in Figure~\ref{fig_associahedron_s022}.

\begin{figure}[htb]
\includegraphics[width=0.8\textwidth]{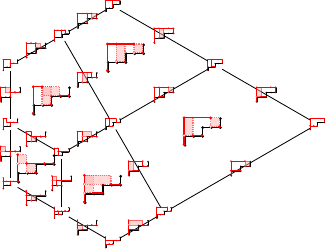}
\caption{The $\nu$-associahedron for $\nu=NEENEEN$.}
\label{fig_vassociahedron_NEENEEN}
\end{figure}

Before matching the definitions of $s$-associahedra and $\nu$-associahedra we need the following results.

\begin{lemma}
\label{lem_vfaces1}
Covering $\nu$-faces $\vF$ are in bijection with pairs $(\vT,\vA)$ such that  $\vT$ is a $\nu$-tree and~$\vA$ is a (possibly empty) subset of $\nu$-ascents of $\vT$. The bijection is determined by $\vF=\vT\smallsetminus \vA$ and~$\dim(\vF)=|\vA|$.
\end{lemma}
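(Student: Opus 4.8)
The plan is to establish the bijection by exhibiting the two maps explicitly and checking they are mutually inverse. Given a covering $\nu$-face $\vF$, I would recover the pair $(\vT,\vA)$ as follows. Since $\vF$ is a face of the $\nu$-Tamari complex, it is a $\nu$-compatible collection; I first need to produce from it a canonical $\nu$-tree $\vT$ containing $\vF$. Because $\vF$ is covering (it contains the root and at least one node in each row and column), I would argue that the \emph{upper} $\nu$-tree $\vT$ completing $\vF$—obtained by right-flushing additional compatible nodes into the face until it becomes maximal, using the right-flushing procedure from~\cite{ceballos_vtamarivtrees_2018}—is uniquely determined. The set $\vA$ is then defined as $\vT\smallsetminus\vF$, and one must verify that each element of $\vA$ is in fact a $\nu$-ascent of $\vT$.

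The key steps, in order, are: (1) verify that every covering $\nu$-face extends to a canonical $\nu$-tree $\vT$ with $\vF\subseteq\vT$, and that $\vA:=\vT\smallsetminus\vF$ consists entirely of $\nu$-ascents; (2) conversely, given a $\nu$-tree $\vT$ and a subset $\vA$ of its $\nu$-ascents, verify that $\vF:=\vT\smallsetminus\vA$ is again $\nu$-compatible (hence a face) and that it is covering; (3) check that the two assignments are mutually inverse; and (4) record the dimension formula. For step (2), the crucial point is that removing a $\nu$-ascent $q=p\llcorner r$ from $\vT$ leaves a compatible collection, and that $\vF$ still contains the root and meets every row and column—the latter because a $\nu$-ascent $q$ is a lower-left corner sandwiched strictly inside a rectangle $p\square r$, so its row and column are also occupied by $p$ and $r$, which remain in $\vF$. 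The dimension formula $\dim(\vF)=\ell(\nu)+1-|\vF|$ together with $|\vF|=|\vT|-|\vA|$ and the fact that every $\nu$-tree has exactly $\ell(\nu)+1$ nodes yields $\dim(\vF)=|\vA|$ immediately.

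I expect the main obstacle to be step (1): showing that the $\nu$-tree extending a covering face is \emph{unique} and that the added nodes are precisely $\nu$-ascents. The subtlety is that a compatible collection can a priori be completed to a maximal one in several ways; I would resolve this by appealing to the right-flushing characterization, which produces a canonical maximal completion, and then arguing that for a covering face the flushed-in nodes have no freedom because every row and column is already occupied. Concretely, each node added to complete $\vF$ must be a lower-left corner of a rectangle whose other corners survive in $\vF$, which is exactly the $\nu$-ascent condition; one must check no other completion avoids forbidden positions. The remaining verifications—compatibility preservation and the inverse property—should follow from the local rectangle characterization of $\nu$-compatibility and $\nu$-ascents, and are essentially bookkeeping once the canonical completion in step (1) is pinned down.
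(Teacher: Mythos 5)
Your overall architecture (two explicit maps, mutual inverses, dimension count) is sound, and your treatment of the reverse direction is fine and even slightly more elementary than the paper's: removing a subset of $\nu$-ascents from a $\nu$-tree trivially leaves a compatible collection, the root is never an ascent (nothing lies above it to serve as the corner $p$), and since an ascent $q=p\llcorner r$ shares its column with one of $p,r$ and its row with the other, the covering property survives; the dimension formula then follows exactly as you say. The genuine gap is at what you yourself identify as the crux, step (1), and the argument you sketch for it rests on a false claim. You assert that ``for a covering face the flushed-in nodes have no freedom because every row and column is already occupied'' and that ``each node added to complete $\vF$ must be a lower-left corner of a rectangle whose other corners survive in $\vF$.'' Neither is true: a covering face $\vF=\vT\smallsetminus\vA$ with $\vA\neq\varnothing$ is contained in \emph{many} $\nu$-trees --- by Proposition~\ref{prop_pure_nuTamari_intervals} the set of such trees is the whole interval $[\vT,\vT+\vA]$ --- and already for a single ascent $a$ the tree $\vT+a=(\vT\smallsetminus\{a\})\cup\{p\urcorner r\}$ is a completion of $\vF$ whose added node is an upper-right corner (a $\nu$-descent), not a lower-left corner. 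So completions are far from forced, and what must actually be proven is that there is \emph{exactly one} completion all of whose added nodes are ascents; that existence-and-uniqueness statement is the entire content of the lemma, and your sketch assumes it rather than proves it.

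Moreover, the tool you invoke, the right-flushing algorithm of~\cite{ceballos_vtamarivtrees_2018}, takes as input the number of nodes per height and builds a tree from scratch; it is not a procedure for completing a given partial compatible collection. Adapting it greedily (pushing new nodes as far right as possible while keeping $\vF$) would itself require proving that the output is a $\nu$-tree, that its added nodes are ascents of it, and that no \emph{other} completion has the all-ascents property --- none of which is ``bookkeeping.'' The paper sidesteps all of this with a structural argument: in the subword-complex model of the $\nu$-Tamari lattice, the $\nu$-trees containing $\vF$ are the facets of a link subword complex, and the increasing flip graph of any subword complex has a unique source; that source is the desired $\vT$, and interiority of $\vF$ guarantees that every element of $\vT\smallsetminus\vF$ is flippable, hence an ascent. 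If you want an elementary route avoiding subword complexes, you need an honest replacement for this uniqueness-of-source argument; as written, your step (1) does not close.
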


\begin{proof}
If $\vT$ is a $\nu$-tree and $a$ is a $\nu$-ascent of $T$, then the $\nu$-face $T\smallsetminus \{a\}$ is contained in two facets ($T$ and the rotation of $T$ at $a$). Therefore, $T\smallsetminus \{a\}$ is an interior face of the $\nu$-Tamari complex. Similarly, if~$A$ is a subset of $\nu$-ascents of $T$, the $\nu$-face $T\smallsetminus A$ is also an interior face. Thus, $\vF=T\smallsetminus A$ is a covering $\nu$-face.
By definition, its dimension is  equal to $\dim(\vF)=\ell(\nu)+1-|\vT\smallsetminus \vA|=\ell(\nu)+1-|\vT|+|\vA|=|\vA|$.

It remains to show that every covering $\nu$-face $\vF$ can be written uniquely as $\vF=\vT\smallsetminus \vA$ for some $\nu$-tree~$\vT$ and a subset $\vA$ of $\nu$-ascents of $\vT$. We prove this using the connection between $\nu$-Tamari lattices and subword complexes presented in~\cite{ceballos_vtamarivtrees_2018}.

Let $\vF$ be a covering $\nu$-face. 
By~\cite[Theorem~5.5]{ceballos_vtamarivtrees_2018}, the $\nu$-Tamari lattice is the increasing flip graph of a suitably chosen subword complex $\mathcal{SC}(Q_\nu,\pi_\nu)$. 
It is not required to understand subword complexes in this proof, but to know that under this correspondence, $\nu$-trees correspond to facets of the subword complex and $\nu$-rotations to increasing flips.   
The link of a face in a subword complex is also a subword complex itself, see e.g.~\cite{knutson_subword_2004}. 
Therefore, the restriction of the $\nu$-Tamari lattice to the set of $\nu$-trees containing $\vF$ is the increasing flip graph of another suitable subword complex $\mathcal{SC}(Q_{\vF,\nu,},\pi_\nu)$. 
Furthermore, it is known that the increasing flip graph of a subword complex has a unique source and a unique sink~\cite[Proposition~4.8]{PilaudStump-ELlabelings}. 
In our language, this means that there is a unique $\nu$-tree $\vT_\vF^{\min}$ (resp. $\vT_\vF^{\max}$) containing $\vF$ such that every ``flippable" element of $\vT_\vF^{\min}$ (resp. $\vT_\vF^{\max}$) that is not in~$\vF$ is ``increasingly flippable" (resp. ``decreasingly flippable"), that is a $\nu$-ascent (resp. ``$\nu$-descent"). 
Moreover, since $\vF$ is an interior face, then every element of $\vA=\vT_\vF^{\min}\smallsetminus \vF$ is flippable. Thus, $\vA$ is a subset of $\nu$-ascents of $\vT_\vF^{\min}$. Since $\vF=\vT_\vF^{\min}\smallsetminus \vA$, this finishes our proof.
\end{proof}

For $a\in \vA$ we denote by $\vT_a$ the $\nu$-tree obtained from $\vT$ by applying a $\nu$-Tamari rotation at the~\mbox{$\nu$-ascent}~$a$, and by $\vT+\vA$ the join of the set $\{\vT_a: a\in\vA\}$. The interval $[\vT,\vT+\vA]$ of the $\nu$-Tamari lattice is called a \defn{pure $\nu$-Tamari interval}.  
An example is illustrated in Figure~\ref{fig_pure_vTamari_interval}.

\begin{figure}[htb]
\begin{tabular}{cc}
\scalebox{.4}{\begin{tikzpicture}
\draw[line width = 4] (0,0) -- (0,1);
\draw[line width = 4] (0,1) -- (1,1);
\draw[line width = 4] (1,1) -- (2,1);
\draw[line width = 4] (2,1) -- (3,1);
\draw[line width = 4] (3,1) -- (3,2);
\draw[line width = 4] (3,2) -- (4,2);
\draw[line width = 4] (4,2) -- (4,3);
\draw[line width = 4] (4,3) -- (5,3);
\draw[line width = 4] (5,3) -- (6,3);
\draw[line width = 4] (6,3) -- (6,4);
\draw[line width = 4] (6,4) -- (7,4);
\draw[line width = 4] (7,4) -- (8,4);
\draw[line width = 4] (8,4) -- (8,5);
\draw[line width = 4] (8,5) -- (8,6);
\draw[line width = 4] (8,6) -- (9,6);
\draw[line width = 4] (9,6) -- (9,7);
\draw[fill, radius=0.15] (0,0) circle;
\draw[fill, radius=0.15] (0,1) circle;
\draw[fill, radius=0.15] (1,1) circle;
\draw[fill, radius=0.15] (2,1) circle;
\draw[fill, radius=0.15] (3,1) circle;
\draw[fill, radius=0.15] (3,2) circle;
\draw[fill, radius=0.15] (4,2) circle;
\draw[fill, radius=0.15] (4,3) circle;
\draw[fill, radius=0.15] (5,3) circle;
\draw[fill, radius=0.15] (6,3) circle;
\draw[fill, radius=0.15] (6,4) circle;
\draw[fill, radius=0.15] (7,4) circle;
\draw[fill, radius=0.15] (8,4) circle;
\draw[fill, radius=0.15] (8,5) circle;
\draw[fill, radius=0.15] (8,6) circle;
\draw[fill, radius=0.15] (9,6) circle;
\draw[fill, radius=0.15] (9,7) circle;

\draw[red, dashed, line width = 4, fill=red!30, opacity = .5] (1.9,2.1) rectangle (0.9,7.1);
\draw[red, dashed, line width = 4, fill=red!30, opacity = .5] (5.9,4.1) rectangle (4.9,5.1);
\draw[red, dashed, line width = 4, fill=red!30, opacity = .5] (4.9,5.1) rectangle (0.9,7.1);
\draw[red, fill=white, radius=0.15] (0.9, 2.1) circle;
\draw[red, fill=white, radius=0.15] (4.9, 4.1) circle;
\draw[red, fill=white, radius=0.15] (0.9, 5.1) circle;
\draw[red, fill, radius=0.15] (-0.1,0.1) circle;
\draw[red, fill, radius=0.15] (2.9,1.1) circle;
\draw[red, fill, radius=0.15] (1.9,1.1) circle;
\draw[red, fill, radius=0.15] (3.9,2.1) circle;
\draw[red, fill, radius=0.15] (1.9,2.1) circle;
\draw[red, fill, radius=0.15] (5.9,3.1) circle;
\draw[red, fill, radius=0.15] (7.9,4.1) circle;
\draw[red, fill, radius=0.15] (6.9,4.1) circle;
\draw[red, fill, radius=0.15] (5.9,4.1) circle;
\draw[red, fill, radius=0.15] (4.9,5.1) circle;
\draw[red, fill, radius=0.15] (8.9,6.1) circle;
\draw[red, fill, radius=0.15] (8.9,7.1) circle;
\draw[red, fill, radius=0.15] (0.9,7.1) circle;
\draw[red, fill, radius=0.15] (-0.1,7.1) circle;

\draw[red, line width = 4] (-0.1,0.1) -- (-0.1,7.1);
\draw[red, line width = 4] (-0.1,7.1) -- (0.9,7.1);
\draw[red, line width = 4] (0.9,7.1) -- (8.9,7.1);
\draw[red, line width = 4] (1.9,1.1) -- (1.9,2.1);
\draw[red, line width = 4] (1.9,1.1) -- (2.9,1.1);
\draw[red, line width = 4] (1.9,2.1) -- (3.9,2.1);
\draw[red, line width = 4] (5.9,3.1) -- (5.9,4.1);
\draw[red, line width = 4] (5.9,4.1) -- (6.9,4.1);
\draw[red, line width = 4] (6.9,4.1) -- (7.9,4.1);
\draw[red, line width = 4] (8.9,6.1) -- (8.9,7.1);
\end{tikzpicture}} &
\input{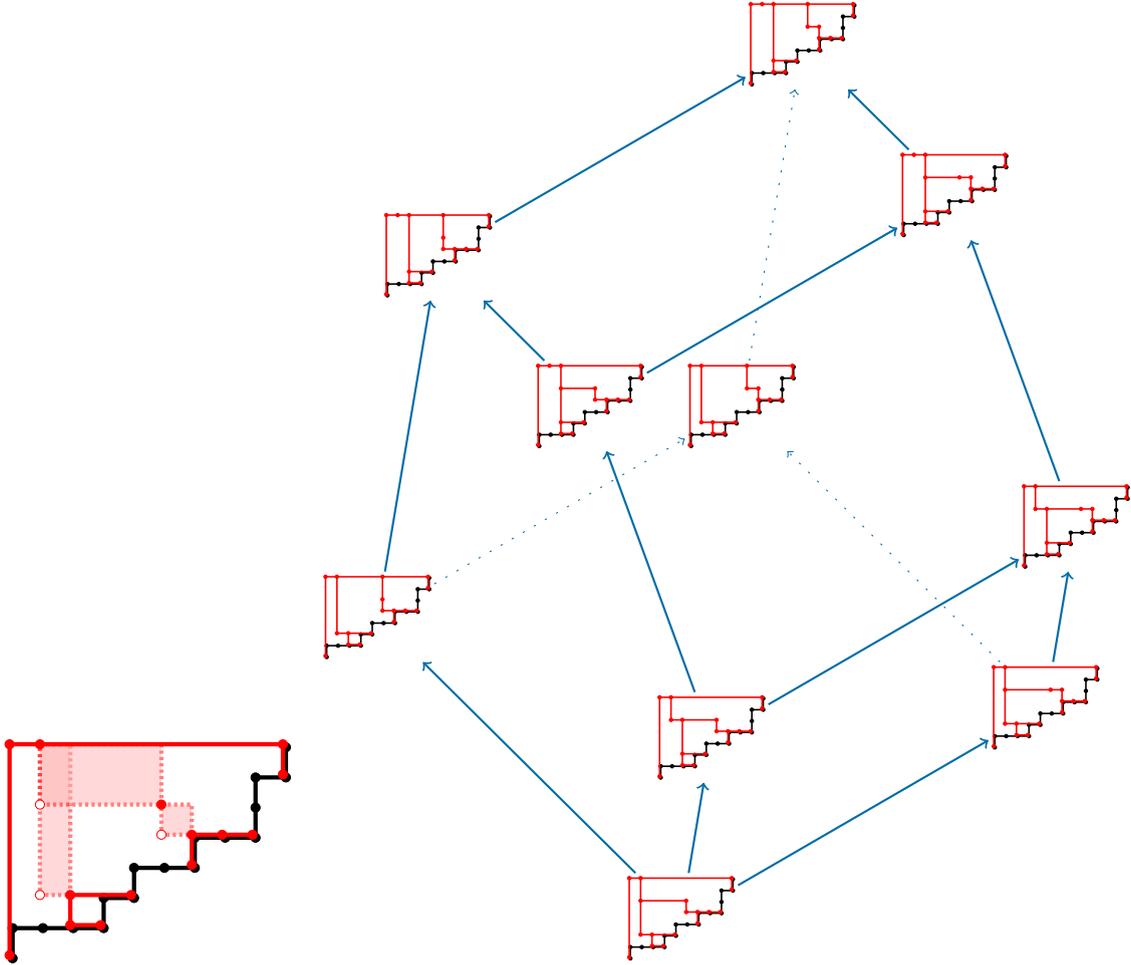}
\end{tabular}
\caption{Example of a pure $\nu$-Tamari interval}
\label{fig_pure_vTamari_interval}
\end{figure}

As we have seen in Section~\ref{sec_characterization_pure_intervals}, the classification of pure intervals in the $s$-weak order is rather involved and technical, and one might expect a similar situation to happen for the classification of pure $s$-Tamari intervals in the $s$-Tamari lattice. However, if $\nu=\nu(\reverse s)$, we know by Proposition~\ref{prop_sTam_vTam_trees} and Lemma~\ref{lem_maximal_equal_join_stamari} that pure $s$-Tamari intervals are mapped to pure $\nu$-Tamari intervals under the bijection~$\streestovtrees$ from Section~\ref{sec_sTamarivTamari}.
For instance, the pure \mbox{$s$-Tamari} interval in Figure~\ref{fig_pure_sTamari_intervals_three} is mapped to the pure $\nu$-Tamari interval in Figure~\ref{fig_pure_vTamari_interval}.
Moreover, the following proposition gives a nice and simple classification of pure $\nu$-Tamari intervals, as the sets of $\nu$-trees containing covering $\nu$-faces. 

\begin{proposition}\label{prop_pure_nuTamari_intervals}
Let $\vT$ is a $\nu$-tree and~$\vA$ is a (possibly empty) subset of $\nu$-ascents of $\vT$. 
If $\vF=\vT\smallsetminus \vA$ is the corresponding covering $\nu$-face,  
then
\begin{align}\label{eq_pure_nuTamari_interval}
[\vT,\vT+\vA] =
\left\{
\nu\text{-trees } \vT':\,  \vF\subseteq \vT'
\right\}.
\end{align}
\end{proposition}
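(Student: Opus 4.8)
The plan is to identify the pure $\nu$-Tamari interval $[\vT,\vT+\vA]$ with the vertex set of the covering $\nu$-face $\vF$. Write $P_\vF:=\{\nu\text{-trees }\vT':\vF\subseteq\vT'\}$; by the very definition of $\Asso{\nu}$ this is exactly the set of vertices of the face $\vF$. From Lemma~\ref{lem_vfaces1} and its proof (via the subword complex $\mathcal{SC}(Q_{\vF,\nu},\pi_\nu)$ whose facets are the $\nu$-trees containing $\vF$), the induced $\nu$-Tamari order on $P_\vF$ is the increasing flip order of that complex, and hence has a unique minimal element $\vT_\vF^{\min}=\vT$ and a unique maximal element $\vT_\vF^{\max}$. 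I would therefore reduce the statement to two claims: (A) $P_\vF$ is order-convex in the $\nu$-Tamari lattice, so that $P_\vF=[\vT,\vT_\vF^{\max}]$; and (B) $\vT_\vF^{\max}=\vT+\vA$. Together these give $[\vT,\vT+\vA]=P_\vF$, which is \eqref{eq_pure_nuTamari_interval}.

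For (A) I would use that the facets of a subword complex containing a fixed face form an interval of its increasing flip lattice: this can be extracted from the EL-labelling of~\cite{PilaudStump-ELlabelings} together with the observation that a single $\nu$-rotation changes exactly one node of a $\nu$-tree, so that a node of $\vF$ cannot disappear and reappear along a flip path from $\vT$ to $\vT_\vF^{\max}$. For the easy inequality in (B), note that each single rotation $\vT_a$ with $a\in\vA$ satisfies $\vT_a\supseteq\vT\ssm\{a\}\supseteq\vT\ssm\vA=\vF$, so $\vT_a\in P_\vF$ and thus $\vT_a\leq\vT_\vF^{\max}$; taking the join over $a\in\vA$ gives $\vT+\vA=\join_{a\in\vA}\vT_a\leq\vT_\vF^{\max}$. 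Moreover, the elements covering $\vT$ inside $P_\vF$ are precisely the $\vT_a$, so these are exactly the atoms of the interval $[\vT,\vT_\vF^{\max}]$.

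The hard part will be the reverse inequality $\vT_\vF^{\max}\leq\vT+\vA$, i.e.\ showing that the maximum of the face equals the join of its atoms. I would attack this in one of two ways. The direct approach is to prove that no node of $\vT+\vA$ lying outside $\vF$ is a $\nu$-ascent, so that $\vT+\vA$ must be the unique sink $\vT_\vF^{\max}$; this requires controlling how the upper-right corner moves of the individual rotations cascade when one passes to the join, which is the genuine technical obstacle. Alternatively, I would transport the problem through the lattice isomorphism $\streestovtrees$ of Proposition~\ref{prop_sTam_vTam_trees}: under it $\vT+\vA$ corresponds to the $s$-Tamari tree $T+A=\join_{a\in A}(T+a)$ of Lemma~\ref{lem_maximal_equal_join_stamari}, whose inversion set is an explicit transitive closure, and I would match this with the maximal $s$-Tamari tree whose image contains $\vF$. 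Either way the crux is the phenomenon, familiar from the weak order (where the longest element is the join of the simple reflections), that in these Tamari-type lattices the top of a face-interval is generated by joining its atoms; establishing this, together with the order-convexity in (A), is where essentially all of the work lies.
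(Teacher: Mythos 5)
Your decomposition into (A) order-convexity of $P_\vF=\{\vT':\vF\subseteq\vT'\}$ and (B) $\vT_\vF^{\max}=\vT+\vA$ matches the structure of the paper's argument, and your easy inequality $\vT+\vA\leq\vT_\vF^{\max}$ together with the identification of the $\vT_a$ as the atoms is correct. But you have not actually proved the statement: you explicitly leave the reverse inequality $\vT_\vF^{\max}\leq\vT+\vA$ as "where essentially all of the work lies," and neither of your two proposed attacks closes it. The direct approach (showing no element of $(\vT+\vA)\smallsetminus\vF$ is a $\nu$-ascent) is exactly the uncontrolled cascade you admit you cannot yet control; and the transport through $\streestovtrees$ only lets you invoke Lemma~\ref{lem_maximal_equal_join_stamari}, which says $T+A=\bigvee_{a\in A}(T+a)$ — that is again the easy inequality, and says nothing about why this join must reach the unique sink of $P_\vF$. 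The missing ingredient, which the paper supplies by citation, is that $P_\vF$ is an interval isomorphic to a \emph{product of classical Tamari lattices} (\cite[Lemma~4.4 or Proposition~5.16]{CeballosPadrolSarmiento-geometryNuTamari}); combined with $\dim(\vF)=|\vA|$ this shows the $\vT_a$ are \emph{all} the atoms of that interval, and then the elementary fact that in a classical Tamari lattice the join of the atoms is the top element (preserved under products) forces $\bigvee_a\vT_a=\vT_\vF^{\max}$. Without some such structural input the "join of atoms equals top" phenomenon is genuinely false in arbitrary lattices, so it must be argued, not assumed.

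Your treatment of (A) also has a gap. Order-convexity asks: if $\vT\leq\vT'\leq\vT_\vF^{\max}$ in the $\nu$-Tamari lattice, why must $\vF\subseteq\vT'$? Your observation that a node of $\vF$ "cannot disappear and reappear along a flip path" only constrains saturated chains that stay inside $P_\vF$; it does not rule out an intermediate $\nu$-tree $\vT'$ comparable to both endpoints but omitting some node of $\vF$. This too is settled by the cited product-of-Tamari-lattices result, which identifies $P_\vF$ with an honest interval of the ambient lattice. In short: you found the right skeleton and the right crux, but the proof is incomplete without the key structural lemma that the paper imports from~\cite{CeballosPadrolSarmiento-geometryNuTamari}.
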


\begin{proof}
Using the same subword complex techniques from the proof of Lemma~\ref{lem_vfaces1}, the set of $\nu$-trees containing $\vF$ is in correspondence with the set of facets of the subword complex $\mathcal{SC}(Q_{\vF,\nu,},\pi_\nu)$, and \mbox{$\nu$-rotations} correspond to increasing flips. 
As argued above, this set of facets has a unique minimal element $\vT_\vF^{\min}$ and a unique maximal element $\vT_\vF^{\max}$. Therefore,
\begin{align}\label{eq_pure_nuTamari_interval_two}
[\vT_\vF^{\min},\vT_\vF^{\max}] =
\left\{
\nu\text{-trees } \vT':\,  \vF\subseteq \vT'
\right\}.
\end{align}
By construction, we have that $\vT=\vT_\vF^{\min}$ and $\vA=\vT\smallsetminus \vF$. It remains to show that $\vT+\vA=\vT_\vF^{\max}$.

By~\cite[Lemma~4.4 or Proposition~5.16]{CeballosPadrolSarmiento-geometryNuTamari}, the interval in Equation~\eqref{eq_pure_nuTamari_interval_two} is a product of classical Tamari lattices. Its minimal element is $\vT$, and it contains the $\nu$-Tamari trees $\vT+a$ for $a\in \vA$ (because we rotate an element $a$ that is not in $\vF$). Since the dimension $\dim(\vF)=|\vA|$, the interval  in Equation~\eqref{eq_pure_nuTamari_interval_two} contains all the covers of the minimal element. Furthermore, the join of the covers of the minimal element in a classical Tamari lattice is the top element of the lattice, and this property is preserved by taking products of Tamari lattices. 
Therefore, $\vT_\vF^{\max}$ is the join of the set $\{\vT_a: a\in\vA\}$, which is equal to $\vT+\vA$ by definition. 
\end{proof}

Figure~\ref{fig_pure_vTamari_interval} shows an example of a pure $\nu$-Tamari interval. The $\nu$-ascents $\vA$ of the $\nu$-tree $\vT$ are the circled nodes in the figure, and the covering $\nu$-face $\vF=\vT\smallsetminus\vA$ is the set of filled red nodes. The $\nu$-trees in the interval $[\vT,\vT+\vA]$ are exactly the $\nu$-trees containing these filled red nodes. 

\begin{corollary}
\label{cor_vfaces2}
Let $[\vT,\vT+\vA]$ and $[\vT',\vT'+\vA']$ be two pure $\nu$-Tamari intervals, and $\vF=\vT\smallsetminus\vA$ and $\vF'=\vT'\smallsetminus\vA'$ be their corresponding covering $\nu$-faces. Then
$[\vT,\vT+\vA]\subseteq [\vT',\vT'+\vA']$ if and only if $\vF\supseteq\vF'$.
\end{corollary}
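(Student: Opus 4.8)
The plan is to deduce Corollary~\ref{cor_vfaces2} directly from the explicit description of pure $\nu$-Tamari intervals given in Proposition~\ref{prop_pure_nuTamari_intervals}. The key observation is that Equation~\eqref{eq_pure_nuTamari_interval} expresses each pure $\nu$-Tamari interval purely as a \emph{set} of $\nu$-trees, namely the set of all $\nu$-trees containing the corresponding covering $\nu$-face. Once containment of intervals is reformulated as set-theoretic containment of these collections of $\nu$-trees, the statement becomes an elementary fact about faces and their stars in the $\nu$-Tamari complex.

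First I would invoke Proposition~\ref{prop_pure_nuTamari_intervals} to write
\[
[\vT,\vT+\vA] = \{\nu\text{-trees }\vT'' :\, \vF\subseteq \vT''\}
\qquad\text{and}\qquad
[\vT',\vT'+\vA'] = \{\nu\text{-trees }\vT'' :\, \vF'\subseteq \vT''\}.
\]
Interval containment $[\vT,\vT+\vA]\subseteq[\vT',\vT'+\vA']$ then means exactly that every $\nu$-tree containing $\vF$ also contains $\vF'$. The backward direction is immediate: if $\vF\supseteq\vF'$, then any $\nu$-tree $\vT''$ with $\vF\subseteq\vT''$ automatically satisfies $\vF'\subseteq\vF\subseteq\vT''$, giving the desired inclusion of sets.

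For the forward direction I would argue that if every $\nu$-tree containing $\vF$ also contains $\vF'$, then in particular $\vF'$ must be contained in the intersection of all $\nu$-trees containing $\vF$. Since $\vF$ is itself a covering $\nu$-face and hence a $\nu$-compatible collection, $\vF$ is contained in at least one $\nu$-tree (any facet extending it in the $\nu$-Tamari complex), and $\vT=\vT_\vF^{\min}$ is one such tree. Applying the hypothesis to $\vT''=\vT$ yields $\vF'\subseteq\vT$. More strongly, because $\vF$ is the common face shared by all the $\nu$-trees in the interval and these trees have $\vF$ as their intersection — each element of $\vT\smallsetminus\vF=\vA$ gets flipped away in the maximal tree $\vT+\vA$ by Proposition~\ref{prop_pure_nuTamari_intervals}, so no point of $\vA$ lies in every tree of the interval — we conclude that $\vF'$, being contained in every tree of the interval, must be contained in $\vF$. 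This gives $\vF'\subseteq\vF$, i.e.\ $\vF\supseteq\vF'$.

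The main obstacle is the forward implication, specifically verifying that $\vF$ is exactly the intersection of all $\nu$-trees in $[\vT,\vT+\vA]$ (equivalently, that no point of $\vA$ survives in every tree of the interval). This is where the subword-complex description from the proof of Lemma~\ref{lem_vfaces1} is needed: since the interval is the increasing flip graph of $\mathcal{SC}(Q_{\vF,\nu},\pi_\nu)$ and each $a\in\vA$ is flippable (a $\nu$-ascent), each such $a$ is absent from the tree obtained by flipping it, so no point of $\vA$ belongs to all facets. Hence the intersection of the trees in the interval is precisely $\vF$, and any $\vF'$ contained in all of them is contained in $\vF$. With this identification in hand, both implications follow, completing the proof.
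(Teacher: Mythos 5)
Your proof is correct and follows essentially the same route as the paper: both directions reduce, via Proposition~\ref{prop_pure_nuTamari_intervals}, to set containment of the covering faces, and the nontrivial direction rests on the same key fact that any element of $\vT\smallsetminus\vF=\vA$ is flippable and is absent from the resulting tree, which still contains $\vF$ and hence lies in the interval. The only cosmetic difference is that you phrase the forward direction directly as $\vF'\subseteq\bigcap\{\vT'':\vF\subseteq\vT''\}=\vF$, whereas the paper argues by contraposition with an explicit witness tree; the underlying flip argument is identical (and your non-essential aside that every element of $\vA$ is already absent from the maximal tree $\vT+\vA$ is not needed and is not what carries the argument).
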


\begin{proof}
Assume $\vF\supseteq\vF'$. Since every $\nu$-tree containing $\vF$ contains $\vF'$, then by Equation~\ref{eq_pure_nuTamari_interval} we have $[\vT,\vT+\vA]\subseteq [\vT',\vT'+\vA']$. This proves the backward direction. 

For the forward direction, assume that  $\vF\nsupseteq\vF'$. We aim to show that $[\vT,\vT+\vA]\nsubseteq [\vT',\vT'+\vA']$. 
Let~$f'\in \vF'\smallsetminus \vF$, and $\widetilde \vT$ be a $\nu$-tree containing $\vF$. We consider two cases. 

Case 1: $f'\notin \widetilde \vT$. In this case, $\widetilde \vT$ does not contain $\vF'$, and so  $\widetilde \vT\in [\vT,\vT+\vA]$ but $\widetilde \vT\notin [\vT',\vT'+\vA']$ as we wanted to show.

Case 2: $f'\in \widetilde \vT$. Since $\vF\subseteq \widetilde \vT$ is an interior face and $f'\notin \vF$, then $f'$ is flippable in $\widetilde \vT$. Flipping it, we obtain a new $\nu$-tree $\vT^*=\widetilde \vT \smallsetminus \{f'\} \cup \{f^*\}$. This new tree satisfies $\vF\subseteq \vT^*$ but   $\vF'\nsubseteq \vT^*$. 
Thus, $ \vT^*\in [\vT,\vT+\vA]$ but $\vT^*\notin [\vT',\vT'+\vA']$ as we wanted.
\end{proof}

\subsection{The $s$-associahedron and the $\nu$-associahedron are isomorphic}

The bijection $\streestovtrees$ between $s$-Tamari trees and $\nu(\reverse s)$-trees described in Section~\ref{sec_sTamarivTamari} extends naturally to a bijection $\overline \streestovtrees$ between the faces of the $s$-associahedron and the faces of the $\nu(\reverse s)$-associahedron. For each pair $(T,A)$ of an $s$-Tamari tree $T$ and a subset $A$ of Tamari-ascents of $T$, we can associate a pair $(\vT,\vA)$ of a $\nu$-tree $\vT=\streestovtrees(T)$ and a subsets $\vA$ of $\nu$-assents of $\vT$ corresponding to $A$. We denote by $\overline \streestovtrees$ the map that sends the pure $s$-Tamari interval $[T,T+A]$ to the covering $\nu$-face $\vF=\vT\smallsetminus \vA$. 

\begin{theorem}
\label{thm:nu-ass-s-ass}
The map $\overline \streestovtrees$ is an isomorphism between $\Asso{s}$ and $\Asso{\nu(\reverse s)}$.
\end{theorem}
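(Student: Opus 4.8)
The plan is to show that $\overline{\streestovtrees}$ is a well-defined bijection on faces that respects dimension and the face-containment order, which together establish that it is an isomorphism of polyhedral complexes. The key structural ingredients are already in place: Proposition~\ref{prop_sTam_vTam_trees} gives that $\streestovtrees$ is a lattice isomorphism between the $s$-Tamari lattice and the $\nu(\reverse s)$-Tamari lattice on vertices; Lemma~\ref{lem_vfaces1} parametrizes covering $\nu$-faces by pairs $(\vT,\vA)$ with $\vA$ a subset of $\nu$-ascents of $\vT$; and Proposition~\ref{prop_pure_nuTamari_intervals} identifies the pure $\nu$-Tamari interval $[\vT,\vT+\vA]$ with the set of $\nu$-trees containing the covering $\nu$-face $\vF=\vT\smallsetminus\vA$.

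First I would check that $\overline{\streestovtrees}$ is well-defined and bijective on faces. A face of $\Asso{s}$ is a pure $s$-Tamari interval $[T,T+A]$, equivalently a pair $(T,A)$ with $T$ an $s$-Tamari tree and $A\subseteq$ Tamari-ascents of $T$. Since $\streestovtrees$ is a bijection on trees, it sends $T$ to a $\nu$-tree $\vT$; the crucial point is that $\streestovtrees$ carries Tamari-ascents of $T$ bijectively to $\nu$-ascents of $\vT$. This follows because $\streestovtrees$ is a lattice isomorphism and hence maps cover relations to cover relations: by Theorem~\ref{thm_sTamari_covering_relations} the covers of $T$ correspond to its Tamari-ascents, and by the definition of the $\nu$-Tamari lattice the covers of $\vT$ correspond to its $\nu$-ascents. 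Thus $A$ determines a subset $\vA$ of $\nu$-ascents of $\vT$ of the same cardinality, and $\overline{\streestovtrees}([T,T+A]) := \vF=\vT\smallsetminus\vA$ is a well-defined covering $\nu$-face by Lemma~\ref{lem_vfaces1}. Conversely every covering $\nu$-face arises uniquely this way (again Lemma~\ref{lem_vfaces1}), and pulling back $(\vT,\vA)$ through $\streestovtrees^{-1}$ recovers $(T,A)$, so $\overline{\streestovtrees}$ is a bijection between the faces of the two complexes. Dimension is preserved: $\dim[T,T+A]=|A|=|\vA|=\dim(\vF)$.

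The remaining task is to verify that $\overline{\streestovtrees}$ preserves the face-containment partial order, which is what makes it an isomorphism of complexes rather than merely a dimension-preserving bijection. Containment of faces in $\Asso{s}$ means containment as intervals in the $s$-Tamari lattice: $[T_1,T_1+A_1]\subseteq[T_2,T_2+A_2]$. Since $\streestovtrees$ is a lattice isomorphism, it preserves the interval order, so this holds if and only if the image intervals satisfy $[\vT_1,\vT_1+\vA_1]\subseteq[\vT_2,\vT_2+\vA_2]$ as intervals of the $\nu$-Tamari lattice (here one uses Lemma~\ref{lem_maximal_equal_join_stamari} and its $\nu$-analogue built into Proposition~\ref{prop_pure_nuTamari_intervals} to guarantee that the images of $T_i+A_i$ are exactly $\vT_i+\vA_i$, since both are joins of the corresponding single rotations). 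By Corollary~\ref{cor_vfaces2}, this interval containment is equivalent to the reverse containment $\vF_1\supseteq\vF_2$ of covering $\nu$-faces, which is precisely the order relation in $\Asso{\nu(\reverse s)}$. Chaining these equivalences gives that $\overline{\streestovtrees}$ and its inverse are both order-preserving, completing the proof.

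I expect the main obstacle to be the careful bookkeeping in the first step, namely confirming that $\streestovtrees$ really does match Tamari-ascents with $\nu$-ascents \emph{as decorations of a fixed tree}, not just that it matches cover relations abstractly. One must ensure that the subset $\vA$ produced does not depend on any auxiliary choices and that it indeed lands inside the admissible set of $\nu$-ascents used in Lemma~\ref{lem_vfaces1}; the cleanest way to do this is to observe that a single Tamari-ascent $a$ of $T$ corresponds to the unique cover $T\wocover T+a$, whose image $\vT\wocover\streestovtrees(T+a)$ is a single $\nu$-Tamari rotation and therefore identifies a unique $\nu$-ascent of $\vT$, and then to invoke Lemma~\ref{lem_maximal_equal_join_stamari} to pass from single ascents to arbitrary subsets $A$ compatibly with the join. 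Once this matching is pinned down, the order-preservation argument is essentially formal, resting on the fact that lattice isomorphisms preserve intervals together with the already-established Corollary~\ref{cor_vfaces2}.
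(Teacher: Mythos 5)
Your proposal is correct and follows essentially the same route as the paper: Lemma~\ref{lem_vfaces1} supplies the bijection between pure $s$-Tamari intervals and covering $\nu$-faces, and order-preservation is obtained by combining the fact that $\streestovtrees$ is a lattice isomorphism (so interval containment is preserved, with $\streestovtrees(T+A)=\vT+\vA$ via the join descriptions in Lemma~\ref{lem_maximal_equal_join_stamari} and Proposition~\ref{prop_pure_nuTamari_intervals}) with Corollary~\ref{cor_vfaces2}. The only difference is that you spell out the matching of Tamari-ascents with $\nu$-ascents more explicitly than the paper, which treats that point in the discussion preceding the theorem.
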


\begin{proof}
By Lemma~\ref{lem_vfaces1}, the map $\overline \streestovtrees$ is a bijection between pure $s$-Tamari intervals and covering $\nu(\reverse s)$-faces. So, $\overline \streestovtrees$ is a bijection between the faces of the $s$-associahedron and the faces of the $\nu(\reverse s)$-associahedron. 
We need to show that this map preserves their order relation. 

Let $[T,T+A]$ and $[T',T'+A']$ be two pure $s$-Tamari intervals. Consider the corresponding pure $\nu$-Tamari intervals $[\vT,\vT+\vA]$ and $[\vT',\vT'+\vA']$ determined by the bijection $\streestovtrees$, and let $\vF=\vT\smallsetminus \vA$ and $\vF'=\vT'\smallsetminus \vA'$ be the corresponding covering $\nu$-faces. 
Since $\streestovtrees$ is order preserving, 
$[T,T+A]\subseteq [T',T'+A']$ if and only if $[\vT,\vT+\vA]\subseteq [\vT',\vT'+\vA']$. 
Combining this with Corollary~\ref{cor_vfaces2}, we get that 
$[T,T+A]\subseteq [T',T'+A']$ if and only if $\vF\supseteq\vF'$ as we wanted.
\end{proof}

\part{Polytopal conjectures}\label{part_polytopal_conjectures}

\section{Polytopal complex and polytopal subdivision realizations}\label{sec_polytopal_conjectures}

We proved in Theorem~\ref{sperm_combinatorial_complex} that the $s$-permutahedron is what we call a \defn{combinatorial complex}, i.e. a collection of cells or faces (pure intervals) satisfying two properties: (1) it is closed under taking faces and (2) any two faces intersect properly. These two conditions are necessary conditions for being a \defn{polytopal complex}. In order to be a polytopal complex, we need the further property that all the cells of the complex can be geometrically realized as (convex) polytopes. This condition is stated in the following conjecture. 

\begin{conjecture}[Polytopality of pure intervals]
\label{conj:pure-polytopal}
For any weak composition $s$, the pure intervals of the $s$-weak order are polytopal in the following sense:
\begin{enumerate}
\item The inclusion poset of pure intervals contained in a pure interval $[T,T+A]$ is the face lattice of some polytope $P$ of dimension $|A|$. \label{item_pure_polytopal_one}  
\item The Hasse diagram of the restriction of the $s$-weak order to $[T,T+A]$ is the edge graph of $P$.\label{item_pure_polytopal_two}
\end{enumerate}
\end{conjecture}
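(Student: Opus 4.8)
The plan is to prove Conjecture~\ref{conj:pure-polytopal} by induction on the length $n=\ell(s)$, producing for each pure interval $[T,T+A]$ an explicit polytope $P$ of dimension $|A|$ whose face lattice realizes the inclusion poset of pure sub-intervals and whose edge graph realizes the restricted $s$-weak order. Before the induction I would record the local data that any candidate $P$ must satisfy. By \Cref{lem_maximal_equal_join} the minimum $T$ is joined by the trees $T+a$, $a\in A$, and the one-dimensional faces of $[T,T+A]$ having $T$ as their minimum are exactly the intervals $[T,T+\{a\}]$ for $a\in A$; hence $T$ is a simple vertex incident to precisely $|A|$ edges, as required at a vertex of a polytope of dimension $|A|$. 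Moreover, combining \Cref{thm:pure-interval-char,thm_min_essential_variations_ascents} with the combinatorial-complex and intersection results (\Cref{sperm_combinatorial_complex,thm:intersection,cor:intersection}), the set of faces contained in $[T,T+A]$ is already a combinatorial complex, graded of rank $|A|$ by the dimension $|A'|$ of a subface $[T',T'+A']$. The two parts of the conjecture then amount to endowing this abstract graded complex with a compatible convex realization.

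For the inductive step I would split according to whether $s$ contains a zero, mirroring the two cases of the recursion in \Cref{prop:f-poly-recur}. When $s$ has a zero, the bijection built in Case~\eqref{prop-case:f-poly-recur-0} of the proof of \Cref{prop:f-poly-recur} writes each pure interval of $s$ as a \emph{product} of a pure interval of $(\tilde s,v)$ and a pure interval of $(0,v)$; I would upgrade this counting bijection to an isomorphism of face posets. Since the face lattice of a product of polytopes is the product of the face lattices and the edge graph of a product is the Cartesian product of the edge graphs, realizing the two factors by the inductive hypothesis and setting $P=P_1\times P_2$ would settle both Part~\eqref{item_pure_polytopal_one} and Part~\eqref{item_pure_polytopal_two} simultaneously. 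The essential content therefore lies in the zero-free case, which I would handle by importing the flow-polytope realization of~\cite{GMPT23}: there the faces of $\Perm{s}$ are the interior faces of a triangulation of a flow polytope, and the Cayley trick together with tropical duality produce convex polytopes for the cells. Restricted to a single pure interval, I would verify that this construction yields a polytope of the correct dimension carrying the claimed face lattice and edge graph.

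The hard part will be the treatment of zeros in \emph{arbitrary} positions. The product decomposition of \Cref{prop:f-poly-recur} is stated only for a zero in the penultimate slot, and a general composition may have interior zeros that do not factor the interval at once; making the reduction uniform requires showing that any $0$-node can be contracted so as to split off a factor $\Perm{0,v}$, compatibly with the variation and essential-variation data of \Cref{thm:pure-interval-char}. A second, more delicate obstacle is that promoting the bijections to genuine poset isomorphisms really uses Part~\eqref{item_pure_polytopal_two}: one must check that contraction and product intertwine the $s$-weak order cover relations (the $s$-tree rotations) with the Cartesian-product edges, and not merely the face counts. Finally, pure intervals are not products of simplices or cubes in general---the two-dimensional example of Figure~\ref{fig:pure-interval} is a pentagon---so the zero-free base case cannot be treated by elementary means and genuinely relies on the flow-polytope/tropical input of~\cite{GMPT23}; a fully self-contained proof would have to reconstruct an equivalent convex model for zero-free pure intervals, which is where I expect the main technical effort to concentrate.
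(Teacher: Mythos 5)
First, a point of framing: \Cref{conj:pure-polytopal} is a \emph{conjecture} in this paper. There is no proof of it here to compare against; the paper only supplies evidence, namely the resolution of the zero-free case in~\cite{GMPT23} (Section~\ref{sec:flows}), explicit realizations in dimensions $2$ and $3$ (Section~\ref{sec_geometric_realizations_dim_2_3}), and computer experiments with the ``ascentopes''. Your preliminary observations (that $T$ is incident to exactly $|A|$ edges by \Cref{lem_maximal_equal_join}, and that the faces below $[T,T+A]$ form a graded combinatorial complex) are correct but only record necessary conditions; they do not produce the polytope $P$.

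The central reduction of your induction has a genuine gap. The product decomposition in Case~(1) of \Cref{prop:f-poly-recur} applies only when $s(n-1)=0$, not when $s$ merely ``contains a zero'', so your dichotomy (zero present $\Rightarrow$ factor as a product; no zero $\Rightarrow$ invoke~\cite{GMPT23}) leaves uncovered exactly the compositions for which the conjecture is still open. Concretely, take $s=(0,0,2,2)$: it has a relevant zero at position~$2$, but $s(n-1)=2\neq 0$, so the recursion falls into Case~(2), which is a ``marked interval'' bijection and not a product of complexes. Moreover, no product decomposition splitting off a factor $\Perm{0,v}$ can exist there, since the $f$-polynomial would have to factor accordingly: from Table~\ref{tab:fs}, $f_{(0,0,2,2)}(t)=18t^3+96t^2+152t+75$, whereas $f_{(0,2,2)}(t)\cdot f_{(0,4)}(t)=(6t^2+20t+15)(4t+5)=24t^3+110t^2+160t+75$, and the constant term rules out every other choice of $v$. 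So the claim on which your inductive step rests --- that ``any $0$-node can be contracted so as to split off a factor $\Perm{0,v}$'' --- is false. You correctly flag the treatment of interior zeros as the hard part, but you do not resolve it, and the zero-free base case is an appeal to~\cite{GMPT23} rather than a new argument. As written, the proposal therefore establishes nothing beyond what is already known, and the conjecture remains open for compositions with zeros.
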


Item~\eqref{item_pure_polytopal_two} in this conjecture is not really necessary because it follows from Item~\eqref{item_pure_polytopal_one}. However, we include it here because it is a nice property that we would like to highlight.

We have strong reasons to believe that this conjecture is true. On one side, it has been recently proven in the case where $s$ does not contain any zeros~\cite{GMPT23}, as we explain in Section~\ref{sec:flows}. On the other side, we have an empirical polytopal construction of each pure interval as a generalized permutahedron which we call an \defn{ascentope}. On Figure~ \ref{fig:pure-interval-ascentope}, you see on the right the ascentope corresponding to the pure interval. The code to compute the ascentopes is available on~\cite{SageDemoII}.
 Computational experiments show that our ascentopes have the right properties with examples of dimensions up to $9$. We plan to study these objects in future work.

Our next conjecture is even more general and was also proven when $s$ does not contain any zeros~\cite{GMPT23}, see Section~\ref{sec:flows}. All of our figures seem to indicate that the Hasse diagram of the $s$-weak order seems to be realizable as the edge graph of a polytopal subdivision of a polytope, whose faces are in correspondence with pure intervals. This polytope should be combinatorially isomorphic to the zonotope  
\begin{equation}\label{eq_zonotope}
Z(s) = \sum_{1\leq i < j \leq n} s(j)\Delta_{ij},
\end{equation}

where $\Delta_{ij}=\conv\{e_i,e_j\}\subset \mathbb R^n$. In particular, if $s$ has no zeros (except possibly for~$s(1)$) then $Z(s)$ is combinatorially an $(n-1)$-dimensional permutahedron.

\begin{figure}[htbp]
\begin{center}
\scalebox{1}{
\begin{tabular}{cccc}
\includegraphics[height=3.5cm]{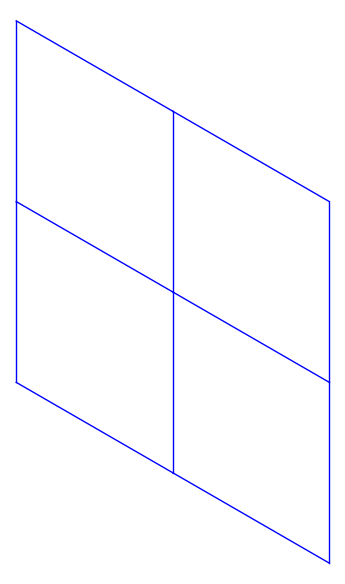} &
\includegraphics[height=3.5cm]{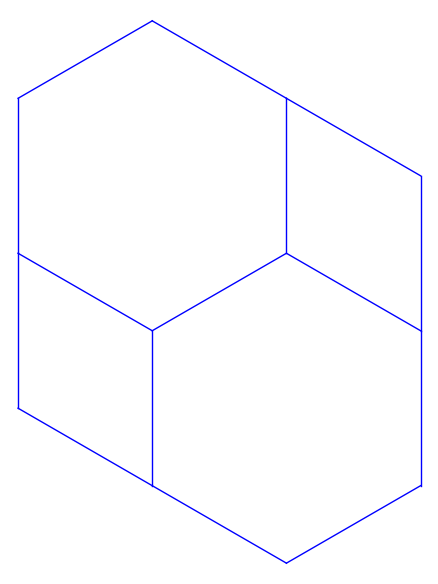} &
\includegraphics[height=3.5cm]{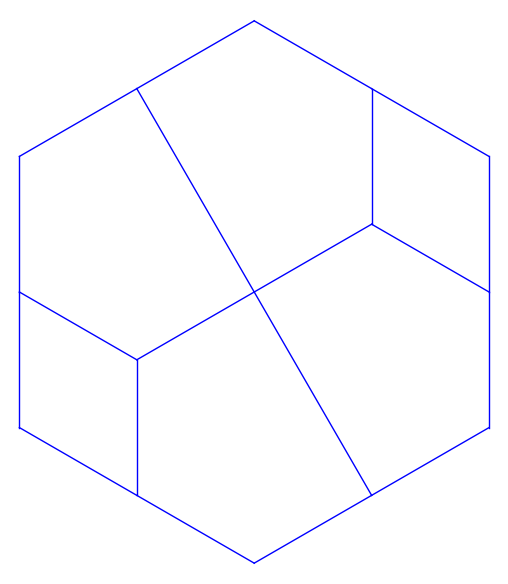} &
\includegraphics[height=3.5cm]{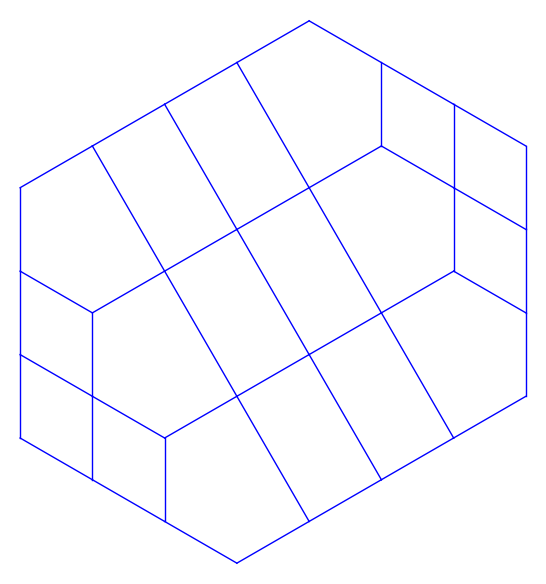} \\
$s = (0,0,2)$ & $s = (0,1,2)$ & $s = (0,2,2)$ & $s = (0,4,3)$   
\end{tabular}
}
\caption{Some geometric realizations of $s$-permutahedra in dimension~2.}
\label{fig:s-perm_dim2}
\end{center}
\end{figure}

\begin{figure}[htbp]
\begin{center}
\scalebox{1}{
\begin{tabular}{ccc}
\includegraphics[height=3.8cm]{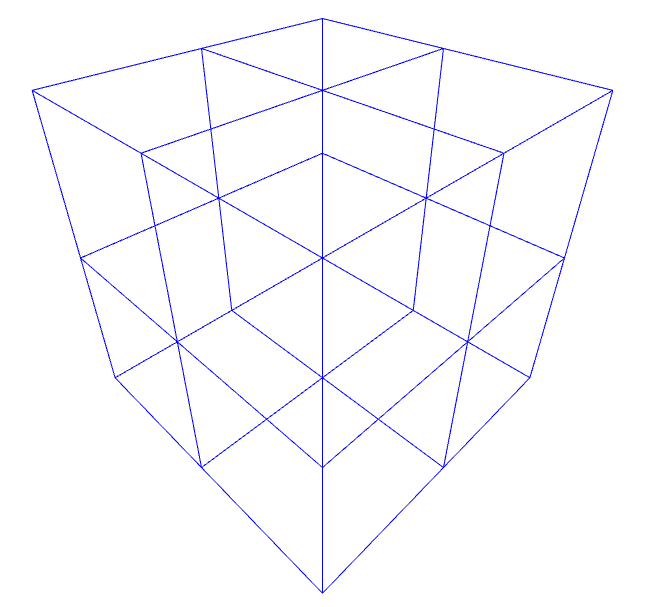} &
\includegraphics[height=4.2cm]{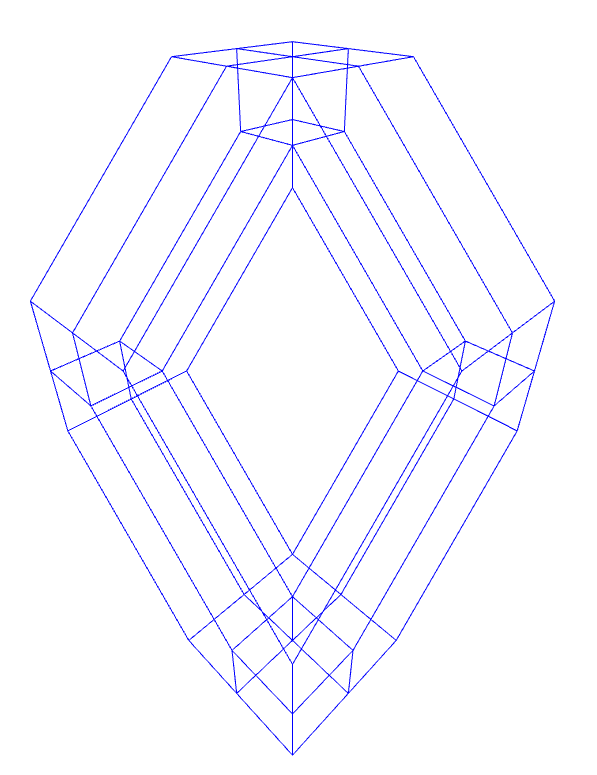} &
\includegraphics[height=3.5cm]{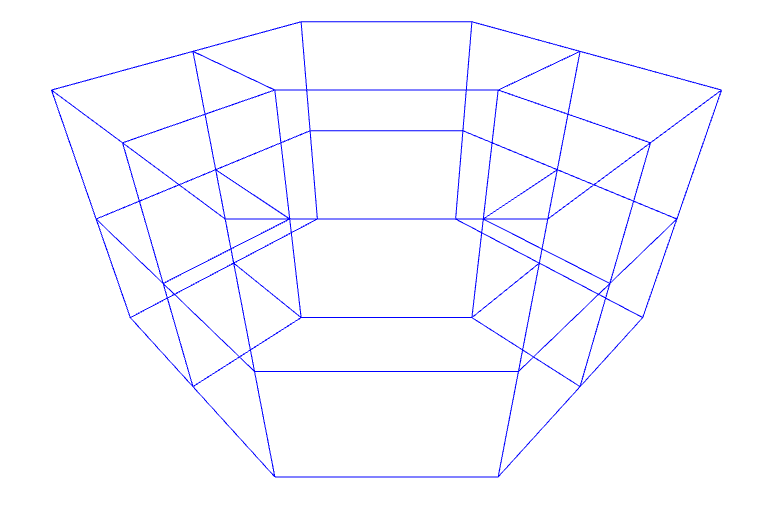} \\
$s = (0,0,0,2)$ & $s = (0,0,1,2)$, & $s = (0,1,0,2)$ \\
\includegraphics[height=4.2cm]{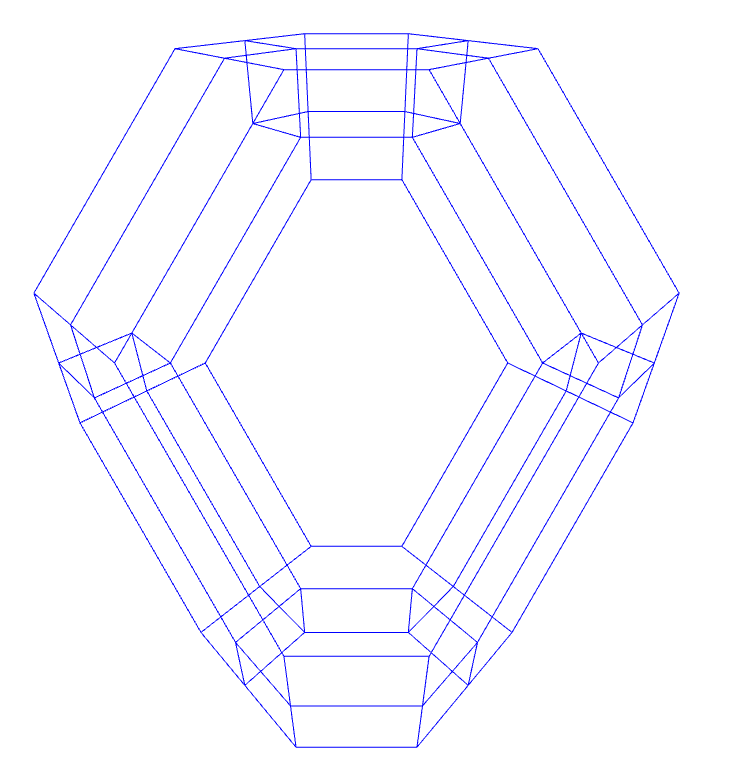} &
\includegraphics[height=4.2cm]{s0222} &
\includegraphics[height=4.2cm]{s0232} \\
$s = (0,1,1,2)$ & $s = (0,2,2,2)$, & $s = (0,2,3,2)$ \\
\includegraphics[height=4.2cm]{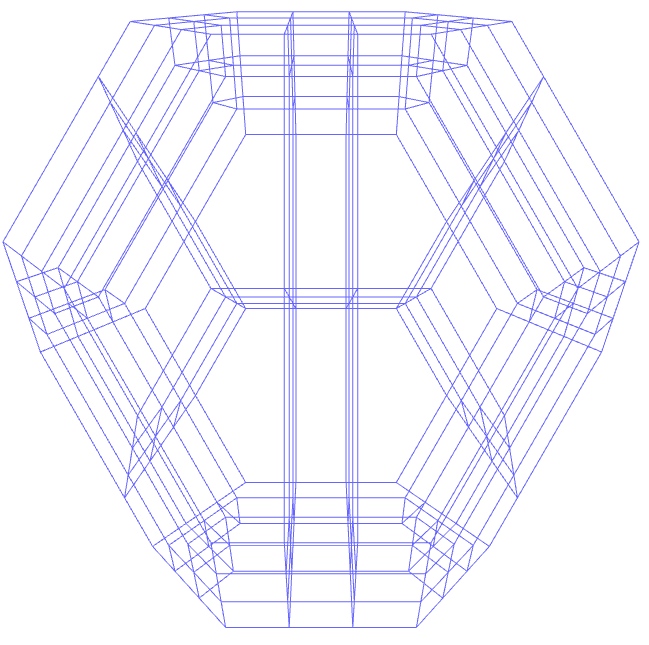} &
\includegraphics[height=4.2cm]{s0333} &
\includegraphics[height=4.2cm]{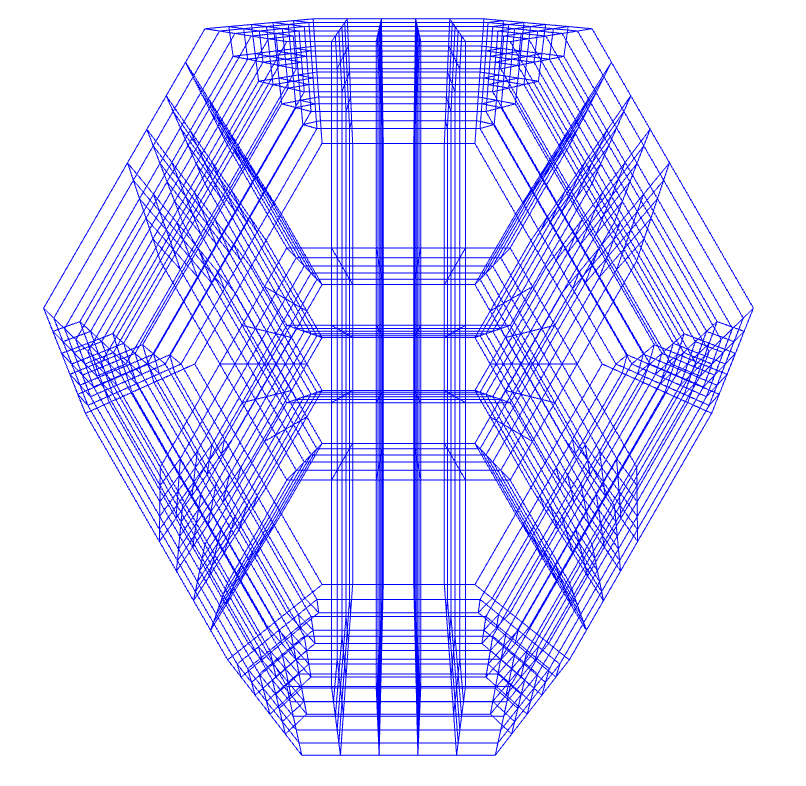} \\ 
$s = (0,3,2,3)$ & $s = (0,3,3,3)$ & $s = (0,5,5,5)$  
\end{tabular}
}
\caption{Some geometric realizations of $s$-permutahedra in dimension~3.}
\label{fig:s-perm_dim3}
\end{center}
\end{figure}

\begin{conjecture}[Polytopal subdivision realization]
\label{coj:spermutahedra}
For any weak composition $s$,
the $s$-permutahedron can be geometrically realized as a polytopal subdivision of a polytope which is combinatorially isomorphic to $Z(s)$. This means, 
\begin{enumerate}
\item The inclusion poset of pure intervals of the $s$-weak order is the face poset of the subdivision. \label{item_spermutahedra_polytopal_one}  
\item The Hasse diagram of the $s$-weak order is the edge graph of the subdivision.\label{item_spermutahedra_polytopal_two}
\end{enumerate}
\end{conjecture}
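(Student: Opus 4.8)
Since the statement is already proven for compositions without zeros in~\cite{GMPT23}, the plan is to take that result as a base case and then attack the zeros. First I would make the dictionary between the two pictures precise: for a zero-free $s$ the zonotope $Z(s)$ of Equation~\eqref{eq_zonotope} is combinatorially an $(n-1)$-permutahedron, and~\cite{GMPT23} realizes $\Perm{s}$ as a polytopal subdivision of a polytope combinatorially isomorphic to it, obtained by dualizing a framed triangulation of a flow polytope through the Cayley trick and tropical geometry. The task at this stage is to verify that the cells of that subdivision are indexed exactly by the pure intervals and that their inclusions match Items~\eqref{item_spermutahedra_polytopal_one}--\eqref{item_spermutahedra_polytopal_two}; the characterization Theorem~\ref{thm:pure-interval-char}, together with the combinatorial-complex structure of Theorem~\ref{sperm_combinatorial_complex}, should make this identification essentially bookkeeping.

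\textbf{A clean reduction for a trailing zero.}
The one case I can reduce cleanly is a zero in the penultimate slot, which mirrors Case~\eqref{prop-case:f-poly-recur-0} of Proposition~\ref{prop:f-poly-recur}. Writing $s=(\tilde s,0,v)$, the bijection behind that case already pairs faces of $\Perm{s}$ with faces of $\Perm{(\tilde s,v)}$ and $\Perm{(0,v)}$; the first step is to promote it to an isomorphism of combinatorial complexes $\Perm{(\tilde s,0,v)}\cong \Perm{(\tilde s,v)}\times\Perm{(0,v)}$, using Theorem~\ref{sperm_combinatorial_complex} and Corollary~\ref{cor:intersection} to check that the pairing respects intersections (here $\Perm{(0,v)}$ is just a segment cut into $v$ edges). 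In parallel I would show that $Z(\tilde s,0,v)$ is combinatorially the prism $Z(\tilde s,v)\times[0,1]$: because $s(n-1)=0$ kills every summand $s(n-1)\Delta_{i,n-1}$, the coordinate direction $e_{n-1}$ survives only in the single summand $v\Delta_{n-1,n}$, so its generator $e_n-e_{n-1}$ is a coloop supplying the prism direction, while the remaining generators re-index to those of $Z(\tilde s,v)$. Since a product of polytopal subdivisions is a polytopal subdivision of the product polytope, combining the inductive realization over $Z(\tilde s,v)$ with the trivial one of the subdivided segment yields a realization of $\Perm{s}$ over a polytope combinatorially isomorphic to $Z(s)$, and Items~\eqref{item_spermutahedra_polytopal_one}--\eqref{item_spermutahedra_polytopal_two} pass to products.

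\textbf{The main obstacle.}
The difficulty is that this reduction only disposes of a zero sitting in the penultimate position. Zeros elsewhere do not split off as products --- for example $f_{(0,0,2,2)}=18t^3+96t^2+152t+75$ does not factor --- so they cannot simply be stripped, and must instead be processed through the full recursion of Proposition~\ref{prop:f-poly-recur}, whose non-product step Case~\eqref{prop-case:f-poly-recur-1} increases the dimension by one and has no evident interpretation as an elementary polytopal operation. More seriously, the whole argument leans on the flow-polytope construction of~\cite{GMPT23} as a black box for the base case, and that input is exactly the one that is not known to tolerate zeros. A self-contained proof would instead demand an explicit coordinatization: attach to each pure interval the conjectural \emph{ascentope} generalized permutahedron of Conjecture~\ref{conj:pure-polytopal}, place the vertices by a single height function over $Z(s)$, and prove that the induced subdivision is \emph{coherent} (regular) with the pure-interval cells as its lower faces. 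Establishing this regularity is the step I expect to be hardest, since Theorem~\ref{sperm_combinatorial_complex} guarantees that the cells meet properly but says nothing about convex position; I would try to build the height function compatibly with the prism decomposition above, so that coherence for general $s$ reduces to coherence in the zero-free case while remaining consistent with the $s$-weak order orientation demanded by Item~\eqref{item_spermutahedra_polytopal_two} --- reconciling these requirements is where the remaining work lies.
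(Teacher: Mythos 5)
The statement you are addressing is Conjecture~\ref{coj:spermutahedra}, and the paper does not prove it: it remains open whenever $s$ contains zeros, and even in the zero-free case the paper only cites the external resolution in~\cite{GMPT23} (see Section~\ref{sec:flows}); the paper's own contribution is limited to the combinatorial prerequisites (Theorem~\ref{sperm_combinatorial_complex}), explicit realizations in dimensions $2$ and $3$ via the coordinates $v_T$ and their correction $\overline v_T$ (Section~\ref{sec_geometric_realizations_dim_2_3}), and computational evidence. Your proposal is therefore correctly calibrated in spirit --- you do not claim a proof, and you identify the same two missing ingredients the authors identify: a construction that tolerates zeros, and a proof that the cells sit in convex position (regularity/coherence), which Theorem~\ref{sperm_combinatorial_complex} alone cannot supply. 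But as a proof it has a genuine gap that you yourself concede: the reduction you carry out only removes a zero in the penultimate slot, mirroring Case~\eqref{prop-case:f-poly-recur-0} of Proposition~\ref{prop:f-poly-recur}, and after iterating it you are still left with every weak composition whose zeros are not all adjacent to the last entry. Your own example $f_{(0,0,2,2)}$ shows these do not factor, and Case~\eqref{prop-case:f-poly-recur-1} of the recursion changes the length and the dimension simultaneously, so it does not correspond to any evident polytopal operation; no mechanism in the proposal closes this loop.

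That said, the trailing-zero reduction is a genuinely new observation not present in the paper and appears sound: the coloop argument for $e_n-e_{n-1}$ in the generating configuration of $Z(\tilde s,0,v)$ from~\eqref{eq_zonotope} does make the zonotope a combinatorial prism over $Z(\tilde s,v)$, and the face bijection underlying Case~\eqref{prop-case:f-poly-recur-0} is a plausible candidate for a complex isomorphism $\Perm{(\tilde s,0,v)}\cong\Perm{(\tilde s,v)}\times\Perm{(0,v)}$ (though you would still need to verify, via Corollary~\ref{cor:intersection}, that it is order- and intersection-preserving, not merely a bijection on faces). If completed, this would extend the result of~\cite{GMPT23} to compositions of the form $(\tilde s,0,v)$ with $\tilde s$ zero-free, which is a strictly larger class than what is currently known. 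But it is an incremental extension, not a proof of the conjecture, and the hardest part --- a height function over $Z(s)$ inducing a coherent subdivision whose cells are the conjectural ascentopes of Conjecture~\ref{conj:pure-polytopal} for arbitrary placement of zeros --- remains entirely open in your write-up, exactly as it does in the paper.
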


Again, Item~\eqref{item_spermutahedra_polytopal_one} in this conjecture follows from Item~\eqref{item_spermutahedra_polytopal_two}, but we include it here because it is a nice property that we would like to highlight. As we will discuss in Section~\ref{sec_geometric_realizations_dim_2_3}, this conjecture holds in full generality in dimensions 2 and 3. Examples of geometric realizations of $s$-permutahedra are illustrated in Figures~\ref{fig:s-perm_dim2} and~\ref{fig:s-perm_dim3}. You can find 3-dimensional animations of these polyhedral subdivisions and more in  \href{https://www.lri.fr/~pons/static/spermutahedron/}{this webpage}~\cite{SPermutaDemo}. Besides, all examples in dimensions $2$ and $3$ can be computed with {\tt SageMath} as shown in our demo worksheet~\cite{SageDemoII}.

One remarkable property of the classical permutahedron is that the classical associahedron can be obtained from it by removing certain facets~\cite{HL07,HLT11}. We believe that if $s$ contains no zeros, this property also holds for the $s$-permutahedron and the $s$-assoociahedron.

\begin{conjecture}[Removing facets of the $s$-permutahedron]
\label{con:sassociahedra}
If $s$ has no zeros (except for $s(1)$), there exists a geometric realization of the \mbox{$s$-permutahedron} such that the $s$-associahedron can be obtained from it by removing certain facets.
\end{conjecture}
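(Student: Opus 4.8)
The plan is to fix a weak composition $s$ with no zeros (except possibly $s(1)$), set $\nu=\nu(\reverse s)$, and start from a geometric realization of the $s$-permutahedron as a polytopal subdivision $\mathcal{S}$ of a polytope combinatorially isomorphic to the zonotope $Z(s)$. Such a realization is available in the zero-free case by~\cite{GMPT23}, which settles Conjecture~\ref{coj:spermutahedra} precisely under this hypothesis. The engine of the argument is the lattice quotient of Theorem~\ref{thm_sTam_quotientLattice}: when $s$ has no zeros, the $s$-Tamari lattice is a quotient of the $s$-weak order, so the quotient map sends each $s$-decreasing tree to a canonical $s$-Tamari representative and partitions the vertices of $\Perm{s}$ into congruence classes. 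The idea is to coarsen $\mathcal{S}$ by deleting exactly those facets (walls) that separate two cells lying in the same congruence class, and to prove that the resulting complex $\mathcal{S}'$ is a geometric realization of the $s$-associahedron.

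The first step is combinatorial: decide which walls to remove and show that merging realizes Tamari faces. Each facet of $\mathcal{S}$ is a codimension-one pure interval, dual to an $s$-tree rotation at a tree-ascent; I would call such a wall \emph{non-Tamari} when the corresponding cover relation is internal to a congruence class, and remove exactly these. I would then show that the union of the cells of $\mathcal{S}$ in a single congruence class, with its internal walls deleted, has face poset equal to the pure $s$-Tamari intervals it contains. Rather than argue this directly from the characterization of pure intervals (\Cref{thm:pure-interval-char}, \Cref{thm_min_essential_variations_ascents}) and the compatibility of $T\mapsto T+A$ with the quotient (Lemma~\ref{lem_maximal_equal_join_stamari}), the cleaner route is to transport the whole statement through the bijection $\overline\streestovtrees$ of Theorem~\ref{thm:nu-ass-s-ass}, reducing everything to a statement about $\Asso{\nu}$, whose face structure is already fully understood via covering $\nu$-faces (Lemma~\ref{lem_vfaces1}, Proposition~\ref{prop_pure_nuTamari_intervals}, Corollary~\ref{cor_vfaces2}) and whose polytopality is known from~\cite{CeballosPadrolSarmiento-geometryNuTamari}.

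The crux, and the step I expect to be the main obstacle, is convexity: proving that each merged region is a genuine convex polytope and that the deleted hyperplanes can be dropped while keeping the complementary cells convex and assembling them into a polytopal subdivision. For the classical permutahedron-to-associahedron passage this is exactly the content of~\cite{HL07,HLT11} and is controlled by explicit facet inequalities; in our setting I see two possible routes. Route~(a) would produce an explicit inequality description of the realization, using the generalized-permutahedron \emph{ascentope} inequalities hinted at after Conjecture~\ref{conj:pure-polytopal} together with the facets of $Z(s)$, and retain only the inequalities indexed by Tamari-admissible walls. Route~(b), which I expect to be cleaner, would work on the common flow-polytope ground: the realization of~\cite{GMPT23} arises, via the Cayley trick and tropicalization, from a triangulation of a flow polytope, while the $\nu$-associahedron of~\cite{CeballosPadrolSarmiento-geometryNuTamari} is a coarser tropical subdivision of an essentially identical underlying polytope, so the desired facet removal should correspond to forgetting some of the tropical hyperplanes. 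Matching these two tropical constructions compatibly on a shared flow polytope is where the real difficulty lies.

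Granting the convexity step, the conclusion is immediate: $\mathcal{S}'$ is then a polytopal complex whose face poset is isomorphic, via $\overline\streestovtrees$, to $\Asso{s}$, and whose facet hyperplanes form a subset of those of $\mathcal{S}$, so the $s$-associahedron is obtained from the realized $s$-permutahedron by removing facets, as claimed. It is worth emphasizing that the zero-free hypothesis is not merely convenient but essential throughout: it is exactly the regime in which both the quotient lattice of Theorem~\ref{thm_sTam_quotientLattice} and the realization of~\cite{GMPT23} are available, and it is the presence of zeros — which already breaks the quotient property of the $s$-Tamari lattice — that makes the general case genuinely harder and, at present, out of reach of this approach.
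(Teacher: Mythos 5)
You should first be clear about the status of the statement: Conjecture~\ref{con:sassociahedra} is a \emph{conjecture}, and the paper does not prove it. The authors only verify it in dimensions $2$ and $3$ by an explicit coordinate construction (the vectors $v_T$ and their correction $\overline v_T$ in Section~\ref{sec_geometric_realizations_dim_2_3}, together with the identification of ``Tamari-valid'' faces), and they state in the introduction and in Section~\ref{sec:flows} that the conjecture remains open in general, even when $s$ has no zeros. Your text is likewise not a proof: the paragraph on ``the crux'' concedes that the convexity/facet-removal step is unresolved and offers two possible routes without carrying either out, and your conclusion is drawn only ``granting the convexity step.'' So what you have written is a strategy that stalls at exactly the point where the problem is genuinely open; it cannot be accepted as a proof of the statement.

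Beyond that, the combinatorial half of your plan has a concrete problem of duality. In the $s$-permutahedron the $s$-decreasing trees are the \emph{vertices}, the edges are the rotations, and the top-dimensional cells are pure intervals $[T,T+A]$ with $|A|=n-1$; a codimension-one wall is a pure interval of dimension $n-2$, not a single rotation. Consequently, ``two cells lying in the same congruence class'' of the quotient of Theorem~\ref{thm_sTam_quotientLattice} is not well defined, and the natural repair (passing to the dual triangulation of \cite{GMPT23}, where top simplices do correspond to trees) produces a coarsening of the \emph{dual} object, not the facet removal the conjecture asks for. Merging along sylvester-type classes would yield one region per $s$-Tamari tree, whereas $\Asso{s}$ has one \emph{vertex} per $s$-Tamari tree and its top cells are the full-dimensional pure $s$-Tamari intervals; compare Figure~\ref{fig_nonconvex_s002}, where the $s$-associahedron appears as the subcomplex of the realized $s$-permutahedron surviving after certain faces are deleted. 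Your reduction through $\overline\streestovtrees$ to the $\nu$-associahedron correctly identifies the target combinatorics, and your route~(b) (matching the tropical/flow-polytope realizations of \cite{GMPT23} and \cite{CeballosPadrolSarmiento-geometryNuTamari} on a common polytope) is a reasonable research direction, but nothing in the proposal establishes that the two realizations can be made compatible --- and that compatibility is precisely the open content of the conjecture.
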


Examples of $s$-associahedra obtained from $s$-permutahedra by removing certain facets are illustrated in Figures~\ref{fig:2d-asso} and~\ref{fig_geometricrealizations_intro}. The $3$d examples are also available on the webpage~\cite{SPermutaDemo}. All these examples were computed with {\tt SageMath} and can be obtained from our demo worksheet~\cite{SageDemoII}.

\begin{figure}[h]
\begin{tabular}{cc}
\includegraphics[height=3.5cm]{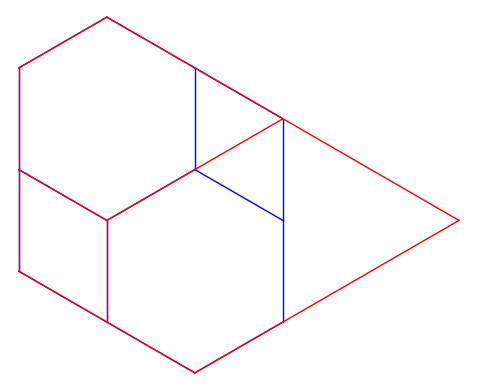} &
\includegraphics[height=3.5cm]{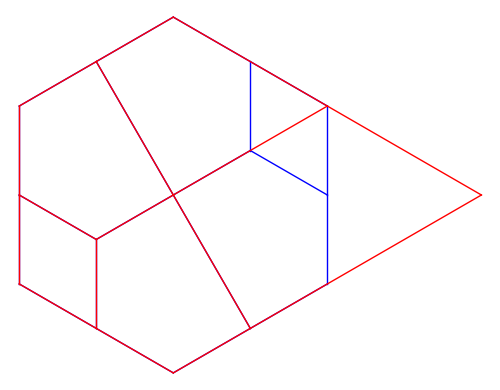} \\
$s = (0,1,2)$ & $s = (0,2,2)$ \\
\includegraphics[height=3.5cm]{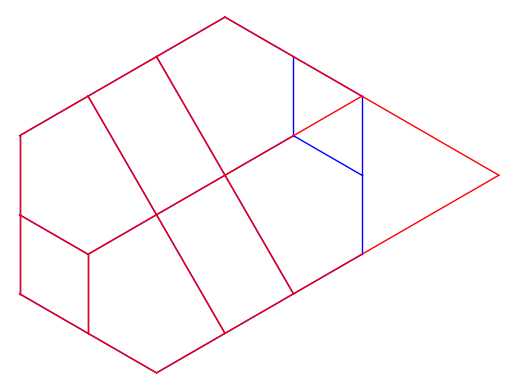} &
\includegraphics[height=3.5cm]{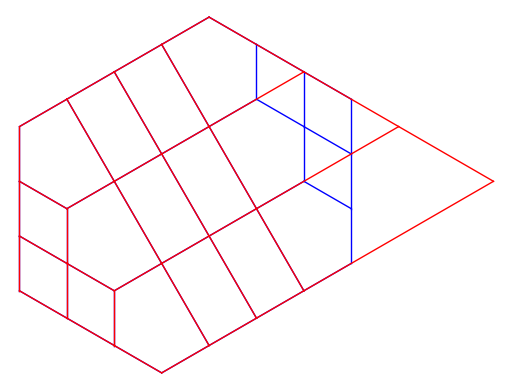} \\
$s = (0,3,2)$ & $s = (0, 4,3)$
\end{tabular}
\caption{Realizations of $s$-associahedra from $s$-permutahedra}
\label{fig:2d-asso}
\end{figure}

\begin{figure}[h!]
\begin{tabular}{cc}
\begin{tikzpicture}
\draw[blue, dashed] (0.0, 1.0) -- (0.0, 2.0);
\draw[blue, dashed] (0.0, 1.0) -- (-0.8660254037844386, 1.5);
\draw[blue, dashed] (0.0, 2.0) -- (-0.8660254037844386, 2.5);
\draw[blue, dashed] (0.0, 0.0) -- (0.0, 1.0);
\draw[blue] (0.0, 0.0) -- (-0.8660254037844386, 0.5);
\draw[blue] (-0.8660254037844386, 0.5) -- (-0.8660254037844386, 1.5);
\draw[blue] (-0.8660254037844386, 0.5) -- (-1.7320508075688772, 1.0);
\draw[blue, dashed] (-0.8660254037844386, 2.5) -- (-1.7320508075688772, 3.0);
\draw[blue, dashed] (-0.8660254037844386, 1.5) -- (-0.8660254037844386, 2.5);
\draw[blue] (-0.8660254037844386, 1.5) -- (-1.7320508075688772, 2.0);
\draw[blue] (-1.7320508075688772, 1.0) -- (-1.7320508075688772, 2.0);
\draw[blue] (-1.7320508075688772, 2.0) -- (-1.7320508075688772, 3.0);
\end{tikzpicture}
&
\begin{tikzpicture}
\draw[blue] (0.0, 0.0) -- (-0.8660254037844386, 0.5);
\draw[blue] (-0.8660254037844386, 0.5) -- (-0.8660254037844386, 1.5);
\draw[blue] (-0.8660254037844386, 0.5) -- (-1.7320508075688772, 1.0);
\draw[blue] (-0.8660254037844386, 1.5) -- (-1.7320508075688772, 2.0);
\draw[blue] (-1.7320508075688772, 1.0) -- (-1.7320508075688772, 2.0);
\draw[blue] (-1.7320508075688772, 2.0) -- (-1.7320508075688772, 3.0);
\end{tikzpicture}
\end{tabular}
\caption{The $s$-permutahedron and $s$-associahedron for $s=(0,0,2)$. In this case, the $s$-associahedron is not convex because $s(2)=0$.}
\label{fig_nonconvex_s002}
\end{figure}
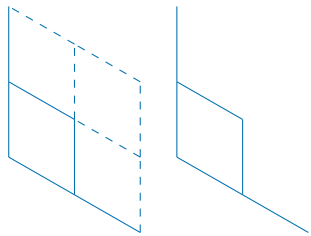

We remark that if $s$ contains zeros other than $s(1)$, then $\Asso{s}$ is not convex, see an example in Figure~\ref{fig_nonconvex_s002}. This follows from the analog result for $\nu$-associahedra in~\cite[Cor.~5.13]{CeballosPadrolSarmiento-geometryNuTamari}, which states that the $\nu$-associahedron is convex if and only if $\nu$ does not have two non-initial consecutive north steps. In such a case, the $\nu$-associahedron is a polytopal subdivision of a classical associahedron~\cite[Cor.~5.13]{CeballosPadrolSarmiento-geometryNuTamari}. 
Thus, Conjecture~\ref{con:sassociahedra} may be thought as a generalization of Hohlweg and Lange's result in~\cite{HL07}.

\section{Generalizations for finite Coxeter groups}\label{sec_Coxeter}
Another natural related question is whether there are generalizations of our constructions for other Coxeter groups. 

\begin{question}
Is there a natural definition of the $s$-permutahedron and the $s$-associahedron for other finite Coxeter groups?
\end{question}

\begin{figure}[htbp]
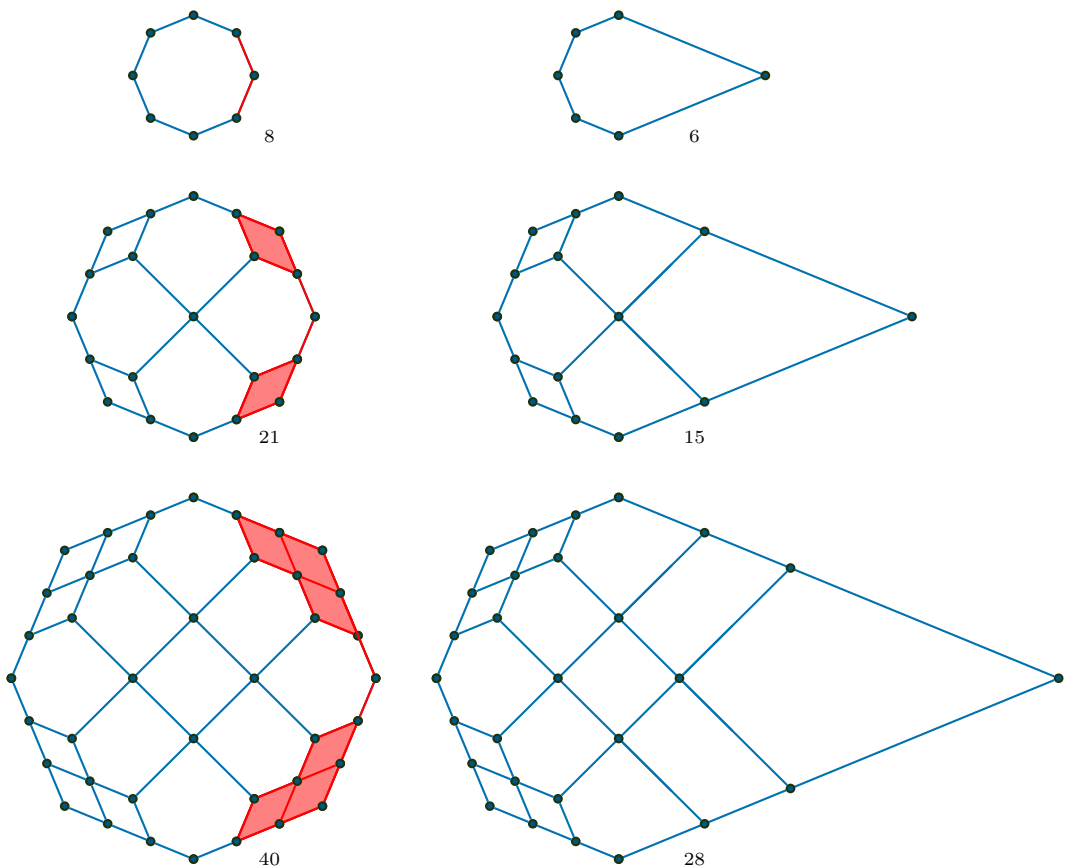

\begin{center}
\include{figures/typeB_spermutahedra_sassociahedra.tex}
\caption{A tantalizing option for $s$-permutahedra and $s$-associahedra of Coxeter type $B_2$.}
\label{fig_typeB_spermutahedra_sassociahedra}
\end{center}
\end{figure}

Figure~\ref{fig_typeB_spermutahedra_sassociahedra} shows a tantalizing partial answer to this question in type $B_2$. The left hand side of this figure shows what we believe could be Fuss-Catalan generalizations of the type $B_2$ permutahedron (subdivided octagons), while the right hand side shows the corresponding Fuss-Catalan type $B_2$ associahedra (subdivided cyclohedra) obtained by removing some facets. The number of vertices of these subdivided cyclohedra are $6, 15, 28, 45,66,\dots $, which are indeed the \defn{$m$-Catalan numbers} of type $B_2$ (also called hexagonal numbers in~\cite[Sequence A000384]{oeis}). This can be checked from the general formula of the Coxeter $m$-Catalan numbers, which is given by
\begin{align}
\prod_{i=1}^n \frac{e_i+mh+1}{e_i+1},
\end{align}
where $h$ denotes the Coxeter number and $e_1,\dots,e_n$ are the exponents of the Coxeter group. In type $B_2$, we have $h=4$, $e_1=1$ and $e_2=3$, so the previous formula reduces to $(2m+1)(m+1)$, which can be easily checked to count the number of vertices of the subdivided cyclohedra on the right of Figure~\ref{fig_typeB_spermutahedra_sassociahedra}. For instance, varying $m$ gives the desired sequence $6, 15, 28, 45,66,\dots $.  

\begin{figure}[t]
\begin{center}
\begin{tabular}{cc}
\includegraphics[width=0.25\textwidth]{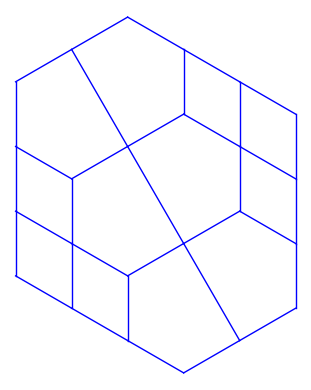} &
\includegraphics[width=0.4\textwidth]{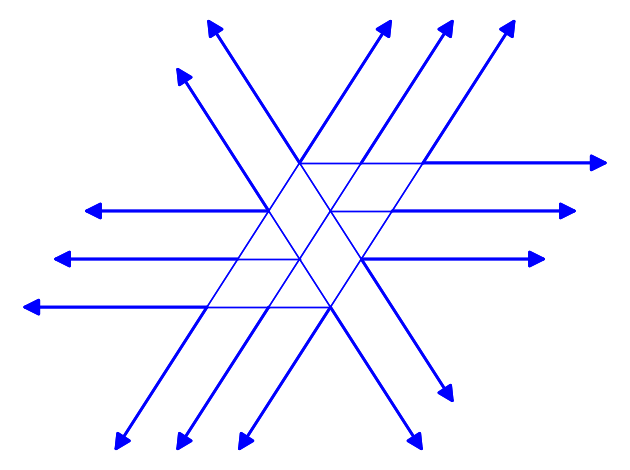}
\end{tabular}

\caption{The $s$-braid arrangement for $s=(0,2,3)$.}
\label{fig_braid_arrangement_s023}
\end{center}
\end{figure}

As far as we know, there are no \defn{m-generalizations} of the $n!$ number (or equivalently, $|W|$) for other finite Coxeter groups $W$. In type $A$, if we set $s=(m,m,\dots,m)$ to be a sequence consisting of $n$ copies of $n$, then the number of elements of the $s$-weak order is the $s!$ number~\cite{CP22}
\begin{align}\label{eq_mgeneralization_nfactorial}
s!=\prod_{i=0}^{n-1}(1+im).  
\end{align}
In particular, if $m=1$, this recovers the $n!$ number, and so may be regarded as the \defn{m-generalization} of~$n!$ in type $A$. 
On the other hand, the number of elements of a finite Coxeter group $W$ of rank $n$ can be computed by the uniform formula
\begin{align}
|W| = \prod_{i=1}^n (1+e_i).
\end{align}
The $n!$ number is recovered for type $A_{n-1}$, in which case the exponents are $e_1,\dots,e_{n-1}=1,\dots,n-1$.
It seems plausible for us, that a natural \defn{$m$-generalization of $|W|$} is
\begin{align}
\prod_{i=1}^n (1+me_i).
\end{align}
In type $A_{n-1}$, this recovers the number of elements of the $s$-week order for $s=(m,m,\dots,m)$, expressed in Equation~\eqref{eq_mgeneralization_nfactorial}.
For type $B_2$, we get the sequence $(1+m)(1+3m)$, which counts exactly the number of vertices of the Fuss-Catalan $s$-permutahedron of type $B_2$ that we are suggesting on the left of Figure~\ref{fig_typeB_spermutahedra_sassociahedra}. The first terms of this sequence are $8,21,40,65,96,\dots$. 
These numbers are called octagonal numbers in~\cite[Sequence~A000567]{oeis}. 
It would be interesting to investigate if the $m$-generalization of $|W|$ is the dimension of certain $W$-module in representation theory. Remarkable $S_n$-modules whose dimensions are given by $n!$ appear in~\cite{garsia_graded_1993,garsia_natural_1996,haiman_hilbert_2001}. 

In forthcoming work, we will introduce and investigate the \defn{$s$-braid arrangement}, an arrangement of hyperplanes which induces a cell decomposition that is dual to the $s$-permutahedron. This point of view will provide further insights on potential generalizations in the context of finite Coxeter groups. An example is illustrated on the right of Figure~\ref{fig_braid_arrangement_s023}.

\section{Geometric realizations in dimensions 2 and 3}\label{sec_geometric_realizations_dim_2_3}

In this section, we provide an explicit construction that shows that Conjectures~\ref{conj:pure-polytopal},~\ref{coj:spermutahedra} and~\ref{con:sassociahedra} hold in dimensions 2 and 3.
There is a natural way of assigning coordinates to each $s$-decreasing tree. 
Let $e_{ij}:= e_i-e_j$ for $i<j$, where $e_1,\dots ,e_n\in \R^n$ are the standard basis vectors in $\R^n$. 
Let $s=(s(1),s(2),\dots,s(n))$ be a weak composition, $T$ be an $s$-decreasing tree and $A$ be a subset of tree-ascents of $T$. We define

\begin{equation*}
v_T = \sum_{i<j} \card_T(j,i) e_{ij} \quad \text{and} \quad 
F_{(T,A)}= \conv\{v_{T'}: T'\in [T,T+A]\}.
\end{equation*}

For $n=3$ and $s(3)\neq 0$, this gives a 2-dimensional realization of the $s$-permutahedron in the  subspace $\{(x_1,x_2,x_3)\in\R^3: x_1+x_2+x_3=0\}\subset \R^3$, see Figure~\ref{fig:s-perm_dim2}. 
This realization ``cuts'' a polygon (a hexagon if $s_2\neq 0$ or a quadrilateral if $s(2)=0$) into smaller polygons. Each polygon is convex and corresponds to one facet of the $s$-permutahedron.

One would hope that this construction directly extends to higher dimensions, but it is not the case. 
For $n\geq 4$, the convex hull of all $v_T$'s is the zonotope $Z(s)$ from~\eqref{eq_zonotope}, which is still cut into identifiable pieces; however, those pieces do not form convex polytopes. 
We were able to fix this realization in dimension 3 ($n=4$ and $s(4)\neq 0$), using a procedure illustrated in Figure~\ref{fig:fix-d3}. The first image shows the direct realization obtained by $v_T$: 
we notice some bent edges and can identify what we call a ``broken pattern'' in trees related to those edges. 
The solution is to push the selected trees into a given direction by a parameter given by a broken pattern itself. 
In certain compositions $s$, this push leads to a ``collision'' (see middle of Figure~\ref{fig:fix-d3}) which again forces us to push certain trees further away. 
The process can be explicitly described for dimension 3. 
The new coordinates are given by 
$\overline v_T = \sum_{i<j} (3\card_T(j,i) + f_T(j,i)) e_{ij}$
where $f_T(j,i)=0$ for $j \neq 3$ and 
\begin{equation*}
f_T(3,i) = \begin{cases}
0 & \mbox{if }\card_T(3,i) = 0, \\
s_3 + \left( \card_T(4,1) - \card_T(4,3) \right) + \left(\card_T(4,2) - \card_T(4,3) \right) & \mbox{if } 0 < \card_T(3,i) < s_3, \\
2 s_3 & \mbox{if } \card_T(3,i) = s_3.
\end{cases}
\end{equation*}
See Figure~\ref{fig_geometricrealizations_intro} for examples, the demo webpage~\cite{SPermutaDemo}, or the {\tt SageMath} demo worksheet~\cite{SageDemoII}. We conjecture that such a construction also exists for higher dimensions.

\begin{figure}[htbp]
\begin{center}
\begin{tabular}{ccc}
\includegraphics[height=4.8cm]{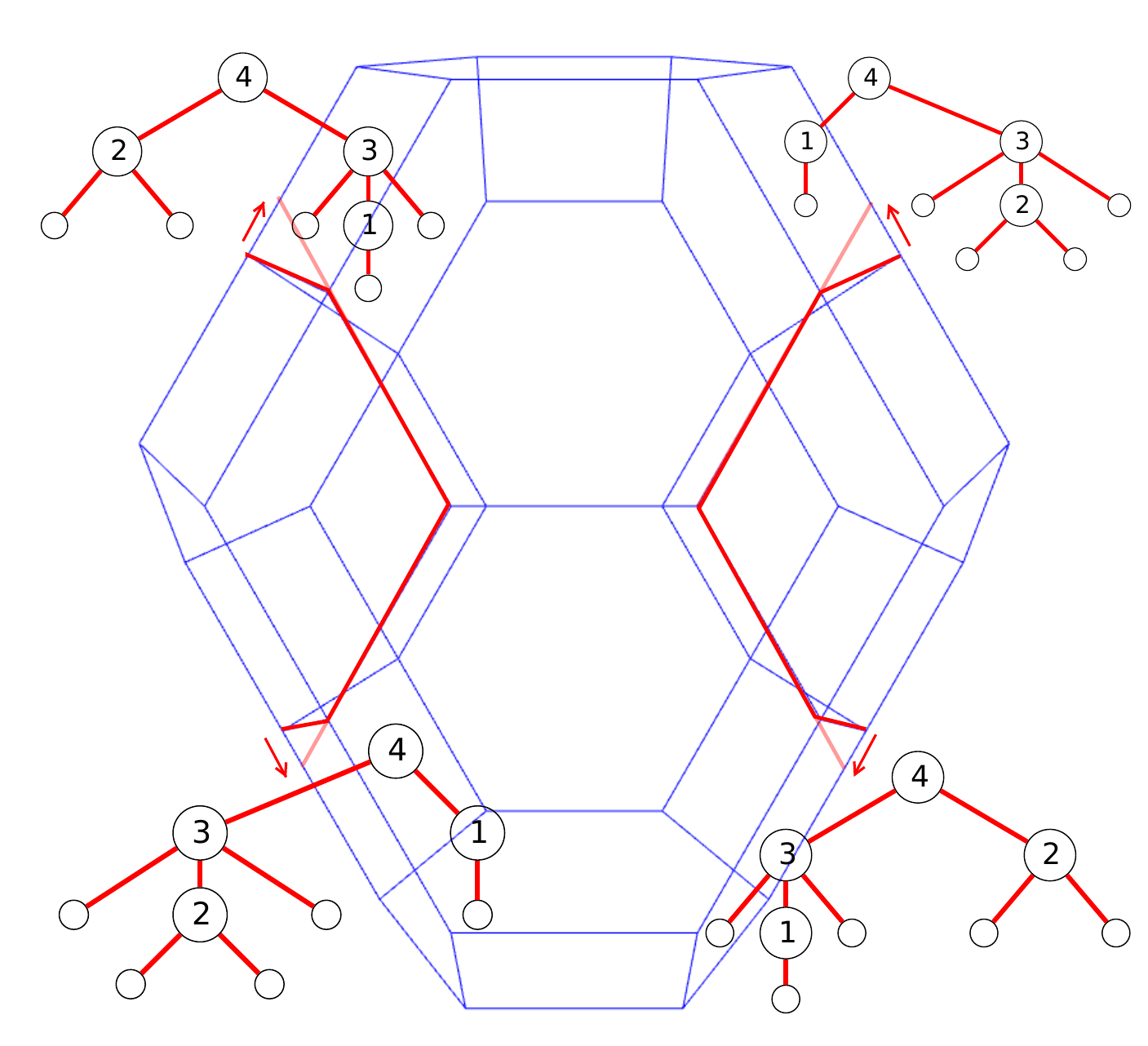} &
\includegraphics[height=4.8cm]{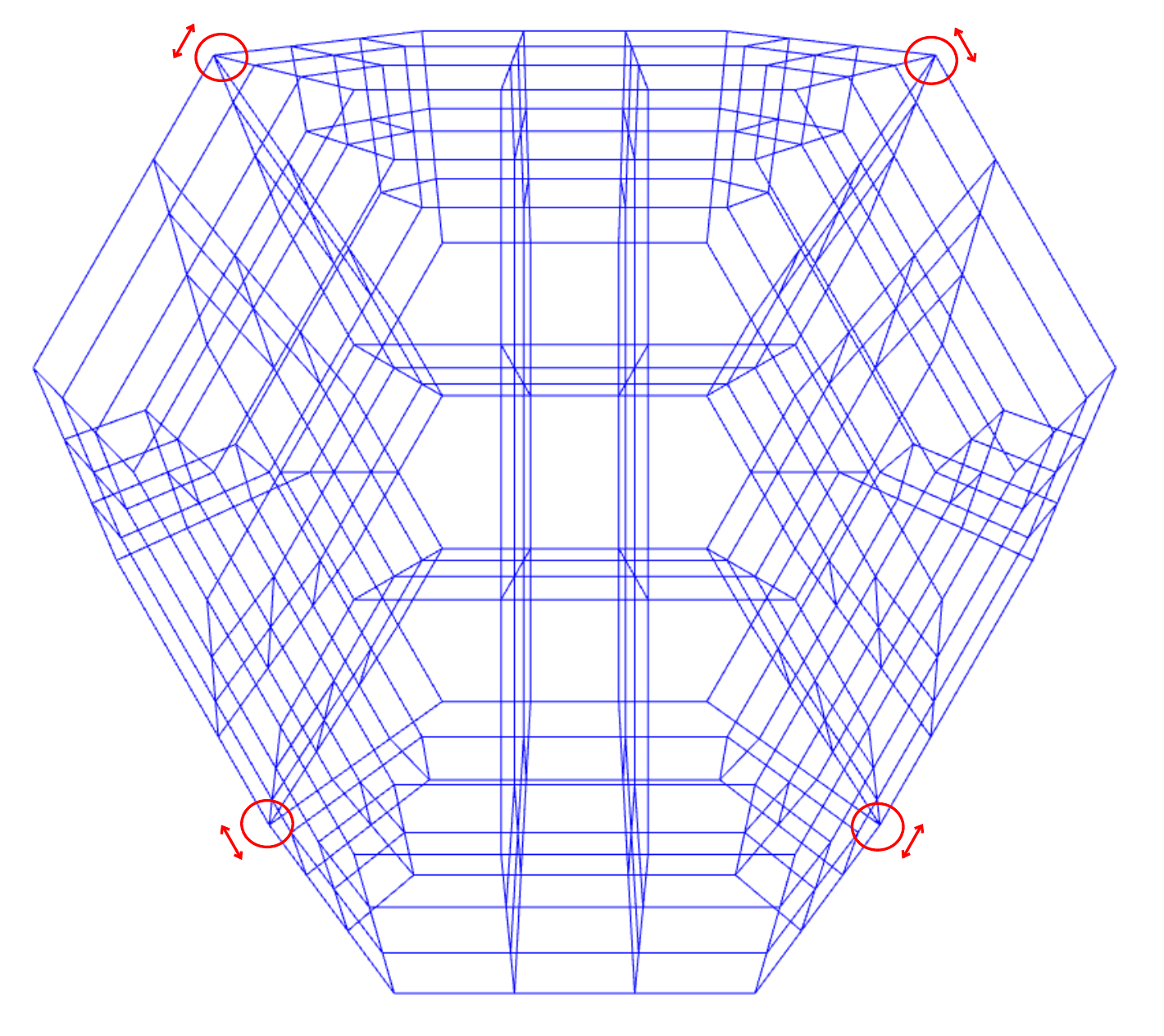} &
\includegraphics[height=4.8cm]{s0333.png}
\end{tabular}

\caption{Construction of a 3 dimensional realization.}
\label{fig:fix-d3}
\end{center}
\end{figure}

Once we have a geometric realization of the $s$-permutahedron, we are able to identify what we call \defn{Tamari-valid} faces and 
construct a realization of the $s$-associahedron by removing faces of the $s$-permutahedron. This works for both our $2D$ and $3D$ realizations,
and is an analogue of a construction in~\cite{HL07} giving rise to Loday's realization of the associahedron.
The process is illustrated in Figure~\ref{fig:2d-asso} in~$2D$, see Figure~\ref{fig_geometricrealizations_intro} for $3D$ examples.

\section{Realization via flow polytopes}
\label{sec:flows}
As mentioned before, the conjectures of this paper were announced in an extended abstract of our work back in 2019~\cite{FPSAC2019}, and this is the first time where we include all the details of our constructions.   
Meanwhile, Conjecture~\ref{coj:spermutahedra} (and consequently, Conjecture~\ref{conj:pure-polytopal}) have been proved in the particular case where $s$ contains no zeros~\cite[Theorem~1.3]{GMPT23}. The case when $s$ contains zeros is still open. Besides dimensions 2 and 3, Conjecture~\ref{con:sassociahedra} remains open for all $s$.

The construction in~\cite{GMPT23} is a beautiful work based on triangulations of flow polytopes.
For each composition~$s$ (with no zeros), the authors introduce a graph called the \emph{$s$-oruga graph}, and use a \emph{framing} of it to triangulate its corresponding flow polytope. They show that the dual of the poset of interior faces of the resulting triangulation is isomorphic to the face poset of the $s$-permutahedron. Finally, they use the Cayley trick and techniques from tropical geometry to obtain the desired geometric realization. 

The presentation in~\cite{GMPT23} has further important implications. 
One of them is a simpler and more convenient description of pure intervals of the $s$-weak order, when $s$ has no zeros. 
They can be described as certain subsets of \emph{coherent routes} of the $s$-oruga graph corresponding to \emph{interior} faces of the triangulation. This has the advantage that intersecting two pure intervals corresponds to just intersecting the associated sets of coherent routes. This intersection is literarily an intersection as sets, and therefore, it is very simple to compute. Exactly the same situation happens for the $\nu$-associahedron in Section~\ref{sec_nu_associahedron}, which makes the proofs much simpler in this case.

In contrast, the combinatorial description of the intersection of two pure intervals that we present in Section~\ref{sec_intersection_theorem} uses the characterization of pure intervals in Section~\ref{sec_characterization_pure_intervals}, which is rather involved.    
However, we believe that our deep combinatorial exploration of their combinatorial properties will be very useful. Combining this with the geometric/combinatorial techniques from~\cite{GMPT23} provides a powerful toolkit for future explorations.


\bibliographystyle{alphaurl}
\bibliography{../draft}
\end{document}